\declaretheoremstyle[
spaceabove=6pt, spacebelow=6pt,
headfont=\normalfont\itshape,
headpunct=.,
notefont=\normalfont\bfseries, notebraces={\hspace*{-4pt}}{},
bodyfont=\normalfont\itshape,
postheadspace=0.5em,
numbered=no,
]{mystyle}
\declaretheorem[style=mystyle,name={}]{labelenv}
\newlength\mylen
\newlist{mycases}{enumerate}{1}
\setlist[mycases,1]{label=\textbf{Case~\arabic*.}, 
  labelwidth=\dimexpr-\mylen-\labelsep\relax,leftmargin=0pt,align=right}
\newtheorem{question}{Question}
\newtheorem{theorem}{Theorem}
\newtheorem{lemma}{Lemma}[section]
\newtheorem{prop}[lemma]{Proposition}
\theoremstyle{definition}
\theoremstyle{remark}
\numberwithin{equation}{section}
\newcommand{\Q}{\mathbb Q}
\newcommand{\Z}{\mathbb Z}
\newcommand{\calE}{\mathcal E}
\newcommand{\cI}{\mathcal I}
\newcommand{\calR}{\mathcal R}
\newcommand{\p}[1]{\medskip \noindent \emph{#1}.}
\newcommand{\definition}[1]{\medskip \noindent \emph{#1.}}
\newcommand{\cB}{\mathcal B}
\newcommand{\cC}{\mathcal C}
\newcommand{\calU}{\mathcal U}
\newcommand{\bE}{\mathbb E}
\newcommand{\calF}{\mathcal F}
\newcommand{\calH}{\mathcal H}
\newcommand{\calI}{\mathcal I}
\newcommand{\Tau}{\mathcal T}
\renewcommand{\mod}{\mo}
\newcommand{\bn}{\noindent}
\newcommand{\calB}{\mathcal{B}}
\newcommand{\cut}{{\ssearrow}}
\newcommand{\finalbound}{51}
\newcommand{\calS}{\mathcal{S}}
\newcommand{\calV}{\mathcal{V}}
\newcommand{\midwedge}{\scaleobj{1.5}{\wedge}}
\newcommand{\rrightarrow}{\twoheadrightarrow}
\newcommand{\calD}{\mathcal{D}}
\newcommand{\calW}{\mathcal{W}}
\newcommand{\calK}{\mathcal{K}}
\let\oldunderset\underset
\protected\def\underset{\oldunderset}
\DeclareMathOperator{\Ind}{Ind}
\DeclareMathOperator{\Mod}{Mod}
\DeclareMathOperator{\Stab}{Stab}
\DeclareMathOperator{\Sp}{Sp}
\DeclareMathOperator{\Aut}{Aut}
\DeclareMathOperator{\bp}{bp}
\DeclareMathOperator{\im}{im}
\DeclareMathOperator{\ab}{ab}
\DeclareMathOperator{\Cech}{\check{C}ech}
\DeclareMathOperator{\ho}{Hom}
\DeclareMathOperator{\mo}{mod}
\DeclareMathOperator{\cok}{cok}
\DeclareMathOperator{\Span}{Span}
\DeclareMathOperator{\Diff}{Diff}
\DeclareMathOperator{\BM}{BM}
\DeclareMathOperator{\maxrk}{maxrk}
\DeclareMathOperator{\rk}{rk}
\DeclareMathOperator{\proj}{proj}
\DeclareMathOperator{\degen}{degen}
\DeclareMathOperator{\eval}{eval}
\DeclareMathOperator{\GL}{GL}
\DeclareMathOperator{\rkshrink}{rkshrink}
\DeclareMathOperator{\algshrink}{algshrink}
\DeclareMathOperator{\nummaxrk}{nummaxrk}
\DeclareMathOperator{\maxalg}{maxalg}
\DeclareMathOperator{\nummaxalg}{nummaxalg}
\begin{document}

\title[Second homology of Torelli]{THE SECOND RATIONAL HOMOLOGY OF THE TORELLI GROUP IS FINITELY GENERATED}

\author{Daniel Minahan}
\address{Skiles Classroom Building, Georgia Institute of Technology, Atlanta, GA 30332}
\email{dminahan\finalbound@gatech.edu}

\subjclass[2000]{Primary 54C40, 14E20; Secondary 49E25, 20C20}

\date{\today}

\keywords{Torelli group}

\begin{abstract}
We prove that second rational homology of the Torelli group of an orientable closed surface of genus $g$ is finite dimensional for $g \geq 51$.  This rules out the simplest obstruction to the Torelli group being finitely presented and provides a partial answer to a question of Bestvina. 
\end{abstract}

\vspace*{-1.5cm}

\maketitle

\p{Historical Remark} This is a permanent preprint.  The results of this paper have been subsumed and significantly improved upon by the work of the author and Putman \cite{MinahanPutmanH2}.

\section{Introduction}\label{introsection}

Let $S_g^b$ be a compact, orientable surface of genus $g$ with $b$ boundary components.  The \textit{mapping class group} $\Mod(S_g)$ is $\pi_0(\Diff^+(S_g))$.  The action of $\Mod(S_g)$ on $H_1(S_g;\Z)$ induces a representation $\Mod(S_g) \rightarrow \Sp(2g,\Z)$ called the \textit{symplectic representation}, where $\Sp(2g,\Z)$ is the group of invertible linear transformations of $H_1(S_g;\Z)$ that respect the algebraic intersection form $\langle \cdot, \cdot \rangle$.  The kernel of this representation is called the \textit{Torelli group} and is denoted $\cI_g$.  There is a short exact sequence 
\begin{displaymath}
1 \rightarrow \cI_g \rightarrow \Mod(S_g) \rightarrow \Sp(2g,\Z) \rightarrow 1.
\end{displaymath}
\bn By the work of McCullough--Miller~\cite{McCulloughMiller}, Mess~\cite{Messfree} and Johnson~\cite{JohnsonI}, the Torelli group is finitely generated if and only if $g \neq 2$.  Birman posed the following question~\cite[Problem 29]{Birmanbook}:

\begin{question}\label{Birmanquestion}
  Is $\cI_g$ finitely presented for any sufficiently large $g$?
\end{question}

\bn Similar questions were asked by Mess~\cite[Page 90]{Kirbyproblems} and Morita~\cite[Problem 2.1]{Moritasurveyprospect}.  

\p{Obstructions to finite presentability} If $G$ is a finitely generated group with $H_2(G;\Q)$ infinite dimensional, then $G$ is not finitely presentable.  The main result of the paper is the following theorem, which shows that $H_2(\cI_g;\Q)$ does not obstruct finite presentability of $\cI_g$ for $g \gg 0$.

\begin{theorem}\label{mainthm}
Let $g \geq \finalbound$.  The vector space $H_2(\cI_g;\Q)$ is finite dimensional.
\end{theorem}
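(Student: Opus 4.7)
The plan is to find a $\cI_g$-equivariant simplicial complex $X$ that is highly connected and has well-understood orbits and stabilizers, then extract $H_2(\cI_g;\Q)$ from the Cartan--Leray (equivariant) spectral sequence
\begin{displaymath}
E^1_{p,q} = \bigoplus_{[\sigma] \in X_p/\cI_g} H_q(\Stab_{\cI_g}(\sigma);\Q) \Longrightarrow H_{p+q}(\cI_g;\Q).
\end{displaymath}
If $X$ is $2$-connected, then $H_2(\cI_g;\Q)$ is a subquotient of the terms $E^1_{p,q}$ with $p+q = 2$ and $p \leq 2$, so finite dimensionality reduces to three tasks: (a) verify finitely many $\cI_g$-orbits of simplices in dimensions $\leq 2$; (b) verify that $H_q(\Stab(\sigma);\Q)$ is finite-dimensional whenever $\dim(\sigma) + q \leq 2$; and (c) prove that $X$ is $2$-connected.

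For (a) and (b) the natural candidate for $X$ is a complex whose vertices are (systems of) simple closed curves on $S_g$, chosen so that the stabilizer of a vertex is, up to an abelian twist subgroup, a product of Torelli groups of the pieces obtained by cutting along those curves, each of genus strictly smaller than $g$. This opens the door to an induction on $g$, provided one works with a version of the theorem for surfaces with boundary or marked points so that the induction closes. The base of the induction uses Johnson's computation that $H_1(\cI_h;\Q) \cong (\wedge^3 H_1(S_h;\Q))/H_1(S_h;\Q)$ is finite-dimensional for $h \geq 3$, together with the known finite generation of $\cI_g$ for $g \geq 3$.

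The main obstacle is (c): establishing $2$-connectivity of $X$. Connectivity of curve-type complexes typically grows only linearly in $g$, and the standard tools are the ``bad simplex'' technique of Hatcher--Vogtmann and the transitivity/equivariant connectivity arguments of Putman. To make the bound $g \geq \finalbound$ emerge, one needs a complex whose connectivity argument is as efficient as possible — a natural candidate is something in the spirit of the Bestvina--Bux--Margalit complex of cycles, where connectivity can be proved via surgery on cycles representing fixed homology classes — while still giving the lower-genus stabilizer structure demanded by (b). Balancing these two requirements, and closing the induction uniformly across surfaces with boundary or marked points, is where the bulk of the work will lie.
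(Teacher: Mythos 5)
Your outline rests on a false premise in step (a). For any $\cI_g$-equivariant complex built out of simple closed curves on $S_g$, there are \emph{infinitely} many $\cI_g$-orbits of simplices in dimension $\geq 1$. The reason is that $\cI_g$ acts trivially on $H_1(S_g;\Z)$, so a multicurve carries homological data that is invariant under the action: in the paper's complex $\cC_{[a]}(S_g)$, the $\cI_g$-orbit of a $k$-cell is determined by the cyclically ordered decomposition of $H_1(S_g;\Z)$ induced by the complementary pieces (Lemma~\ref{homolcurveconjstructurelemma}), and there are infinitely many such decompositions. Consequently $X_g = \cC_{[a]}(S_g)/\cI_g$ has one vertex but infinitely many edges and $2$-cells, and the $E^1$ terms you hope to bound are infinite direct sums. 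Task (b) suffers the same fate: although each individual $H_1(\Stab(\sigma);\Q)$ is finite dimensional via Johnson, the direct sum over infinitely many orbits is not, so a naive count of dimensions on the $E^1$ page cannot close the argument.

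This is in fact the central difficulty that the paper is organized around, and it is where the real work lies. The paper does start from the equivariant homology spectral sequence for $\cI_g$ acting on $\cC_{[a]}(S_g)$ (so your general framework is the right one, and $2$-acyclicity is already available from prior work on $\cC_{[a]}(S_g)$ — this is not where $g \geq \finalbound$ comes from). The novelty is in showing that the two relevant subquotients, $\bE_{2,0}^2 \cong H_2(X_g;\Q)$ and $\bE_{1,1}^2$, are finite dimensional \emph{despite} being built from infinite-rank $E^1$ terms. This is done via the ``representation of transvective type'' machinery (Proposition~\ref{gengrpprop}): one identifies a finite set of commuting transvections $T_{v_1},\dots,T_{v_9}$ in $\Sp(2g,\Z)$, shows the coinvariants of the relevant vector space under the stabilizer subgroups are finitely generated, shows that the transvection-fixed subspaces generate, and then runs an induction on the \emph{number of transvections}, not on the genus. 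The bound $g \geq \finalbound$ emerges from needing enough genus to fit nine disjoint curves plus additional subsurface room in the Bestvina--Margalit torus manipulations. The other key ingredient the proposal lacks is finiteness of $H_0(\Sp(2g,\Z);H_2(\cI_g;\Q))$ (Lemma~\ref{KassPutaltlemma}, an alternate proof of a result of Kassabov--Putman), which is used to finish the passage from Theorem~\ref{fincokthm} to Theorem~\ref{mainthm}; an induction on genus in the style you sketch would not supply this.
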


\bn Theorem~\ref{mainthm} partially answers the following question of Bestvina~\cite[Page 5]{Farbproblems}.  Margalit also asked a similar question~\cite[Question 5.12]{Margalitproblems}).

\begin{question}\label{homolquestion}
For which choices of $k, g$, and commutative ring $R$ is $H_k(\cI_g;R)$ finitely generated?
\end{question}

\definition{Prior partial answers to Question~\ref{homolquestion}} Mess showed that $\cI_2$ is an infinitely generated free group~\cite{Messfree}, which implies that $H_1(\cI_2)$ is infinitely generated and $H_i(\cI_2) = 0$ for $i \geq 2$.  Johnson~\cite{JohnsonIII} computed $H_1(\cI_g;\Z)$ for $g \geq 3$ and proved that
\begin{displaymath}
H_1(\cI_g;\Q) \cong \midwedge^3 H_1(S_g;\Q)/H_1(S_g;\Q).
\end{displaymath}

\bn In fact, Johnson defined a map $\tau_g:\cI_g \rightarrow \midwedge^3 H_1(S_g;\Z)/H_1(S_g;\Z)$ now called the Johnson homomorphism, and showed that $(\tau_g)_*$ induces an isomorphism in first rational homology~\cite{JohnsonIII}.  Akita showed that $H_*(\cI_g;\Z)$ is infinitely generated as an abelian group for $g \geq 7$~\cite{Akita}.  Hain showed that $H_3(\cI_3;\Z)$ is infinitely generated~\cite{Hainthird}.  Bestvina, Bux, and Margalit showed that the cohomological dimension of $\cI_g$ is $3g-5$~\cite[Theorem A]{BBM}, which implies that $H_k(\cI_g;\Z) = 0$ for $k \geq 3g - 4$.  In this same paper, they proved that $H_{3g-5}(\cI_g;\Z)$ is infinitely generated for $g \geq 2$~\cite[Theorem C]{BBM}.  Gaifullin later showed that $H_k(\cI_g;\Z)$ is infinitely generated for $2g - 3\leq k \leq 3g - 5$~\cite{Gaifullin}, confirming a conjecture of Bestvina--Bux--Margalit~\cite{BBM}. 

\subsection{The outline of the proof of Theorem~\ref{mainthm}}\label{theoremBtheoremAsection}

Theorem~\ref{mainthm} will follow from the following result.

\begin{theorem}\label{fincokthm}
Let $g \geq \finalbound$.  Let $a \subseteq S_g$ be a nonseparating simple closed curve.  The cokernel of the pushforward map $\iota_*:H_2(\Stab_{\cI_g}(a);\Q) \rightarrow H_2(\cI_{g};\Q)$ is finite dimensional.
\end{theorem}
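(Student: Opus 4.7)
The plan is to exhibit a $2$-connected simplicial complex $X$ carrying a simplicial $\cI_g$-action with $a$ as a vertex, and to exploit the equivariant spectral sequence
\begin{displaymath}
E^1_{p,q} = \bigoplus_{\sigma \in X_p/\cI_g} H_q(\Stab_{\cI_g}(\sigma); \Q) \Longrightarrow H_{p+q}(\cI_g; \Q),
\end{displaymath}
which converges correctly in total degrees $\leq 2$ whenever $X$ is $2$-connected. When $\cI_g$ acts transitively on vertices with vertex stabilizer $\Stab_{\cI_g}(a)$, the image of $\iota_*$ equals the filtration piece $F_0 H_2(\cI_g; \Q)$, so the cokernel of $\iota_*$ is an extension of subquotients of $E^1_{1,1}$ and $E^1_{2,0}$. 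It then suffices to arrange that (i) there are finitely many $\cI_g$-orbits of edges and of $2$-simplices in $X$, and (ii) each edge stabilizer has finite-dimensional rational $H_1$.

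The main obstacle is the construction of $X$ with the right properties. The naive candidate, the complex of nonseparating simple closed curves, fails because $\cI_g$-orbits of its vertices are indexed by primitive classes in $H_1(S_g; \Z)$, of which there are infinitely many. I would therefore rigidify the vertex data by adding extra information, for instance a framing or a geometric cycle in the spirit of the Bestvina--Bux--Margalit complex of cycles~\cite{BBM}, until the $\cI_g$-action on vertices becomes essentially transitive. Verifying $2$-connectivity of the resulting complex for $g \geq \finalbound$ is where the bulk of the work lies: it should proceed by a `bad simplex' or Quillen-style induction on genus, with the numerical bound $\finalbound$ emerging from the compounded genus drops at each inductive step. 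Finiteness of $\cI_g$-orbits on edges and $2$-simplices then follows from the change-of-coordinates principle for multicurves, combined with finiteness of $\Sp(2g, \Z)$-orbits on the rigidifying linear-algebraic data.

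The final input is the finite-dimensionality of $H_1$ of each edge stabilizer, which rests on Johnson's theorem~\cite{JohnsonIII}. An edge stabilizer sits in a short exact sequence whose kernel is a finitely generated abelian group of Dehn twists in the multicurve represented by the edge, and whose quotient is a partial Torelli group of the subsurface $S' \subseteq S_g$ obtained by cutting along that multicurve. For $g \geq \finalbound$, each component of $S'$ has genus at least $3$, so Johnson's isomorphism $H_1(\cI_h; \Q) \cong \midwedge^3 H_1(S_h; \Q)/H_1(S_h; \Q)$ yields finite-dimensional $H_1$ for the quotient, and a Lyndon--Hochschild--Serre argument on the extension lifts this finiteness to the edge stabilizer itself. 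With (i) and (ii) in place, the spectral sequence delivers the bound on the cokernel of $\iota_*$ and completes the proof.
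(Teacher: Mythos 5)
Your outline correctly identifies the tool (the equivariant homology spectral sequence for an $\cI_g$-action on a sufficiently connected complex, with vertex stabilizer $\Stab_{\cI_g}(a)$) and the fact that the cokernel of $\iota_*$ is controlled by $E^1_{1,1}$ and $E^1_{2,0}$-subquotients. But the plan rests on a premise that fails and that the paper expends almost all of its effort circumventing: there is no complex of this kind with finitely many $\cI_g$-orbits of edges and $2$-cells. The paper's choice, the complex of homologous curves $\cC_{[a]}(S_g)$, is indeed vertex-transitive under $\cI_g$ (no rigidification is needed), but by Lemma~\ref{homolcurveconjstructurelemma} the $\cI_g$-orbits of its $k$-cells are parametrized by cyclically ordered decompositions of $H_1(S_g;\Z)$ into $(k+1)$ summands, of which there are infinitely many once $k\geq 1$ — even at a single fixed genus partition, there are infinitely many complementary splittings of $[a]^{\perp}$. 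Adding a framing (or passing to something like the Bestvina--Bux--Margalit complex of cycles) does not collapse this to finitely many orbits either, because the obstruction is the infinite family of $\Sp$-splittings, not a continuous geometric degree of freedom. Your step (i) is therefore unavailable, and with it your step (ii) loses its force: a direct sum of finite-dimensional vector spaces over an infinite index set is not finite-dimensional, so finite-dimensionality of each edge stabilizer's $H_1$ does not bound $E^1_{1,1}$ or $E^2_{1,1}$.

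The paper's proof takes the same spectral-sequence setup but then proves, as hard theorems rather than as consequences of finiteness of the quotient complex, that $\bE_{2,0}^2 = H_2(X_g;\Q)$ is finite-dimensional (Proposition~\ref{homolcurvequotprop}) and that $\bE_{1,1}^2$ is finite-dimensional (Proposition~\ref{11finprop}). These are obtained by encoding the problem as a ``representation of transvective type'' (Section~\ref{gengrpsection}) and then, over Sections~\ref{homolcurvequotsection}--\ref{fincoksection}, running an induction on algebraic invariants (the $\rk^{\calV}$ and $\theta(\calV)$ quantities attached to cells via projections of a fixed multicurve's homology classes onto the splitting pieces) to show that the infinitely many cell orbits contribute only a finite-dimensional space after passing to the $E^2$-page and quotienting by a suitable stabilizer subgroup of $\Sp(2g,\Z)$. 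None of that machinery appears in your outline. A secondary, smaller issue: for $g\geq \finalbound$ it is not true that every component of the surface cut along a multicurve has genus at least $3$ — disjoint homologous curves can cobound a genus-zero or genus-one piece regardless of how large $g$ is — so the argument you sketch for (ii) would also need repair, but this is minor compared to the collapse of (i).
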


\bn Most of the paper will be devoted to proving Theorem~\ref{fincokthm}.  

\p{The connection between Theorem~\ref{mainthm} and Theorem~\ref{fincokthm}}  What follows is a trick.  We will expand this trick in Proposition \ref{gengrpprop}.  Let $\calB$ be a finite collection of nonseparating simple closed curves such that the set of Dehn twists
\begin{displaymath}
\{T_c: c \in \calB\}
\end{displaymath}
\bn generates $\Mod(S_g)$, e.g., the curves corresponding to the Humphries generators~\cite{FarbMarg}.  Consider the map
\begin{displaymath}
\varphi: H_2(\cI_g;\Q) \rightarrow \bigoplus_{c \in \calB} \cok(H_2(\Stab_{\cI_g}(c);\Q) \rightarrow H_2(\cI_g;\Q)).
\end{displaymath}
\bn  Now, for any $c \in \calB$, the quotient map $\rho_c:H_2(\cI_g;\Q) \rightarrow \cok(H_2(\Stab_{\cI_g}(c);\Q) \rightarrow H_2(\cI_g;\Q))$ satisfies 

\begin{displaymath}
\ker(\rho_c) = \im(H_2(\Stab_{\cI_g}(c);\Q) \rightarrow H_2(\cI_g;\Q)).
\end{displaymath}
\bn  Hence we have
\begin{displaymath}
    \ker(\varphi) = \bigcap_{c \in \calB} \im(H_2(\Stab_{\cI_g}(c);\Q) \rightarrow H_2(\cI_g;\Q)).
\end{displaymath}
\bn Then for any $c \in \calB$, $T_c$ acts trivially $\im(H_2(\Stab_{\cI_g}(c);\Q) \rightarrow H_2(\cI_g;\Q))$.  Therefore every element of $\{T_c:c \in \calB\}$ acts trivially on $\ker(\varphi)$.  Since $\{T_c: c \in \calB\}$ generates $\Mod(S_g)$, the vector space $\ker(\varphi)$ is a trivial $\Sp(2g,\Z)$--module.  We will show in Lemma~\ref{KassPutaltlemma} that $H_0(\Sp(2g,\Z);H_2(\cI_g;\Q))$ is finite dimensional.  This is a result of Kassabov and Putman~\cite{KassabovPutman}, for which we provide an alternative proof.  Theorem~\ref{fincokthm} says that each vector space $\cok(H_2(\Stab_{\cI_g}(c);\Q) \rightarrow H_2(\cI_g;\Q))$ is finite dimensional.  Since $\calB$ is a finite set, the vector space $\im(\varphi)$ is finite dimensional.  Consider the short exact sequence
\begin{displaymath}
0 \rightarrow \ker(\varphi) \rightarrow H_2(\cI_g;\Q) \rightarrow \im(\varphi) \rightarrow 0.
\end{displaymath}
\bn  A portion of the corresponding long exact sequence in $\Sp(2g,\Z)$--homology is given by
\begin{displaymath}
\ldots \rightarrow H_1(\Sp(2g,\Z);\im(\varphi)) \rightarrow H_0(\Sp(2g,\Z); \ker(\varphi)) \rightarrow H_0(\Sp(2g,\Z); H_2(\cI_g;\Q)) \rightarrow \ldots
\end{displaymath}
\bn Since $\Sp(2g,\Z)$ is finitely generated and $\im(\varphi)$ is finite dimensional, $H_1(\Sp(2g,\Z);\im(\varphi))$ is finite dimensional as well.  By Lemma~\ref{KassPutaltlemma}, $H_0(\Sp(2g,\Z); H_2(\cI_g;\Q))$ is finite dimensional.  Hence the vector space  $H_0(\Sp(2g,\Z), \ker(\varphi))$ is finite dimensional.  Then $H_0(\Sp(2g,\Z); \ker(\varphi))$ is isomorphic to $\ker(\varphi)$ because $\ker(\varphi)$ is a trivial $\Sp(2g,\Z)$--module, so $\ker(\varphi)$ is finite dimensional.  Since $\im(\varphi)$ is finite dimensional as well, we have $H_2(\cI_g;\Q)$ finite dimensional.

\subsection{The proof of Theorem~\ref{fincokthm}}

The proof of Theorem~\ref{fincokthm} proceeds by considering the action of $\cI_g$ on a complex called the complex of homologous curves, and then studying the equivariant homology spectral sequence for this group action.

\p{The complex of homologous curves}  In this paper, a \textit{curve} on a surface $S$ is an isotopy class of oriented essential embedded copies of $S^1$.  Two curves $c$ and $d$ are \textit{disjoint} if they have disjoint representatives.  The \textit{geometric intersection number}, denoted $|c \cap d|$, is the minimum number of intersection points over all representatives of $c$ and $d$.  Let $a \subseteq S_g$ be a nonseparating curve and let $[a] \in H_1(S_g;\Z)$ denote the corresponding first homology class.  The \textit{complex of homologous curves} $\cC_{[a]}(S_g)$ is the complex where the vertices are curves $c$ such that $c$ represents $[a]$ in $H_1(S_g;\Z)$.  A $k$--cell in $\cC_{[a]}(S_g)$ is a collection of $k+1$ pairwise disjoint vertices of $\cC_{[a]}(S_g)$.  A theorem of the author says that $\widetilde{H}_k(C_{[a]}(S_g);\Z) = 0$ for $0 \leq k \leq g - 3$~\cite[Theorem A]{Minahanhomolconn}.  

\p{The equivariant homology spectral sequences}  We consider the equivariant homology spectral sequence $\bE_{*,*}^*$ for the action of $\cI_g$ on $\cC_{[a]}(S_g)$~\cite[Section VII]{Brownbook}.  Let $a \subseteq S_g$ be a nonseparating curve and let $X_g = \cC_{[a]}(S_g)/\cI_g$.  We will use the notation $X_g$ throughout the paper to refer to this quotient.  If $\sigma$ is a cell of $X_g$, let $(\cI_g)_{\sigma}$ denote the stabilizer $\Stab_{\cI_g}(\widehat{\sigma})$ for some arbitrary lift of $\sigma$ to $\cC_{[a]}(S_g)$.  It is known that if $x$ and $y$ are nonseparating homologous curves in $S_g$, then there is an $f \in \cI_g$ such that $fx = y$ (see, e.g., \cite[Lemma 6.2]{Putmantorelli}).  Hence the set $X_g^{(0)}$ of vertices is a singleton.  It follows that page 1 of $\bE_{*,*}^*$ is as in Figure~\ref{bcloutlinepg1simpleex}.

\begin{sseqdata}[name = homologous bcl simple, homological Serre grading, y axis gap = 1.5cm, y axis gap = 1.5cm , x axis extend end = 2cm, classes = {draw = none}, xscale = 3.2, yscale = 1]
\class["H_2((\cI_g)_a)"](0,2)
\class["H_1((\cI_g)_a)"](0,1)
\class["\underset{e \in X^{(1)}}{\bigoplus}H_2((\cI_g)_e)"](1,2)
\class["\underset{e \in X^{(1)}}{\bigoplus}H_1((\cI_g)_e)"](1,1)
\class["\underset{\sigma \in X^{(2)}}{\bigoplus}H_1((\cI_g)_\sigma)"](2,1)
\class["\underset{e\in X^{(1)}}{\bigoplus}H_0((\cI_g)_e)"](1,0)
\class["\underset{\sigma \in X^{(2)}}{\bigoplus}H_0((\cI_g)_\sigma)"](2,0)
\class["\underset{\rho \in X^{(3)}}{\bigoplus}H_0((\cI_g)_\rho)"](3,0)
\d[source anchor = 180, target anchor = 0, shorten < = 0.5]1(1,2)
\d[source anchor = 180, target anchor = 0, shorten < = 0.5]1(1,1)
\d[source anchor = 174, target anchor = 6, shorten < = 0.5]1(2,1)
\d[source anchor = 174, target anchor = 6, shorten < = 0.5]1(3,0)
\d[source anchor = 174, target anchor = 6, shorten < = 0.5]1(2,0)
\end{sseqdata}

\begin{figure}[ht]
\printpage[name = homologous bcl simple,page = 1]
\caption{Page 1 of $\bE_{*,*}^*$ for the action of $\cI_g$ on $\cC_{[a]}(S_g)$.  All coefficients are in $\Q$.}\label{bcloutlinepg1simpleex}
\end{figure}

Since $\cC_{[a]}(S_g)$ is 2--acyclic for $g \geq 5$~\cite[Theorem A]{Minahanhomolconn}, the spectral sequence $\bE_{*,*}^*$ converges to $H_2(\cI_g)$~\cite[Section VII]{Brownbook}.  Examining Figure~\ref{bcloutlinepg1simpleex}, we see that the cokernel of the map
\begin{displaymath}
\iota_*:H_2(\Stab_{\cI_g}(a);\Q) \rightarrow H_2(\cI_g;\Q)
\end{displaymath}
\bn is noncanonically identified with the direct sum
\begin{displaymath}
\bE_{1,1}^\infty \oplus \bE_{2,0}^\infty.
\end{displaymath}
\bn  Hence the cokernel of $\iota_*$ is noncanonically identified with a subquotient of
\begin{displaymath}
\bE_{1,1}^2 \oplus \bE_{2,0}^2.
\end{displaymath} 
\bn The proof of Theorem~\ref{fincokthm} will proceed in two steps:

\begin{enumerate}[label=\Roman{enumi}.]
\item the vector space $\bE_{2,0}^2$ is finite dimensional, and
\item the vector space $\bE_{1,1}^2$ is finite dimensional.
\end{enumerate}
\bn By definition, $\bE_{2,0}^2$ is canonically identified with $H_2(X_g;\Q)$.  Step I is recorded as the following proposition.
\begin{prop}\label{homolcurvequotprop}
Let $g \geq \finalbound$ and $a \subseteq S_g$ be a nonseparating simple closed curve.  Let $X_g = \cC_{[a]}(S_g)/\cI_g$.  The vector space $H_2(X_g;\Q)$ is finite dimensional.
\end{prop}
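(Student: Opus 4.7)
The plan is to prove Proposition~\ref{homolcurvequotprop} by showing that $X_g$ has only finitely many cells in dimensions $\leq 2$, from which it follows immediately that $H_2(X_g;\Q)$ is finite dimensional, since this homology group is a subquotient of the cellular chain group $C_2(X_g;\Q)$.  As the set $X_g^{(0)}$ is already a singleton, the content of the proposition reduces to showing that $\cI_g$ acts on the sets of $1$- and $2$-simplices of $\cC_{[a]}(S_g)$ with only finitely many orbits.

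For the $1$-simplices, a pair $\{c_0, c_1\}$ of disjoint nonseparating curves in class $[a]$ is a bounding pair in the sense of Johnson: after choosing suitable orientations, the cycle $c_0 - c_1$ is null-homologous and $c_0 \cup c_1$ separates $S_g$ into two subsurfaces $\Sigma_1, \Sigma_2$ with common boundary.  The $\Mod(S_g)$-orbit of such a pair is classified by the genus of $\Sigma_1$, which takes only finitely many values.  I would then invoke a Torelli change-of-coordinates statement to refine this to a finite classification of $\cI_g$-orbits of $1$-simplices; such refinements follow from classical results on how the Torelli group acts on configurations of disjoint curves, for example as developed in work of Johnson and Putman.

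For the $2$-simplices, a triple $\{c_0, c_1, c_2\}$ of pairwise disjoint curves in class $[a]$ cuts $S_g$ into subsurfaces whose combinatorial type --- encoded by the dual graph of the cut together with the genera of the pieces --- takes only finitely many values for fixed $g$.  Once again the $\Mod(S_g)$-classification is finite, and I would invoke Torelli change-of-coordinates arguments of the same flavor to conclude that $\cI_g$ also has finitely many orbits on $2$-simplices in $\cC_{[a]}(S_g)$.

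The principal obstacle is the Torelli change-of-coordinates step.  While the $\Mod(S_g)$-classification is straightforward combinatorics, refining it to a finite classification of $\cI_g$-orbits requires the construction of explicit Torelli elements --- typically bounding pair maps and separating twists supported in the complementary subsurfaces of the simplex --- to witness the necessary moves between configurations of the same topological type.  I expect this to be the technical heart of the proof of Proposition~\ref{homolcurvequotprop}; once it is established for $k \leq 2$, the conclusion that $C_2(X_g;\Q)$ is a finite-dimensional $\Q$-vector space is immediate, and so therefore is the finite-dimensionality of the subquotient $H_2(X_g;\Q)$.
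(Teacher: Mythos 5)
Your proposal rests on the claim that $X_g = \cC_{[a]}(S_g)/\cI_g$ has only finitely many cells in each dimension $\leq 2$, and this claim is false. By Lemma~\ref{homolcurveconjstructurelemma}, the $\cI_g$-orbit of a $k$-cell in $\cC_{[a]}(S_g)$ is determined by the cyclically ordered decomposition of $H_1(S_g;\Z)$ that the cell induces, and there are infinitely many such decompositions. Concretely: two bounding pairs $(a, c_1)$ and $(a, c_2)$ of the same topological type (say each cutting off a genus-one piece) are in the same $\Mod(S_g)$-orbit, but if the genus-one pieces carry different primitive $3$-dimensional subgroups of $H_1(S_g;\Z)$ --- which can be arranged in infinitely many ways --- then the two edges lie in different $\cI_g$-orbits, since $\cI_g$ acts trivially on $H_1$. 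The ``Torelli change-of-coordinates'' step you gesture at therefore cannot produce the finiteness you need: the refinement from $\Mod$-orbits to $\cI_g$-orbits is strict and infinite, not a coarsening that the Torelli group can undo. So $X_g$ has one vertex but infinitely many edges and $2$-cells, and $C_2(X_g;\Q)$ is infinite dimensional.

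This infinitude is precisely the difficulty that the rest of the paper is built to handle. The space $X_g$ is infinite, but it carries an action of $\Sp([a]^\perp,\Z)$ (the image of $\Mod(S_g \cut a)$ in $\Sp(2g,\Z)$) with \emph{finite quotient}; that is what makes $H_0(\Sp([a]^\perp,\Z); H_2(X_g;\Q))$ finite dimensional, which is hypothesis (2) in the definition of a representation of transvective type. The actual proof then applies Proposition~\ref{gengrpprop} to the tuple $(G, \{T_w\}, H_2(X_g;\Q))$, after establishing hypothesis (3) --- that $\cok(H_2(X_g^w;\Q) \to H_2(X_g;\Q))$ is finite dimensional --- via the Bestvina--Margalit torus analysis of Sections~\ref{homolcurvequotsection}--\ref{finquotsectionpt2}. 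Your proposal would need to be entirely reworked along these lines; there is no shortcut via finiteness of the cell complex.
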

\bn Proposition~\ref{homolcurvequotprop} will be proven in Sections~\ref{homolcurvequotsection}--\ref{finquotsectionpt2}.  Step II of the proof of Theorem~\ref{fincokthm} is recorded as the following result.

\begin{prop}\label{11finprop}
Let $g \geq \finalbound$ and let $a \subseteq S_g$ be a nonseparating simple closed curve.  Let $\bE_{*,*}^*$ denote the equivariant homology spectral sequence in rational coefficients for the action of $\cI_g$ on $\cC_{[a]}(S_g)$.  The vector space $\bE_{1,1}^2$ is finite dimensional.
\end{prop}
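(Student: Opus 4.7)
\medskip\noindent\emph{Proof plan.}
By the definition of the equivariant homology spectral sequence and the fact that $X_g^{(0)}$ is a singleton, $\bE_{1,1}^2$ is the homology at the middle of the three-term complex of rational vector spaces
\begin{displaymath}
\bigoplus_{\sigma \in X_g^{(2)}} H_1((\cI_g)_\sigma;\Q) \xrightarrow{d^1} \bigoplus_{e \in X_g^{(1)}} H_1((\cI_g)_e;\Q) \xrightarrow{d^1} H_1((\cI_g)_a;\Q),
\end{displaymath}
with differentials given by signed sums of the restriction--corestriction maps attached to face inclusions. My plan has three steps: first, classify the $\cI_g$--orbits of $1$-- and $2$--cells of $\cC_{[a]}(S_g)$; second, describe the rational $H_1$ of the corresponding stabilizers; third, show the resulting middle homology is finite-dimensional.

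For the orbit classification, each edge of $\cC_{[a]}(S_g)$ is an unordered pair $\{c_1,c_2\}$ of disjoint curves with $[c_1]=[c_2]=[a]$, so $c_1\cup c_2$ is null-homologous and separates $S_g$ into two subsurfaces of genera summing to $g-1$. A Putman-style change-of-coordinates argument for $\cI_g$ (cf.\ the proof of \cite[Lemma 6.2]{Putmantorelli}) identifies the $\cI_g$--orbit of $\{c_1,c_2\}$ with the unordered pair of complementary genera, giving $O(g)$ orbits of edges; the analogous discussion for triples of pairwise disjoint homologous curves yields $O(g^2)$ orbits of $2$--cells.

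For the stabilizer analysis, $(\cI_g)_{\{c_1,c_2\}}$ sits inside the $\Mod(S_g)$--stabilizer, which is generated by mapping classes supported on each complementary piece together with the bounding-pair map $T_{c_1}T_{c_2}^{-1}$ (and a swap when the pieces are homeomorphic). Using Johnson's abelianization theorem and its extension to Torelli groups of surfaces with boundary, I would describe $H_1((\cI_g)_e;\Q)$ as a finite-dimensional ``Johnson'' summand built from $\wedge^3$ of sublattices of $H_1(S_g;\Q)$, together with a possibly infinite-dimensional ``separating-twist'' summand that arises when a complementary piece has low genus. A parallel description holds for $2$--cell stabilizers.

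The main obstacle is the final step. On the Johnson summands the three-term complex is a finite direct sum (over the finitely many orbits) of finite-dimensional $\Q$--vector spaces, so its middle homology is automatically finite-dimensional. The real difficulty is the separating-twist contributions from edges whose complement contains a low-genus piece. My strategy is to show that every such exceptional class in some $H_1((\cI_g)_e;\Q)$ can be written as the $d^1$--image of a class in $H_1((\cI_g)_\sigma;\Q)$ for a $2$--cell $\sigma$ containing $e$ as a face, modulo classes supported in the Johnson summands of the other two edges of $\sigma$; the desired $\sigma$ is produced by geometrically introducing a third homologous curve that subdivides the offending low-genus piece. Executing this cancellation uniformly across all orbits --- so that the image of $d^1\colon\bE^1_{2,1}\to\bE^1_{1,1}$ surjects onto the infinite-dimensional part of $\ker(d^1\colon\bE^1_{1,1}\to\bE^1_{0,1})$ --- is the heart of the proposition.
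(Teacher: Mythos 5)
Your proposal has a critical error at the orbit-counting step, which then misidentifies where the real difficulty lies. You claim that the $\cI_g$--orbit of an edge $\{c_1,c_2\}$ is determined by ``the unordered pair of complementary genera, giving $O(g)$ orbits of edges,'' and analogously $O(g^2)$ orbits of $2$--cells. This is false. Lemma~\ref{homolcurveconjstructurelemma} of the paper shows that the $\cI_g$--orbit of a $k$--cell is classified by the cyclic decomposition $\calH(\sigma)$ of $H_1(S_g;\Z)$ into the actual sublattices arising as images of $H_1$ of the complementary pieces --- not merely by their genera. Since $\cI_g$ acts trivially on $H_1(S_g;\Z)$, two edges whose complementary pieces carry different rank-$3$ sublattices of $[a]^{\perp}$ are in different $\cI_g$--orbits even if the genera match, and there are infinitely many such sublattices. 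You appear to have conflated $\Mod(S_g)$--orbits (finite, determined by genera via change of coordinates) with $\cI_g$--orbits (infinite). Consequently $X_g$ is an infinite complex, $\bE_{1,1}^1$ is an infinite direct sum, and the argument you describe as automatic --- ``on the Johnson summands the three-term complex is a finite direct sum ... so its middle homology is automatically finite-dimensional'' --- does not go through.

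This reversal flips your assessment of easy versus hard. What you identify as the heart of the matter --- killing the ``separating-twist'' or low-genus contributions by expressing them as $d^1$--images of $2$--cell classes --- corresponds roughly to the paper's Lemma~\ref{intermediatequotientlemma}, which is indeed proved by introducing an auxiliary $2$--cell and invoking the injectivity of stabilizer pushforwards on $H_1$ (Putman's theorem via Lemma~\ref{firsthomolratimlemma}), so that one can replace each $H_1((\cI_g)_x;\Q)$ by its image $A_x$ in $H_1(\cI_g;\Q)$. That step is real but relatively contained. The actual bulk of the proof is taming the infinitely many Johnson-type summands: the paper does this not by listing orbits but by exploiting the $\Sp([a]^{\perp},\Z)$--module structure on $\bE_{1,1}^2$, bounding the invariants $\rk^{\calV}$ and $\theta(\calV)$ (Lemmas~\ref{11rkonelemma} and~\ref{11algonelemma}) to get a finitely-generated set of $G$--coinvariants (Lemma~\ref{11finquotlemma}), establishing the cokernel condition via Lemma~\ref{11stabgenlemma}, and finally feeding everything into the ``representation of transvective type'' machine (Proposition~\ref{gengrpprop}). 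Your proposal contains no analogue of this reduction, and without it you are left with an uncontrolled infinite sum.
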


\bn Proposition~\ref{11finprop} will be proven in Sections~\ref{11finquotsection}--\ref{fincoksection}.  Before discussing the proofs of Propositions~\ref{homolcurvequotprop} and~\ref{11finprop}, we will discuss the main technical tool of the paper.

\subsection{The main technical tool of the paper} We will now explain the main technical tool that we use to prove Theorem \ref{mainthm}, Proposition \ref{homolcurvequotprop}, and Proposition \ref{11finprop}.  A specific instance of this tool has already been discussed in Section \ref{theoremBtheoremAsection}.

\p{Notation} Throughout this paper, $\Stab$ will be taken to mean the pointwise stabilizer, as opposed to the setwise stabilizer.

\begin{labelenv}[Proposition \ref{gengrpprop}] Let $g \geq 1$, and let $N \subseteq S_g$ be a nonseparating multicurve such that $|N| < g$.  Let $G \subseteq \Sp(2g,\Z)$ be the image of the map $\Stab_{\Mod(S_g)}(N) \rightarrow \Sp(2g,\Z)$.  Let $V$ be a $G$--representation over $\Q$.  Suppose that there is a constant $0 \leq d \leq g- |N|$ such that the following hold:

\begin{enumerate}
\item For any $M \subseteq S_g$ such that:
\begin{itemize}
\item $|M| \geq d$,
\item $M$ is disjoint from $N$, and
\item $M \sqcup N$ is nonseparating,
\end{itemize}
\bn the cokernel of the map $\bigoplus_{c \in M} V^{T_c} \rightarrow V$ is finite dimensional.
\item For any $M \subseteq S_g$ such that:
\begin{itemize}
\item $|M| < d$,
\item $M$ is disjoint from $N$, and
\item $M \sqcup N$ is nonseparating,
\end{itemize}
\bn the coinvariants module $V_{\Stab_G(M)}$ is finite dimensional.
\end{enumerate}

\bn Then $V$ is finite dimensional.
\end{labelenv}

\bn In practice, we will often show that the first hypothesis is satisfied by showing that the natural map $\bigoplus_{c \in M} V^{T_c} \rightarrow V$ is surjective.  We will prove Proposition \ref{gengrpprop} in Section \ref{gengrpsection}.  

\subsection{The outline of the paper} The paper is organized into four chunks.

\begin{itemize}
\item Section \ref{gengrpsection}, which contains the proof of Proposition \ref{gengrpprop}.
\item Sections \ref{homolcurvequotsection} -- \ref{finquotsectionpt2}, which contain the proof of Proposition \ref{homolcurvequotprop}.
\item Sections \ref{11finquotsection} and \ref{fincoksection}, which contain the proof of Proposition \ref{11finprop}.  Additionally, Section \ref{fincoksection} contains the proof of Theorem \ref{fincokthm}.
\item Section \ref{mainpfsection}, which contains the proof of Theorem \ref{mainthm}.
\end{itemize}

\bn We now explain the organization of the second and third chunk in more detail.

\p{Sections \ref{homolcurvequotsection} -- \ref{finquotsectionpt2}, which prove Proposition \ref{homolcurvequotprop}} Our goal is to prove Proposition \ref{homolcurvequotprop} by applying Proposition \ref{gengrpprop} with $G = \Stab_{\Sp(2g,\Z)}(\vec{x})$, $d = 1$, and $V = H_2(X_g;\Q)$ (recall that $X_g = \cC_{\vec{x}}(S_g)/\cI_g$).  In particular, if $a \subseteq S_g$ is a representative of $\vec{x}$, we will prove the following:
\begin{enumerate}
\item For any primitive $c$ disjoint from and not homologous to $a$, the cokernel of $H_2(X_g;\Q)^{T_{[c]}} \rightarrow H_2(X_g;\Q)$ is finite dimensional, and
\item the coinvariants module $H_2(X_g;\Q)_{G}$ is finite dimensional.
\end{enumerate}
\bn The latter property is not too difficult to prove, while the former property is the bulk of the work of Sections \ref{homolcurvequotsection}-- \ref{finquotsectionpt2}.  The proof of the first hypothesis proceeds in the following steps:
\begin{enumerate}
\item Construct a subspace spanned by fundamental classes of tori $T \subseteq X_g$ called Bestvina--Margalit tori.  These tori are constructed in Section \ref{homolcurvequotsection}, and the subspace spanned by their fundamental classes is denoted $\BM_2(X_g;\Q)$.
\item Show that for any primitive $c \subseteq S_g$ disjoint from and inhomologous to $a$, the cokernel $\BM_2(X_g;\Q)^{T_{[c]}} \rightarrow \BM_2(X_g;\Q)$ is spanned by classes in $\BM_2(X_g;\Q)$.  This is the bulk of the work of Section \ref{homolcurvequotsection}, and is stated as Lemma \ref{vstablemma}.  
\item Use Proposition \ref{gengrpprop} with $G = \Stab_{\Sp(2g,\Z)}(\vec{x}, [c])$, $d = 9$, and $V$ the cokernel of the map $H_2(X_g;\Q)^{T_{[c]}} \rightarrow H_2(X_g;\Q)$.  This is carried out in two substeps, which each verify a hypothesis of Proposition \ref{gengrpprop}.
\begin{enumerate}
\item The subspace $\BM_2(X_g;\Q)$ is the image of a subspace $H_2^{\ab,\bp}(\cI_g;\Q) \subseteq H_2(\cI_g;\Q)$ generated by abelian cycles consisting of bounding pair maps.  The main work of Section \ref{abelcyclesection} is Proposition \ref{abcycleprop}, which says that for a nonseparating multicurve $M \subseteq S_g$ containing at least 9 curves, the map
\begin{displaymath}
\bigoplus_{d \subseteq M} \left(H_2^{\ab,\bp}(\cI_g;\Q)\right)^{T_{[d]}} \rightarrow H_2^{\ab,\bp}(\cI_g;\Q)
\end{displaymath}
\bn is surjective.  Since $V = H_2(X_g;\Q)/H_2(X_g;\Q)^{T_c}$ is a quotient of $\BM_2(X_g;\Q)$ by Lemma \ref{vstablemma} and $\BM_2(X_g;\Q)$ is a quotient of $H_2^{\ab,\bp}(\cI_g;\Q)$, the map
\begin{displaymath}
\rho: \bigoplus_{d \subseteq M} V^{T_{[d]}} \rightarrow V
\end{displaymath}
\bn is surjective.  In particular, this implies that $\cok(\rho)$ is finite dimensional, so hypothesis (1) of Proposition \ref{gengrpprop} is satisfied for $G = \Stab_{\Sp(2g,\Z)}(\vec{x}, c)$, $V = H_2(X_g;\Q)/H_2(X_g;\Q)^{T_c}$ and $d = 9$.
\item We now show that for $M$ a multicurve with $|M| \leq 8$ such that $M$ is disjoint from $a$ and $c$ and $a \sqcup c \sqcup M$ is nonseparating, the coinvariants module
\begin{displaymath}
V_{\Stab_{[M]}G}
\end{displaymath}
\bn is finite dimensional, where $[M]$ denotes the set of homology classes represented by elements of $M$.  Since $V$ is a quotient of $\BM_2(X_g;\Q)$ by Lemma \ref{vstablemma}, it suffices to show that $\BM_2(X_g;\Q)_{\Stab_{[M]}G}$ is finite dimensional.  This is the content of Lemma \ref{bmfinquotlemma} and is the main work of Section \ref{bmfinquotsection}.
\end{enumerate}
\bn Statements (a) and (b) are the hypotheses of Proposition \ref{gengrpprop} for $G = \Stab_{\Sp(2g,\Z)}(\vec{x}, [c])$, $d = 9$, and $V$ the cokernel of the map $H_2(X_g;\Q)^{T_{[c]}} \rightarrow H_2(X_g;\Q)$.  Hence $V$ is finite dimensional by Proposition \ref{gengrpprop}.
\end{enumerate}

\bn Hence for any primitive $c$ disjoint from and not homologous to $a$, the cokernel of $H_2(X_g;\Q)^{T_{[c]}} \rightarrow H_2(X_g;\Q)$ is finite dimensional.  If $G = \Stab_{\Sp(2g,\Z)}(\vec{x})$ then it is not too difficult to show that $H_2(X_g;\Q)_{G}$ is finite dimensional.  Hence by applying Proposition \ref{gengrpprop} with $G = \Stab_{\Sp(2g,\Z)}(\vec{x})$, $d = 1$, and $V = H_2(X_g;\Q)$, we conclude that $H_2(X_g;\Q)$ is finite dimensional, which is the statement of Proposition \ref{homolcurvequotprop}.

\p{Sections \ref{11finquotsection} and \ref{fincoksection}, which prove Proposition \ref{11finprop}} The approach is to apply Proposition \ref{gengrpprop} with $G = \Stab_{\Sp(2g,\Z)}(\vec{x})$, $d = 8$, and $V = \bE_{1,1}^2$.  We verify each hypothesis of Proposition \ref{gengrpprop} in turn.
\begin{enumerate}
\item Hypothesis (1) is stated as Lemma \ref{11stabgenlemma}, and is the main content of Section \ref{11stabgensection}.
\item Hypothesis (2) is stated as Lemma \ref{11finquotlemma} and is the main content of Section \ref{fincoksection}.
\end{enumerate}
\bn Given these two results, we prove Proposition \ref{11finprop} by applying Proposition \ref{gengrpprop}.  Additionally, Section \ref{fincoksection} contains the proof of Theorem \ref{fincokthm}.

\p{Acknowledgments}  We would like to thank our advisor Dan Margalit for many helpful math conversations and his continued support and encouragement.  We would like to thank Andy Putman for providing many helpful comments on several different iterations of this paper.  We would like to thank Peter Patzt for helping to clarify the exposition and for pointing out some confusing notational choices.  We would like to thank Benson Farb for reading and providing comments on an earlier version of this paper.  We would like to thank  Igor Belegradek, Wade Bloomquist, Katherine Williams Booth, John Etnyre, Annie Holden, Thang Le, Nick Salter, Roberta Shapiro, Cindy Tan, and Bena Tshishiku for helpful conversations.

\section{A finiteness result about groups acting on vector spaces}\label{gengrpsection}

The goal in this section is to prove Proposition~\ref{gengrpprop}, which is a result that we will apply repeatedly to determine that certain representations of subgroups of the symplectic group are finite dimensional.  The statement is as follows.

\begin{prop}\label{gengrpprop}
Let $g \geq 1$, and let $N \subseteq S_g$ be a nonseparating multicurve such that $|N| < g$.  Let $G \subseteq \Sp(2g,\Z)$ be the image of the map $\Stab_{\Mod(S_g)}(N) \rightarrow \Sp(2g,\Z)$.  Let $V$ be a $G$--representation over $\Q$.  Suppose that there is a constant $0 \leq d \leq g- |N|$ such that the following hold:

\begin{enumerate}
\item For any $M \subseteq S_g$ such that:
\begin{itemize}
\item $|M| \geq d$,
\item $M$ is disjoint from $N$, and
\item $M \sqcup N$ is nonseparating,
\end{itemize}
\bn the cokernel of the map $\bigoplus_{c \in M} V^{T_c} \rightarrow V$ is finite dimensional.
\item For any $M \subseteq S_g$ such that:
\begin{itemize}
\item $|M| < d$,
\item $M$ is disjoint from $N$, and
\item $M \sqcup N$ is nonseparating,
\end{itemize}
\bn the coinvariants module $V_{\Stab_G(M)}$ is finite dimensional.
\end{enumerate}

\bn Then $V$ is finite dimensional.
\end{prop}

\bn Before proving the proposition, we will need a result about generating sets of subgroups of the symplectic group, which is Lemma \ref{spgenlemma}. We will also standardize some terminology that we will use throughout the paper.

\p{Terminology on symplectic lattices} A \textit{quasi--unimodular lattice} is a finitely generated free abelian group $L$ equipped with an alternating form $\langle \cdot, \cdot \rangle: L \times L \rightarrow \Z$.  The lattice $L$ is \textit{unimodular} if the form $\langle \cdot, \cdot \rangle$ is nondegenerate.  The \textit{genus of} $L$, denoted $g(L)$, is $\frac 1 2 \rk(W)$, where $W \subseteq L$ is a maximal free abelian subgroup of $L$ such that the restriction of $\langle \cdot, \cdot \rangle$ to $W$ is nondegenerate.  If $v \in L$ is some element, then $v$ is \textit{primitive} if there is no integer $m \geq 2$ and nonzero $w \in L$ with $mw= v$.  For any $v \in L$, the \textit{transvection along} $v$ is the homomorphism $T_v: L \rightarrow L$ given by
\begin{displaymath}
T_v(w) = w + \langle v,w \rangle v.
\end{displaymath}
\bn  We will say that the  transvection $T_v$ is \textit{primitive} if $v$ is primitive.  We will say that a subgroup $L' \subseteq L$ is \textit{primitive} if $L$ is generated by primitive elements.  The set of degenerate elements in $L$, i.e., elements $v \in L$ such that $\langle v, \cdot \rangle: L \rightarrow \Z$ is the zero map, will be denoted $L^{\degen}$.  If $\calV \subseteq L$ is a set of elements, we will denote by $\calV^{\perp}$ the subgroup $\{w \in L: \langle w, v \rangle = 0 \text{ for all } v \in \calV\}$.  We let 
\begin{displaymath}
    \Sp(L,\Z) = \{g \in \GL(L,\Z): \langle gv, gw \rangle = \langle v, w \rangle \text{ for all } v,w\in L\}.
\end{displaymath}
\bn  If $L' \subseteq L$ is a primitive, unimodular subgroup, let $\proj_{L'}:L \rightarrow L'$ denote the projection map induced by the form $\langle \cdot, \cdot \rangle$.  

\p{Multicurves} We will say that a \textit{multicurve} $M \subseteq S_g$ is an isotopy class of pairwise disjoint oriented curves.  We will use the notation $\pi_0(M)$ to denote the set of isotopy classes of curves in $M$.  The stabilizer group $\Stab_{\Mod(S_g)}(M)$ will denote the subgroup of $\Mod(S_g)$ consisting of elements that stabilize each curve $c \in \pi_0(M)$.  Note that since our multicurves are oriented, the stabilizer $\Stab_{\Mod(S_g)}(M)$ does not change the orientation of the curves of $M$.

\p{Notation} For the remainder of the paper, we will use $\cut$ to denote Farb and Margalit's notion of cutting along curves on surfaces~\cite[Section 3.6]{FarbMarg}.

\p{Mapping class groups of cut open surfaces} Let $M \subseteq S_g$ be a nonseparating multicurve.  The mapping class group $\Mod(S_g \cut M)$ is the mapping class group of the cut open surface.  There is a natural inclusion $S_g \cut M \hookrightarrow M$, and this induces a map $\Mod(S_g \cut M) \rightarrow \Mod(S_g)$ given by extending $\varphi \in \Mod(S_g \cut M)$ by the identity along $S_g \setminus \im(S_g \cut M \rightarrow S_g)$.

We are now ready to state and prove a preparatory lemma, after which we will conclude Section \ref{gengrpsection} by proving Proposition \ref{gengrpprop}.

\begin{lemma}\label{spgenlemma}
Let $g \geq 2$ and $b \geq 0$, and let $S = S_g^b$ be an oriented, compact surface of genus $g$ with $b$ boundary components.  Let $M \subseteq S$ be a nonseparating multicurve such that $g(S \cut M) \geq 1$.  Let $G$ be the image of the composition
\begin{displaymath} 
\Mod(S \cut M) \rightarrow \Mod(S) \rightarrow \Aut(H_1(S_g^b;\Z)).
\end{displaymath}
\bn  Then $G$ has a finite generating set consisting of transvections along classes $v$ represented by nonseparating curves $c \subseteq S_g \cut M$.
\end{lemma}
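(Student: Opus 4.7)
The plan is to reduce both parts of the lemma to two standard tools: finite generation of $\Mod(S \cut M)$ by Dehn twists, and the change-of-coordinates principle for simple closed curves on $S \cut M$.

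\textbf{Part (1).} By Lickorish--Humphries type generating theorems adapted to surfaces with boundary (see~\cite{FarbMarg}), $\Mod(S \cut M)$ is generated by a finite collection of Dehn twists. Each such Dehn twist $T_c$ acts on $H_1(S;\Z)$ as the transvection $T_{[c]}$ along $[c]$, so the images of a finite Dehn-twist generating set of $\Mod(S \cut M)$ under the stated composition give a finite generating set of $G$ consisting of transvections.

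\textbf{Part (2).} Set $L = H_1(S;\Z)$ and let $V_M \subseteq L$ be the sublattice spanned by $\{[c] : c \in \pi_0(M)\}$. Since the components of $M$ are pairwise disjoint, $V_M$ is isotropic for the intersection form; since $G$ fixes $V_M$ pointwise, any transvection $T_v \in G$ satisfies $v \in V_M^{\perp}$. The hypothesis $g(S \cut M) \geq 1$ guarantees that the symplectic quotient $V_M^{\perp}/V_M$ has positive genus. Composing the capping-off surjection $\Mod(S \cut M) \rightarrow \Mod(\overline{S \cut M})$ with the classical surjection $\Mod(\overline{S \cut M}) \rightarrow \Sp(H_1(\overline{S \cut M}))$, and identifying $H_1(\overline{S \cut M}) \cong V_M^{\perp}/V_M$, shows that $G$ surjects onto $\Sp(V_M^{\perp}/V_M;\Z)$.

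Given a primitive transvection $T_v \in G$, transitivity of $\Sp$ on primitive vectors of $V_M^{\perp}/V_M$ lifts to $G$ and lets one conjugate $T_v$ by an element of $G$ into $T_{v'}$, where $v'$ has the same image in $V_M^{\perp}/V_M$ as the class of any prescribed nonseparating simple closed curve $c$ in $S \cut M$. A further $G$-adjustment realized by mapping classes of $S \cut M$ acting trivially on $V_M^{\perp}/V_M$ (for instance, bounding pair maps) corrects the residual $V_M$-shift and produces $T_{[c]}$ as a $G$-conjugate of $T_v$. The change-of-coordinates principle on $S \cut M$ then implies that the transvections $T_{[c]}$ for $c$ a nonseparating simple closed curve in $S \cut M$ are mutually $G$-conjugate; since these include the Part~(1) generating set, the $G$-normal closure of $T_v$ equals $G$. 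The main technical obstacle is the lifting step from the quotient $V_M^{\perp}/V_M$ back to $V_M^{\perp}$, which requires both surjectivity on the quotient and realization in $G$ of sufficiently many $V_M$-shifts.
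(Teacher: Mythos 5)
This matches the paper's argument; the paper cites Korkmaz for the finite generating set of $\Mod(S\cut M)$ by Dehn twists along nonseparating curves, and their images under the symplectic representation are the desired transvections.

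\textbf{Part (2).} Here you take a genuinely different and considerably harder route. The paper conjugates \emph{upstream}: $\Mod(S\cut M)$ is generated by nonseparating Dehn twists, by the change-of-coordinates principle all such twists are conjugate inside $\Mod(S\cut M)$, hence any one $T_d$ normally generates $\Mod(S\cut M)$, and pushing forward along the surjection onto $G$ gives that $T_{[d]}$ normally generates $G$. Together with the elementary observation (left implicit in the paper) that every primitive transvection $T_v \in G$ is of the form $T_{[d]}$ for some nonseparating $d \subseteq S\cut M$ — because $v$ lies in $V_M^{\perp} = \im\left(H_1(S\cut M;\Z) \rightarrow H_1(S;\Z)\right)$, a primitive element in the image of a surjection of free abelian groups always has a primitive preimage, and a primitive class in $H_1(S\cut M;\Z)$ is represented by a nonseparating curve since $g(S\cut M)\geq 1$ — this yields the full statement with no further work. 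Your approach instead conjugates \emph{downstream} in $\Aut(H_1(S;\Z))$ through the symplectic quotient $V_M^{\perp}/V_M$ and then tries to lift back; the lifting is exactly the work the upstream route makes unnecessary, since conjugating by a mapping class rather than by a symplectic matrix makes realizability in $G$ automatic.

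Your lifting step has two genuine gaps. First, transitivity of $\Sp$ on \emph{primitive} vectors of $V_M^{\perp}/V_M$ is not available as stated: the image $\bar v$ of a primitive $v$ need not be primitive there. For instance, in $S = S_2$ with $M = \{c\}$ and $[c] = \alpha_1$, the class $v = 2\alpha_2 + \alpha_1$ is primitive, is represented by a nonseparating curve in $S\cut c$ (so $T_v \in G$), yet $\bar v = 2\bar\alpha_2$. You would need to work with $\Sp$-orbits of a fixed divisibility and choose the target curve $c$ to match. Second, and more seriously, bounding pair maps cannot supply the residual $V_M$-shift: a bounding pair map lies in the Torelli group of $S$ and therefore acts as the \emph{identity} on all of $H_1(S;\Z)$, not merely on the quotient $V_M^{\perp}/V_M$, so it cannot move $v'$ at all. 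The elements of $G$ that genuinely shift by $V_M$ are of a different nature — for example $T_c T_{c'}^{-1}$ with $c,c' \subseteq S\cut M$ cobounding in $S$ together with some components of $M$, so that $0 \neq [c]-[c'] \in V_M$ — and even with the right elements in hand you would still have to verify they realize all the required shifts. You correctly flag the lifting as the main obstacle, but the device you propose cannot do the job.
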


\begin{proof} Korkmaz~\cite[Theorem 3.1]{Korkmaz} has shown that if $S_{g'}^{b'}$ is a compact, oriented surface with genus $g' \geq 1$, then $\Mod(S_{g'}^{b'})$ has a finite generating set $\calD \subseteq \Mod(S_{g'}^{b'})$ consisting of Dehn twists along nonseparating curves.  Then $S \cut M$ is a compact, oriented surface, and has genus at least one by hypothesis.  Hence there is a finite set of Dehn twists along nonseparating curves $\calD \subseteq \Mod(S \cut M)$ such that $\calD$ generates $\Mod(S \cut M)$.  Then the image of $T_d \in \calD$ under the symplectic representation is a transvection along $[d]$.  Hence $G$ has a finite generating set transvections along elements represented by nonseparating curves, as desired.
\end{proof}

\bn We are now ready to conclude Section \ref{gengrpsection}.

\begin{proof}[Proof of Proposition \ref{gengrpprop}]
The proof proceeds by induction on $d$.

\p{Base case: $d = 0$} In this case, the first hypothesis says that if $M$ is the empty multicurve, then the cokernel $\bigoplus_{c \in \emptyset} V^{T_c} \rightarrow V$ is finite dimensional.  This is just saying that the cokernel of the zero map is finite dimensional, so in particular $V$ is finite dimensional.

\p{Inductive step: $d \geq 1$} Assume that the proposition holds for all $d' < d$.  We will show that it holds for $d$ as well.  Our aim is to show that if $G$ and $V$ satisfy the hypotheses of the proposition for $d$, then they satisfy the hypotheses of the proposition for $d -1$ as well.  The inductive hypothesis would then imply that $V$ is finite dimensional.  The second hypothesis, namely that the coinvariants module $V_{\Stab_G(M)}$ is finite dimensional for $|M| < d$ also holds for $|M| < d -1$, so it suffices to prove the first hypothesis for $d-1$.  It suffices to show that for $M$ a multicurve with:
\begin{itemize}
\item $|M|  \geq d - 1$,
\item $M$ is disjoint from $N$, and
\item $M \sqcup N$ is nonseparating,
\end{itemize}
\bn the cokernel of the map $\bigoplus_{c \in M} V^{T_c} \rightarrow V$ is finite dimensional.  This holds for $|M| \geq d$ by hypothesis, so it suffices to show that the result holds for $|M| = d-1$.  

Let $M \subseteq S_g \cut N$ be a nonseparating multicurve with $\left|M\right| = d-1$.  Let $G' = \im(\Mod(S_g \cut (M \sqcup N) \rightarrow \Sp(2g,\Z))$.  By Lemma \ref{spgenlemma}, $G"$ is generated by a finite set of transvections $\calF$ along a set of primitive elements $\calV$ such that each $v \in \calV$ has a nonseparating representative $c \subseteq S_g \cut M$.  Let $\calR$ be a set of such representatives, one for each element of $\calV$.  Now, for any $c \in \calR$, let $M_c = M \sqcup c$.  The multicurve $M_c$ satisfies:
\begin{itemize}
\item $|M_c| = 1 + |M| = d$,
\item $M_c$ is disjoint from $N$ since $M$ and $c$ are, and
\item $M_c \sqcup N$ is nonseparating since $c$ is nonseparating on $S_g \cut (M \sqcup N)$ by our choice of generating set.
\end{itemize}
\bn Therefore, the first hypothesis in the proposition applied to $V$ as a $G$ representation tells us that 
\begin{displaymath}
\cok(\bigoplus_{d \in M_c} V^{T_{[d]}} \rightarrow V)
\end{displaymath}
\bn is finite dimensional.  In particular, this implies that, if we let $V_M = \cok(\bigoplus_{d \in M} V^{T_d} \rightarrow V)$, then $V_M^{T_v}$ has finite codimension in $V_M$ for any $v \in \calV$.  Now, consider the filtration of $V_M$ given by ordering $\calV = \{v_1,\ldots, v_n\}$ and setting $W_{M,i} = \bigcap_{j=1}^iV_M^{T_{v_i}}$. Since $V_M^{T_{v_i}}$ has finite codimension in $V_M$, each $V_{M,i}$ has finite codimension in $V_{M,i-1}$.  In particular, this implies that $W_{M,n}$ has finite codimension in $V_M$.  Thus, it suffices to show that $W_{M,n}$ is finite dimensional.

\p{$W_{M,n}$ is finite dimensional} Consider the long exact sequence in $G'$--homology associated to the short exact sequence
\begin{displaymath}
0 \rightarrow W_{M,n} \rightarrow V_M \rightarrow V_M/W_{M,n} \rightarrow 0.
\end{displaymath}
\bn Part of this sequence is given by
\begin{displaymath}
H_1(G';V_M/W_{M,n}) \rightarrow H_0(G';W_{M,n}) \rightarrow H_0(G';V_M).
\end{displaymath}
\bn The group $G'$ is finitely presented by Lemma \ref{spgenlemma} and $V_M/W_{M,n}$ is finite dimensional by hypothesis, so $H_1(G';V_M/W_{M,n})$ is finite dimensional.  Furthermore, $H_0(G';V_M)$ is a quotient of $H_0(G';V)$.  This last vector space is finite dimensional by applying the second hypothesis of the proposition to the multicurve $M$, since we know $|M| = d- 1 < d$.  Therefore, the vector space $H_0(G';W_{M,n})$ is finite dimensional.  But now, $W_{M,n}$ is the intersection $\bigcap_{v \in \calV} V_M^{T_v}$.  Since the set $\calF = \{T_v:v \in \calV\}$ generates $G'$ by assumption, we conclude that $W_{M,n}$ is a trivial $G'$--representation, so $H_0(G';W_{M,n}) = W_{M,n}$.  Hence $W_{M,n}$ is finite dimensional and has finite codimension in $V_M$, so $V_M$ is finite dimensional as well.  The proof is now complete by the inductive hypothesis that the proposition holds for $d-1$.
\end{proof}

\section{Bestvina--Margalit tori}\label{homolcurvequotsection}

For the remainder of this section, fix a $g \geq 3$ and $a \subseteq S_g$ a nonseparating curve.  Let $X_g = \cC_{[a]}(S_g)/\cI_g$.  A Bestvina--Margalit torus (defined below), is a certain type of 2--torus embedded in $X_g$.  There is a subspace $\BM_2(X_g;\Q) \subseteq H_2(X_g;\Q)$ generated by the fundamental classes of the Bestvina--Margalit tori.  The goal in this section is to prove basic structural results about $X_g$ and Bestvina--Margalit tori.

\p{Bestvina--Margalit tori, by example} In unpublished work~\cite{BMtori}, Bestvina and Margalit associate to each 2-cell $\sigma$ in $X_g$ a corresponding dual cell.  The cells $\sigma$ and $\sigma'$ form a torus which we refer to as a \textit{Bestvina--Margalit torus}.  The idea is that $\sigma$ and $\sigma'$ have the same edges, except the edges are in a different cyclic order.  Let $x,y,z,w \in \cC_{[a]}(S_g)$ be as in Figure~\ref{dualcellpic}.  Let $\sigma \subseteq X_g$ be the 2-cell such that there is a lift $\widehat{\sigma} \subseteq C_{[a]}(S_g)$ with vertices $x,y,$ and $w$, and let $\sigma' \subseteq X_g$ be the 2--cell with a lift $\widehat{\sigma}'$ with vertices $x,z,$ and $w$.  Then the edges $e_{x,y}$ and $e_{y,w}$ are in the same orbit under the action of $\cI_g$ to $e_{z,w}$ and $e_{x,z}$, respectively~\cite[Proposition 3.29]{Putmanbook}.  However, the cells $\widehat{\sigma}$ and $\widehat{\sigma}'$ are not in the same orbit under the action of $\cI_g$, since they induce decompositions of $H_1(S_g;\Z)$ with different cyclic orders.  Hence the cells $\sigma$ and $\sigma'$ form a torus in $X_g$.  
\begin{figure}[ht]
\begin{tikzpicture}[scale=0.8]
\node[anchor = south west, inner sep  = 0] at (0,0){\includegraphics[scale=0.8]{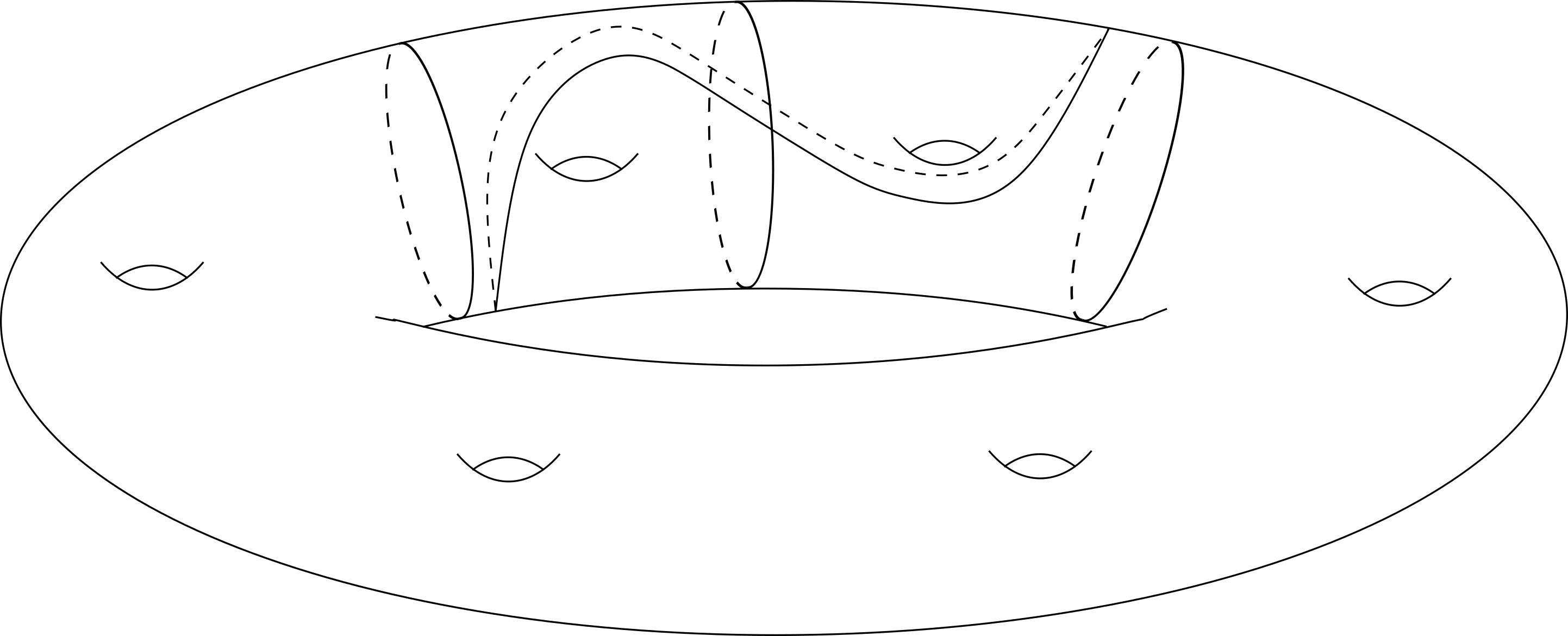}};
\node at (4.4,4.9){\large $x$};
\node at (7.4,6.5){\large $y$};
\node at (13.9,5){\large $w$};
\node at (9.2,4.8){\large $z$};
\end{tikzpicture}
\caption{The construction of dual cells.}\label{dualcellpic}
\end{figure}
\bn We will denote this torus $\BM_{\sigma}$ and its fundamental class by $[\BM_\sigma]$.  In general, a \textit{Bestvina--Margalit} torus is any subcomplex of $X_g$ given by the union of a pair of 2--cells $\sigma$ and $\sigma'$ such that each edge of $\sigma$ is in the same $\cI_g$--orbit as an edge of $\sigma'$, but $\sigma \neq \sigma'$.  Let $\BM_2(X_g;\Q)$ denote the subspace of $H_2(X_g;\Q)$ generated by the set of fundamental classes $\{[\BM_{\sigma}]\}_{\sigma \in X_g^{(2)}}$.  This subspace plays a crucial role in the proof of Proposition~\ref{homolcurvequotprop}.   

\subsection{Structural results about \bm{$\cC_{[a]}(S_g)/\cI_g$}}\label{structsubsection} In this section we will prove Lemma~\ref{homolcurveconjstructurelemma}, which will tell us when two $k$--cells $\sigma, \sigma' \subseteq \cC_{[a]}(S_g)$ are in the same $\cI_g$--orbit for the action of $\cI_g$ on $\cC_{[a]}(S_g).$  It is known that two nonseparating curves $c$ and $d$ in $S_g$ are in the same orbit under the action of the Torelli group if and only if $c$ and $d$ are homologous (see, e.g., Putman~\cite[Lemma 6.2]{Putmantorelli}).  Hence the complex $X_g$ has one vertex.  Since $X_g$ has one vertex, there is a group generated by the edges of $X_g$ with relations given by 2--cells.  We will use this fact to denote the composition of two edges $y$ and $z$ that share a 2--cell by $yz$, where this edge is the third edge of the 2--cell.  This specifies an orientation on the edges, since there is one orientation of the edges $y$ and $z$ such that $y * z = yz$, where $*$ is the composition in the group generated by the edges on $X_g$.  If $e \subseteq \cC_{[a]}(S_g)$ is an edge, there is a bounding pair $T_{c,c'}$ taking one endpoint of $e$ to the other endpoint.  See Figure~\ref{hatchermargfig} for an example.
\begin{figure}[ht]
\begin{tikzpicture}[scale=0.9]
\node[anchor = south west, inner sep = 0] at (0,0){\includegraphics{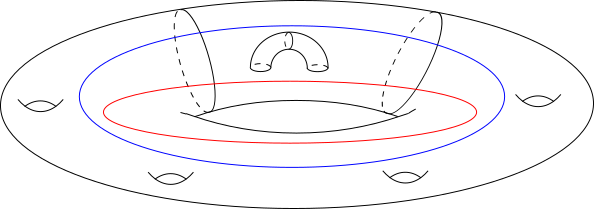}}[scale=0.9];
\node at (8.4,4){$v$};
\node at (8.4,5.6){$w$};
\node at (6.4,4.2){$c$};
\node at (12.8,4.2){$c'$};
\end{tikzpicture}
\caption{The bounding pair map $T_{c}T_{c'}^{-1}$ and edge $e = e_{v,w}$.}\label{hatchermargfig}
\end{figure}
Lemma~\ref{homolcurveconjstructurelemma} describes the cells of the complex $X_g$.  Before stating and proving the lemma, we introduce some notation.  Let $\sigma \subseteq X_g$ be a $k$--cell with a lift $\widehat{\sigma} \subset \cC{[a]}(S_g)$, where $\widehat{\sigma}$ corresponds to a multicurve $a_0\sqcup\ldots \sqcup a_k$.  Let $S_0,\ldots,S_k$ be the connected components of $S_g \cut \widehat{\sigma}$.  Assume that $S_i \cap S_j \neq \emptyset$ if and only if $i-j \equiv \pm 1 \mod k+1$.  For each $S_i$, let $\calH_i^\sigma$ denote the image in $H_1(S_g;\Z)$ of the pushforward map $H_1(S_i;\Z) \rightarrow H_1(S_g;\Z)$.  Let $\calH(\sigma)$ denote the set of free abelian groups $\calH_0^\sigma,\ldots, \calH_k^\sigma$.  Associated to $\calH(\sigma)$ is a directed cycle $C$.  The vertices of $C$ are the connected components $S_i$ and there is a directed edge $S_i \rightarrow S_j$ if the following hold:
\begin{itemize}
\item $a_{\ell} \subseteq S_i \cap S_j$ for some $0 \leq \ell \leq k$, and
\item $a_{\ell}$ is oriented so that $S_i$ is on the left of $a_{\ell}$.
\end{itemize}
\bn  This directed cycle induces a cyclic ordering on the set $\calH(\sigma)$.  For convenience, we will index the $\calH_i$ so that the cyclic order is $\calH_0 < \calH_1 < \ldots < \calH_k < \calH_0$.  We will refer to this cyclically ordered set $\calH(\sigma)$ as the $\textit{cyclic decomposition of } H_1(S_g;\Z) {induced by } \widehat{\sigma}$.  We will say that the $\textit{genus of } \widehat{\sigma}$ is the cyclically ordered tuple $\left(g(\calH_0),\ldots, g(\calH_k)\right)$, and we will denote this tuple by $g(\widehat{\sigma})$.  We will prove the following result.

\begin{lemma}\label{homolcurveconjstructurelemma}
Let $\sigma$ and $\sigma'$ be two $k$--cells of $\cC_{[a]}(S_g)$.  Then there is an $f \in \cI_g$ such that $f\sigma = \sigma'$ if and only if $\sigma$ and $\sigma'$ induce the same decomposition of $H_1(S_g;\Z)$.
\end{lemma}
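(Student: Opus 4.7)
The forward direction is immediate: if $f \in \cI_g$ satisfies $f\sigma = \sigma'$, then $f_*$ is the identity on $H_1(S_g;\Z)$, and because $f$ bijects the components of $S_g \cut \sigma$ onto those of $S_g \cut \sigma'$, each $\calH_i^{\sigma'}$ equals $f_*(\calH_i^\sigma) = \calH_i^\sigma$. The cyclic ordering of the $\calH_i$ is determined by the oriented curves of $\sigma$, whose classes in $H_1(S_g;\Z)$ are fixed by $f$, so $\calH(\sigma) = \calH(\sigma')$ as cyclically ordered tuples of subgroups.

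For the backward direction, suppose $\calH(\sigma) = \calH(\sigma')$. The plan has three steps. First, by the change of coordinates principle, the topological type of the pair $(S_g,\sigma)$ is determined by the genera of the pieces and their cyclic gluing pattern, both of which are encoded in $\calH(\sigma)$; hence there exists some $\psi \in \Mod(S_g)$ with $\psi\sigma = \sigma'$ respecting the cyclic order of components. Second, because $\psi$ sends the $i$th piece of $S_g \cut \sigma$ to the $i$th piece of $S_g \cut \sigma'$, the induced map $\psi_* \in \Sp(2g,\Z)$ preserves the cyclically ordered decomposition $\calH(\sigma)$; call the subgroup of such symplectic automorphisms $\Gamma \subseteq \Sp(2g,\Z)$. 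Third, I would produce an element $h \in \Stab_{\Mod(S_g)}(\sigma)$ with $h_* = \psi_*$; then $f := \psi h^{-1}$ satisfies $f\sigma = \psi h^{-1}\sigma = \psi\sigma = \sigma'$ and acts trivially on $H_1(S_g;\Z)$, giving the required element of $\cI_g$.

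The main obstacle is producing the correcting element $h$, which amounts to showing that the image of $\Stab_{\Mod(S_g)}(\sigma) \to \Sp(2g,\Z)$ contains all of $\Gamma$. The stabilizer is generated by Dehn twists along the curves of $\sigma$ together with mapping classes supported on each piece $S_i$ of $S_g \cut \sigma$; the image of $\Mod(S_i)$ surjects onto the symplectic group of $\calH_i^\sigma$ (with respect to the restricted form) by classical results, and the twists along the curves of $\sigma$ realize transvections along $[a]$. The delicate point is that the restricted form on each $\calH_i^\sigma$ is degenerate (the class $[a]$ lies in its radical), and the subgroups $\calH_i^\sigma$ all contain $[a]$, so $\Gamma$ is not simply a product of smaller symplectic groups. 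Describing $\Gamma$ explicitly in terms of this overlapping decomposition, and checking that each of its elements arises from the stabilizer of $\sigma$, is where the bulk of the argument lies.
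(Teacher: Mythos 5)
Your forward direction matches the paper's in spirit, but the paper handles the cyclic-order point with more care: the issue is that every vertex of $\sigma$ has the \emph{same} homology class $[a]$, so the fact that $f_*$ fixes these classes does not by itself tell you which component goes where. The paper nails this down by choosing an auxiliary curve $b$ crossing each vertex of $\sigma$ once and reading the cyclic order off the sequence of intersections along $b$. Your argument ``the cyclic ordering is determined by the oriented curves, whose classes are fixed by $f$'' elides exactly the step that needs work, though the underlying intuition (the cyclic order is a topological invariant of the oriented decomposition, and an orientation-preserving $f$ must respect it) is sound.

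The real gap is in your backward direction. Your three-step plan is coherent, but step 3---producing $h \in \Stab_{\Mod(S_g)}(\sigma)$ with $h_* = \psi_*$ for an \emph{arbitrary} $\psi_* \in \Gamma$---is not a small lemma; it is the content of the whole argument, and you explicitly leave it open. Worse, you have set up the problem so that you must understand the full image of $\Stab_{\Mod(S_g)}(\sigma) \to \Sp(2g,\Z)$ and show it coincides with $\Gamma$, which (as you correctly note) is delicate because each $\calH_i^{\sigma}$ contains $[a]$ in the radical of its restricted form, so $\Gamma$ is not a direct product of smaller symplectic groups and the overlaps must be tracked. The paper avoids this entirely by applying change of coordinates not just to the multicurve $\sigma \mapsto \sigma'$ but simultaneously to a full system of representing curves $a_j, b_j$ for a symplectic basis adapted to the decomposition. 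This pins down $\psi_*$ on $\alpha_2,\beta_2,\ldots,\alpha_g,\beta_g$ and on $\alpha_1 = [a]$, leaving only the ambiguity $\psi_*(\beta_1) = \beta_1 + n\alpha_1$, which is corrected by $T_{a'}^{-n}$ for a single vertex $a' \in \sigma'$. In short: the paper chooses $\psi$ cleverly so the correction lives in a rank-one abelian group realized by one Dehn twist, whereas your plan requires a correction ranging over all of $\Gamma$ and a proof that the stabilizer surjects onto $\Gamma$. Your route is not wrong, but it is substantially harder, and the hard part is precisely what is missing.
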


\begin{proof} We first prove the forwards implication and then we prove the backwards implication.

\p{The forward implication} Let $\sigma, \sigma' \subseteq C_{[a]}(S_g)$ be 2--cells in the same $\cI_g$ orbit.  Let $f \in \cI_g$ be an element with $f\sigma = \sigma'$.  Since $f \in \Mod(S_g)$, $f$ takes any $\calH \in \calH(\sigma)$ to some $\calH' \in \calH(\sigma')$, and since $f \in \cI_g$, we must have $\calH = \calH'$.  Therefore $\calH(\sigma) = \calH(\sigma')$ as unordered sets, so it remains to show that $f$ preserves the cyclic order on $\calH(\sigma)$ and $\calH(\sigma')$.  Let $b \subseteq S_g$ be an oriented curve such that for any vertex $d\in \sigma$, $b$ intersects $d$ once with signed intersection $-1$.  Then since $f(\sigma) = \sigma'$, the image $f(b)$ satisfies the property that for any $d' \in \sigma'$, $f(b)$ intersects $cd$ once with signed intersection number $-1$.  Now, for each $\calH_i \in \calH(\sigma)$, let $c_i$ be a nonseparating simple closed curve disjoint from $\sigma$ and not homologous to $[c]$ such that $c_i$ intersects $b$ once with signed intersection $-1$.  An example with $\sigma$ a 2--cell of the curves $\sigma,b$, and $c_i$ can be found in Figure~\ref{toralnonconjfig}, if we denote the vertices of $\sigma$ by $d_0, d_1,d_2$.

\begin{figure}[h]
\begin{tikzpicture}
\node[anchor = south west, inner sep = 0] at (0,0){\includegraphics[scale=0.8]{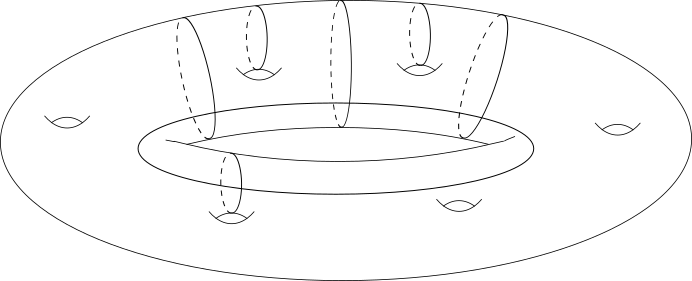}};
\node at (4.5,5) {$d_0$};
\node at (5.9,5) {$c_0$};
\node at (7.7,5) {$d_1$};
\node at (9.4,5) {$c_1$};
\node at (11,5) {$d_2$};
\node at (5.3,2.2) {$c_1$};
\node at (8, 1.7) {$b$};
\end{tikzpicture}
\caption{The curves $a_0,a_1,a_2$, $b$, and $c_i$}\label{toralnonconjfig}
\end{figure} 

Let $c'_i = f(c_i)$.  Then $[c'_i]= [c_i]$ since $f \in \cI_g$, and $[c'_i] \in \calH_i(\sigma')$.  Then the cyclic order of the elements of $\calH(\sigma)$ is the same as the cyclic order of the intersections of $c_i$ with $b$, where the cyclic order on the $c_i$ is induced by the orientation of $b$, so the lemma follows.

\p{The backwards implication} Suppose now that $\sigma$ and $\sigma'$ induce the same decomposition of $H_1(S_g;\Z)$.  Reindex $\calH(\sigma')$ so that $\calH_i^{\sigma} = \calH_i^{\sigma'}$ for all $0 \leq i \leq \dim(\sigma)$, and let $S_i^{\sigma}$, $S_i^{\sigma'}$ denote the connected components of $S_g \cut \sigma$ and $S_g \cut \sigma'$.  Let $\alpha_1,\beta_1,\ldots, \alpha_g, \beta_g$ be a symplectic basis for $H_1(S_g;\Z)$ such that:
\begin{itemize}
\item $[a] = \alpha_1$ and
\item there is a partition $\nu \vdash\{2,\ldots, g\}$ where $\nu$ has $k +1$ blocks such that, for each $P_i \in \nu$, the set $\{\alpha_j, \beta_j\}_{j \in P_i}$ is a symplectic basis for a maximal unimodular subgroup of $\calH_i^\sigma \in \calH(\sigma)$.
\end{itemize}
\bn For each $\calH_i^{\sigma}$ and $\calH_i^{\sigma'}$, let $\{a_j, b_j\}_{j \in P_i}$ and $\{a_j', b_j'\}_{P_i}$ be representatives in $S_i^{\sigma}$ and $S_i^{\sigma'}$ respectively for $\{\alpha_j,\beta_j\}_{j \in P_i}$ such that $|a_i \cap b_j| = |a'_i \cap b'_j| = \left| \langle \alpha_i,\beta_j \rangle\right|$ for any $1 \leq i,j \leq g$.  By the change of coordinates principle, there is an $f \in \Mod(S_g)$ taking $\sigma$ to $\sigma'$ that also takes $a_j \rightarrow a'_j, b_j \rightarrow b_j'$ for all $2 \leq j \leq g$.  Then $f_*(\alpha_i) = \alpha_i$ and $f_*(\beta_i) = \beta_i$ for $2 \leq i \leq g$.  Then $f$ takes a vertex of $\sigma$ to a vertex of $\sigma'$, so $f_*(\alpha_1) = \alpha_1$.  This implies that $\langle f_*\beta_1, \alpha_i \rangle = \langle f_*\beta_1, f_*(\beta_i)\rangle  = 0$ for any $2 \leq i \leq g$, and $\langle \alpha_1, f_*(\beta_1)\rangle = 1$.  Hence $f(\beta_1) = \beta_1 + n \alpha_1$ for some $n \in \Z$, so $f$ is not \textit{a priori} in the Torelli group, since it may act nontrivially on $\beta_1$.  We now find another element $h \in \Mod(S_g)$ such that $hf\sigma = \sigma'$ and $hf \in \cI_g$.

Let $a' \in \sigma'$ be a curve in the $k$--cell $\sigma'$.  Then $T_{a'}^{-n}f\sigma = T_{a'}^{-n}\sigma' = \sigma'$ since $f\sigma = \sigma'$ and $T_{a'}$ acts trivially on $\sigma'$ by construction.  Furthermore, for any $m \in \Z$, we have 
\begin{displaymath}
T_{[a']}(\beta_1 + m \alpha_1) = \beta_1 + m \alpha_1 + \langle[a'], \beta_1 + m \alpha_1\rangle [a'].
\end{displaymath}
\bn  Since $[a'] = \alpha_1$, we therefore have $T_{[a']}(\beta_1 + m \alpha_1) = \beta_1 + m \alpha_1 + 1 \alpha_1 = \beta_1 + (m+1)\alpha_1$.  Therefore $T_{[a']}^{-n}(\beta_1 + m\alpha_1) = \beta_1 + (m-n)\alpha_1$.  Therefore we have we have $T_{[a']}^{-n}f_*(\beta_1) = T_{[a']}^{-n}(\beta_1 + n \alpha_1) =  \beta_1 + n \alpha_1 - n \alpha_1 = \beta_1$.  Then $T_{a'}$ acts trivially on the set $\{\alpha_1,\alpha_2,\beta_2,\ldots, \alpha_g, \beta_g\}$ since $[a'] = \alpha_1$ trivially intersects every element in this set.   Hence $(T_a^{-n}f)_* $ fixes the set $\{\alpha_1,\beta_1,\ldots, \alpha_g, \beta_g\}$, so $T_{a}^{-n} f \in \cI_g$.  Hence $T_{a}^{-n}f$ is an element of the Torelli group taking $\sigma$ to $\sigma'$, as desired.
\end{proof}

\bn Given Lemma~\ref{homolcurveconjstructurelemma}, if $\sigma \subseteq X_g$ is a cell, the cyclically ordered decomposition $\calH(\widehat{\sigma})$ for $\widehat{\sigma}$ a lift of $\sigma$ to $\cC_{[a]}(S_g)$ depends only on $\sigma$, so we will use the notation $\calH(\sigma)$ to refer to the decomposition of $H_1(S_g;\Z)$ induced by $\widehat{\sigma}$, and refer to $\calH(\sigma)$ as the \textit{decomposition of $H_1(S_g;\Z)$ induced by $\sigma$}.

\subsection{Bestvina--Margalit tori}\label{bmsubsection}  We will now give a formal definition of a Bestvina--Margalit torus and prove a collection of results about fundamental classes of Bestvina--Margalit tori. 

\p{Bestvina--Margalit tori} Let $\sigma \subseteq X_g$ be a 2--cell.  There is a unique cell $\sigma'$ called the \textit{dual cell to $\sigma$} such that $\calH(\sigma) = \calH(\sigma')$ as unordered sets, but with $\calH(\sigma) \neq \calH(\sigma')$ as cyclically ordered sets.  This $\sigma'$ is unique by Lemma~\ref{homolcurveconjstructurelemma} since there are exactly two cyclic orders on the set of three elements.  The Bestvina--Margalit torus $\BM_{\sigma}$ is the union $\sigma \cup \sigma'$, and the fundamental class of $\BM_{\sigma}$ is denoted $[\BM_{\sigma}]$.  We have the following lemma that allows us to add fundamental classes of Bestvina--Margalit tori.

\begin{lemma}\label{lemmaaltcohomolauxadd}
Let $x,y,z$ be three edges in $X_g$ such that $\BM_{x,y}$, $\BM_{x,z}$ and $\BM_{y,z}$ are all Bestvina--Margalit tori.  Let $yz$ denote the third edge of a 2--cell $\sigma$ with $y,z \subseteq \sigma$ oriented so that in the group on the edges of $X_g$, we have $y * z = yz$.  A lift of a 3--cell $\tau \subseteq X_g$ containing these edges can be seen in Figure~\ref{lemmaaltcohomolauxaddfig}.  Then in $H_2(X_g;\Q)$, the relation $[\BM_{x,y}] + [\BM_{x,z}] = [\BM_{x,yz}]$ is satisfied.
\end{lemma}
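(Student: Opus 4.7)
The plan is to realize the claimed identity as the cellular boundary of an explicit 3-chain on $X_g$, which must vanish in $H_2(X_g;\Q)$.

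First, I would lift the 3-cell $\tau$ of Figure~\ref{lemmaaltcohomolauxaddfig} to a 3-simplex $\widehat\tau \subseteq \cC_{[a]}(S_g)$ on four vertices and use the composition law on the edges of $X_g$ to identify the six edges of $\widehat\tau$ in $X_g$ with $x$, $y$, $z$, $xy$, $yz$, and $xyz$. The four 2-faces of $\tau$ are then the 2-cells $\sigma_{x,y}$, $\sigma_{y,z}$, $\sigma_{x,yz}$, and $\sigma_{xy,z}$, one for each pair of composable edges appearing as consecutive sides of a triangle in $\widehat\tau$.

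Next, Lemma~\ref{homolcurveconjstructurelemma} associates to each 2-face $\sigma_{\bullet,\bullet}$ a unique dual 2-cell $\sigma'_{\bullet,\bullet}$, obtained by reversing the induced cyclic order on its decomposition of $H_1(S_g;\Z)$. I would verify that reversing this cyclic order globally on $\calH(\widehat\tau)$ produces a second 3-simplex whose image in $X_g$ is a single 3-cell $\tau'$ with four 2-faces precisely the duals $\sigma'_{x,y}$, $\sigma'_{y,z}$, $\sigma'_{x,yz}$, $\sigma'_{xy,z}$. The 3-chain $\tau - \tau' \in C_3(X_g;\Q)$ then has cellular boundary
\[
\partial(\tau - \tau') \;=\; \sum_{i} \pm\bigl([\sigma_i] - [\sigma'_i]\bigr) \;=\; \sum_{i} \pm\, [\BM_{\sigma_i}],
\]
which vanishes in $H_2(X_g;\Q)$. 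Tracking the simplicial boundary signs produces a four-term relation among $[\BM_{x,y}]$, $[\BM_{y,z}]$, $[\BM_{x,yz}]$, and $[\BM_{xy,z}]$.

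To reduce this four-term relation to the desired three-term identity, I would apply the same construction to a second 3-cell built by interchanging the roles of $x$ and $y$, yielding a parallel four-term relation. Combining these two relations and using the symmetry $[\BM_{x,y}] = \pm [\BM_{y,x}]$ coming from reversing the orientation of the torus cancels the auxiliary classes $[\BM_{y,z}]$ and $[\BM_{xy,z}]$ and leaves exactly $[\BM_{x,yz}] = [\BM_{x,y}] + [\BM_{x,z}]$.

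I expect the main obstacle to be the orientation and cyclic-order bookkeeping: confirming that $\tau'$ exists as a single 3-cell of $X_g$ rather than as four incompatible duals of its individual 2-faces, and matching the simplicial boundary signs of $\partial(\tau-\tau')$ to the orientation conventions chosen for the Bestvina--Margalit fundamental classes, so that the combination of the two four-term relations reduces cleanly to the lemma's three-term identity.
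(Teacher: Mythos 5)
Your first step is sound: reversing the cyclic order on $\calH(\widehat\tau)$ does produce a 3-cell $\tau'$ whose four 2-faces are exactly the dual 2-cells of the faces of $\tau$, and $\partial(\tau\pm\tau')$ (with the correct sign) yields a four-term relation among $[\BM_{x,y}]$, $[\BM_{y,z}]$, $[\BM_{x,yz}]$, and $[\BM_{xy,z}]$. The gap is in the reduction to three terms. The torus $\BM_{x,z}$ cannot appear in this relation: $x$ and $z$ are opposite edges of the tetrahedron $\widehat\tau$ (they share no vertex), so $\sigma_{x,z}$ is not a face of $\tau$. ``Interchanging the roles of $x$ and $y$'' is not a well-defined operation on the lift, and even interpreting it charitably as using the 3-cell $\tau_1$ with $\calH(\tau_1)=(\calH_1,\calH_0,\calH_2,\calH_3)$ paired with its reverse, the resulting four-term relation, when combined with the first, still leaves four BM tori, not three. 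One can see the obstruction abstractly: label the six BM tori by the pair merged, $A=\BM_{\{0,1\}}$, $B=\BM_{\{0,2\}}$, $C=\BM_{\{0,3\}}$, $D=\BM_{\{1,2\}}$, $E=\BM_{\{1,3\}}$, $F=\BM_{\{2,3\}}$, so the desired identity is $F+E-D=0$. There are exactly three ``3-cell plus its reverse'' relations (one per reversal-pair of cyclic orders), namely $A-D+F-C=0$, $A-E+F-B=0$, and $B-D+E-C=0$, and the first minus the second equals the third, so they span a rank-2 space. In every element of that span the coefficients of $A$ and $F$ are equal, so $F+E-D$ (coefficient $1$ on $F$, $0$ on $A$) is not in the span. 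Your proposed reduction therefore cannot succeed.

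The paper's proof uses a different mechanism. It takes three 3-cells $\tau_0, \tau_1, \tau_2$ whose decompositions have cyclic orders $(\calH_0,\calH_1,\calH_2,\calH_3)$, $(\calH_1,\calH_0,\calH_2,\calH_3)$, $(\calH_1,\calH_2,\calH_0,\calH_3)$ — these are \emph{not} reversal pairs. In the signed sum $\partial\tau_0-\partial\tau_1+\partial\tau_2$, pairs of \emph{identical} 2-cells shared between consecutive $\tau_i$ cancel (for instance, $\{\calH_0+\calH_1,\calH_2,\calH_3\}$ appears in both $\partial\tau_0$ and $\partial\tau_1$), and after those cancellations the six surviving 2-cells group into three dual pairs, giving exactly $-[\BM_{x,yz}]+[\BM_{x,y}]+[\BM_{x,z}]$. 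The crucial point you missed is that the cancellation happens between identical 2-cells of \emph{different} 3-cells, not between a 2-cell and its dual inside a single 3-cell-plus-reverse; the latter only ever packages the four faces of one tetrahedron into a four-term relation that never touches $\BM_{x,z}$.
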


\begin{figure}[h]
\begin{tikzpicture}
\node[anchor = south west, inner sep = 0] at (0,0){\includegraphics[scale=0.8]{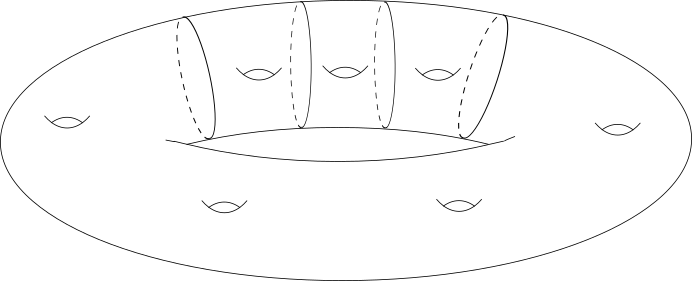}};
\node at (4.5,5){$a$};
\node at (6.8,5){$a_1$};
\node at (8.6,5){$a_2$};
\node at (10.9,4.5){$a_3$};
\end{tikzpicture}
\caption{A 3--cell containing lifts of the edges $x,y,z,yz$ as in Lemma~\ref{lemmaaltcohomolauxadd}.  $x$ lifts to $(a,a_1)$, $y$ lifts to $(a_1,a_2)$, $z$ lifts to $(a_2,a_3)$ and $yz$ lifts to $(a_1,a_3)$.}\label{lemmaaltcohomolauxaddfig}
\end{figure}

\bn Before proving the lemma, we will explain how to compute differentials in $X_g$.  If $\tau \subseteq \cC_{[a]}(S_g)$ is a $k$--cell with vertices $\{c_0,\ldots, c_k\}$, then the differential $\partial \tau$ is given by
\begin{displaymath}
\partial \tau = \sum_{i=0}^k (-1)^i (c_0,\ldots, \widehat{c_i}, \ldots, c_k)
\end{displaymath}
\bn where $(c_0, \ldots, \widehat{c_i}, \ldots, c_k)$ denotes the $(k-1)$--cell with vertices given by $c_0, \ldots, c_{i-1}, c_{i+1}, \ldots, c_k$.  This differential descends to $X_g$ by definition.  Now, if $\tau \subseteq X_g$ is a $k$--cell, Lemma~\ref{homolcurveconjstructurelemma} tells us that $\tau$ is determined by the decomposition
\begin{displaymath}
\calH(\tau) = \{\calH_0,\ldots, \calH_k\}.
\end{displaymath}
\bn Then, if $\overline{\tau}$ is a lift of $\tau$ to $\cC_{[a]}(S_g)$, forgetting a vertex $c \subseteq \overline{\tau}$, which yields a $k$--cell $\overline{\sigma}$, corresponds to replacing $\calH_i, \calH_{i+1}$ in $\calH(\tau)$ with $\calH_i + \calH_{i+1}$.  Therefore the differential $\partial \tau$ is given by
\begin{displaymath}
\partial \tau = \sum_{i=0}^k (-1)^i \sigma_i
\end{displaymath}
\bn where for $0 \leq i \leq k-1$, each $\sigma_i$ satisfies
\begin{displaymath}
\calH(\sigma_i) = \{\calH_0,\ldots, \calH_{i-1}, \calH_{i} + \calH_{i+1}, \calH_{i+2},\ldots, \calH_k\}.
\end{displaymath}
\bn We have that $\calH(\sigma_k)= \{\calH_1,\ldots, \calH_{k-1}, \calH_k + \calH_0\}$.

\begin{proof}[Proof of Lemma~\ref{lemmaaltcohomolauxadd}]
We will find 3--dimensional subcomplex $\overline{L} \subseteq X_g$ such that the boundary realizes the relation in the lemma.  Let $\tau$ be a 3--cell with $\calH(\tau) = \{\calH_0, \calH_1, \calH_2, \calH_3\}$ such that $\calH_0 \in \calH(x)$, $\calH_1 \in \calH(x)$, and $\calH_2 \in \calH(x)$.  The space $\overline{L}$ is given as the union of the following 3--cells:
\begin{itemize}
\item $\tau_0 = \tau$,
\item $\tau_1$ that satisfies $\calH(\tau_1) = \{\calH_1, \calH_0, \calH_2, \calH_3\}$, and
\item $\tau_2$ that satisfies $\calH(\tau_2) = \{\calH_1, \calH_2, \calH_0, \calH_3\}$.
\end{itemize}
\bn We can compute the differentials of each 3--cell above as follows, signed appropriately to yield the desired relation in the lemma.  For notational convenience, we will conflate $\sigma$ and $\calH(\sigma)$ for $\sigma \subseteq X_g$.
\begin{itemize}
\item $\partial \tau_0 = \{\calH_0 + \calH_1, \calH_2, \calH_3\} - \{\calH_0, \calH_1 + \calH_2, \calH_3\} + \{\calH_0, \calH_1, \calH_2 + \calH_3\} - \{\calH_0 + \calH_3, \calH_1, \calH_2\}$,
\item $-\partial \tau_1 = -\{\calH_0 + \calH_1, \calH_2, \calH_3\} + \{\calH_1, \calH_0 + \calH_2, \calH_3\} - \{\calH_1, \calH_0, \calH_2 + \calH_3\} + \{\calH_1 + \calH_3, \calH_0, \calH_2\}$, and
\item $\partial \tau_2 = \{\calH_1 + \calH_2, \calH_0, \calH_3\}-\{\calH_1, \calH_2 + \calH_0, \calH_3\} + \{\calH_1, \calH_2, \calH_0 + \calH_3\} - \{\calH_1 + \calH_3, \calH_2, \calH_0\}.$ 
\end{itemize}
\bn Now, we see that in the sum $\partial \tau_0 - \partial \tau_1 + \partial \tau_2$, the term $\{\calH_0 + \calH_1, \calH_2, \calH_3\}$ in $\partial \tau_0$ cancels with $-\{\calH_0 + \calH_1, \calH_2, \calH_3\}$ in $-\partial \tau_1$, the term $-\{\calH_0 + \calH_3, \calH_1, \calH_2\}$ in $\partial \tau_0$ cancels with $\{\calH_1, \calH_2, \calH_0 + \calH_3\}$ in $\partial \tau_2$, and $\{\calH_1, \calH_0 + \calH_2, \calH_3\}$ in $-\partial \tau_1$ cancels with $-\{\calH_1, \calH_2 + \calH_0, \calH_3\}$ in $\partial \tau_2$.  This means that
\begin{align*}
\partial \tau_0 - \partial \tau_1 + \partial \tau_2 =& -\{\calH_0, \calH_1 + \calH_2, \calH_3\} +\{H_0, \calH_1, \calH_2+ \calH_3\}\\
& - \{\calH_1, \calH_0, \calH_2 + \calH_3\} + \{\calH_1 + \calH_3, \calH_0, \calH_2\} \\
&+\{\calH_1 + \calH_2, \calH_0, \calH_3\} - \{\calH_1 + \calH_3, \calH_2, \calH_0\}.\\
\end{align*}
\bn Now, by rearranging terms, we have
\begin{align*}
\partial \tau_0 - \partial \tau_1 + \partial \tau_2 =& -\{\calH_0, \calH_1 + \calH_2, \calH_3\} +\{\calH_1 + \calH_2, \calH_0, \calH_3\}\\
& +\{H_0, \calH_1, \calH_2+ \calH_3\}- \{\calH_1, \calH_0, \calH_2 + \calH_3\}  \\
& + \{\calH_1 + \calH_3, \calH_0, \calH_2\}- \{\calH_1 + \calH_3, \calH_2, \calH_0\}.\\
\end{align*}
\bn Then each pair of two cells term on the right side of the above equation is a Bestvina--Margalit torus, so we have
\begin{displaymath}
\partial \tau_0 - \partial \tau_1 + \partial \tau_2 = -[\BM_{x,yz}] + [\BM_{x,y}] + [\BM_{x,z}]
\end{displaymath}
\bn and therefore the relationship given in the lemma holds.
\end{proof}

\bn We now prove Lemma~\ref{bmfinquotsplit}, which says when two Bestvina--Margalit tori are equal in $X_g$.

\begin{lemma}\label{bmfinquotsplit}
Let $\BM_{\sigma}$ and $\BM_{\tau}$ be two Bestvina--Margalit tori.  Then $\BM_{\sigma} = \BM_{\tau}$ if and only if $\calH(\sigma) = \calH(\tau)$ as unordered sets.  
\end{lemma}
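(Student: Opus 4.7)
The plan is to reduce the statement to the observation that a Bestvina--Margalit torus consists of precisely two 2-cells, together with the uniqueness of the dual cell from Lemma~\ref{homolcurveconjstructurelemma}. Both implications are essentially unpackings of definitions.

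For the reverse (``if'') direction, suppose $\calH(\sigma) = \calH(\tau)$ as unordered sets. Recall that a cyclic order on a three-element set is one of exactly two possibilities, so $\calH(\tau)$ agrees with $\calH(\sigma)$ as a cyclically ordered set or with the dual cyclic order. By Lemma~\ref{homolcurveconjstructurelemma}, two 2--cells of $X_g$ are equal precisely when the induced cyclically ordered decompositions of $H_1(S_g;\Z)$ agree, so $\tau$ is either $\sigma$ itself or the dual cell $\sigma'$. In either case, $\tau \cup \tau' = \sigma \cup \sigma'$ and hence $\BM_\tau = \BM_\sigma$.

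For the forward (``only if'') direction, suppose $\BM_\sigma = \BM_\tau$ as subcomplexes of $X_g$. By construction, each Bestvina--Margalit torus is the union of exactly two 2--cells: the cell and its dual. Hence the set of top-dimensional cells of $\BM_\sigma$ is $\{\sigma,\sigma'\}$, and the corresponding set for $\BM_\tau$ is $\{\tau,\tau'\}$. Equality of the underlying subcomplexes forces $\{\sigma,\sigma'\} = \{\tau,\tau'\}$, so $\tau \in \{\sigma,\sigma'\}$. Since $\calH(\sigma) = \calH(\sigma')$ as unordered sets (this is the defining feature of the dual cell), we conclude $\calH(\tau) = \calH(\sigma)$ as unordered sets.

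There is no real obstacle here; the only subtlety worth spelling out is that the dual cell $\sigma'$ is \emph{unique}, which is exactly what allows one to go from ``$\calH(\sigma) = \calH(\tau)$ as unordered sets'' to ``$\tau \in \{\sigma,\sigma'\}$'' without ambiguity. This is precisely the uniqueness that Lemma~\ref{homolcurveconjstructurelemma} guarantees, combined with the fact that a three-element set supports only two cyclic orderings.
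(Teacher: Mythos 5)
Your proof is correct and follows essentially the same route as the paper: both hinge on Lemma~\ref{homolcurveconjstructurelemma} (cells of $X_g$ are determined by their cyclic decomposition), the uniqueness of the dual cell, and the fact that a three-element set admits exactly two cyclic orders. The only cosmetic difference is that you prove the ``if'' direction directly whereas the paper argues by contrapositive; the content is identical.
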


\begin{proof}
We prove both directions.

\p{Backwards direction} We prove the contrapositive.  If $\BM_{\sigma} \neq \BM_{\tau}$, then $\sigma \neq \tau$ since every cell $\sigma$ has a unique dual 2--cell $\sigma'$ such that $\sigma \cup \sigma'$ is a Bestvina--Margalit torus.  This implies that $\calH(\sigma) \neq \calH(\tau)$ as decompositions of $[a]^{\perp}$, which implies in particular that $\calH(\sigma) \neq \calH(\tau)$ as unordered sets.

\p{Forwards direction}  Suppose that $\BM_{\sigma} = \BM_{\tau}$.  By Lemma~\ref{homolcurveconjstructurelemma}, $\sigma$ and $\tau$ are identified under the action of $\cI_g$ if and only if $\calH(\sigma) = \calH(\tau)$ as decompositions of $[a]^{\perp}$.  But then if $\calH(\sigma) \neq \calH(\tau)$, we must have $\calH(\sigma) = \calH(\tau')$, where $\tau'$ is the 2--cell dual to $\tau$, since there are only two cyclic orders on a set of three elements.
\end{proof}

\bn  In particular, if $\BM_{\sigma}$ is a Bestvina--Margalit torus, we will use the notation  $\calH(\BM_{\sigma})$ to denote the decomposition $\calH(\sigma)$ without the cyclic order, and we will refer to this decomposition as the \textit{decomposition of } $H_1(S_g;\Z)$ \textit{induced by $\BM_{\sigma}$}.  If $\sigma$ is a 2--cell with edges $x$ and $y$, we will denote the splitting $\calH(\BM_{\sigma})$ by $\calH(\BM_{x,y})$.  We describe how $\calH$ interacts with the addition of tori in the following lemma.  The idea is that if we have a 3--cell $\tau$ as in Figure~\ref{lemmaaltcohomolauxaddfig}, then we can describe the splitting induced by one torus corresponding to a 2--cell $\sigma \subseteq \tau$ in terms of the splitting induced by tori corresponding to other 2--cells contained in $\tau$.

\begin{lemma}\label{bmfinquotfundclassadd}
Let $g \geq \finalbound$ and let $a \subseteq S_g$ be a nonseparating simple closed curve.  Let $x,y,z \subseteq X_g$ be three edges such that $\BM_{x,y}$, $\BM_{x,z}$ and $\BM_{y,z}$ are all Bestvina--Margalit tori.  Suppose that we have
\begin{displaymath}
\calH(\BM_{x,y}) = \{\calH_0,\calH_1,\calH_2\} \text{ and } \calH(\BM_{x,z}) = \{\calH_0, \calH'_1, \calH'_2\}
\end{displaymath}
\bn  with $\calH_1 \cap \calH'_1 = \Z[a]$.  Suppose additionally that we have $\calH_0 \in \calH(x)$, $\calH_1 \in \calH(y)$ and $\calH'_1 \in \calH(z)$.  Let $yz$ denote the third edge of a 2--cell $\sigma$ with $y,z \subseteq \sigma$ with $yz$ oriented so that $y *z = yz$ in the group generated by edges of $X_g$.  Then we have
\begin{displaymath}
\calH(\BM_{x,yz}) = \{\calH_0, \calH_1 + \calH_1', \calH_2 \cap \calH'_2\}.
\end{displaymath}
\end{lemma}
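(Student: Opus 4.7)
The plan is to realize the 2-cell $\sigma$ (with edges $y, z, yz$) and a candidate for $\sigma_{x,yz}$ as two of the four faces of a common 3-cell $\tau \subseteq \cC_{[a]}(S_g)$, and then to read $\calH(\BM_{x,yz})$ directly off the decomposition $\calH(\tau)$.

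First, I record a structural fact: for any lift $\widehat{\sigma}$ of a $k$-cell of $\cC_{[a]}(S_g)$ with decomposition $\calH(\widehat{\sigma}) = \{\mathcal{L}_0, \ldots, \mathcal{L}_k\}$, the reductions $\overline{\mathcal{L}_i} := \mathcal{L}_i/\Z[a]$ form an orthogonal direct-sum decomposition of $V := [a]^\perp/\Z[a]$ into symplectic sublattices. This follows from Mayer--Vietoris applied to the cut surface $S_g \cut \widehat{\sigma}$: the cut pieces are pairwise disjoint, so their homology images are symplectically orthogonal, and together they span $V$. In particular $\mathcal{L}_i \cap \mathcal{L}_j = \Z[a]$ for $i \neq j$, and $\mathcal{L}_i \cap (\mathcal{L}_j + \mathcal{L}_\ell) = \Z[a]$ for distinct $i, j, \ell$.

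Second, lift $\sigma$ to $\{a_1, a_2, a_3\} \subseteq \cC_{[a]}(S_g)$ with cut components $S'_1, S'_2, S'_3$, and let $\calJ_i$ denote the image of $H_1(S'_i;\Z)$ in $H_1(S_g;\Z)$, so $\calH(\sigma) = (\calJ_1, \calJ_2, \calJ_3)$ cyclically. Then $\calH(y) = \{\calJ_1, \calJ_2 + \calJ_3\}$ and $\calH(z) = \{\calJ_2, \calJ_1 + \calJ_3\}$. Given $\calH_1 \in \calH(y)$, $\calH'_1 \in \calH(z)$, and $\calH_1 \cap \calH'_1 = \Z[a]$, the orthogonality established in the first step forces $\calH_1 = \calJ_1$ and $\calH'_1 = \calJ_2$, since any other pairing would yield an intersection strictly containing $\Z[a]$. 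Since $y$ is an edge of $\sigma_{x,y}$ with $\calH(\sigma_{x,y}) = (\calH_0, \calH_1, \calH_2)$, we obtain $\calH_0 + \calH_2 = \calJ_2 + \calJ_3$; symmetrically $\calH_0 + \calH'_2 = \calJ_1 + \calJ_3$. Intersecting these in $V$ and invoking the direct-sum structure yields $\overline{\calH_0} \subseteq \overline{\calJ_3}$, whence $\overline{\calJ_3} = \overline{\calH_0} \oplus (\overline{\calJ_3} \cap \overline{\calH_0}^\perp)$ and $\overline{\calJ_3} \cap \overline{\calH_0}^\perp = \overline{\calH_2 \cap \calH'_2}$.

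Third, by the change-of-coordinates principle applied to $S'_3$, I find a simple closed curve $a_0 \subseteq S'_3$ homologous to $a$ in $S_g$ that separates $S'_3$ into two subsurfaces $T_0, T_3$ with homology images $\calH_0$ and $\calH_2 \cap \calH'_2$ in $H_1(S_g;\Z)$. The multicurve $\{a_0, a_1, a_2, a_3\}$ thus determines a 3-cell $\tau \subseteq \cC_{[a]}(S_g)$ with $\calH(\tau) = (\calH_0, \calJ_1, \calJ_2, \calH_2 \cap \calH'_2)$ cyclically. The face of $\tau$ obtained by forgetting $a_2$ is a 2-cell with decomposition $(\calH_0, \calJ_1 + \calJ_2, \calH_2 \cap \calH'_2) = (\calH_0, \calH_1 + \calH'_1, \calH_2 \cap \calH'_2)$; its three edges are $\calH(x)$, the edge $\{\calH_1 + \calH'_1, \calH_0 + (\calH_2 \cap \calH'_2)\}$, and a third edge. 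The middle edge coincides with $\calH(yz)$, as verified by examining the face of $\tau$ obtained by forgetting $a_0$ (which recovers $\sigma$ and has $yz$ as its third edge). Hence this face is $\sigma_{x,yz}$, so $\calH(\BM_{x,yz}) = \{\calH_0, \calH_1 + \calH'_1, \calH_2 \cap \calH'_2\}$.

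The main obstacle is the third step, specifically the construction of the curve $a_0$ realizing the prescribed orthogonal splitting of $\calJ_3$. This requires a careful application of the change-of-coordinates principle to a subsurface with a prescribed symplectic decomposition of its homology image, together with additional bookkeeping to ensure that $a_0$ is homologous to $a$ in $S_g$ and that the cyclic order of the resulting 4-piece decomposition is compatible with those given by $\sigma$, $\sigma_{x,y}$, and $\sigma_{x,z}$.
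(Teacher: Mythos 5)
Your proof is essentially correct, but step~3 is a detour. The paper finishes algebraically: having established that $\calH_1, \calH_1' \in \calH(\sigma)$ (so that $\calH_1 + \calH_1' \in \calH(yz)$), it notes that $\calH(\BM_{x,yz})$ contains $\calH_0$ and $\calH_1 + \calH_1'$, and the third element is forced to be
\begin{displaymath}
\calH_0^{\perp} \cap (\calH_1 + \calH_1')^{\perp} = (\calH_0 + \calH_1)^{\perp} \cap (\calH_0 + \calH_1')^{\perp} = \calH_2 \cap \calH_2'.
\end{displaymath}
Your steps~1 and~2 reproduce this reasoning with more explicit bookkeeping (identifying $\calH_1 = \calJ_1$, $\calH_1' = \calJ_2$, and $\overline{\calH_0} \subseteq \overline{\calJ_3}$), which is fine. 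But instead of closing the argument algebraically you then build a 3-cell $\tau$ in $\cC_{[a]}(S_g)$ via the change-of-coordinates principle and read the answer off its faces. This is not wrong, but it imports geometric work you flag yourself as "the main obstacle," and it is unnecessary: once you know $\overline{\calH_0} \subseteq \overline{\calJ_3}$ and $\overline{\calJ_3} = \overline{\calH_0} \oplus \overline{\calH_2 \cap \calH_2'}$, the pairwise-$\Z[a]$-intersection constraint on the three pieces of $\calH(\sigma_{x,yz})$, combined with the two edge constraints $\calH(x) = \{\calH_0, \calH_0^\perp\}$ and $\calH(yz) = \{\calJ_1 + \calJ_2, \calJ_3\}$, rules out every combination except $\{\calH_0, \calJ_1 + \calJ_2, \calJ_3 \cap \calH_0^\perp\}$ (for instance, $\calJ_3 \cap \calH_0 = \calH_0 \neq \Z[a]$ excludes $\calJ_3$ pairing with $\calH_0$). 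So the conclusion follows without exhibiting a representative multicurve. If you do keep the 3-cell construction you will need to verify the realizability of the splitting $\overline{\calJ_3} = \overline{\calH_0} \oplus \overline{\calH_2 \cap \calH_2'}$ by a curve $a_0 \subseteq S_3'$ homologous to $a$ in $S_g$, which the change-of-coordinates principle does provide, but this is work the algebraic finish avoids entirely.
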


\begin{proof}
We first want to compute the decomposition $\calH(yz)$.  The 2--cell $\sigma$ containing $y,z$ and $yz$ has $\calH_1, \calH'_1 \in \calH(\sigma)$.  This implies that the third element of $\calH(\sigma)$ is $\calH_1^{\perp} \cap (\calH'_1)^{\perp}$.  Hence 
\begin{displaymath}
\calH(yz) = \{\calH_1^{\perp} \cap (\calH'_1)^{\perp}, (\calH_1^{\perp} \cap (\calH'_1)^{\perp})^{\perp}\} = \{\calH_1^{\perp} \cap (\calH'_1)^{\perp}, \calH_1 + \calH_1'\}
\end{displaymath}
\bn and in particular, we have $\calH_1 + \calH'_1 \in \calH(yz)$.  Now, the splitting $\calH(\BM_{x,yz})$ contains $\calH_0$ and $\calH_1 + \calH_1'$, so the third element of $\calH(\BM_{x,yz})$ is given by
\begin{displaymath}
\calH_0^{\perp} \cap (\calH_1 + \calH_1')^{\perp} = (\calH_0 + \calH_1)^{\perp} \cap (\calH_0 + \calH_1')^{\perp} = \calH_2 \cap \calH_2',
\end{displaymath}
\bn so the lemma holds.
\end{proof}

\bn Before moving forward with the paper, we will briefly discuss a notation for cells in $X_g$, as well as the action of subgroups of $\Sp(2g,\Z)$ on $X_g$.

\p{Graphical representations for Bestvina--Margalit tori}  Let $x,y,z,yz \subseteq X_g$ be a set of edges as in Lemma~\ref{bmfinquotfundclassadd}.  Let $\widehat{\calH} = \{\calH_0, \calH_1, \calH_1', \calH_2 \cap \calH_2'\}$ be an unordered set.  Then $\widehat{\calH}$ corresponds to the labeled graph as in Figure~\ref{labelgraphex}.

\begin{figure}[h]
\begin{tikzpicture}
\node[anchor = south west, inner sep = 0] at (0,0){\includegraphics{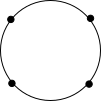}};
\node at (-0.3,1.3) {$\calH_0$};
\node at (1.4,2.9) {$\calH_1$};
\node at (3,1.3) {$\calH_1'$};
\node at (1.4,-0.2) {$\calH_2 \cap \calH_2'$};
\end{tikzpicture}
\caption{The graphical representation of $\widehat{\calH}$}\label{labelgraphex}
\end{figure}

The advantage of this notation is that the decompositions corresponding to $\calH(x,y)$, $\calH(x,z)$ and $\calH(x,yz)$ can be read off from the graph.  Each of these Bestvina--Margalit tori corresponds to the decomposition given by taking the span of the union of two subgroups not equal to $\calH_0$.  We will use this notation in Section~\ref{bmfinquotsection} to keep track of certain decompositions of $[a]^{\perp} \subseteq H_1(S_g;\Z)$.

\p{The action of $\Sp([a]^{\perp}, \Z)$ on $X_g$}  Throughout the rest of the paper, we will consider the action of $\Sp([a]^{\perp}, \Z)$ on $X_g$.  The subgroup $\Gamma_a \subseteq \Mod(S_g)$ consisting of all elements that act trivially on the class $[a] \in H_1(S_g;\Z)$ acts on $\cC_{[a]}(S_g)$.  We have $\cI_g \subseteq \Gamma_a$ by definition, and so the quotient $\Gamma_a/\cI_g = \Sp([a]^{\perp},\Z)$ acts on $X_g$.  

\section{Abelian cycles in $H_2(\cI_g;\Z)$}\label{abelcyclesection}

The goal of this section is to prove Proposition~\ref{abcycleprop}, which says that certain subspaces of $H_2(\cI_g;\Q)$ generated by abelian cycles of bounding pair maps can be generated by abelian cycles supported on finitely many subsurfaces.  

\p{Abelian cycles} Let $G$ be a group and ${c_1,\ldots, c_k} \subseteq G$ be a collection of pairwise commuting elements of infinite order.  Let $A = \langle c_1,\ldots, c_k\rangle \subseteq G$.  The abelian cycle $\ab(c_1,\ldots, c_k) \in H_k(G;\Q)$ is the image of the class $[c_1] \wedge \ldots \wedge [c_k] \in H_k(A;\Q)$ under the pushforward map $H_k(A;\Q) \rightarrow H_k(G;\Q)$.  If $k = 2$, we will denote the abelian cycle $\ab(c_1,c_2)$ by $[c_1, c_2]$.  If $G$ is a group, we let $H_k^{\ab}(G;\Q)$ denote the subspace of $H_k(G;\Q)$ generated by abelian cycles.

\p{Torelli group of a subsurface} Let $S$ be a compact surface and $\iota: S \hookrightarrow S_g$ be an embedding.  We will denote the genus of $S$ by $g(S)$.  We say that $\iota$ is \textit{clean} if no connected component of $S_g \setminus \iota(S)$ is a disk.  Let $\cI_g(S, S_g)$ denote the intersection $\iota_*(\Mod(S)) \cap \cI_g$, where the pushforward map $\iota_*$ is given by extending $\varphi \in \Mod(S)$ by the identity over $S_g \setminus \iota(S)$.  Let $\cI(\iota)$ denote the inclusion map $\cI(S, S_g) \rightarrow \cI_g$.

\p{Bounding pair maps} If $c \subseteq S_g$ is a curve, let $T_c$ denote the Dehn twist along $c$.  We say that $f \in \cI_g$ is a \textit{bounding pair map} if $f = T_{c}T_{c'}^{-1}$ for $c$ and $c'$ two disjoint homologous curves.  We will denote the bounding pair map $T_{c}T_{c'}^{-1}$ by $T_{c,c'}$.  Let $H_2^{\ab, \bp}(\cI(S, S_g);\Q)$ denote the subspace of $H_2(\cI(S,S_g);\Q)$ generated by abelian cycles $[T_{c,c'}, T_{d,d'}]$ such that the bounding pairs $c \cup c'$ and $d \cup d'$ are disjoint.

The main goal of the section is to prove the following result.

\begin{prop}\label{abcycleprop}
Let $g \geq \finalbound$ and let $M \subseteq S_g$ be a nonseparating multicurve with $\left|\pi_0(M) \right| =9$.  Let $\calV = \{[c]: c \in \pi_0(M)\}$.  The natural map
\begin{displaymath}
\bigoplus_{v \in \calV} H_2^{\ab, \bp}(\cI_g;\Q)^{T_v} \rightarrow H_2^{\ab,\bp}(\cI_g;\Q)
\end{displaymath}
\bn is surjective.
\end{prop}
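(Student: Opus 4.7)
The plan is to prove a stronger statement: every generator $\xi = [T_{c,c'}, T_{d,d'}]$ of $H_2^{\ab, \bp}(\cI_g;\Q)$ is itself fixed by $T_v$ for at least one $v \in \calV$, which directly gives surjectivity of the displayed map. The crucial input is that the $\Mod(S_g)$--action on $H_2(\cI_g;\Q)$ factors through $\Sp(2g,\Z)$, because conjugation of a group on its own homology is trivial. Hence for $v = [m] \in \calV$ with $m \in \pi_0(M)$, the transvection $T_v$ acts on $H_2(\cI_g;\Q)$ as conjugation by any Dehn twist $T_{m'}$ with $[m'] = [m]$, and the choice of representative is immaterial. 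Therefore, if some curve $m'$ with $[m'] = [m]$ can be chosen disjoint from $c \cup c' \cup d \cup d'$, then $T_{m'}$ commutes in $\cI_g$ with both bounding pair maps $T_{c, c'}$ and $T_{d, d'}$, fixing the abelian cycle $\xi$ under conjugation and hence under $T_{[m]}$.

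When such a disjoint representative of some $v \in \calV$ exists for the given configuration, $\xi$ is directly fixed and there is nothing to do. For the remaining configurations, I would appeal to bilinearity of abelian cycles. Specifically, if $c, c', c''$ are three pairwise disjoint curves representing the same homology class, with $c''$ also disjoint from $d \cup d'$, the group identity $T_{c, c'} = T_{c, c''} T_{c'', c'}$ in $\Mod(S_g)$ together with bilinearity of the pushforward $H_2(\Z^k;\Q) \to H_2(\cI_g;\Q)$ along tuples of pairwise commuting elements yields
\begin{displaymath}
[T_{c, c'}, T_{d, d'}] = [T_{c, c''}, T_{d, d'}] + [T_{c'', c'}, T_{d, d'}]
\end{displaymath}
in $H_2(\cI_g;\Q)$, with a symmetric relation for the second bounding pair. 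Iterating these relations with carefully chosen intermediate curves decomposes $\xi$ into a finite sum of simpler abelian cycles, each of which falls into the easy case; the bound $g \geq \finalbound$ guarantees that at every stage of the iteration the complement of the current configuration retains enough genus to place the required intermediate curves.

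The main obstacle is the combinatorial/topological argument guaranteeing that this iteration terminates and that each final summand is indeed matched to some $v \in \calV$. The obstruction to finding a disjoint representative of $v$ is that $v$ must lie in the image of $H_1(\Sigma;\Z) \to H_1(S_g;\Z)$ for some component $\Sigma$ of the complement of the configuration, a subgroup contained in the rank--at--most--two corank space $\langle [c], [d] \rangle^\perp$. The value $|\pi_0(M)| = 9$ is calibrated so that, after at most a bounded number of bilinearity splittings, every resulting summand admits a disjoint representative of some class in $\calV$; this exploits that $\calV$ consists of nine pairwise orthogonal linearly independent classes coming from a single multicurve, together with the large genus hypothesis. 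Controlling this decomposition, and verifying that the number $9$ suffices uniformly over all configurations arising in the iteration, is the delicate part of the argument.
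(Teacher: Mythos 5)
Your strategy of conjugating by a Dehn twist along a representative of $v \in \calV$ disjoint from the curves in the abelian cycle is a correct observation, and the idea of using bilinearity of abelian cycles to split $T_{c,c'}$ into $T_{c,c''}T_{c'',c'}$ is also a valid tool that the paper employs (Lemma~\ref{genusoneabelcyclelemma}). However, the iteration you sketch has a genuine gap that cannot be closed with these tools alone. The relation $T_{c,c'} = T_{c,c''}T_{c'',c'}$ requires $[c''] = [c]$, so every abelian cycle produced by your decomposition still carries the same bounding-pair homology classes $[c]$ and $[d]$. Your disjointness criterion requires $v \in [c]^\perp \cap [d]^\perp$ (since any curve disjoint from $c, c', d, d'$ has homology class orthogonal to both), and it is entirely possible to have an abelian cycle where $\langle v, [c] \rangle \neq 0$ and $\langle v, [d] \rangle \neq 0$ for \emph{every} $v \in \calV$; for instance, take $\calV = \{\alpha_1,\dots,\alpha_9\}$ from a symplectic basis and let $[c] = \beta_1 + \cdots + \beta_9$ and $[d] = \beta_1 + \cdots + \beta_9 + \alpha_{10}$, which pair nontrivially with every $\alpha_i$ yet satisfy $\langle [c],[d]\rangle = 0$. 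In such configurations the easy case never arises, and no finite number of the splittings you describe reaches it, because they never change $[c]$ or $[d]$.

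The paper circumvents this by decomposing not in the group $\cI_g$ but in $H_1(\cI(S',S_g);\Q) \cong \midwedge^3 H_1(S';\Q)$ via the Johnson homomorphism (Lemma~\ref{firsthomolratimlemma}). A class $[T_{f,f'}]$ in this exterior power can be expanded as a $\Q$-linear combination of classes $[T_{f_\ell,f'_\ell}]$ whose bounding-pair homology classes are \emph{different} from $[f]$ and land in $\midwedge^3 v^\perp$ for various $v \in \calV$; this decomposition is guaranteed by a dimension count (the inclusion--exclusion argument of Lemma~\ref{unimodwedgethreelemma}) and is not a group relation. The resulting $H_1$-level relation is then promoted to a relation among abelian cycles in $H_2(\cI_g;\Q)$ using the K\"unneth structure: the abelian cycle $[T_{d,d'}, T_{f,f'}]$ is the image of $[T_{f,f'}] \otimes [T_{d,d'}]$ under the cross-product $H_1(\cI(S',S_g);\Q) \otimes H_1(\langle T_{d,d'}\rangle;\Q) \to H_2(\cI_g;\Q)$ and is therefore linear in the $H_1$-class of the first factor, not merely in a chosen group element representing it. This K\"unneth linearity is the crucial tool your proposal is missing, and without it the reduction to the disjointness criterion cannot be carried out. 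The number $9$ is then calibrated against the $4$-out-of-$7$ decomposition of Lemmas~\ref{unimodwedgethreelemma} and~\ref{unimodwedgethreept2lemma}, not against a count of geometric intersections.
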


\bn The idea of Proposition~\ref{abcycleprop} is that if there are enough curves on a surface $S_g$, then any abelian cycle of bounding pairs is a linear combination of abelian cycles that are each disjoint from at least one of the curves. 

\p{Outline of the proof of Proposition~\ref{abcycleprop}} If $G$ is a group acting on a vector space $V$ and $\calF \subseteq G$ is a subset, we let $V^{\calF}$ denote the subspace of $V$ fixed by all elements of $\calF$.  If $\calW \subseteq H_1(S_g;\Z)$, let $T_{\calW}$ denote the set of transvections along elements of $\calW$.  If $F \in \cI_g$, let $[F]$ denote the corresponding class in $H_1(\cI_g;\Q)$.  Let $[T_{d,d'}, T_{e,e'}] \in H_2^{\ab, \bp}(\cI_g;\Q)$ be an abelian cycle.  The proof of the proposition proceeds in two steps.
\begin{enumerate}
    \item There is a relation
    \begin{displaymath}
        [T_{d,d'}, T_{e,e'}] = \sum_{i=1}^k \lambda_i[T_{d,d'}, T_{f_i,f'_i}]
    \end{displaymath}
    \bn with $[T_{f_i, f'_i}] \in H_1(\cI_g;\Q)^{T_{\calV'}}$ for some $\calV' \subseteq \calV$ and $\left|\calV'\right| = 4$ and for all $1 \leq i \leq k$.
    \item If $[T_{d,d'}, T_{f_i,f'_i}]$ is an abelian cycle with $T_{f_i, f'_i} \in H_1(\cI_g;\Q)^{T_{\calV'}}$ for some $\calV' \subseteq V$ and $\left|\calV'\right| = 4$, then there is a relation
    \begin{displaymath}
    [T_{d,d'}, T_{f_i, f'_i}] = \sum_{j=1}^m \theta(\calV)_j [T_{h_j, h'_j}, T_{f_i, f'_i}]
    \end{displaymath}
    \bn with each $[T_{h_j, h'_j}] \in H_1(\cI_g;\Q)^{T_v}$ for some $v \in \calV'.$
\end{enumerate}

\bn Step (1) is the content of Lemma~\ref{abelcyclestabfourlemma} and Step (2) is the content of Lemma~\ref{abcyclefourstablemma}.  

\p{Outline of the section} We will begin by describing the Johnson homomorphism.  We will then prove Lemma~\ref{firsthomolratimlemma}, which describes the vector space $H_1(\cI(S, S_g);\Q)$ for $S \hookrightarrow S_g$ a clean embedding with $g(S) \geq 3$.  We then prove Lemma~\ref{unimodwedgethreelemma} which is a result about generating sets of exterior powers of quasi--unimodular lattices, and then use Lemma~\ref{unimodwedgethreelemma} to prove Lemma~\ref{abcyclefourstablemma}.  We also use Lemma~\ref{unimodwedgethreelemma} to prove Lemma~\ref{unimodwedgethreept2lemma}, which is another result about generating sets of exterior products of vector spaces.  We use Lemma~\ref{unimodwedgethreept2lemma} to prove Lemma~\ref{abelcyclestabfourlemma}. We then combine these lemmas to prove Proposition~\ref{abcycleprop}.

\p{The Johnson homomorphism} Let $a_1,b_1,\ldots, a_g, b_g$ be a symplectic basis for $H_1(S_g;\Z)$ and let $\omega = a_1 \wedge b_1 + \ldots + a_g \wedge b_g \in \midwedge^2 H_1(S_g;\Z)$.  There is an inclusion $H_1(S_g;\Z) \hookrightarrow \midwedge^3H_1(S_g;\Z)$ given by $[c] \rightarrow [c] \wedge \omega$.  Johnson~\cite{Johnsonhomomorphism} constructed a map 
\begin{displaymath}
    \tau_g: \cI_g \rightarrow \midwedge^3H_1(S_g;\Z)/H_1(S_g;\Z)
\end{displaymath}
\bn and showed that the pushforward map
\begin{displaymath}
    (\tau_g)_*: H_1(\cI_g;\Q) \rightarrow \midwedge^3H_1(S_g;\Q)/H_1(S_g;\Q)
\end{displaymath}
\bn is an isomorphism~\cite{JohnsonIII}.  We will make use of Lemma~\ref{bpjohnsoncomp}, which describes $\tau_g(T_{c,c'})$ for $T_{c,c'}$ a bounding pair map.  The lemma is due to Johnson~\cite[Lemma 4B]{Johnsonhomomorphism}.

\begin{lemma}\label{bpjohnsoncomp}
Let $g \geq 3$.  Let $T_{c,c'}$ be a bounding pair map and let $S$ be a connected component of $S_g \cut \left(c \cup c' \right)$.  Let $a_1,b_1,\ldots, a_h,b_h$ be a symplectic basis for a maximal nondegenerate subspace of $H_1(S;\Z)$.  We have
\begin{displaymath}
   \tau_g(T_{c,c'}) =  [c]\wedge \left(\sum_{i=1}^{h}a_i \wedge b_i\right).
\end{displaymath}
\end{lemma}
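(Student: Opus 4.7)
This is Johnson's original formula. The plan is to follow his computation via the second nilpotent quotient $N = \pi_1(S_g, *)/\Gamma_3$, where $\Gamma_k$ denotes the $k$-th lower central series term. There is a central extension $1 \to \Gamma_2/\Gamma_3 \to N \to H_1(S_g;\Z) \to 1$ with the identification $\Gamma_2/\Gamma_3 \cong \midwedge^2 H_1(S_g;\Z)$ given by $[\alpha,\beta] \mapsto [\alpha] \wedge [\beta]$. For $f \in \cI_g$, the induced automorphism of $N$ is the identity on the $H_1(S_g;\Z)$-quotient, so $x \mapsto f_*(x) x^{-1}$ descends to a homomorphism $H_1(S_g;\Z) \to \midwedge^2 H_1(S_g;\Z)$; the symplectic form then packages this datum as an element of $\midwedge^3 H_1(S_g;\Z)/H_1(S_g;\Z)$, which is $\tau_g(f)$.

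For $f = T_{c,c'}$, I would place the basepoint $*$ in the interior of $S$ and choose based loop representatives so that $a_1, b_1, \ldots, a_h, b_h$ lie entirely within $S$, while basis classes of $H_1(S_g;\Z)$ supported in the complementary cut piece $S'$ are represented by based loops $\gamma$ that exit $S$ through $c'$ and re-enter through $c$. Since $T_{c,c'}$ is supported in annular neighborhoods of $c \cup c'$, the loops inside $S$ are fixed pointwise, and the action on each $\gamma = \alpha \beta \delta$ (with $\alpha, \delta$ in $S'$ and $\beta$ in $S$) inserts a $(c')^{\mp 1}$ between $\alpha$ and $\beta$ and a $c^{\pm 1}$ between $\beta$ and $\delta$. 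After multiplying by $\gamma^{-1}$ the $\delta$ tail cancels, yielding $T_{c,c'}(\gamma)\gamma^{-1} \equiv \alpha (c')^{\mp 1} \beta c^{\pm 1} \beta^{-1} \alpha^{-1} \pmod{\Gamma_3}$. Using $[c] = [c']$ and the centrality of $\Gamma_2/\Gamma_3$ in $N$, this reduces to $\pm [c] \wedge [\beta]$ in $\midwedge^2 H_1(S_g;\Z)$. Assembling these contributions over the basis and performing the symplectic contraction yields the element $[c] \wedge \sum_{i=1}^h a_i \wedge b_i$.

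The main obstacle is the careful bookkeeping of signs and of the commutator corrections in $\Gamma_2/\Gamma_3$, together with the identification of the class of the arc $\beta$ with the appropriate symplectic contraction on $H_1(S;\Z)$. To sidestep the heaviest calculation, I would first verify the formula for a single standard bounding pair inside an explicit picture of $S_g$ with a convenient symplectic basis, and then invoke the $\Mod(S_g)$-equivariance of $\tau_g$ together with the change-of-coordinates principle to transport the identity to the general case, since both sides of the stated formula transform identically under the symplectic action on $H_1(S_g;\Z)$.
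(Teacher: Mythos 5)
The paper does not prove this lemma itself; it cites Johnson's Lemma 4B directly, and your proposal is essentially a reconstruction of Johnson's original argument via the second nilpotent quotient of $\pi_1(S_g)$, which is exactly the method behind the cited result. So your route and the paper's are the same, up to the fact that the paper outsources the work.

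Two small technical points worth tightening if you carry the sketch out. First, for a \emph{closed} surface group one has $\Gamma_2/\Gamma_3 \cong \midwedge^2 H_1(S_g;\Z)/\Z\omega$ rather than $\midwedge^2 H_1(S_g;\Z)$ on the nose: the single surface relation is a product of commutators and so imposes the relation $\omega = \sum_i a_i \wedge b_i = 0$ at this level of the lower central series. This is harmless for your purposes since $\tau_g$ ultimately lives in $\midwedge^3 H_1(S_g;\Z)/H_1(S_g;\Z)$, but the contraction step needs this quotient to be set up correctly. Second, with the basepoint placed in the interior of $S$, a based loop whose homology class is supported in the complementary piece $S'$ necessarily begins and ends with arcs in $S$ and has its middle segment in $S'$; so the decomposition you wrote as $\gamma = \alpha\beta\delta$ with $\alpha,\delta$ in $S'$ and $\beta$ in $S$ has the roles of the inner and outer arcs reversed. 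The inserted $c^{\pm 1}$ and $(c')^{\mp 1}$ twist arcs then appear at the two crossings, and the commutator collapse in $\Gamma_2/\Gamma_3$ proceeds as you describe. Neither issue is a gap in the idea; your fallback of verifying on one explicit bounding pair in a standard picture and then invoking $\Mod(S_g)$--equivariance of $\tau_g$ together with change of coordinates is a perfectly good way to avoid the sign bookkeeping.
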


\bn We use Lemma~\ref{bpjohnsoncomp} to prove Lemma~\ref{firsthomolratimlemma}, which describes $H_1(\cI(S;S_g);\Z)$ for clean embeddings $S \hookrightarrow S_g$.  We first prove an auxiliary lemma, which is essentially just a corollary of a result of Putman~\cite[Theorem 1.3]{Putmantorelli}.

\begin{lemma}\label{putmancorrlemma}
Let $g \geq 4$ and let $\iota:S \hookrightarrow S_g$ be a clean embedding such that $g(S) \geq 3$.  Then $\cI(S, S_g)$ is generated by bounding pair maps.
\end{lemma}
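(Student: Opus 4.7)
The plan is to deduce this lemma from Putman's Theorem 1.3 in~\cite{Putmantorelli}, which gives a generating set for Torelli groups of partitioned surfaces consisting of bounding pair maps and separating twists. The first step is to identify $\cI(S, S_g)$ with the partitioned Torelli group of $(S, \calP)$ for an appropriate labeled partition $\calP$ of $\pi_0(\partial S)$. The clean embedding $\iota$ induces such a partition: two boundary components of $S$ belong to the same block of $\calP$ if and only if they bound the same connected component of $S_g \setminus \iota(S)$, and each block is labeled by the homology class in $H_1(S_g;\Z)$ of the corresponding complementary subsurface. An element of $\Mod(S)$ extends to an element of $\cI_g$ if and only if it acts trivially on the partitioned homology data recorded by $\calP$, which gives the desired identification.

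With this identification in hand, Putman's Theorem 1.3 applied to $(S, \calP)$ yields a generating set for $\cI(S, S_g)$ consisting of bounding pair maps in $S$ together with separating twists in $S$. Both types of generators automatically act trivially on $H_1(S_g;\Z)$, so they already lie in $\cI(S, S_g)$ as required. It remains to rewrite each separating twist in this generating set as a product of bounding pair maps. Since $g(S) \geq 3$, Johnson's classical argument~\cite{Johnsonhomomorphism} using the lantern relation expresses any separating twist $T_c$ in $\Mod(S)$ as a product of bounding pair maps supported entirely in $S$; each such bounding pair map lies in $\cI(S, S_g)$, completing the argument.

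The main obstacle I anticipate is Step 1: setting up the partition and the homology labels on each block so that the partitioned Torelli group is canonically identified with $\cI(S, S_g)$, and confirming that Putman's hypotheses apply in this setting. The second step is essentially a restatement of the well-known fact that separating twists lie in the subgroup generated by bounding pair maps once the genus is at least three, and is standard. The hypothesis $g \geq 4$ on the ambient surface is used to ensure enough room in $S_g$ for a clean embedding with $g(S) \geq 3$ to exist and for Putman's generating theorem to apply without degenerate cases.
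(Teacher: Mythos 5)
Your proof is correct and follows the same route as the paper: apply Putman's Theorem 1.3 to generate $\cI(S,S_g)$ by bounding pair maps and separating twists, then use the lantern relation (the paper explicitly embeds $S_0^4 \hookrightarrow S_1^2 \hookrightarrow S$, you cite Johnson's argument, but these are the same thing) to rewrite separating twists as products of bounding pair maps supported in $S$. Your extra discussion of the partition bookkeeping needed to match $\cI(S,S_g)$ with Putman's partitioned Torelli group is a sensible elaboration of a step the paper leaves implicit, not a different approach.
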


\begin{proof}
A result of Putman~\cite[Theorem 1.3]{Putmantorelli} tells us that $\cI(S, S_g)$ is generated by bounding pair maps and Dehn twists along separating curves.  Hence it suffices to show that if $\delta \subseteq \iota(S)$ is a separating curve in $S_g$, then $T_{\delta}$ is a product of bounding pair maps.  Since $g(S) \geq 3$, there is an embedded copy of $S_1^2 \subseteq S$ such that the image of one boundary component of $S_1^2$ is the curve $\delta$.  Then by embedding a copy of $S_0^4 \hookrightarrow S_1^2$, we can use the lantern relation~\cite[Section 5.1]{FarbMarg} to rewrite $T_{\delta}$ as a product of three bounding pair maps, so the lemma follows.
\end{proof}

\bn We also need one more fact about quasi--unimodular lattices.

\begin{lemma}\label{wedgegenlemma}
Let $L$ be a quasi--unimodular lattice such that $g(L) \geq 1$.  The free abelian group $\midwedge^2 L$ is spanned by elements of the form $\gamma \wedge \delta$ with $\gamma, \delta$ primitive and $\langle \gamma, \delta\rangle = 1$.
\end{lemma}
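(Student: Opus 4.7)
The plan is to choose a convenient $\Z$--basis of $L$ and then rewrite every basis wedge of $\midwedge^2 L$ as a $\Z$--linear combination of wedges of the form $\gamma \wedge \delta$ with $\gamma, \delta$ primitive and $\langle \gamma, \delta \rangle = 1$.  Because $g(L) \geq 1$, there exist primitive $a, b \in L$ with $\langle a, b \rangle = 1$ (primitivity is automatic, since any integer factor of $a$ must divide $\langle a, b\rangle = 1$, and similarly for $b$).  The subspace $W = \langle a, b\rangle$ is then unimodular, so the standard form--adjoint projection $\ell \mapsto \langle a, \ell\rangle b - \langle b, \ell\rangle a$ exhibits $L = W \oplus W^{\perp}$.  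Picking any $\Z$--basis $e_1, \ldots, e_k$ for $W^{\perp}$ gives a $\Z$--basis $\{a, b, e_1, \ldots, e_k\}$ of $L$, so $\midwedge^2 L$ is $\Z$--spanned by $a \wedge b$, the mixed wedges $a \wedge e_i$ and $b \wedge e_i$, and the ``purely perpendicular'' wedges $e_i \wedge e_j$.

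The trivial case is $a \wedge b$ itself, which is already of the desired form.  For the mixed wedges, a one--step trade handles both: expanding $a \wedge (b + e_i) = a \wedge b + a \wedge e_i$ and $(a + e_i) \wedge b = a \wedge b + e_i \wedge b$ expresses $a \wedge e_i$ and $b \wedge e_i$ as differences of wedges of the desired form, since $\langle a, b + e_i\rangle = \langle a + e_i, b\rangle = 1$ and both $b + e_i$ and $a + e_i$ are primitive in the chosen basis.

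The main technical step is the wedge $e_i \wedge e_j$, because the integer $s = \langle e_i, e_j\rangle$ can be arbitrary, and a one--variable trade cannot simultaneously pin the pairing to $1$ and produce the cross term $e_i \wedge e_j$.  The resolution is a two--variable trade that uses the symplectic pair $(a,b)$ to absorb the discrepancy:
\[
(e_i + a) \wedge (e_j + (1-s)\, b) = e_i \wedge e_j + (1-s)\, e_i \wedge b + a \wedge e_j + (1-s)\, a \wedge b.
\]
The left--hand wedge is of the required form because $\langle e_i + a,\, e_j + (1-s) b\rangle = s + (1-s) = 1$ and both factors are primitive in $L$ (their coordinate vectors in the basis $\{a,b,e_1,\ldots,e_k\}$ have coprime coefficients).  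Solving for $e_i \wedge e_j$ and invoking the previously handled mixed cases to rewrite $e_i \wedge b$, $a \wedge e_j$, and $a \wedge b$ completes the proof.
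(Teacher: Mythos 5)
Your proof is correct.  It takes the same basic strategy as the paper—fix a basis of $L$, then rewrite each basis wedge $r \wedge s$ in $\midwedge^2 L$ as a $\Z$--linear combination of wedges $\gamma \wedge \delta$ with $\gamma,\delta$ primitive and $\langle\gamma,\delta\rangle = 1$ by ``trading'' against a symplectic pair—but your bookkeeping is cleaner.  The paper works with a full partial symplectic basis $\{\gamma_1,\delta_1,\ldots,\gamma_{g(L)},\delta_{g(L)},\gamma_{g(L)+1},\ldots,\gamma_\ell\}$ and splits into three cases according to which kinds of basis vectors occur in $r \wedge s$, needing different trades in each case; its final case ($r,s$ both degenerate) in particular uses a somewhat ad hoc two--term substitution.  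You instead split off a \emph{single} symplectic pair $(a,b)$, use the unimodular splitting $L = W \oplus W^\perp$ to get a basis $\{a, b, e_1,\ldots,e_k\}$, and treat all of $W^\perp$ uniformly.  The payoff is that there are really only two nontrivial cases (mixed wedges $a \wedge e_i$, $b \wedge e_i$ and perpendicular wedges $e_i \wedge e_j$), and the two--variable trade $(e_i + a) \wedge (e_j + (1-s)b)$, where $s = \langle e_i, e_j\rangle$, handles the entire second case in one stroke by simultaneously producing $e_i \wedge e_j$ and absorbing the discrepancy $s$ into the $(a,b)$ direction.  This is tighter and more uniform than the paper's argument; both proofs, however, rest on the same underlying idea.
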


\begin{proof}
Let $\calS \subseteq \midwedge^2L$ be the set of all elements $\gamma \wedge \delta$ as in the statement of the lemma.  Let 
\begin{displaymath}
\calB = \{\gamma_1,\delta_1, \gamma_2,\delta_2,\ldots, \gamma_{g(L)}, \delta_{g(L)}, \gamma_{g(L)+1}, \gamma_{g(L) + 2}, \ldots, \gamma_{\ell}\}
\end{displaymath}
\bn be a partial symplectic basis for $L$.  The group $\midwedge^2 L$ is generated by all elements of the form $r \wedge s$ for $r,s \in \calB$, so it suffices to show that any element $r \wedge s$ with $r,s \in \calB$ is a $\Z$--linear combination of elements as in the statement of the lemma.

We begin by proving this in the case $r = \gamma_i, s = \delta_j$.  We have $\gamma_i \wedge \delta_j = (\gamma_i + \gamma_j) \wedge \delta_j - (\gamma_j \wedge \delta_j)$.  Each of these latter elements is a wedge of primitive elements with algebraic intersection number one, so $r \wedge s$ is in the subgroup of $\midwedge^2 L$ generated by $\calS$.  

Now, suppose that $r \in \calB, s = \delta_i $ with $r \neq \gamma_i$.  We have $r \wedge s =  (r + \gamma_i) \wedge \delta_i - \gamma_i \wedge \delta_i$, and each of these latter elements are in $\calS$, since the only element of $\calB$ that $\delta_i$ intersects nontrivially is $\gamma_i$.  

Finally, suppose that $r = \gamma_i, s = \gamma_j$.  If $i$ or $j$ are less than $g(L)$, then without loss of generality we have $j \leq g(L)$.   Then we have $r \wedge s = gamma_i + \delta_j \wedge \gamma_j - \delta_j \wedge \gamma_j$, and these latter elements are all in $\calS$.  Otherwise, $i,j > g(L)$.  In this case, we have 
\begin{displaymath}
r \wedge s = (\gamma_i + \delta_{g(L)}) \wedge (\gamma_j + \gamma_{g(L)}) - \delta_{g(L)} \wedge \gamma_j - \delta_{g(L)} \wedge \gamma_j - \delta_{g(L)} - \gamma_{g(L)}.
\end{displaymath}
\bn The first and less elements are in $\calS$, and the middle two elements are $\Z$--linear combinations of elements in $\calS$ by the previous cases, so $r \wedge s \in \Span_{\Z}(\calS)$ and thus the lemma holds.
\end{proof}

\bn We now describe $H(\cI(S,S_g);\Q)$ for certain clean embeddings $\iota:S \hookrightarrow S_g$.

\begin{lemma}\label{firsthomolratimlemma}
Let $S$ be a compact surface with boundary with $\left|\pi_0(\partial S)\right| \leq 2$.  Let $\iota:S \hookrightarrow S_g$ be a clean embedding such that $S_g \cut S$ is connected. Suppose additionally that $g(S_g \cut S) \geq 1$.  Then there is a surjection
\begin{displaymath}
f:H_1(\cI_g(S, S_g);\Q) \twoheadrightarrow \midwedge^3 H_1(S;\Q)
\end{displaymath}
\bn such that the following diagram commutes:

\begin{center}
\begin{tikzcd}
H_1(\cI(S,S_g);\Q) \arrow[r,"f"] \arrow[d,"\cI(\iota)_*"] & \wedge^3 H_1(S;\Q) \arrow[d, "\wedge^3 \iota_*"] \\
H_1(\cI_g;\Q) \arrow[r, "\tau_g \otimes \Q"]& \wedge^3H_1(S_g;\Q)/H_1(S_g;\Q).
\end{tikzcd}
\end{center} 

\bn Furthermore, suppose that $g(S) \geq 3$.  Then $f$ is an isomorphism, and the map $\wedge^3 \iota_* \circ f$ is injective.
\end{lemma}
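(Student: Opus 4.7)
The plan is to define $f$ on a generating set of $\cI(S, S_g)$ via a geometric formula compatible with Johnson, and then verify that this formula extends to a well-defined homomorphism on homology with the required properties. By Lemma~\ref{putmancorrlemma}, for $g(S) \geq 3$ the group $\cI(S, S_g)$ is generated by bounding pair maps $T_{c, c'}$ with $c, c' \subseteq S$. The connectedness of $S_g \cut S$ implies that all boundary components of $\partial S$ lie in a single component of $S \cut (c \cup c')$: if they split, gluing $S_g \cut S$ back would connect the two components of $S_g \cut (c \cup c')$, contradicting that $(c, c')$ is a bounding pair in $S_g$. Let $S_{c, c'}$ denote the component of $S \cut (c \cup c')$ disjoint from $\partial S$. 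Then $S_{c, c'}$ is simultaneously a component of $S_g \cut (c \cup c')$, and Lemma~\ref{bpjohnsoncomp} gives
\[
\tau_g(T_{c, c'}) = [c] \midwedge \omega_{S_{c, c'}} \pmod{H_1(S_g;\Q)},
\]
where $\omega_{S_{c, c'}} = \sum a_i \midwedge b_i$ is the symplectic form on a maximal nondegenerate subspace of $H_1(S_{c, c'};\Z) \subseteq H_1(S;\Z)$. I define $f$ on the generator $T_{c, c'}$ by $f(T_{c, c'}) = [c] \midwedge \omega_{S_{c, c'}} \in \midwedge^3 H_1(S;\Q)$; commutativity of the diagram is then immediate.

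To show $f$ extends to a well-defined homomorphism on $\cI(S, S_g)$ and hence descends to $H_1(\cI(S, S_g);\Q)$, I would identify $f$ with the restriction of a Johnson-type homomorphism for the bounded surface $S$. Johnson's construction extends to surfaces with boundary, producing $\tau_S \colon \cI(S) \to \midwedge^3 H_1(S;\Q)$ whose value on bounding pair maps agrees with the formula above. Since $\cI(S, S_g)$ is contained in $\cI(S)$ modulo boundary Dehn twists (which lie in the kernel of $\tau_S$), the restriction $\tau_S|_{\cI(S, S_g)}$ realizes $f$. For surjectivity, Lemma~\ref{wedgegenlemma} says $\midwedge^2 H_1(S;\Z)$ is spanned by wedges of symplectic pairs, and the change of coordinates principle realizes each such pair as $\omega_{S_{c, c'}}$ for some bounding pair $(c, c') \subseteq S$; wedging with appropriate homology classes of bounding-pair curves then gives a generating set for $\midwedge^3 H_1(S;\Q)$. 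When $g(S) \geq 3$, Johnson's rational $H_1$-computation for bounded Torelli groups, giving $H_1(\cI(S);\Q) \cong \midwedge^3 H_1(S;\Q)$, combined with the identification above yields that $f$ is an isomorphism.

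For the final injectivity claim, I analyze $\midwedge^3 \iota_* \colon \midwedge^3 H_1(S;\Q) \to \midwedge^3 H_1(S_g;\Q)/H_1(S_g;\Q)$. Using the decomposition of $H_1(S_g;\Q)$ as $\iota_* H_1(S;\Q)$ plus classes from $H_1(S_g \cut S;\Q)$, one checks that $x \midwedge \omega_{S_g}$ lies in $\midwedge^3 H_1(S;\Q)$ only for $x = 0$, since the hypotheses $g(S) \geq 3$ and $g(S_g \cut S) \geq 1$ ensure that both symplectic forms $\omega_S$ and $\omega_{S_g \cut S}$ are nonzero and that the cross-terms $x_{\cut} \midwedge \omega_S$ and $x_S \midwedge \omega_{S_g \cut S}$ are nonzero components in the relevant summands of $\midwedge^3(H_1(S) \oplus H_1(S_g \cut S))$. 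The main obstacle is verifying well-definedness of $f$: a direct relation-by-relation check over the defining relations of $\cI(S, S_g)$ is cumbersome, so the cleanest route is to invoke a version of Johnson's theorem for bounded surfaces, which requires careful treatment of the $|\pi_0(\partial S)| = 2$ case, where $\iota_*$ may have a one-dimensional kernel generated by a boundary class and $\cI(S, S_g)$ sits inside $\cI(S)$ as a proper subgroup that nonetheless hits every Johnson class.
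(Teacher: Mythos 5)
Your proposal identifies the right formula for $f$ on bounding pair maps and the right ingredients (Lemmas~\ref{putmancorrlemma}, \ref{bpjohnsoncomp}, \ref{wedgegenlemma}, Putman's injectivity theorem), but you take a genuinely harder route than the paper, and you leave the hardest step unresolved. You define $f$ directly on a generating set and then face the problem of showing this is well defined on $H_1(\cI(S, S_g);\Q)$; you flag this yourself as ``the main obstacle'' and propose to resolve it by invoking a Johnson homomorphism for bounded surfaces, with a side comment that the $\left|\pi_0(\partial S)\right| = 2$ case needs care because $\iota_*$ may have a kernel. That is precisely where the argument is missing: you never actually produce the map $\tau_S$, verify its relation to $\iota_*$, or handle the two-boundary case, so the proposal as written does not establish that $f$ exists.

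The paper sidesteps this issue entirely. Instead of defining $f$ on generators, it sets
\[
f := \bigl(\midwedge^3 \iota_*\bigr)^{-1}\big|_{\im(\tau_g \otimes \Q \,\circ\, \cI(\iota)_*)} \circ \bigl(\tau_g \otimes \Q\bigr) \circ \cI(\iota)_*,
\]
a composition of linear maps that is automatically well defined. The content of the proof then reduces to three verifiable facts: that $\im(\tau_g \circ \cI(\iota)_*) \subseteq \im(\midwedge^3 \iota_*)$ (your $\subseteq$ containment, using Lemmas~\ref{putmancorrlemma} and \ref{bpjohnsoncomp}), that the reverse containment holds (your surjectivity argument via Lemma~\ref{wedgegenlemma}), and that $\midwedge^3 \iota_*$ is injective. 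Everything you wanted to check on generators becomes a statement about images of honest homomorphisms, so no relation-by-relation verification is needed and the two-boundary subtleties never arise. I'd recommend reorganizing your proof around that definition. Separately, your injectivity argument for $\midwedge^3 \iota_*$ is vaguer than it needs to be: the decomposition of $H_1(S_g;\Q)$ you invoke is not a direct sum when $S$ has boundary, and the cleaner argument is the dimension count in the paper --- if $\alpha \wedge \omega$ were in the image of $\midwedge^3 H_1(S;\Q)$ then one would obtain $2g-1$ independent classes in $\im(H_1(S;\Q) \to H_1(S_g;\Q))$, contradicting $\dim H_1(S;\Q) \leq 2g-2$, which follows from $g(S_g \cut S) \geq 1$.
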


\begin{proof} We begin with the first part of the lemma.  We show that the image of the composition the composition
\begin{displaymath}
H_1(\cI_g(S,S_g);\Q) \xrightarrow{\cI(\iota)_*} H_1(\cI_g;\Q) \xrightarrow{\tau_g \otimes \Q} \midwedge^3H_1(S_g;\Q)/H_1(S_g;\Q)
\end{displaymath}
\bn is $\im(\midwedge^3 \iota_*)$, and that $\midwedge^3 \iota_*$ is an injection.  If these two statements hold, then $f$ is given by
\begin{displaymath}
f = (\midwedge^3 \iota_*)^{-1} |_{\im(\tau_g \otimes \Q \circ \cI(\iota)_*} \circ \tau_g \otimes \Q \circ \cI(\iota)_*.
\end{displaymath}
\bn We prove each containment in turn, and then prove injectivity.

\p{Showing that $\im(\tau_g \otimes \Q \circ \cI(\iota)_*) \subseteq  \im(\midwedge^3 H_1(S;\Q) \rightarrow \midwedge^3 H_1(S_g;\Q)/H_1(S_g;\Q))$} Lemma~\ref{putmancorrlemma} says that $\cI_g(S,S_g)$ is generated by bounding pair maps, so it suffices to show that if $T_{c,c'} \in \cI_g(S, S_g)$ is a bounding pair map, then $\tau_g \otimes \Q \circ \cI(\iota)_*(T_{c,c'}) \in \im(\midwedge^3 H_1(S;\Q) \rightarrow \midwedge^3 H_1(S_g;\Q)/H_1(S_g;\Q))$.  Since the bounding pair $c \cup c'$ is contained in $S$, we have $[c] \in \midwedge^3 H_1(S;\Q)$.  Then there must be a connected component $S'$ of $S_g \cut \left(c \cup c'\right)$ such that $S' \subseteq S$.  Let $\alpha_1,\beta_1,\ldots, \alpha_h, \beta_h$ be a symplectic basis for a maximal unimodular subgroup of $H_1(S';\Z)$.  Lemma~\ref{bpjohnsoncomp} says that $\tau_g \circ \cI(\iota)_*([T_{c,c'}]) = [c] \wedge \left(\alpha_1 \wedge \beta_1 + \ldots + \alpha_h \wedge \beta_h\right) \in \im(\midwedge^3 H_1(S;\Q) \rightarrow \midwedge^3 H_1(S_g;\Q)/H_1(S_g;\Q))$, so the $\subseteq$ containment holds.

\p{Showing that $\im(\tau_g \circ \cI(\iota)_*) \supseteq \im(\midwedge^3 H_1(S;\Q) \rightarrow \midwedge^3 H_1(S_g;\Q)/H_1(S_g;\Q))$} Observe that if $v_1, v_2, v_3 \in H_1(S;\Z)$ are primitive classes with $\langle v_1, v_2 \rangle = 1$ and $\langle v_1, v_3 \rangle = \langle v_2, v_3 \rangle = 0$, then Lemma~\ref{bpjohnsoncomp} says that if $c,c'$ is a bounding pair with $[c] = v_3$ and $v_1,v_2$ a symplectic basis for a maximal nondegenerate subgroup of the first homology of a connected component of $S \cut \left(c \cup c'\right)$, then $\tau_g(T_{c,c'}) = v_1 \wedge v_2 \wedge v_3 \in \im(\tau_g \circ \cI(\iota)_*)$.  Since the set of such triples of elements spans $\midwedge^3 H_1(S;\Z)$ by Lemma~\ref{wedgegenlemma}, we have $\im(\tau_g \circ \cI(\iota)_*) \supseteq \im(\midwedge^3 H_1(S;\Q) \rightarrow \midwedge^3 H_1(S_g;\Q)/H_1(S_g;\Q))$.  

\p{Showing that $\midwedge^3H_1(S;\Q)$ injects into $\midwedge^3 H_1(S_g;\Q)/H_1(S_g;\Q)$} We first show that the exterior power of pushforwards $\midwedge^3 H_1(S;\Q) \rightarrow \midwedge^3 H_1(S_g;\Q)$ is injective.  Since $S_g \cut S$ is connected, the pushforward $H_1(S;\Q) \rightarrow H_1(S_g;\Q)$ is injective, so $\midwedge^3 H_1(S;\Q) \rightarrow \midwedge^3 H_1(S_g;\Q)$ is injective as well.  Hence it suffices to show that $\im(\midwedge^3 H_1(S;\Q) \rightarrow \midwedge^3 H_1(S_g;\Q)) \cap H_1(S_g;\Q) \wedge \omega = 0$, where $\omega \in \midwedge^2 H_1(S_g;\Q)$ is the characteristic element.  Suppose otherwise, so there is some nonzero $\alpha \wedge \omega \in \im(\midwedge^3 H_1(S;\Q) \rightarrow \midwedge^3 H_1(S_g;\Q))$.  Since $\omega$ does not depend on the choice of symplectic basis, we can choose a symplectic basis $\alpha_1, \beta_1,\ldots, \alpha_g, \beta_g$ with $\alpha \wedge \omega = m \cdot \alpha_1 \wedge(\alpha_1 \wedge \beta_1 + \alpha_2 \wedge \beta_2 + \ldots + \alpha_g \wedge \beta_g) = m \cdot \alpha_1 \wedge (\alpha_2 \wedge \beta_2 + \ldots + \alpha_g \wedge \beta_g)$ for $m$ some positive integer.  But this implies that there are $(2g-1)$ linearly independent elements $\alpha_1, \alpha_2, \beta_2, \ldots, \alpha_g, \beta_g \in \im(H_1(S;\Q) \rightarrow H_1(S_g;\Q))$.  We have assumed that $g(S_g \cut S) \geq 1$, so $\dim(H_1(S;\Q)) \leq 2g - 2$, which is a contradiction.

We now prove the second part of the lemma, where we assume that $g(S) \geq 3$.  In this case, since $g(S) \geq 3$ and $\iota:S \hookrightarrow S_g$ is clean, a theorem of Putman~\cite[Theorem B]{Putmanjohnson} states that the pushforward map $\cI(\iota)_*$ is an injection.  Therefore the composition $(\midwedge^3 \iota_*)^{-1} |_{\im(\tau_g \otimes \Q \circ \cI(\iota)_*} \circ \tau_g \otimes \Q \circ \cI(\iota)_*$ is an injection.  By definition this is $f$, so $f$ is injective as well, and is therefore an isomorphism.
\end{proof}

\bn We will also prove Lemma~\ref{dehntransvectionswitch}, which is a result that describes how stabilizers of transvections interact with the abelianization of certain $\cI(S, S_g)$.  We first prove the following result.

\begin{lemma}\label{transvectionstablemma}
Let $g \geq 3$ and let $w \in H_1(S_g;\Z)$ be a nonzero primitive element.  The fixed space $H_1(\cI_g;\Q)^{T_w}$ is sent to $\im(\midwedge^3 [w]^{\perp} \rightarrow \midwedge^3 H_1(S_g;\Q)/H_1(S_g;\Q))$ under the Johnson homomorphism.
\end{lemma}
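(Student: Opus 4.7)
The plan is to use the $\Mod(S_g)$-equivariance of the Johnson homomorphism to reduce the statement to a purely linear-algebraic computation of the $T_w$-fixed subspace of $V := \midwedge^3 H_1(S_g;\Q)/H_1(S_g;\Q)$. The Johnson homomorphism $\tau_g$ is natural under conjugation by $\Mod(S_g)$, and this action on the target factors through the symplectic representation. Hence $\tau_g \otimes \Q$ intertwines the $\Sp(2g,\Z)$-actions on $H_1(\cI_g;\Q)$ and on $V$, and since $\tau_g \otimes \Q$ is an isomorphism, it suffices to show that $V^{T_w} = \im(\midwedge^3 [w]^{\perp} \to V)$. The easy inclusion is immediate: $T_w$ fixes every element of $[w]^{\perp}$ since $\langle w, v \rangle = 0$ for $v \in [w]^{\perp}$, so it acts trivially on $\midwedge^3 [w]^{\perp}$ and hence on its image in $V$.

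For the reverse inclusion, I would extend $w$ to a symplectic basis $w = a_1, b_1, \ldots, a_g, b_g$, write $\omega = w \wedge b_1 + \omega'$ with $\omega' = \sum_{i \geq 2} a_i \wedge b_i \in \midwedge^2 [w]^{\perp}$, and use the direct sum decomposition
$$\midwedge^3 H_1(S_g;\Q) = \midwedge^3 [w]^{\perp} \oplus \bigl(\midwedge^2 [w]^{\perp}\bigr) \wedge b_1.$$
Any $\xi$ decomposes uniquely as $\xi = x + \alpha \wedge b_1$ with $x \in \midwedge^3 [w]^{\perp}$ and $\alpha \in \midwedge^2 [w]^{\perp}$. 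A direct expansion using $T_w(b_1) = b_1 + w$ together with $w \wedge w = 0$ yields $(T_w - I)(\xi) = \alpha \wedge w$, which lies purely in the first factor $\midwedge^3 [w]^{\perp}$.

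The condition $[\xi] \in V^{T_w}$ becomes $\alpha \wedge w \in H_1(S_g;\Q) \wedge \omega$. Expanding $v \wedge \omega$ for $v = v_1 + c b_1$ (with $v_1 \in [w]^{\perp}$, $c \in \Q$) in the decomposition shows that its component in $(\midwedge^2[w]^{\perp}) \wedge b_1$ is $(v_1 \wedge w - c \omega') \wedge b_1$. Since $\alpha \wedge w$ has zero component in that factor, this forces $v_1 \wedge w = c \omega'$ in $\midwedge^2[w]^{\perp}$; but $v_1 \wedge w$ is simple of rank $\leq 2$ while $\omega'$ has rank $2(g-1) \geq 4$, so $c = 0$ and $v_1 \in \Q w$. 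Unwinding, the fixed-point equation forces $\alpha = w \wedge \beta - \lambda \omega'$ for some $\beta \in [w]^{\perp}$ and $\lambda \in \Q$. Substituting back and using the identities $b_1 \wedge \omega = -\omega' \wedge b_1$ and $\beta \wedge \omega = -(w \wedge \beta) \wedge b_1 + \beta \wedge \omega'$ (both of which lie in $H_1(S_g;\Q) \wedge \omega$), one rewrites $[\xi] = [x + \beta \wedge \omega']$ modulo $H_1(S_g;\Q) \wedge \omega$, which is an element of $\im(\midwedge^3 [w]^{\perp} \to V)$.

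The main obstacle is essentially just the bookkeeping in the last step: keeping track of the two factors of the decomposition, and in particular verifying that $v_1 \wedge w = c\omega'$ forces $c = 0$ by a rank comparison (which uses $g \geq 3$, or at least $g \geq 2$). There is no conceptual difficulty; the content of the lemma is the direct computation of $V^{T_w}$ combined with the equivariance of $\tau_g$.
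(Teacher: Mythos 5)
Your proof is correct (up to minor sign slips, noted at the end) and takes a somewhat different route through the linear algebra than the paper does. Both you and the paper reduce via the $\Sp(2g,\Z)$-equivariance of $\tau_g$ to computing $V^{T_w}$ for $V = \midwedge^3 H_1(S_g;\Q)/H_1(S_g;\Q)$, and the easy containment is handled identically. For the reverse containment, the paper picks a symplectic basis with $\alpha_1 = w$ and works with the spanning set of pure basis wedges avoiding $\alpha_1\wedge\beta_1$, while you use the intrinsic decomposition $\midwedge^3 H_1(S_g;\Q) = \midwedge^3[w]^{\perp} \oplus \bigl(\midwedge^2[w]^{\perp}\bigr)\wedge\beta_1$ (your $b_1$ is the paper's $\beta_1$), compute $(T_w-I)\xi = \alpha\wedge w$ directly, and determine when this lies in $H_1(S_g;\Q)\wedge\omega$ by a component-by-component comparison with $v\wedge\omega$ together with a rank argument.

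Your explicit handling of the quotient by $H_1(S_g;\Q)\wedge\omega$ is in fact more careful than the paper's and closes a small gap in the published argument. The paper asserts that the spanning set of pure basis wedges avoiding $\alpha_1\wedge\beta_1$ is a basis of $V$ ``by counting dimensions,'' but the count is off by $2$: that set has $\binom{2g}{3}-(2g-2)$ elements while $\dim V = \binom{2g}{3}-2g$, the surplus coming from the two relations $\alpha_1\wedge\omega\equiv 0$ and $\beta_1\wedge\omega\equiv 0$ in $V$. Consequently, the paper's deduction that $\alpha_1\wedge\sum_i\lambda_i\gamma_i\wedge\delta_i\equiv 0$ in $V$ forces $\sum_i\lambda_i\gamma_i\wedge\delta_i=0$ does not follow as stated: for $\sum_i\lambda_i\gamma_i\wedge\delta_i=\omega':=\sum_{j\ge 2}\alpha_j\wedge\beta_j$ one has $\alpha_1\wedge\omega'=\alpha_1\wedge\omega\equiv 0$ with $\omega'\neq 0$. (The lemma itself is unaffected, because the corresponding class $\beta_1\wedge\omega'=\beta_1\wedge\omega$ is already zero in $V$, but that additional observation is needed to complete the argument.) Your rank comparison --- which uses $g\geq 3$ to rule out $\omega'$ being proportional to a simple $2$-vector --- does exactly the required bookkeeping and makes the argument airtight. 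Finally, the sign slips: since $\omega'$ has even degree, $\beta_1\wedge\omega = +\omega'\wedge\beta_1$ rather than $-\omega'\wedge\beta_1$; the $\bigl(\midwedge^2[w]^{\perp}\bigr)\wedge\beta_1$-component of $v\wedge\omega$ is $(v_1\wedge w + c\omega')\wedge\beta_1$; and the solved form is $\alpha = w\wedge\beta + \lambda\omega'$. None of this changes the structure of the argument, since $\lambda$ is free and the signs cancel in the final substitution, but the slips should be corrected.
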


\begin{proof}
Since the Johnson homomorphism is a map of $\Sp(2g,\Z)$--representations, it suffices to show that $\left(\midwedge^3 H_1(S_g;\Q)/H_1(S_g;\Q)\right)^{T_w} = \im(\midwedge^3 [w]^{\perp} \rightarrow \midwedge^3 H_1(S_g;\Q)/H_1(S_g;\Q))$.  Since $T_w$ acts trivially on $[w]^{\perp}$ by the definition of transvection, the transvection $T_w$ acts trivially on $\im(\midwedge^3 [w]^{\perp} \rightarrow \midwedge^3 H_1(S_g;\Q)/H_1(S_g;\Q))$, and thus $\tau_g(H_1(\cI_g;\Q)^{T_w}) \supseteq \im(\midwedge^3 [w]^{\perp} \rightarrow \midwedge^3 H_1(S_g;\Q)/H_1(S_g;\Q))$.  It therefore remains to prove the other containment.  

Let $\calB = \{\alpha_1,\beta_1,\ldots, \alpha_g, \beta_g\} \in H_1(S_g;\Z)$ be a symplectic basis so that $\alpha_1 = w$.  The vector space $\midwedge^3 H_1(S_g;\Q)$ has a basis consisting of all pure wedge products of elements in $\calB$.  Then after modding out by $H_1(S_g;\Q) \rightarrow \midwedge^3 H_1(S_g;\Q)$, we see that any pure tensor of the form $\alpha_1 \wedge \beta_1 \wedge \gamma$ is a linear combination
\begin{displaymath}
\alpha_2 \wedge \beta_2 \wedge \gamma + \ldots + \alpha_g \wedge \beta_g \wedge \gamma.
\end{displaymath}
\bn Hence $\midwedge^3 H_1(S_g;\Q)/H_1(S_g;\Q)$ has a basis consisting of all pure tensors of elements in $\calB$ except those containing $\alpha_1 \wedge \beta_1$.  This set is spanning in $\midwedge^3 H_1(S_g;\Q)/H_1(S_g;\Q)$, and is therefore a basis by counting dimensions.  Now, $T_w$ acts trivially on any such wedge products except for those containing $\beta_1$.  For such a basis element $\beta_1 \wedge \gamma \wedge \delta$ for $\gamma, \delta \in \calB \setminus \{\alpha_1\}$, we have $T_w(\beta_1 \wedge \gamma \wedge \delta) = \beta_1 \wedge \gamma \wedge \delta + \alpha_1 \wedge \gamma \wedge \delta$.  Now, if we have a $\Q$--linear combination of such elements
\begin{displaymath}
\sum_{i=1}^n \lambda_i \beta_1 \wedge \gamma_i \wedge \delta_i,
\end{displaymath}
\bn then we have
\begin{displaymath}
T_w\left(\sum_{i=1}^n \lambda_i \beta_1 \wedge \gamma_i \wedge \delta_i\right) = \sum_{i=1}^n \lambda_i (\beta_1 + \alpha_1) \wedge \gamma_i \wedge \delta_i = (\beta_1 + \alpha_1) \wedge \sum_{i=1}^n \lambda_i \gamma_i \wedge \delta_i.
\end{displaymath}
\bn  Then to have $\sum_{i=1}^n \lambda_i \beta_1 \wedge \gamma_i \wedge \delta_i$ fixed by $T_w$, we must have
\begin{displaymath}
\sum_{i=1}^n \lambda_i \beta_1 \wedge \gamma_i \wedge \delta_i = (\beta_1 + \alpha_1) \wedge \sum_{i=1}^n \lambda_i \gamma_i \wedge \delta_i,
\end{displaymath}
\bn or equivalently we must have
\begin{displaymath}
\alpha_1 \wedge \sum_{i=1}^n \lambda_i \gamma_i \wedge \delta_i = 0
\end{displaymath}
\bn Since the span of the set $\{\gamma_i, \delta_i: 1 \leq i \leq n\}$ does not contain $\alpha_1$, we conclude that $\sum_{i=1}^n \lambda_i \gamma_i \wedge \delta_i = 0$.  In particular, this implies that only elements of the form $\midwedge^3 [w]^{\perp} \rightarrow \midwedge^3 H_1(S_g;\Q)/H_1(S_g;\Q)$ are fixed by $T_w$, as desired.
\end{proof}

\bn We apply Lemma~\ref{transvectionstablemma} in the following context.

\begin{lemma}\label{dehntransvectionswitch}
Let $g \geq 4$, and let $\iota:S \hookrightarrow S_g$ be a clean embedding satisfying the hypotheses of Lemma~\ref{firsthomolratimlemma}.  Let $M \subseteq S_g$ be a nonseparating multicurve, and let $\calW \subseteq H_1(S_g;\Z)$ be given by $\calW = \{[c]: c\in \pi_0(M)\}.$  Then, after passing to $\Q$--coefficients, the Johnson homomorphism $\tau_g$ restricts to an isomorphism
\begin{displaymath}
\im\left(H_1(\cI(S, S_g);\Q) \rightarrow H_1(\cI_g;\Q)\right)  \cap H_1(\cI_g;\Q)^{T_\calW} \rightarrow \midwedge^3 \im(H_1(S;\Q) \rightarrow H_1(S_g;\Q)) \cap \midwedge^3(\calW^{\perp})
\end{displaymath}
\end{lemma}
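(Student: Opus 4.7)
The plan is to transport both subspaces on the left of the claimed isomorphism under $\tau_g$ using the two preceding lemmas, and then identify the resulting intersection. Throughout, write $\pi \colon \midwedge^3 H_1(S_g;\Q) \twoheadrightarrow \midwedge^3 H_1(S_g;\Q)/H_1(S_g;\Q)$ for the natural projection.

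First, since $g(S) \geq 3$, Lemma~\ref{firsthomolratimlemma} gives that $\tau_g$ restricts to an isomorphism from $\im(\cI(\iota)_*)$ onto $\pi(\midwedge^3 \iota_*(H_1(S;\Q)))$, and moreover that $\pi$ is injective on $\midwedge^3 \iota_*(H_1(S;\Q))$ as a subspace of $\midwedge^3 H_1(S_g;\Q)$, providing a canonical lift from the quotient back to the original exterior cube. Second, by Lemma~\ref{transvectionstablemma} applied to each $w \in \calW$, the subspace $\tau_g(H_1(\cI_g;\Q)^{T_w})$ equals $\pi(\midwedge^3 [w]^\perp)$ and coincides with the $T_w$-fixed subspace of the quotient; since $\tau_g$ is an $\Sp(2g,\Z)$-equivariant isomorphism and commutes with intersections, taking the intersection over $w \in \calW$ yields $\tau_g(H_1(\cI_g;\Q)^{T_\calW}) = \bigcap_{w \in \calW} \pi(\midwedge^3 [w]^\perp)$, which one expects to agree with $\pi(\midwedge^3 \calW^\perp)$ using the identity $\calW^\perp = \bigcap_{w \in \calW} [w]^\perp$ and the compatibility of exterior cubes with intersections.

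Combining these two identifications, $\tau_g$ carries $\im(\cI(\iota)_*) \cap H_1(\cI_g;\Q)^{T_\calW}$ isomorphically onto $\pi(\midwedge^3 \iota_*(H_1(S;\Q))) \cap \pi(\midwedge^3 \calW^\perp)$, which via the injective lift of Lemma~\ref{firsthomolratimlemma} corresponds to $\midwedge^3 \iota_*(H_1(S;\Q)) \cap \midwedge^3 \calW^\perp$ on the right of the lemma. The main obstacle is the final linear-algebraic step of showing that $\pi(\midwedge^3 \iota_*(H_1(S;\Q))) \cap \pi(\midwedge^3 \calW^\perp)$ pulls back to $\midwedge^3 \iota_*(H_1(S;\Q)) \cap \midwedge^3 \calW^\perp$ under the injection. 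The plan is to select a partial symplectic basis of $H_1(S_g;\Z)$ adapted to $\Span(\calW)$, which is isotropic because the curves of $M$ are pairwise disjoint and hence pair trivially under the algebraic intersection form, write $\omega = \sum_i w_i \wedge w_i^\ast + \omega''$ in this basis, and carry out a direct component analysis. The injectivity of $\pi$ on $\midwedge^3 \iota_*(H_1(S;\Q))$ supplied by Lemma~\ref{firsthomolratimlemma} is what rules out contributions from $\ker \pi = H_1(S_g;\Q) \wedge \omega$ and so forces any $a \in \midwedge^3 \iota_*(H_1(S;\Q))$ with $\pi(a) \in \pi(\midwedge^3 \calW^\perp)$ to lie in $\midwedge^3 \calW^\perp$ itself.
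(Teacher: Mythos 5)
Your plan follows the same route the paper takes: identify $\tau_g(\im(\cI(\iota)_*))$ with $\pi(A)$ for $A := \midwedge^3 \iota_*H_1(S;\Q)$, identify $\tau_g(H_1(\cI_g;\Q)^{T_w})$ with $\pi(\midwedge^3[w]^\perp)$ and intersect over $w\in\calW$, and then try to pull the resulting intersection of images back to $A\cap\midwedge^3\calW^\perp$. Unfortunately, the two places you leave as expectations are exactly where the argument fails, and the failure is not a gap to be filled by sharper bookkeeping: the statements in question are false.

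First, $\bigcap_{w\in\calW}\pi(\midwedge^3[w]^\perp)$ is generally strictly larger than $\pi(\midwedge^3\calW^\perp)$ once $|\calW|\geq 2$; a quotient map does not commute with intersections of subspaces. For example, with $g=4$ and $\calW=\{\alpha_1,\alpha_2\}$ part of a symplectic basis $\alpha_1,\beta_1,\ldots,\alpha_4,\beta_4$, a dimension count gives $\dim\bigl(\pi(\midwedge^3[\alpha_1]^\perp)\cap\pi(\midwedge^3[\alpha_2]^\perp)\bigr)=24$ while $\dim\pi(\midwedge^3\calW^\perp)=20$. Second, and more decisively, the ``pullback'' you flag as the main obstacle cannot be carried out. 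Injectivity of $\pi|_A$ only says each element of $\pi(A)$ has a unique preimage \emph{inside $A$}; it says nothing about whether a representative of $\pi(a)$ in $\midwedge^3\calW^\perp$ agrees with $a$. Concretely, with $H_1(S;\Q)=\Span(\alpha_1,\beta_1,\alpha_2,\beta_2,\alpha_3,\beta_3)$ and $\calW=\{\alpha_1\}$ (any $g\geq 4$), the element $a=\alpha_1\wedge\beta_1\wedge\alpha_2\in A$ does not lie in $\midwedge^3[\alpha_1]^\perp$, yet
\begin{displaymath}
a - \alpha_2\wedge\omega \;=\; -\alpha_2\wedge\alpha_3\wedge\beta_3-\cdots-\alpha_2\wedge\alpha_g\wedge\beta_g\;\in\;\midwedge^3[\alpha_1]^\perp,
\end{displaymath}
so $\pi(a)\in\pi(\midwedge^3[\alpha_1]^\perp)$. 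Hence $\pi(A)\cap\pi(\midwedge^3\calW^\perp)$ properly contains $\pi(A\cap\midwedge^3\calW^\perp)$, and no choice of basis adapted to $\Span(\calW)$ will change that. The paper's own proof has the identical two steps and hence the identical issue; what the downstream applications (Lemmas~\ref{abcyclefourstablemma} and~\ref{abelcyclestabfourlemma}) actually use is only the easy containment, namely that $\midwedge^3\iota_*H_1(S;\Q)\cap\midwedge^3\calW^\perp$ maps under $\pi$ and $\tau_g^{-1}$ into $\im(\cI(\iota)_*)\cap H_1(\cI_g;\Q)^{T_\calW}$, and that the source has a spanning set of bounding-pair classes via Lemma~\ref{wedgegenlemma}. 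If you restate the lemma as that inclusion (rather than an isomorphism), your first two paragraphs already give a complete proof and the ``main obstacle'' paragraph can be dropped.
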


\begin{proof}
By Lemma~\ref{firsthomolratimlemma}, we have $\im\left(H_1(\cI(S, S_g);\Q) \rightarrow H_1(\cI_g;\Q)\right) \cong \im(\midwedge^3 \iota_*) \cong \midwedge^3 H_1(S;\Q)$.  Now, the Johnson homomorphism is $\Sp(2g,\Z)$--equivariant, so if $w \in \calW$, the fixed set $H_1(\cI_g;\Q)^{T_w}$ is isomorphic to $\im(\tau_g \otimes \Q)^{T_w}$.  Then by Lemma~\ref{transvectionstablemma}, we have 
\begin{displaymath}
\tau_g(H_1(\cI_g;\Q)^{T_w}) = \im(\midwedge^3 [w]^{\perp} \rightarrow \midwedge^3 H_1(S_g;\Q)/H_1(S_g;\Q)).
\end{displaymath}
\bn Then we have $H_1(\cI_g;\Q)^{T_{\calW}} = \bigcap_{w \in \calW} H_1(\cI_g;\Q)^{T_w}$ and $\midwedge^3 \calW^{\perp} = \bigcap_{w \in \calW} \midwedge^3 [w]^{\perp}$.  Therefore we have $\tau_g(H_1(\cI_g;\Q)^{T_{\calW}}) = \im(\midwedge^3 \calW^{\perp} \rightarrow \midwedge^3 H_1(S_g;\Q)/H_1(S_g;\Q))$.  Hence the intersection
\begin{displaymath}
\im\left(H_1(\cI(S, S_g);\Q) \rightarrow H_1(\cI_g;\Q)\right)  \cap H_1(\cI_g;\Q)^{T_\calW}
\end{displaymath}
\bn is sent to $\im(\midwedge^3 H_1(S;\Q) \rightarrow \im(\tau_g)) \cap \im(\midwedge^3 \calW^{\perp} \rightarrow \im(\tau_g))$ by the Johnson homomorphism, and this last intersection is equal to
\begin{displaymath}
\midwedge^3 (\im(H_1(S;\Q) \rightarrow H_1(S_g;\Q))) \cap \midwedge^3\calW^{\perp}
\end{displaymath}
\bn as desired.
\end{proof}

\bn The remaining steps to do now before proving Proposition~\ref{abcycleprop} are to prove Lemma~\ref{abcyclefourstablemma} and Lemma~\ref{abelcyclestabfourlemma}, which will allow us to rewrite classes $[T_{d,d'}, T_{e,e'}]$ as linear combinations of abelian cycles of bounding pairs maps contained in $H_2(\cI_g;\Q)^{T_v}$ for various primitive $v \in H_1(S_g;\Z)$.

\subsection{The proof of Lemma~\ref{abcyclefourstablemma}}

We now recall a basic fact from linear algebra.  
\begin{lemma}\label{unimodwedgethreelemma}
Let $(L, \langle \cdot, \cdot \rangle)$ be quasi--unimodular lattice.  Let $\calW = \{w_1,\ldots, w_4\} \subseteq L$ be a set of primitive elements such that the image of $\calW$ under the adjoint map $L \rightarrow \ho_{\Z}(L, \Z)$ is a linearly independent set.  Then the natural map
\begin{displaymath}
\varphi: \bigoplus_{w \in \calW} \midwedge^3\left(w^{\perp} \otimes \Q\right) \rightarrow \midwedge^3\left(L \otimes \Q\right)
\end{displaymath}
\bn is surjective.
\end{lemma}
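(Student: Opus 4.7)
The plan is to reduce everything to a straightforward pigeonhole argument after choosing a well-adapted basis of $V := L \otimes \Q$. For each $w_i \in \calW$, let $\phi_i : V \to \Q$ denote the linear functional $\phi_i(v) = \langle w_i, v \rangle$, so that $K_i := w_i^{\perp} \otimes \Q = \ker(\phi_i)$. By hypothesis the images of the $w_i$ under the adjoint map, i.e.\ the functionals $\phi_1, \ldots, \phi_4$, are linearly independent elements of $\ho_{\Q}(V, \Q)$.

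The first step is to build a basis of $V$ adapted to this setup. Since the $\phi_i$ are linearly independent, standard linear algebra produces vectors $v_1, v_2, v_3, v_4 \in V$ satisfying $\phi_i(v_j) = \delta_{ij}$. Let $K = \bigcap_{i=1}^4 K_i$, so that $V = K \oplus \Span_{\Q}(v_1, v_2, v_3, v_4)$. Choose any basis $\calB_0$ of $K$ and set $\calB = \calB_0 \cup \{v_1, v_2, v_3, v_4\}$; this is a basis of $V$. The key observation is that
\begin{displaymath}
K_i = K \oplus \Span_{\Q}(\{v_j : j \neq i\}),
\end{displaymath}
so $K_i$ contains every element of $\calB$ except $v_i$.

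The vector space $\midwedge^3 V$ has a basis consisting of pure wedges $e_1 \wedge e_2 \wedge e_3$ with $e_1, e_2, e_3$ distinct elements of $\calB$. Such a wedge uses at most three of the four vectors $v_1, v_2, v_3, v_4$, so there is some index $\ell \in \{1,2,3,4\}$ such that $v_\ell \notin \{e_1, e_2, e_3\}$. Then $e_1, e_2, e_3 \in K_\ell$, so the wedge lies in $\midwedge^3 K_\ell$, and hence in the image of $\varphi$. Since every basis element of $\midwedge^3 V$ is hit, $\varphi$ is surjective.

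There is no real obstacle; the argument is a pigeonhole argument made possible by the fact that $|\calW| = 4$ exceeds $3$, which is the rank of the exterior power in question. The linear independence hypothesis on the adjoint images of $\calW$ is used exactly once, to guarantee the existence of the dual vectors $v_i$; with this the rest is bookkeeping. It is worth noting that this argument breaks down for $\left|\calW\right| = 3$, which is why the lemma (and later the proposition invoking it) needs the $+1$ padding in the size of $\calW$.
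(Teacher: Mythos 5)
Your proof is correct, but it takes a genuinely different route from the paper. The paper argues by a dimension count: it invokes an ``inclusion--exclusion principle for vector spaces'' (an alternating sum of dimensions coming from the exactness of a Koszul-style complex) applied to $V = \im(\varphi)$ and $V_i = \midwedge^3 w_i^{\perp}$, computes $\dim(\im(\varphi)) = \sum_{k=1}^4 (-1)^{k+1}\binom{4}{k}\binom{n-k}{3} = \binom{n}{3}$, and concludes surjectivity since the target also has dimension $\binom{n}{3}$. That computation silently relies on the identification $\bigcap_{j \in \calS}\midwedge^3 w_j^{\perp} = \midwedge^3\bigl(\bigcap_{j\in\calS} w_j^{\perp}\bigr)$ for every $\calS \subseteq \{1,\ldots,4\}$, which is true but is not a formal triviality. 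Your adapted basis $\calB = \calB_0 \cup \{v_1,\ldots,v_4\}$ sidesteps this entirely: it is simultaneously adapted to all four hyperplanes $K_i$, so the pigeonhole argument on the pure wedges in this basis shows directly that each basis vector of $\midwedge^3 V$ already lies in some $\midwedge^3 K_\ell$. Your version is more elementary and more transparent about where the ``$4 > 3$'' count enters; the paper's version is shorter if one is willing to take the inclusion--exclusion identity for granted, and it generalizes mechanically to the higher exterior powers and larger $\calW$ that appear later in the paper (e.g.\ Lemma~\ref{unimodwedgelemma}).
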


\begin{proof}
Choose a basis $\calB = \{a_1,\ldots, a_n\}$ for $L \otimes \Q$ such that $\langle w_i, a_j\rangle = \delta_{ij}$.  Then the set $\calB_{i} = \{a_1,\ldots, \widehat{a_i},\ldots, a_n\}$ is a basis for $w^{\perp} \otimes \Q$ for each $w \in \calW$.  Now, the vector space $\midwedge^3 (L \otimes \Q)$ has a basis consisting of triples of elements in $\calB$.  Since $\left|\calW\right| \geq 4$, each of these triples lies in some $\calB_{i}$.  Therefore $\im(\varphi)$ contains a basis for $\midwedge^3(L \otimes \Q)$ and hence is surjective. 
\end{proof}

\bn For our purpose, the important takeaway from Lemma~\ref{unimodwedgethreelemma} is the following result.

\begin{lemma}\label{abcyclefourstablemma}
Let $M' \subseteq S_g$ be a nonseparating multicurve with $\left|\pi_0(M')\right| \geq 4$, and let $\calV' = \{[c]: c \in \pi_0(M')\}$.  Let $d,d' \subseteq S_g$ be a bounding pair such that $[T_{d,d'}] \in H_1(\cI_g;\Q)^{T_{\calV'}}$.  Let $[T_{d,d'}, T_{f,f'}] \in H_2^{\ab,\bp}(\cI_g;\Q)$ be an abelian cycle.  There is a $\Q$--linear relation
\begin{displaymath}
[T_{d,d'}, T_{f,f'}] = \sum_{\ell = 1}^m \lambda_{\ell} [T_{d,d'}, T_{f_\ell, f_\ell'}]
\end{displaymath}
\bn in $H_2^{\ab,\bp}(\cI_g;\Q)$ such that, for each $1 \leq \ell \leq m$, there is a $v \in \calV'$ such that $[T_{f_{\ell},f_{\ell}'}] \in H_1(\cI_g;\Q)^{T_v}$.
\end{lemma}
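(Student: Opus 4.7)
The strategy is to exploit bilinearity of abelian cycles inside the first homology of a suitable subsurface Torelli group containing $T_{d,d'}$ in its centralizer, and then to decompose $[T_{f,f'}]$ using Lemma~\ref{unimodwedgethreelemma}. Since $T_{d,d'}$ and $T_{f,f'}$ commute, the curves $f \cup f'$ are disjoint from $d \cup d'$. The plan is to choose a clean embedded subsurface $\iota: S \hookrightarrow S_g$ with $g(S) \geq 3$, $S_g \cut S$ connected, $S$ disjoint from $d \cup d'$, containing $f \cup f'$, and containing representatives of at least four classes $\{v_1,\ldots,v_4\} \subseteq \calV'$ as homology classes of curves in $S$; the hypothesis $g \geq \finalbound$ guarantees enough room for such an $S$. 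Since $\iota(S)$ is disjoint from $d \cup d'$, the pushforward $\cI(S, S_g) \hookrightarrow \cI_g$ lands inside the centralizer $C_{\cI_g}(T_{d,d'})$, so the abelian--cycle operation yields a homomorphism
\[
\phi: H_1(\cI(S, S_g);\Q) \longrightarrow H_2(\cI_g;\Q), \qquad [x] \mapsto [T_{d,d'}, x],
\]
sending $[T_{f,f'}]$ to $[T_{d,d'}, T_{f,f'}]$, so it suffices to produce the desired relation in $H_1(\cI(S, S_g);\Q)$.

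Let $\calW = \{w_1,\ldots,w_4\} \subseteq H_1(S;\Z)$ be primitive classes satisfying $\iota_*(w_i) = v_i$; arrange $\calW$ to be linearly independent under the adjoint map (again using large $g$). By Lemma~\ref{firsthomolratimlemma}, the Johnson homomorphism identifies $H_1(\cI(S, S_g);\Q)$ with $\midwedge^3 H_1(S;\Q)$. Apply Lemma~\ref{unimodwedgethreelemma} to $L = H_1(S;\Z)$ and $\calW$: the map $\bigoplus_{w \in \calW} \midwedge^3(w^\perp \otimes \Q) \to \midwedge^3(L \otimes \Q)$ is surjective, so the Johnson image of $[T_{f,f'}]$ in $\midwedge^3 H_1(S;\Q)$ decomposes as $\sum_\ell \lambda_\ell \xi_\ell$ with $\xi_\ell \in \midwedge^3 w_{i(\ell)}^\perp$ for some $w_{i(\ell)} \in \calW$. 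By the spanning result of Lemma~\ref{wedgegenlemma} applied inside $w_{i(\ell)}^\perp$ together with the realization argument in the proof of Lemma~\ref{firsthomolratimlemma}, each $\xi_\ell$ equals a $\Q$--linear combination of Johnson images of bounding pair maps $T_{f_\ell, f_\ell'} \in \cI(S, S_g)$ whose defining curves lie in $S$ and whose homology classes lie in $w_{i(\ell)}^\perp$. Collecting terms and re--indexing yields an identity $[T_{f,f'}] = \sum_\ell \lambda_\ell [T_{f_\ell, f_\ell'}]$ in $H_1(\cI(S, S_g);\Q)$.

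By construction each $T_{f_\ell, f_\ell'}$ commutes with $T_{d,d'}$, and since $\iota_*$ sends $w_{i(\ell)}^\perp$ into $v_{i(\ell)}^\perp$, the pushforward class $[T_{f_\ell, f_\ell'}] \in H_1(\cI_g;\Q)$ lies in $H_1(\cI_g;\Q)^{T_{v_{i(\ell)}}}$ by Lemma~\ref{transvectionstablemma}. Applying $\phi$ to the identity then produces the desired relation in $H_2(\cI_g;\Q)$. The main obstacle is the initial subsurface--selection step: one must show that four elements of $\calV'$ can be simultaneously realized by curves in a common clean subsurface $S$ disjoint from $d \cup d'$ with the necessary adjoint--independence in $H_1(S;\Z)$. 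This requires carefully exploiting the bound $g \geq \finalbound$ together with the fixed--space hypothesis $[T_{d,d'}] \in H_1(\cI_g;\Q)^{T_{\calV'}}$ (via Lemmas~\ref{bpjohnsoncomp} and~\ref{transvectionstablemma}) to force enough orthogonality with $[d]$ for the chosen classes in $\calV'$.
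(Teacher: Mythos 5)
Your core approach (pass to $H_1$ of a subsurface Torelli group, identify it with $\midwedge^3$ via Lemma~\ref{firsthomolratimlemma}, decompose with Lemma~\ref{unimodwedgethreelemma}, realize summands as bounding pair maps via Lemma~\ref{wedgegenlemma}, and push back through the K\"unneth map $\phi$) is the same as the paper's main case, and the appeal to Lemma~\ref{transvectionstablemma} to identify the $T_v$--fixed part is correct. However, there are two genuine gaps.

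First, you jump to ``the curves $f\cup f'$ are disjoint from $d\cup d'$'' and then silently assume $f$ and $f'$ lie in a common component of $S_g \cut (d\cup d')$. If $f$ and $f'$ lie in different components, there is no connected clean subsurface disjoint from $d\cup d'$ containing both; you need the extra reduction $T_{f,f'} = T_{f,d}\,T_{d,f'}$ (forcing $[f]=[f']=[d]$) to split the abelian cycle into two cycles each supported on one side. Second and more seriously, your subsurface--selection step is not salvageable by appealing to $g \geq \finalbound$ (note the lemma carries no such hypothesis): you want a subsurface $S$ on the side of $d\cup d'$ containing $f\cup f'$ that carries representatives of \emph{four} classes from $\calV'$, but some classes in $\calV'$ may have no representative on that side at all, regardless of genus. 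The correct fix is a case distinction, not a genus bound. If some $v \in \calV'$ has a representative in the component \emph{not} containing $f\cup f'$, then that representative is disjoint from $f\cup f'$, so $[T_{f,f'}]$ is already fixed by $T_v$ and the lemma holds trivially with $m=1$, $\lambda_1=1$. Only when all of $\calV'$ lives on the side of $f\cup f'$ (which, combined with pairwise trivial algebraic intersection, forces the genus of that side to be at least $4$) does your $\midwedge^3$--decomposition argument apply --- and then one should take $S$ to be that entire component (with $\partial S = d\cup d'$), not a smaller subsurface disjoint from $d\cup d'$, so that the Lemma~\ref{firsthomolratimlemma} hypotheses on $\partial S$ and connectedness of the complement are automatically satisfied.
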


\begin{proof}
Let $S'$ and $S''$ be the two connected components of $S_g \cut \left(d \cup d' \right)$.  Without loss of generality, we may assume that $f,f' \subseteq S'$.  Indeed, if $f,f'$ are in distinct connected components of $S_g \cut \left(d \cup d' \right)$, then $f$ and $f'$ are homologous to $d$, so we have a relation
\begin{displaymath}
[T_{d,d'}, T_{f,f'}] = [T_{d,d'}, T_{f,d}] + [T_{d,d'}, T_{d, f'}]
\end{displaymath}
\bn We now have two cases.

\p{Case 1: $\calV' \not \subseteq \im(H_1(S';\Z) \rightarrow H_1(S_g;\Z))$} In this case, there is a $v \in \calV'$ such that $ v \in \im(H_1( S'';\Z) \rightarrow H_1(S_g;\Z))$.  Then $T_{f,f'} \in H_1(\cI_g;\Z)^{T_v}$, so we are done.

\p{Case 2: $\calV' \subseteq \im(H_1(S';\Q))$} We see that since $\calV'$ is linearly independent and the elements of $\calV'$ have pairwise trivial algebraic intersection, we have $g(S') \geq 4$.  Lemma~\ref{firsthomolratimlemma} says that $H_1(\cI(S', S_g);\Q) \cong \midwedge^3 H_1(S';\Q)$.  Lemma~\ref{unimodwedgethreelemma} implies that there is a surjection
\begin{displaymath}
\bigoplus_{v \in \calV'}\midwedge^3 v^{\perp} \cap \midwedge^3 H_1(S';\Q) \rightarrow \midwedge^3 H_1(S';\Q).
\end{displaymath}
\bn  Then Lemma~\ref{dehntransvectionswitch} applied to each term on the summand on the left implies there is a surjection
\begin{displaymath}
\bigoplus_{v \in \calV'}H_1(\cI_g;\Q)^{T_v} \cap H_1(\cI(S',S_g);\Q) \rightarrow H_1(\cI(S', S_g);\Q).
\end{displaymath}
\bn Hence there is a relation in $H_1(\cI(S', S_g);\Q)$ given by
\begin{displaymath}
[T_{f,f'}] = \sum_{r = 1}^n\lambda_r[F_r]
\end{displaymath}
\bn where each $F_r \in H_1(\cI(S', S_g);\Q)^{T_v}$ for some $v \in \calV'$.  Then each $F_r$ is supported on $S'$, so each $F_r$ commutes with $T_{d,d'}$.  Therefore, there is a relation in $H_2(\cI_g;\Q)$ given by
\begin{displaymath}
[T_{d,d'}, T_{f,f'}] = \sum_{r = 1}^n\lambda_r[T_{d,d'}, F_r]
\end{displaymath}
\bn where each $[F_r] \in H_1(\cI(S',S_g);\Q)^{T_v}$ for some $v \in \calV'$.  

We now show that each $[F_r]$ can be written as a sum of classes represented by bounding pair maps.  Choose an $r$ with $1 \leq r \leq n$ and let $v \in \calV'$ such that $[F_r] \in H_1(S';\Q)^{T_v}$. Lemma~\ref{dehntransvectionswitch} says that $\midwedge^3 H_1(\cI(S',S_g);\Q)^{T_v} \cong\midwedge^3 v^{\perp} \cap \midwedge^3 H_1(S';\Q)$, and this isomorphism is induced by restricting the Johnson homomorphism.  Then Lemma \ref{bpjohnsoncomp} and Lemma \ref{wedgegenlemma} combine to tell us that $H_1(\cI(S' \cut c, S_g);\Q)^{T_v}$ is generated by classes represented by bounding pair maps, since each element in the basis for $\midwedge^3 v^{\perp} \cap H_1(S';\Q)$ in Lemma \ref{wedgegenlemma} is the image under $\tau_g$ of a bounding pair map.  Therefore we may rewrite $[F_r]$ as a linear combination of classes $[T_{f_{1,r}, f_{1,r}'}], \ldots, [T_{f_{\ell_r,r}, f_{m_r, r}'}] \in H_1(\cI(S', S_g);\Q)^{T_v}$.  Since each $T_{f_{\ell, r}, f'_{\ell, r}}$ is supported on $S'$, each $T_{f_{\ell, r}, f'_{\ell, r}}$ commutes with $T_{d,d'}$.  Therefore we have a relation
\begin{displaymath}
[T_{d,d'}, T_{f,f'}] = \sum_{r = 1}^n \lambda_r \sum_{\ell = 1}^{m_r} [T_{d,d'}, T_{f_{\ell, r}, f'_{\ell, r}}]
\end{displaymath}
\bn which by forgetting the $r$ index yields a relation
\begin{displaymath}
[T_{d,d'}, T_{f,f'}] = \sum_{\ell = 1}^m \lambda_{\ell} [T_{d,d'}, T_{f_\ell, f_\ell'}].
\end{displaymath}
\bn Now, since each $T_{f_{\ell}, f_\ell'}$ is stabilized by some $T_v \in T_{\calV'}$, we have each $[T_{f_{\ell}, f'_{\ell}}]\in H_1(\cI_g;\Q)^{T_v}$ for some $v \in \calV'$, as desired.
\end{proof}

\subsection{The proof of Lemma~\ref{abelcyclestabfourlemma}} 

\bn We begin by extending Lemma~\ref{unimodwedgethreelemma} into the following.

\begin{lemma}\label{unimodwedgethreept2lemma}
Let $L$ be a quasi--unimodular lattice.  Let $\calW = \{w_1,\ldots, w_7\}$ be a set of primitive elements in $L$ such that the image of $\calW$ under the adjoint map $L \rightarrow \ho_{\Z}(L,\Z)$ is $\Q$--linearly independent.  Then the natural map
\begin{displaymath}
\varphi: \bigoplus_{\calW' \subseteq\calW: \left|\calW' \right| = 4} \midwedge^3\left(\left(\calW'\right)^{\perp} \otimes \Q\right) \rightarrow \midwedge^3\left(L \otimes \Q\right)
\end{displaymath}
\bn is surjective.
\end{lemma}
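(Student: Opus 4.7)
The plan is to reduce Lemma~\ref{unimodwedgethreept2lemma} to Lemma~\ref{unimodwedgethreelemma}. The key claim to establish is that for each $w \in \calW$, the subspace $\midwedge^3(w^\perp \otimes \Q)$ lies in the image of $\varphi$. Once this is known, Lemma~\ref{unimodwedgethreelemma} applied to any four elements of $\calW$ (they are primitive by hypothesis, and their adjoint functionals are linearly independent as a subset of a linearly independent set) yields $\sum_{w \in \calW}\midwedge^3(w^\perp \otimes \Q) = \midwedge^3(L \otimes \Q)$, so the image of $\varphi$ is everything.

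By symmetry we fix $w = w_1$ and aim to show that the family of subspaces $\midwedge^3(\{w_1\} \cup \calW'')^\perp \otimes \Q$, as $\calW''$ ranges over three-element subsets of $\{w_2, \ldots, w_7\}$, spans $\midwedge^3(w_1^\perp \otimes \Q)$; each such subspace lies in the image of $\varphi$ because $\{w_1\} \cup \calW''$ has four elements. First we will verify that the restricted functionals $\phi_{w_2}|_{w_1^\perp}, \ldots, \phi_{w_7}|_{w_1^\perp}$ are six linearly independent functionals on $w_1^\perp \otimes \Q$. The kernel of the restriction map $(L\otimes \Q)^* \to (w_1^\perp \otimes \Q)^*$ is one-dimensional and spanned by the image of $w_1$ under the adjoint (this functional vanishes on $w_1^\perp$ because the form is alternating), so any nontrivial linear dependence among the restrictions would force a nontrivial linear dependence among $\phi_{w_1}, \ldots, \phi_{w_7}$, contradicting the hypothesis.

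Next we choose $f_2, \ldots, f_7 \in w_1^\perp \otimes \Q$ with $\phi_{w_i}(f_j) = \delta_{ij}$ (possible by linear independence of the restricted functionals) and extend by a basis $e_1, \ldots, e_m$ of $\bigcap_{i=2}^{7}\ker(\phi_{w_i}|_{w_1^\perp})$ to obtain a basis of $w_1^\perp \otimes \Q$. In this basis, for a three-element subset $\calW'' = \{w_{j_1}, w_{j_2}, w_{j_3}\}$, the subspace $(\{w_1\} \cup \calW'')^\perp \otimes \Q$ is spanned by the $e_i$ together with the three $f_j$ for $j \in \{2, \ldots, 7\} \setminus \{j_1, j_2, j_3\}$. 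Any basis wedge of $\midwedge^3(w_1^\perp \otimes \Q)$ involves at most three $f_j$-factors, and since there are six possible indices, we can choose $\calW''$ to avoid the indices of those factors, placing the wedge in $\midwedge^3(\{w_1\} \cup \calW'')^\perp \otimes \Q$. This exhausts a spanning set of $\midwedge^3(w_1^\perp \otimes \Q)$, completing the reduction.

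The main obstacle is essentially bookkeeping: verifying the linear independence of the restricted functionals, setting up the adapted basis, and checking the combinatorial claim that every basis wedge of $\midwedge^3(w_1^\perp \otimes \Q)$ fits into some $\midwedge^3(\{w_1\} \cup \calW'')^\perp \otimes \Q$. A direct inclusion--exclusion argument in the style of Lemma~\ref{unimodwedgethreelemma} is also possible in principle, but the intersections of the subspaces $\midwedge^3(\calW')^\perp$ for four-element subsets $\calW' \subseteq \calW$ are controlled by the sizes of unions of four-element subsets of a seven-element set, yielding considerably more intricate combinatorics; the reduction approach neatly sidesteps this.
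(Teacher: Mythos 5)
Your proposal is correct, and it takes a genuinely different route from the paper. The paper proves the lemma by an induction on a parameter $k$ ranging from $1$ to $4$: it introduces auxiliary maps $\varphi_k$ whose source is a direct sum indexed by $k$--element subsets of $\calW$, establishes $\varphi_1$ surjective by one application of Lemma~\ref{unimodwedgethreelemma}, and then passes from $\varphi_{k-1}$ to $\varphi_k$ by applying Lemma~\ref{unimodwedgethreelemma} again to each sublattice $(\calW'')^\perp$ with $\left|\calW''\right| = k-1$, iterating until reaching $k=4$. You instead reduce at once to showing $\midwedge^3(w_i^\perp \otimes \Q) \subseteq \im(\varphi)$ for each $i$, and you prove this containment not by another appeal to Lemma~\ref{unimodwedgethreelemma} but by an explicit basis computation: the dual vectors $f_2,\ldots,f_7$ to the restricted functionals $\phi_{w_j}|_{w_1^\perp}$, completed by a basis of the common kernel, reduce the question to the purely combinatorial fact that a $3$--element set of indices in $\{2,\ldots,7\}$ can always be avoided by another disjoint $3$--element set — possible precisely because $6 \geq 3 + 3$, matching the threshold $\left|\calW\right| = 7$ required by the paper's count $\left|\calW \setminus \calW''\right| \geq 4$. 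You then invoke Lemma~\ref{unimodwedgethreelemma} once, at the end, to see that the $\midwedge^3(w_i^\perp \otimes \Q)$ span $\midwedge^3(L \otimes \Q)$. The paper's induction is shorter on the page and treats Lemma~\ref{unimodwedgethreelemma} entirely as a black box, whereas your argument makes the combinatorics visible at the level of basis wedges and uses the auxiliary lemma only once; both are valid for quasi--unimodular lattices, and the only very minor blemish in yours is the aside that $\phi_{w_1}$ vanishes on $w_1^\perp$ "because the form is alternating" — this vanishing is immediate from the definition of $w_1^\perp$ and does not need the alternating hypothesis.
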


\begin{proof}
We will prove inductively that the natural maps
\begin{displaymath}
    \varphi_{k}: \bigoplus_{\calW' \subseteq\calW: \left|\calW' \right| = k} \midwedge^3\left(\left(\calW'\right)^{\perp} \otimes \Q\right) \rightarrow \midwedge^3\left(L \otimes \Q\right)
\end{displaymath}
\bn are surjective for $k \leq 4$.

\p{Base case: $k = 1$} This follows by applying Lemma~\ref{unimodwedgethreelemma} with any subset $\calW' \subseteq \calW$ with $\left|\calW'\right| = 4$.  

\p{Inductive step: $2 \leq k \leq 4$}  Assume that the map $\varphi_{k'}$ is surjective for all $1 \leq k' < k$.  We will show that $\varphi_k$ is surjective as well.  Since $\varphi_{k'}$ is surjective, it suffices to show that, for any $\calW'' \subseteq \calW$ with $\left|\calW''\right| = k-1$, we have
\begin{displaymath}
    \im(\varphi_{k}) \supseteq \im\left(\midwedge^3\left(\calW''\right)^{\perp}\otimes \Q \rightarrow \midwedge^3 L \otimes \Q \right). 
\end{displaymath}
\bn Since $k \leq 4$ and $\left|\calW''\right| < k$, we have $\left|\calW \setminus\calW''\right| \geq 7-k > 7-4 = 3$, so $\left|\calW \setminus \calW'' \right| \geq 4$.  Hence the natural map
\begin{displaymath}
    \varphi_{\calW''}: \bigoplus_{w \in \calW \setminus \calW''}\midwedge^3\left(\calW'' \cup \{w\}\right)^{\perp} \otimes \Q \rightarrow \midwedge^3\left(\calW''\right)^{\perp} \otimes \Q
\end{displaymath}
\bn is surjective by Lemma~\ref{unimodwedgethreelemma}.  But then $\im(\varphi_{k'} \circ \varphi_{\calW''}) \subseteq \im(\varphi_k)$ for any $\calW'' \subseteq \calW$, so $\im(\varphi_{k'}) \subseteq \im(\varphi_k)$.  Since $\im(\varphi_{k'}) = \midwedge^3 L \otimes \Q$ by the inductive hypothesis, the proof is complete.
\end{proof}

\bn We use Lemma~\ref{unimodwedgethreept2lemma} to prove the following result about $H_2^{\ab,\bp}(\cI_g;\Q)$.

\begin{lemma}\label{abelcyclestabfourlemma}
Let $g \geq \finalbound$ and let $[T_{d,d'}, T_{e,e'}] \in H_2^{\ab,\bp}(\cI_g;\Q)$ be an abelian cycle.  Assume that one connected component of $S_g \cut \left(d \cup d'\right)$ has genus one.  Let $M \subseteq S_g$ be a nonseparating multicurve with $\left|\pi_0(M)\right| = 9$ and let $\calV = \{[c]: c \in \pi_0(M)\}$.  Then there is a linear relation
\begin{displaymath}
        [T_{d,d'}, T_{e,e'}] = \sum_{i=1}^k \lambda_i[T_{d,d'}, T_{f_i,f'_i}]
    \end{displaymath}
    \bn with each $T_{f_i, f'_i} \in H_1(\cI_g;\Q)^{T_{\calV'}}$ for some $\calV' \subseteq \calV$ and $\left|\calV'\right| = 4.$
\end{lemma}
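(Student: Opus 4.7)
The plan is to reduce to the case $e,e' \subseteq S_2$, where $S_1$ and $S_2$ denote the genus--one and genus--$(g-2)$ components of $S_g \cut (d \cup d')$, and then apply Lemma~\ref{unimodwedgethreept2lemma} to rewrite $\tau_g(T_{e,e'})$ inside $\midwedge^3 H_1(S_2;\Q)$. If $e$ and $e'$ lie in different components, I would use the factorization $T_{e,e'} = T_{e,d} T_{d,e'}$ (as in the start of the proof of Lemma~\ref{abcyclefourstablemma}) to split into bounding pair maps supported on a single component. If $e,e' \subseteq S_1$, then $\tau_g(T_{e,e'})$ lies in the image of the one--dimensional space $\midwedge^3 H_1(S_1;\Q)$, which is spanned by $\tau_g(T_{d,d'})$, forcing $[T_{e,e'}] = c \cdot [T_{d,d'}]$ in $H_1(\cI_g;\Q)$ for some $c \in \Q$; then by bilinearity of abelian cycles paired with the commuting element $T_{d,d'}$, we get $[T_{d,d'},T_{e,e'}] = c \cdot [T_{d,d'},T_{d,d'}] = 0$ and the lemma holds trivially.

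Now assume $e,e' \subseteq S_2$. The key step uses the genus--one hypothesis through a dimension count. Since the curves in $\pi_0(M)$ are pairwise disjoint, the classes in $\calV$ are pairwise orthogonal under the symplectic form, so $\Span(\calV) \subseteq H_1(S_g;\Q)$ is an isotropic subspace. On the other hand, $H_1(S_1;\Q)$ is three--dimensional with a symplectic pair in its nondegenerate part, so every isotropic subspace of $H_1(S_1;\Q)$ has dimension at most two. The projection $\pi \colon H_1(S_g;\Q) \to \ho(H_1(S_2;\Q),\Q)$ given by $v \mapsto \langle v,\cdot\rangle|_{H_1(S_2;\Q)}$ has kernel $H_1(S_2;\Q)^{\perp} = H_1(S_1;\Q)$, so $\pi(\Span(\calV))$ has rank at least $9 - 2 = 7$. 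I would choose a subset $\calW \subseteq \calV$ with $|\calW| = 7$ such that $\pi(\calW)$ is linearly independent.

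The proof of Lemma~\ref{unimodwedgethreept2lemma} depends only on linear independence of the adjoint functionals via inclusion--exclusion on dimensions, so it applies equally well to any seven linearly independent hyperplanes in $H_1(S_2;\Q)$, in particular to the kernels $\ker(\pi(w)|_{H_1(S_2;\Q)})$ for $w \in \calW$. This yields a surjection
\[
\bigoplus_{\calW' \subseteq \calW,\, |\calW'|=4} \midwedge^3\!\bigl((\calW')^{\perp} \cap H_1(S_2;\Q)\bigr) \longrightarrow \midwedge^3 H_1(S_2;\Q).
\]
Via Lemma~\ref{firsthomolratimlemma}, $\tau_g(T_{e,e'}) \in \midwedge^3 H_1(S_2;\Q)$, and we decompose it as $\sum_i x_i$ with $x_i \in \midwedge^3((\calW'_i)^{\perp} \cap H_1(S_2;\Q))$. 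By Lemma~\ref{wedgegenlemma} applied to the quasi--unimodular lattice $(\calW'_i)^{\perp} \cap H_1(S_2;\Z)$ (whose genus is at least $g-4 \geq 1$ since $g \geq \finalbound$), together with Lemma~\ref{bpjohnsoncomp} as at the end of the proof of Lemma~\ref{abcyclefourstablemma}, each $x_i$ is a $\Q$--linear combination of Johnson images of bounding pair maps $T_{f_j,f'_j}$ supported in $S_2$ with $[T_{f_j,f'_j}] \in H_1(\cI_g;\Q)^{T_{\calW'_i}}$.

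Finally, since each such $T_{f_j,f'_j}$ is supported in $S_2$ it commutes with $T_{d,d'}$, so bilinearity of abelian cycles paired with $T_{d,d'}$ lifts the $H_1$--level relation $[T_{e,e'}] = \sum_j \lambda_j [T_{f_j,f'_j}]$ in $H_1(\cI(S_2,S_g);\Q)$ to the desired $H_2$--level relation $[T_{d,d'},T_{e,e'}] = \sum_j \lambda_j [T_{d,d'},T_{f_j,f'_j}]$ in $H_2(\cI_g;\Q)$, with each $\calW'_i \subseteq \calV$ of size four playing the role of $\calV'$. The main obstacle is the dimension count: we need $\pi(\calV)$ to have rank at least seven, which relies crucially on both the isotropy of $\Span(\calV)$ (from the multicurve condition) and the genus--one hypothesis on $S_1$ (which caps isotropic subspaces of $H_1(S_1;\Q)$ at dimension two), together forcing the tight count $9 - 2 = 7$ that explains the choice $|\pi_0(M)| = 9$.
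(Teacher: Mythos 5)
Your overall strategy matches the paper's: reduce to $e, e' \subseteq S_2$, project $\calV$ into $H_1(S_2;\Q)$ to get at least $7$ adjoint-independent classes, apply Lemma~\ref{unimodwedgethreept2lemma}, convert back to bounding pairs via Lemmas~\ref{wedgegenlemma} and~\ref{bpjohnsoncomp}, and lift to $H_2$ using the commuting $T_{d,d'}$. Your dimension count via $\ker(\pi) = H_1(S_2;\Q)^{\perp}$ and the isotropy of $\Span(\calV)$ is a cleaner reformulation of the paper's projection to $H_1(S';\Q) \cap [b]^{\perp}$ (both yield the same $4$-dimensional, genus-$2$ kernel), and you correctly identify this as the source of the constant $9$.

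There is, however, one step whose reasoning is not valid as written. In your handling of the subcase $e, e' \subseteq S_1$, you deduce $[T_{e,e'}] = c \cdot [T_{d,d'}]$ in $H_1(\cI_g;\Q)$ and then claim $[T_{d,d'}, T_{e,e'}] = c\cdot[T_{d,d'}, T_{d,d'}] = 0$ ``by bilinearity of abelian cycles.'' Abelian cycles are additive over commuting products of group elements, i.e.\ $\ab(x, yz) = \ab(x,y) + \ab(x,z)$ when $x,y,z$ pairwise commute, but they are not linear in the image of the second argument in $H_1$: knowing $[T_{e,e'}] = c[T_{d,d'}]$ does not let you replace $T_{e,e'}$ by $(T_{d,d'})^c$ inside the abelian cycle, since the two can differ by an element of $[\cI_g,\cI_g]$ that still commutes with $T_{d,d'}$. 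The statement you want is not a consequence of the homology class alone. Fortunately this subcase is vacuous: if $e$ and $e'$ both lie in the genus-one component $S_1$ and form a bounding pair in $S_g$, an Euler characteristic count on the two pieces of $S_g \cut (e\cup e')$ forces one piece to be an annulus, so $e$ and $e'$ are isotopic and $T_{e,e'}$ is trivial. So your conclusion holds, but for the wrong reason; the subcase should be dispatched by that topological argument (which is implicitly what the paper's assertion ``we must have $e, e' \subseteq S'$'' is relying on) rather than by any appeal to linearity of abelian cycles.
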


\begin{proof}
The proof follows the same approach as Lemma~\ref{abcyclefourstablemma}.  Let $S \sqcup S = S_g \cut \left(d \cup d' \right)$ with $g(S) = 1$.  Since $e \cup e'$ is disjoint from $d \cup d'$, we have $e, e' \subseteq S_1$.  Since $g(S) = 1$, we must have $e,e' \subseteq S'$.  Now, choose a primitive element $[b] \in H_1(S_g;\Z)$ such that $[b]\in [e]^{\perp}$ and $\langle [b], [d] \rangle = 1$.  Since $g(S) = 1$ and $\langle [b], [d] \rangle = 1$, we have $g(H_1(S';\Z) \cap [b]^{\perp}) = g - 2$.  Then since the elements of $\calV$ have pairwise trivial algebraic intersection, the projection of $\calV$ to $H_1(S';\Z) \cap [b]^{\perp}$ must contain a set of at least 7 elements whose image under the adjoint map $H_1(S';\Z) \cap [b]^{\perp} \rightarrow \ho_{\Z}(H_1(S';\Z), \Z)$ is $\Q$--linearly independent.  Let $\overline{\calV}$ be the image of the projection of $\calV$ to $H_1(S';\Z) \cap [b]^{\perp}$.  Let $\calB \subseteq \overline{\calV}$ be a set of at least 7 linearly independent elements.  By applying Lemma~\ref{unimodwedgethreept2lemma} to $H_1(S';\Q)$ and $\calB$, there is a surjection

\begin{displaymath}
\bigoplus_{\calB' \subseteq \calB:\left|\calB'\right| = 4} \midwedge^3 \left(\calB'\right)^{\perp} \rightarrow \midwedge^3 H_1(S';\Q).
\end{displaymath}
\bn Let $\calV_{\calB}$ be the preimage of $\calB$ under the projection map $\calV \rightarrow \overline{\calV}$.  Since for any $v \in \calV$, we have $v^{\perp} \cap H_1(S';\Q) = \left(\proj_{H_1(S';\Q) \cap [b]^{\perp}}v\right)^{\perp} \cap H_1(S';\Q)$, we therefore have a surjection
\begin{displaymath}
\bigoplus_{\calV' \subseteq \calV_{\calB}:\left|\calV'\right| = 4} \midwedge^3 \left(\calV'\right)^{\perp} \cap H_1(S';\Q) \rightarrow \midwedge^3 H_1(S';\Q).
\end{displaymath}
\bn By construction, the complement $S_g \cut S'$ is connected and has $g(S_g \cut S') \geq 1$.  Hence by Lemma~\ref{firsthomolratimlemma}, there is an isomorphism $H_1(\cI(S', S_g);\Q) \cong \midwedge^3H_1(S';\Q)$.  Furthermore, by applying Lemma~\ref{dehntransvectionswitch}, we have $\midwedge^3 \left(\calV'\right)^{\perp} \cap H_1(S';\Q) \cong  \midwedge^3 H_1(S';\Q)^{T_{\calV'}}$.  Then, since $T_{e,e'} \in \cI(S', S_g)$, we have a relation in $H_1(\cI(S', S_g);\Q)$ given by
\begin{displaymath}
[T_{e,e'}] = \sum_{i=1}^k \lambda_i [F_i]
\end{displaymath}
\bn where each $[F_i] \in H_1(\cI_g;\Q)^{T_{\calV'}}$ for some $\calV' \subseteq \calV_{\calB}$ with $\left|\calV'\right| = 4$.  Since any $f \in \cI(S', S_g)$ commutes with $T_{d,d'}$, we have a relation in $H_2(\cI_g;\Q)$ given by
\begin{displaymath}
[T_{d,d'}, T_{e,e'}] = \sum_{i=1}^k \lambda_i[T_{d,d'}, F_i]
\end{displaymath}
\bn where each $[F_i] \in H_1(\cI(S', S_g);\Q)^{T_{\calV'}}$ for some $\calV' \subseteq \calB$ with $\left|\calV'\right| = 4$.  By Lemma~\ref{dehntransvectionswitch}, we have an isomorphism $H_1(\cI(S',S_g);\Q)^{T_{\calV'}} \cong \midwedge^3 \im(H_1(S';\Q) \rightarrow H_1(S_g;\Q)) \cap \midwedge^3 (\calV')^{\perp}$.  Then Lemma~\ref{wedgegenlemma} gives us a spanning set for the vector space $\midwedge^3 \im(H_1(S';\Q) \rightarrow H_1(S_g;\Q)) \cap \midwedge^3 (\calV')^{\perp}$, and by Lemma~\ref{bpjohnsoncomp} each element of this spanning set is given by applying $\tau_g$ to a bounding pair map supported on a bounding pair $f \cup f' \subseteq S'$.  Hence there is a relation in $H_2(\cI_g;\Q)$ given by
\begin{displaymath}
        [T_{d,d'}, T_{e,e'}] = \sum_{i=1}^k \lambda_i[T_{d,d'}, T_{f_i,f'_i}]
    \end{displaymath}
    \bn such that each $T_{f_i, f'_i} \in H_1(\cI_g;\Q)^{T_{\calV'}}$ for some $\calV' \subseteq \calV$ with $\left|\calV'\right| = 4$, so the lemma holds.
\end{proof}

\bn We will need one more auxiliary result, which well help us apply Lemma~\ref{abelcyclestabfourlemma}.

\begin{lemma}\label{genusoneabelcyclelemma}
Let $g \geq \finalbound$ and let $[T_{d,d'}, T_{e,e'}] \in H_2^{\ab,\bp}(\cI_g;\Q)$.  Then there is a linear relation
\begin{displaymath}
[T_{d,d'}, T_{e,e'}] = \sum_{j = 1}^r [T_{d_j, d'_j}, T_{e,e'}]
\end{displaymath}
\bn in $H_2^{\ab, \bp}(\cI_g;\Q)$ such that , for each $1 \leq j \leq r$, at least one connected component of $S_g \cut \left(d_j \cup d'_j\right)$ has genus one.
\end{lemma}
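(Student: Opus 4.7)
The plan is to use Lemma~\ref{firsthomolratimlemma} to realize $[T_{d,d'}]$, via the Johnson homomorphism, as $[d] \wedge \omega_U$ for a side $U$ of $d \cup d'$, then algebraically decompose $\omega_U = \sum_i \alpha_i \wedge \beta_i$ into rank-one pieces so that each summand corresponds to a bounding pair map with a genus-one side. The entire computation must be carried out in a subsurface disjoint from $e \cup e'$ so that the resulting relation lifts to abelian cycles.

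Let $U_1, U_2$ be the two sides of $d \cup d'$. If either has genus one, take $r = 1$ with $(d_1, d_1') = (d, d')$ and $\lambda_1 = 1$. Otherwise both $g(U_i) \geq 2$. Assume first that $e \cup e' \subseteq U_1$, so that $U_2$ is disjoint from $e \cup e'$. Let $h = g(U_2) \geq 2$ and fix a symplectic basis $\alpha_1, \beta_1, \ldots, \alpha_h, \beta_h$ of a maximal unimodular subgroup of $H_1(U_2; \Z)$. Lemma~\ref{bpjohnsoncomp} gives $\tau_g(T_{d,d'}) = \sum_{i=1}^h [d] \wedge \alpha_i \wedge \beta_i$. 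By the change of coordinates principle, for each $i$ we can find a bounding pair $c_i \cup c'_i \subseteq U_2$ that cuts off a genus-one subsurface $T_i \subseteq U_2$ with symplectic basis $(\alpha_i, \beta_i)$ and with $[c_i] = [d]$ in $H_1(S_g; \Z)$; Lemma~\ref{bpjohnsoncomp} then gives $\tau_g(T_{c_i, c'_i}) = [d] \wedge \alpha_i \wedge \beta_i$.

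To lift the identity $\tau_g(T_{d,d'}) = \sum_i \tau_g(T_{c_i, c'_i})$ to $H_1(\cI(S, S_g); \Q)$, we choose a connected subsurface $S \subseteq S_g$ satisfying the hypotheses of Lemma~\ref{firsthomolratimlemma} (namely $g(S) \geq 3$, $|\pi_0(\partial S)| \leq 2$, and $S_g \cut S$ connected with $g(S_g \cut S) \geq 1$), disjoint from $e \cup e'$ and containing $d \cup d' \cup \bigcup_i (c_i \cup c'_i)$ in its interior. Take $S$ to be a regular neighborhood of $U_2 \cup d \cup d'$, enlarged by a genus-one handle attached through a tube in $U_1 \setminus N(e \cup e')$ whenever $h = 2$; the hypothesis $g \geq \finalbound$ ensures the existence of such a handle, since $U_1 \setminus N(e \cup e')$ has genus at least $g - 4 \geq 47$. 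Lemma~\ref{firsthomolratimlemma} then yields $H_1(\cI(S, S_g); \Q) \cong \midwedge^3 H_1(S; \Q)$, upgrading the Johnson identity to $[T_{d,d'}] = \sum_i [T_{c_i, c'_i}]$ in $H_1(\cI(S, S_g); \Q)$. Each $T_{c_i, c'_i}$ is supported on $S$, so it commutes with $T_{e,e'}$, and a K\"unneth-type pairing with $[T_{e,e'}]$ yields the desired relation $[T_{d,d'}, T_{e,e'}] = \sum_i [T_{c_i, c'_i}, T_{e,e'}]$ in $H_2^{\ab, \bp}(\cI_g; \Q)$.

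When $e \in U_1$ and $e' \in U_2$ lie on opposite sides of $d \cup d'$, a Mayer--Vietoris calculation applied to the decomposition $S_g = V_1 \cup_{e \cup e'} V_2$ forces $[d] = \pm [e]$. In this case we introduce auxiliary curves $d_1 \in V_1$ and $d_2 \in V_2$ both homologous to $d$, pairwise disjoint from each other and from $d, d', e, e'$, and chosen so that $d_1 \cup d_2$ itself bounds a genus-one subsurface of $S_g$ (possible by the change of coordinates principle using $g \geq \finalbound$). The bounding pair identity $T_{d,d'} = T_{d, d_1}\, T_{d_1, d_2}\, T_{d_2, d'}$ then yields
\[
[T_{d,d'}, T_{e,e'}] = [T_{d, d_1}, T_{e,e'}] + [T_{d_1, d_2}, T_{e,e'}] + [T_{d_2, d'}, T_{e,e'}]
\]
in $H_2^{\ab, \bp}(\cI_g; \Q)$. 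The middle term already has a genus-one side, and the outer terms fall under the previous case. The principal difficulty is the topological construction of $S$ in the $h = 2$ case and of the straddling genus-one bounding pair $d_1 \cup d_2$; both hinge on the genus bound $g \geq \finalbound$ to guarantee enough room to avoid $e \cup e'$.
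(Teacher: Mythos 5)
Your argument is correct, but it takes a substantially more elaborate route than the paper. The paper handles the same-side case by a purely group-theoretic device: choose a chain of pairwise-disjoint curves $d_0 = d, d_1, \dots, d_k = d'$ inside the side $U_2$ disjoint from $e \cup e'$, all homologous to $d$, with the region between consecutive curves a genus-one subsurface, and then use the telescoping identity $T_{d,d'} = \prod_{i=0}^{k-1} T_{d_i, d_{i+1}}$ together with bilinearity of abelian cycles. This avoids the Johnson homomorphism entirely and never needs a subsurface $S$ satisfying the hypotheses of Lemma~\ref{firsthomolratimlemma}, so it bypasses exactly the edge case that costs you the most effort: when $g(U_2) = 2$ you have to enlarge $S$ with a handle and then verify $g(S) \geq 3$, $\left|\pi_0(\partial S)\right| \leq 2$, and that $S_g \cut S$ stays connected. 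For the opposite-sides case, the paper splits the \emph{second} bounding pair rather than the first, writing $T_{e,e'} = T_{e,d} T_{d,e'}$ (valid because $[e] = \pm [d]$, which your Mayer--Vietoris observation also gives); this reduces directly to the same-side case without needing to construct auxiliary straddling curves $d_1, d_2$ bounding a genus-one piece. Both proofs are ultimately performing the same geometric decomposition of $U_2$ into genus-one slices, but the paper does this at the level of curves while you pass through $\tau_g$, decompose $\omega_{U_2}$ into rank-one wedges, realize each as a bounding pair map, and then lift back via Lemma~\ref{firsthomolratimlemma}; the former is simply cheaper. Your route does have the side benefit of making the Johnson image of each summand explicit --- the same mechanism the paper deploys in Lemma~\ref{abcyclefourstablemma} --- so it is instructive to see it works here too.
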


\begin{proof}
Let $S \cup S'$ be the connected components of $S_g \cut (d \cup d')$.  If $e, e'$ are contained in different connected components of $S_g \cut \left(d \cup d' \right)$, we have a relation $T_{e,e'} = T_{e,d} T_{d,e'}$.  Hence there is a relation
\begin{displaymath}
    [T_{d,d'},T_{e,e'}] = [T_{d,d'}, T_{e,d}] + [T_{d,d'}, T_{d,e'}]
\end{displaymath}
\bn in $H_2^{\ab, \bp}(\cI_g;\Q)$.  Therefore without loss of generality, we may assume that $e,e' \subseteq S$.  Now, let $d_0, \ldots,d_k \subseteq S'$ be curves such that the following hold:
\begin{itemize}
    \item $d_0 = d$,
    \item $d_k = d'$, 
    \item $d_i$ and $d_j $ are disjoint for all $0 \leq i < j \leq k$,
    \item $d_i$ is homologous to $d$ for all $0 \leq i \leq k$, and
    \item the connected components of $S' \cut \left(\bigcup_{0 \leq i \leq k} d_i \right)$ all have genus one.
\end{itemize}
 \bn By construction, we have $T_{d,d'} = \prod_{0 \leq i \leq k - 1}T_{d_i, d_{i+1}}$.  Furthermore, each bounding pair $d_i \cup d_{i+1}$ is contained in $S'$, and hence each $T_{d_i, d_{i+1}}$ commutes with $T_{e,e'}$.  Therefore there is a relation in $H_2^{\ab, \bp}(\cI_g;\Q)$ given by
 \begin{displaymath}
     [T_{d,d'}, T_{e,e'}] = \sum_{i=0}^{k-1}[T_{d_i, d_{i+1}}, T_{e,e'}],
 \end{displaymath}
\bn so the lemma holds.
\end{proof}

\bn We now conclude Section~\ref{abelcyclesection}.  

\begin{proof}[Proof of Proposition~\ref{abcycleprop}]
Let $[T_{d,d'}, T_{e,e'}] \in H_2^{\ab, \bp}(\cI_g;\Q))$ be an abelian cycle.  The proof of the proposition proceeds in two steps.
\begin{enumerate}
    \item There is a relation
    \begin{displaymath}
        [T_{d,d'}, T_{e,e'}] = \sum_{i=1}^k \lambda_i[T_{d,d'}, T_{f_i,f'_i}]
    \end{displaymath}
    \bn with each $[T_{f_i, f'_i}] \in H_1(\cI_g;\Q)^{T_{\calV'}}$ with $\calV' \subseteq \calV$ and $\left|\calV'\right| = 4.$
    \item If $[T_{d,d'}, T_{f_i,f'_i}]$ is an abelian cycle with $T_{f_i, f'_i} \in H_1(\cI_g;\Q)^{T_{\calV'}}$ with $\calV' \subseteq V$ and $\left|\calV'\right| = 4$, then there is a relation
    \begin{displaymath}
    [T_{d,d'}, T_{f_i, f'_i}] = \sum_{j=1}^m \lambda_j [T_{h_j, h'_j}, T_{f_i, f'_i}]
    \end{displaymath}
    \bn with each $[T_{h_j, h'_j}] \in H_1(\cI_g;\Q)^{T_v}$ for some $v \in \calV'.$
\end{enumerate}

\bn Lemma~\ref{genusoneabelcyclelemma} says that we may assume without loss of generality that at least one connected component of $S_g \cut(d \cup d')$ has genus one.  Then Step (1) is the content of Lemma~\ref{abelcyclestabfourlemma} and Step (2) is the content of Lemma~\ref{abcyclefourstablemma}.
\end{proof}

\section{Finiteness of coinvariants in $\BM_2(X_g;\Q)$}\label{bmfinquotsection}

The main goal of this section is to prove the following result. 
\begin{lemma}\label{bmfinquotlemma}
Let $g \geq \finalbound$ and $a \subseteq S_g$ a nonseparating curve.  Let $[b] \in H_1(S_g;\Z)$ be a homology class with $\langle [a], [b] \rangle = 1$.  Let $b \subseteq S_g$ be a representative of $[b]$ with geometric intersection $\left|a \cap b \right| = 1$.  Let $M \subseteq S_g \cut (a \cup b)$ be a nonseparating multicurve with $\left|\pi_0(M)\right| = 9$.  Let $G$ be the image of the map $\Mod(S_g \cut \left(a \cup b \cup M\right)) \rightarrow \Sp(2g,\Z)$.  The vector space
\begin{displaymath}
H_0(G;\BM_2(X_g;\Q))
\end{displaymath}
\bn is finite dimensional.
\end{lemma}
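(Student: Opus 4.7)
The subspace $\BM_2(X_g;\Q)$ is spanned by classes $[\BM_\sigma]$ of Bestvina--Margalit tori, and by Lemma~\ref{bmfinquotsplit} each such class is determined by the unordered orthogonal decomposition $\calH(\BM_\sigma)=\{\calH_0,\calH_1,\calH_2\}$ of $[a]^\perp/\Z[a]$ into three symplectic summands of positive genus summing to $g-1$. The plan is to show that, modulo the relations already holding in $H_2(X_g;\Q)$ and the action of $G$, only finitely many such decompositions are needed.

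I would first reduce to decompositions whose summands have small genus. Applying Lemma~\ref{lemmaaltcohomolauxadd} together with the combinatorial formula of Lemma~\ref{bmfinquotfundclassadd}, one obtains the identity
$$[\BM(\calH_0,\calH_1,\calH_2)] \;=\; [\BM(\calH_0,A,B+\calH_2)] + [\BM(\calH_0,B,A+\calH_2)]$$
whenever $\calH_1=A\oplus B$ is an orthogonal splitting with $g(A)=1$. Iterating this on each of the three summands in turn reduces every $[\BM_\sigma]$, modulo relations, to a combination of Bestvina--Margalit classes whose decompositions have two summands of genus one and one summand of genus $g-3$.

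Next, I would analyze the $G$--action on the reduced generators. By Lemma~\ref{spgenlemma}, $G$ is generated by transvections, and since the cut-open surface $S_g\cut(a\cup b\cup M)$ has genus at least roughly $g-10\geq 41$, the group $G$ is a large subgroup of the stabilizer in $\Sp(2g,\Z)$ of the classes $[a],[b],[c_1],\ldots,[c_9]$. I would classify the $G$--orbits of reduced decompositions by discrete combinatorial data: the partition of $\{[c_1],\ldots,[c_9]\}$ recording which summand each class projects into nontrivially, together with the triple of summand genera. A change-of-coordinates argument inside the image of the cut-open mapping class group, using transvections along primitive elements of $[b]^\perp\cap [c_1]^\perp\cap\cdots\cap[c_9]^\perp$ supplied by Dehn twists on the cut-open surface, then shows that two reduced decompositions with the same combinatorial type lie in a single $G$--orbit. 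Since there are only finitely many such types, the coinvariant space $H_0(G;\BM_2(X_g;\Q))$ is finite-dimensional.

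The main obstacle is executing the change-of-coordinates argument inside $G$ rather than inside the full $\Sp(2g,\Z)$, and verifying that each symplectic transformation required can be realized by transvections coming from Dehn twists supported on $S_g\cut(a\cup b\cup M)$. This is where the bound $g\geq \finalbound$ enters: the cut-open surface must have enough genus for every pair of decompositions of the same combinatorial type to be related by a product of such transvections, which is what keeps the orbit-counting in the previous step honest.
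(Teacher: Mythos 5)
There is a genuine gap, and it lies precisely where the paper does the hard work. Your proposed classification of $G$--orbits of Bestvina--Margalit tori by ``the partition of $\{[c_1],\ldots,[c_9]\}$ recording which summand each class projects into nontrivially, together with the triple of summand genera'' is much too coarse, and the genus reduction you perform first does nothing to fix this. The $G$--orbit of a torus $\BM_{\sigma}$ also depends on the \emph{divisibilities} of the projections $\proj_{\calH_k^\sigma \cap [b]^\perp}(v_i)$ and on the \emph{algebraic intersection numbers} $\langle \proj_{\calH_k^\sigma\cap[b]^\perp}(v_i),\ \proj_{\calH_k^\sigma\cap[b]^\perp}(v_j)\rangle$. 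These are the invariants $\rk^{\calV}(v_{i,k}^\sigma)$ and $\theta(\calV)_{i,j,k}(\sigma)$ of Lemma~\ref{bmconjlemma}, and they take infinitely many distinct values as $\sigma$ ranges over all $2$--cells: even in a genus-one summand $\calH_k^\sigma$ spanned (over $\Z[a]$) by $\alpha,\beta$ with $\langle\alpha,\beta\rangle = 1$, the projection of $v_i$ can be $n\alpha$ for any integer $n$, so reducing the genera of the summands does not bound either $\rk^{\calV}$ or $\theta(\calV)$. Your change-of-coordinates step therefore cannot close: two reduced decompositions sharing the same partition and genus data will in general \emph{not} be in the same $G$--orbit, and you would be summing over infinitely many orbit types.

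What the paper actually does, and what your argument needs, is an inductive reduction \emph{of the algebraic invariants themselves}, carried out modulo the relations of Lemma~\ref{lemmaaltcohomolauxadd}. Concretely, Lemma~\ref{bmfincokprimred} shows by a double induction on $\maxrk_m$ and $\nummaxrk_m$ (using Lemma~\ref{bmfinquotfundclassaddpt2} to control how $\rk^{\calV}$ transforms under addition of fundamental classes, and Lemma~\ref{bmfinquotgenusincrease} to create room to split summands) that every $[\BM_\sigma]$ is a $\Q$--linear combination of classes with $\rk^{\calV}(v_{i,k}^{\tau}) \leq 1$; Lemma~\ref{bmfinquotalgint} then runs an analogous induction on $\maxalg$ and $\nummaxalg$ to further reduce to $\left|\theta(\calV)_{i,j,k}(\tau)\right| \leq 1$. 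Only at that point is the set of remaining $G$--orbits finite (via Lemma~\ref{bmconjlemma}), which is what gives the finite-dimensionality of $H_0(G;\BM_2(X_g;\Q))$. Your split
\begin{displaymath}
[\BM(\calH_0,\calH_1,\calH_2)]=[\BM(\calH_0,A,B+\calH_2)]+[\BM(\calH_0,B,A+\calH_2)]
\end{displaymath}
is a correct instance of Lemma~\ref{lemmaaltcohomolauxadd}, but used only to shrink genera it is the wrong move: the paper uses exactly this relation, but chooses the splitting $\calH_1 = A \oplus B$ so that the new summands have \emph{smaller divisibility and intersection invariants}, and the choice of $A$ and $B$ with that property is the technical heart of the argument. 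Without a replacement for that step, the proof does not go through.
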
  
\p{The outline of Section~\ref{bmfinquotsection}} Let $g \geq \finalbound$ and $a,b,M$ be as in the statement of Lemma~\ref{bmfinquotlemma} and let $\calV = \{[c]: c \in \pi_0(M)\}$.  We will begin by describing some algebraic invariants of Bestvina--Margalit tori, which record how the elements in $\calV$ project onto the different subgroups in $\calH(\BM_{\sigma})$.  We will then prove Lemma~\ref{bmconjlemma}, which allows us to use these algebraic invariants to determine when two Bestvina--Margalit tori $\BM_{\sigma}$ and $\BM_{\tau}$ are in the same $G$--orbit for certain subgroups $G  \subseteq \Sp(2g,\Z)$.  We will then prove Lemma~\ref{bmfinquotfundclassaddpt2}, which describes how these algebraic invariants interact with addition of fundamental classes.  We will then prove Lemma~\ref{bmfincokprimred} and Lemma~\ref{bmfinquotalgint}.   These two results will allow decompose a class $[\BM_{\sigma}]$ into ``simpler" classes, where ``simpler" means roughly that the algebraic invariants of the torus $\BM_{\sigma}$ are bounded.  These last two lemmas make up the main work of the section.

\p{Algebraic invariants of Bestvina--Margalit tori} We begin by describing the invariants of Bestvina--Margalit tori that we will use to prove Lemma~\ref{bmfinquotlemma}.  Let $\calH(\BM_\sigma) = \{\calH_0^\sigma, \calH_1^\sigma, \calH_2^\sigma\}$ as in Section~\ref{homolcurvequotsection}.  For each $v_i \in \calV$, let $\rk^{\calV}(v_{i,k}^\sigma)$ denote the maximal $m \in \Z$ such that that $\proj_{\calH_k^\sigma \cap [b]^{\perp}}(v_i) = m w$ for some nonzero $w \in \calH_k^\sigma$.  For each $v_i, v_j \in \calV$, let
\begin{displaymath}
\theta(\calV)_{i,j,k}(\sigma) = \langle \proj_{\calH_k^\sigma \cap [b]^{\perp}}(v_i), \proj_{\calH_k^\sigma \cap [b]^{\perp}}(v_j)\rangle.
\end{displaymath} 

\bn We now prove a result that describes the orbits of Bestvina--Margalit tori under the action of stabilizer subgroups of $\Sp(2g,\Z)$. 

\begin{lemma}\label{bmconjlemma}
Let $M \subseteq S_g$ be a nonseparating multicurve disjoint from $a$ and $b$.  Let $\calV = \{[c]: c \in \pi_0(M)\}$, and assume that the elements of $\calV$ are indexed as  $\calV = \{v_1,\ldots, v_n\}$.  Let $G$ be the group $\im(\Mod(S_g \cut \left(a \cup b \cup M\right)) \rightarrow \Sp(2g,\Z))$.  Let $\sigma, \tau \subseteq X_g$ be 2--cells and let $\BM_{\sigma}, \BM_{\tau}$ be the corresponding Bestvina--Margalit tori.  The tori $\BM_{\sigma}$ and $\BM_{\tau}$ are in the same $G$--orbit if, after possibly reindexing $\calH(\BM_{\tau})$, we have:
\begin{itemize}
\item $g(\sigma) = g(\tau)$,
\item $\rk^{\calV}(v_{i,k}^\sigma)=\rk^{\calV}(v_{i,k}^\tau)$ for $1 \leq i \leq n, 0 \leq k \leq 2,$ and
\item $\theta(\calV)_{i,j,k}(\sigma )= \theta(\calV)_{i,j,k}(\tau)$ for $1 \leq i,j \leq n, 0 \leq k \leq 2.$
\end{itemize}
\end{lemma}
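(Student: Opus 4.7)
The plan is to apply a relative change of coordinates argument on $S_g \cut N$, where $N = a \cup b \cup M$, to produce an element of $\Mod(S_g)$ that fixes each curve of $N$ and sends a lift of $\sigma$ to a lift of either $\tau$ or its dual cell; this element will descend through $\Mod(S_g) \to \Sp(2g,\Z)$ to the required $G$-equivalence. Necessity of the three listed invariants is clear, since each is computed from the unordered decomposition $\calH(\BM_\sigma)$ together with the fixed classes $[a]$, $[b]$, and $\calV$, all of which are preserved by any element of $G$ (every such element is represented by a mapping class fixing each curve of $N$).

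For sufficiency, I would pick lifts $\hat\sigma = a_0 \cup a_1 \cup a_2$ and $\hat\tau = a'_0 \cup a'_1 \cup a'_2$ in $\cC_{[a]}(S_g)$, placed in general position with respect to $N$. By Lemma~\ref{bmfinquotsplit} combined with Lemma~\ref{homolcurveconjstructurelemma}, it suffices to produce $F \in \Mod(S_g)$ that isotopically fixes every curve of $N$ and such that $F(\hat\sigma)$ and $\hat\tau$ induce the same unordered decomposition of $H_1(S_g;\Z)$. Equivalently, we need a homeomorphism of the cut surface $S_g \cut N$ that is the identity on the boundary and carries the arc-and-curve system $\hat\sigma \cap (S_g \cut N)$ to $\hat\tau \cap (S_g \cut N)$ after the allowed reindexing of $\calH(\BM_\tau)$.

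By the change of coordinates principle \cite[Section 1.3]{FarbMarg}, constructing such a homeomorphism reduces to verifying that the two arc-and-curve systems have matching topological types with compatible boundary labelings. The genus invariant $g(\sigma) = g(\tau)$ matches the topological types of the three pieces $S_k$ of $S_g \cut \hat\sigma$ with those of $S_g \cut \hat\tau$. On each piece $S_k$, the rank invariants $\rk^\calV(v_{i,k}^\sigma)$ encode the divisibility of the projections of each $v_i \in \calV$ into $\calH_k \cap [b]^\perp$, which determines how the representative curves of $M$ meet $S_k$ relative to the distinguished arcs coming from $b \cap S_k$. The theta invariants $\theta(\calV)_{i,j,k}(\sigma)$ encode the symplectic pairings of these projections, fixing the intersection pattern between pairs of such arcs inside $S_k$. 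A piecewise application of the standard classification of multicurves by their homology classes, performed in each $S_k$ relative to its boundary curves $a_{k-1}, a_k$ and the marked subarcs of $b \cap S_k$, then produces a local homeomorphism on each piece realizing the match of invariants; these assemble to a global homeomorphism of $S_g \cut N$ which, after extending by the identity across $N$, yields the desired $F$.

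The main obstacle I anticipate is this last piecewise classification step: justifying that the algebraic data encoded by projections into $\calH_k \cap [b]^\perp$ really does determine the boundary-labeled topological type of the arc system $M \cap S_k$ inside each piece. This is essentially a relative version of the classification of multicurves on a surface by their homology classes and pairwise intersections, carried out in the presence of the extra marked arcs coming from $b$; the rank and theta invariants are defined precisely to capture this refined topological data, and the bound $g \geq \finalbound$ ensures each $S_k$ has enough genus for the relevant change of coordinates moves to be available.
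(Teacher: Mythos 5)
Your reduction via Lemmas~\ref{bmfinquotsplit} and~\ref{homolcurveconjstructurelemma} is correct: it suffices to produce $F \in \Mod(S_g)$ fixing each curve of $N = a \cup b \cup M$ so that $F(\widehat{\sigma})$ and $\widehat{\tau}$ induce the same unordered decomposition of $H_1(S_g;\Z)$. But the step you call ``equivalently'' overshoots: a homeomorphism of $S_g \cut N$ carrying the actual arc-and-curve system of $\widehat{\sigma}$ onto that of $\widehat{\tau}$ is strictly stronger than matching decompositions, and pursuing that stronger goal is what forces you into the piecewise classification you flag at the end. You are right to flag it, because it is a genuine gap. The invariants $\rk^{\calV}$ and $\theta(\calV)$ are purely homological (divisibilities and pairings of the projections of the $v_i$ into $\calH_k^\sigma \cap [b]^\perp$), and nothing you say establishes that these determine the boundary-labeled topological type of the arc system of $M$ inside each piece $S_k$. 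Homological data of this kind does not in general control geometric intersection patterns, so the relative classification of multicurves you invoke is not available in the form you need, and the paper supplies no such theorem.

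The paper avoids the problem by working algebraically first and realizing geometrically last. The genus hypothesis gives some $f \in \Stab_{\Sp(2g,\Z)}([a],[b])$ carrying each $\calH_k^\sigma$ to $\calH_k^\tau$. The rank and theta hypotheses are then used inside each $\calH_k^\tau$ separately: they say precisely that the vectors $f(v_{i,k}^\sigma)$ and $v_{i,k}^\tau$ have the same divisibilities and pairing matrix, hence there is a symplectic automorphism $f_k$ supported on $\calH_k^\tau$ carrying the former to the latter. Replacing $f$ by $f_0 f_1 f_2 f$ gives an element of $\Stab_{\Sp(2g,\Z)}([a],[b])$ that fixes every $v_i$ and still sends $\calH(\BM_\sigma)$ to $\calH(\BM_\tau)$. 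Only then does the paper realize $f$ geometrically: lift it to $F' \in \Mod(S_g)$, and apply change of coordinates to two genuine geometric symplectic bases each containing $a$, $b$, and $M$ --- a setting where topological equivalence is automatic --- to correct $F'$ by a Torelli element so the product fixes $N$ curve-by-curve. This shows $f \in G$ and finishes the argument. If you keep your reduction but replace the arc-system matching with this algebraic construction of $f$, the gap closes.
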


\begin{proof}
Let $\BM_{\sigma}, \BM_{\tau}$ be two Bestvina--Margalit tori that satisfy the hypotheses of the lemma.  Because $g(\sigma) = g(\tau)$, there is an $f \in \Stab_{\Sp(2g,\Z)}([a], [b])$ satisfying $f\BM_{\sigma} = \BM_{\tau}$.  Therefore, after possibly reindexing, we have $f(\calH_k^\sigma) = \calH_k^\tau$ for every $0 \leq k\leq 2$. 

We claim that $f$ sends $\proj_{\calH_k^\sigma \cap [b]^{\perp}}(v_i)$ to $\proj_{\calH_k^{\tau} \cap [b]^{\perp}}(v_i)$ for every $i,k$.  Indeed, such an element exists since $g(\calH(\sigma)) = g(\calH(\tau))$ and $\BM_{\sigma}$ and $\BM_{\tau}$ have $\rk^{\calV}(v_{i,k}^\sigma) = \rk^{\calV}(v_{i,k}^\tau)$ and $\theta(\calV)_{i,j,k}(\sigma) = \theta(\calV)_{i,j,k}(\tau)$.  Hence for each $\calH_k(\tau)$ there is an $f_k \in \Stab_{\Sp(2g,\Z)}([a],[b])$ which is the identity on $\calH_{k'}(\tau)$ for $k' \neq k$ and which takes $f(v_{i,k}^{\sigma})$ to $v_{i,k}^{\tau}$.  Then we can replace $f$ with $f_0f_1f_2f$, which satisfies $f\proj_{\calH_k^\sigma \cap [b]^{\perp}}(v_i) = \proj_{\calH_k^\tau \cap [b]^{\perp}}(v_i)$.  But then this new $f$ fixes every $v_i$, since
\begin{displaymath}
f (v_i) = \sum_{k \in \{1,2,3\}} f(\proj_{\calH_k^{\sigma} \cap [b]^{\perp}}(v_i)) = \sum_{k \in \{1,2,3\}} \proj_{\calH_k^{\tau} \cap [b]^{\perp}}(v_i) = v_i.
\end{displaymath}
\bn It now remains to show that $f \in G$, i.e., that there is some $F \in \Mod(S_g \cut (a \cup b \cup M))$ such that the image of $F$ under the symplectic representation is $f$.  Since the symplectic representation $\Mod(S_g) \rightarrow \Sp(2g,\Z)$ is surjective, there is some $F' \in \Mod(S_g)$ such that $F'$ is sent to $f$ by the symplectic representation.  Since $\calV$ can be represented by $M$ disjoint from $a$ and $b$, we can extend $\calV \cup \{a,b\}$ to a symplectic basis $\calB = \{\alpha_1,\beta_1,\ldots, \alpha_g, \beta_g\}$.  Choose a set of curves $\widehat{\calB}$ such that:
\begin{itemize}
\item  $\pi_0(M), \{a,b\} \subseteq \widehat{\calB}$,
\item  $\calB = \{[c]: c \in \widehat{\calB}\}$, and
\item $\left|c \cap c'\right| = \left|\langle [c], [c'] \rangle\right|$ for all $c,c' \in \widehat{\calB}$.  
\end{itemize}
\bn  Let $\widehat{\calB}'$ be another set of representatives for $\calB$ satisfying the above conditions, except we have 
\begin{displaymath}
\pi_0(F'(M)), \{F'(a), F'(b)\} \subseteq \widehat{\calB}'.
\end{displaymath}
\bn  But now by the change of coordinates principle, there is some $F'' \in \Mod(S_g)$ taking $\widehat{\calB}' \rightarrow \widehat{\calB}$ in such a way that $F''_*$ acts trivially on $\calB$.  But then $F'' \in \cI_g$, so the image of $F'' \cdot F'$ under the symplectic representation is the same as $F'$, which is $f$.  Then we have $F'' \cdot F'(M) = M$, $F'' \cdot F'(a) = a$, and $F'' \cdot F'(b) = b$.  Hence $F'' \cdot F' \in \Mod(S_g \cut (M \cup a \cup b))$ and $F'' \cdot F'$ maps to $f$ under the symplectic representation, so $f \in G$, as desired.
\end{proof}

\bn We now prove a lemma that describes how the $\theta(\calV)$ and $\rk^{\calV}$ interact with the Bestvina--Margalit tori configured as in Lemma~\ref{bmfinquotfundclassaddpt2}. 

\begin{lemma}\label{bmfinquotfundclassaddpt2}
Let $g \geq \finalbound$, $a,b, M \subseteq S_g$ and $\calV \subseteq H_1(S_g;\Z)$ be as in Lemma~\ref{bmconjlemma}.  Let $x,y,z \subseteq X_g$ and $\calH_0, \calH_1, \calH_2, \calH'_1, \calH'_2$ be as in Lemma~\ref{bmfinquotfundclassadd}.  For any choice of $* = y,z$, and $yz$, let $\sigma_*$ denote the 2--cell containing $x$ and $*$.  Let $v_{i,k}^*$ denote $\proj_{\calH_i \in \calH(\sigma_*) \cap [b]^{\perp}}(v_i)$, and let $\theta(\calV)_{i,j,k}(*)$ denote $\theta(\calV)_{i,j,k}(\sigma_{*})$.  Then the following equalities hold for all $1 \leq i,j \leq 9$:
\begin{multicols}{2}
\begin{enumerate}
\item $\rk^{\calV}(v_{i,0}^{y}) = \rk^{\calV}(v_{i,0}^z) = \rk^{\calV}(v_{i,0}^{yz})$,
\item $\rk^{\calV}(v_{i,1}^{yz}) = \gcd(\rk^{\calV}(v_{i,1}^y), \rk^{\calV}(v_{i,1}^z))$,
\item $\rk^{\calV}(v_{i,2}^y) = \gcd(\rk^{\calV}(v_{i,1}^z), \rk^{\calV}(v_{i,2}^{yz}))$,
\item $\rk^{\calV}(v_{i,2}^z) = \gcd(\rk^{\calV}(v_{i,1}^y), \rk^{\calV}(v_{i,2}^{yz}))$,
\item $\theta(\calV)_{i,j,0}(y) = \theta(\calV)_{i,j,0}(z)= \theta(\calV)_{i,j,0}(yz)$ ,
\item $\theta(\calV)_{i,j,1}(y) + \theta(\calV)_{i,j,1}(z)= \theta(\calV)_{i,j,1}(yz)$,
\item $\theta(\calV)_{i,j,1}(z) + \theta(\calV)_{i,j,2}(yz) = \theta(\calV)_{i,j,2}(y)$, and
\item $\theta(\calV)_{i,j,1}(y) + \theta(\calV)_{i,j,2}(yz) = \theta(\calV)_{i,j,2}(z)$.
\end{enumerate}
\end{multicols}
\end{lemma}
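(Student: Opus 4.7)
The plan is to construct a symplectic orthogonal four-piece refinement of the three decompositions $\calH(\sigma_y)$, $\calH(\sigma_z)$, $\calH(\sigma_{yz})$, and then to read off all eight identities from the coordinate expressions of each $v_i$ in this refinement.

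The first step is to observe that, because $\calH_1 \cap \calH_1' = \Z[a]$ (the standing hypothesis of Lemma~\ref{bmfinquotfundclassadd}), the subgroups $\calH_1$ and $\calH_1'$ appear as two \emph{distinct} summands of the decomposition $\calH(\sigma)$ associated to the 2-cell $\sigma$ containing the edges $y$, $z$, and $yz$; this is already implicit in the proof of Lemma~\ref{bmfinquotfundclassadd}. Since $\calH(\sigma)$ is a symplectic orthogonal decomposition of $[a]^\perp$, one concludes that $\calH_1 \perp \calH_1'$. Combined with the standing orthogonalities $\calH_1, \calH_1' \perp \calH_0$, this forces $\calH_1' \subseteq \calH_2$ and $\calH_1 \subseteq \calH_2'$, and produces (after tensoring with $\Q$) a symplectic orthogonal four-piece refinement
\[
\left([a]^\perp \cap [b]^\perp\right) \otimes \Q \;=\; \calH_0^\Q \perp \calH_1^\Q \perp (\calH_1')^\Q \perp (\calH_2 \cap \calH_2')^\Q
\]
common to all three of the decompositions $\calH(\sigma_*)$; in particular $\calH_2 = \calH_1' \oplus (\calH_2 \cap \calH_2')$, $\calH_2' = \calH_1 \oplus (\calH_2 \cap \calH_2')$, and $\calH_1 + \calH_1' = \calH_1 \oplus \calH_1'$ as symplectic orthogonal direct sums.

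Writing each $v_i = v_{i,0} + u_i + u_i' + w_i$ uniquely in this refinement, with $v_{i,0}\in \calH_0, u_i \in \calH_1, u_i' \in \calH_1', w_i \in \calH_2 \cap \calH_2'$, and grouping terms according to each coarser decomposition, I obtain
\[
v_{i,0}^y = v_{i,0}^z = v_{i,0}^{yz} = v_{i,0},\quad v_{i,1}^y = u_i,\quad v_{i,1}^z = u_i',\quad v_{i,2}^{yz} = w_i,
\]
\[
v_{i,1}^{yz} = u_i + u_i',\quad v_{i,2}^y = u_i' + w_i,\quad v_{i,2}^z = u_i + w_i.
\]
From these formulas, equalities~(1) and~(5) are immediate. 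For the $\theta$-equalities~(6), (7), (8), I expand the alternating form bilinearly on the expressions above; every cross-term between distinct summands of the refinement vanishes by symplectic orthogonality, leaving precisely the stated identity. For the rank equalities~(2), (3), (4), I use the elementary fact that if $L_1, L_2$ are primitive subgroups of a free abelian group with $L_1 \cap L_2 = 0$, and $x = m_1 u_1 + m_2 u_2$ for primitive elements $u_j \in L_j$, then the largest integer dividing $x$ in $L_1 \oplus L_2$ is $\gcd(m_1, m_2)$ (extend each $u_j$ to a $\Z$-basis of $L_j$); this is applied to $\calH_1 \oplus \calH_1'$ for~(2), to $\calH_1' \oplus (\calH_2 \cap \calH_2')$ for~(3), and to $\calH_1 \oplus (\calH_2 \cap \calH_2')$ for~(4).

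The main obstacle is the orthogonality claim $\calH_1 \perp \calH_1'$ at the start: one must carefully argue that the hypothesis $\calH_1 \cap \calH_1' = \Z[a]$ forces $\calH_1$ and $\calH_1'$ to appear as actual summands of $\calH(\sigma)$ rather than as ``merged'' subgroups of $\calH(y)$ or $\calH(z)$. Once this orthogonality is established, the rest of the proof reduces to the bookkeeping with the four-piece refinement described above.
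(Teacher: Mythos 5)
Your proposal is correct and follows essentially the same route as the paper. The paper obtains the same projection identities $v_{i,0}^y = v_{i,0}^z = v_{i,0}^{yz}$, $v_{i,1}^y + v_{i,1}^z = v_{i,1}^{yz}$, $v_{i,1}^y + v_{i,2}^{yz} = v_{i,2}^z$, $v_{i,1}^z + v_{i,2}^{yz} = v_{i,2}^y$ directly from the subgroup relations of Lemma~\ref{bmfinquotfundclassadd} together with the additivity $\proj_{(X+Y)\cap[b]^\perp} = \proj_{X\cap[b]^\perp} + \proj_{Y\cap[b]^\perp}$ when $X \cap Y = \Z[a]$; your four-piece orthogonal refinement is a clean repackaging of exactly that fact, and the coordinate expressions you write down are what the paper calls relations (a)--(d). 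The one substantive difference is the justification of the gcd identities (2)--(4): the paper represents the two primitive classes by disjoint simple closed curves and observes that a coprime integer combination of such classes is primitive, whereas you invoke the equivalent algebraic statement about primitive sublattices with trivial intersection. Both are correct. The orthogonality $\calH_1 \perp \calH_1'$ that you flag as the main obstacle is already supplied by the proof of Lemma~\ref{bmfinquotfundclassadd}, which explicitly notes that $\calH_1$ and $\calH_1'$ both occur as members of $\calH(\sigma)$ for the $2$--cell $\sigma$ containing $y$, $z$, and $yz$; so that concern is resolved without additional work.
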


\begin{proof}
Since we have chosen the multicurve $M$ to be disjoint from $a$ and $b$, we have $\calV \subseteq [b]^{\perp}$.  Then, if $X, Y \subseteq [a]^{\perp}$ are two quasi--unimodular lattices with $X \cap Y = \Z[a]$ and $\rk^{\calV}(X)= 2g(X) + 1$, $\rk^{\calV}(Y) = 2g(Y) + 1$, we see that for any $v \in [a]^{\perp} \cap [b]^{\perp}$, we have
\begin{displaymath}
\proj_{(X + Y) \cap [b]^{\perp}}(v) = \proj_{X \cap[b]^{\perp}}(v) + \proj_{Y \cap[b]^{\perp}}(v).
\end{displaymath}
\bn Now, Lemma~\ref{bmfinquotfundclassadd} says that the following hold:
\begin{enumerate}
\item $\calH_0^{y} = \calH_0^z = \calH_0^{yz}$,
\item $\calH_1^y + \calH_1^{z} = \calH_1^{yz}$,
\item $\calH_1^y + \calH_2^{yz} = \calH_2^{z}$, and
\item $\calH_1^z + \calH_2^{yz} = \calH_2^{y}$.
\end{enumerate}
Hence the above observation about projections tells us that the following equalities among $v_{i,k}^{y}$, $v_{i,k}^z$ and $v_{i,k}^{yz}$ hold for all $1 \leq i \leq 9$:
\begin{multicols}{2}
\begin{enumerate}[label=(\alph*)]
\item $v_{i,0}^{y} = v_{i,0}^z = v_{i,0}^{yz}$,
\item $v_{i,1}^{y} + v_{i,1}^{z} = v_{i,1}^{yz}$,
\item $v_{i,1}^y + v_{i,2}^{yz} = v_{i,2}^z$, and
\item $v_{i,1}^z + v_{i,2}^{yz} = v_{i,2}^y$.
\end{enumerate}
\end{multicols}
\bn The relations in the statement of the lemma are derived as follows.

\p{Relations (1) and (5)}  Since $v_{i,0}^y = v_{i,0}^z = v_{i,0}^{yz}$ by equality (a), we must have $\rk^{\calV}(v_{i,0}^y) = \rk^{\calV}(v_{i,0}^z) = \rk^{\calV}(v_{i,0}^{yz})$, and similarly for $\theta(\calV)_{i,j,0}(y) = \theta(\calV)_{i,j,0}(z) = \theta(\calV)_{i,j,0}(yz).$ 

\p{Relation (2) and (6)}  We have $v_{i,1}^y + v_{i,1}^z =v_{i,1}^{yz}$ by relation (b).  If $w_{i,1}^*$ is a primitive class with $\rk^{\calV}(v_{i,1}^*) w_{i,1}^* = v_{i,1}^*$ for some choice of $* = y,z,yz$, then by relation (b) we have $\rk^{\calV}(v_{i,1}^y) w_{i,1}^y + \rk^{\calV}(v_{i,1}^z)w_{i,1}^z = v_{i,1}^{yz}$.  By Lemma~\ref{bmfinquotfundclassadd}, the group $\calH_1^{x,yz} \in \calH(\sigma_{yz})$ is given by $\calH_1^{x,y} + \calH^{x,z}_1$ with $\calH_1^{x,y} \cap \calH^{x,z}_1 = \Z[a]$, so we may represent $w_{i,1}^y$ and $w_{i,1}^z$ using disjoint curves $c_y, c_z \subseteq S_g$.  Then the homology class
\begin{displaymath}
    \rk^{\calV}(v_{i,1}^y)/\gcd(\rk^{\calV}(v_{i,1}^y), \rk^{\calV}(v_{i,1}^z))\cdot [c_y] + \rk^{\calV}(v_{i,1}^z)/\gcd(\rk^{\calV}(v_{i,1}^y), \rk^{\calV}(v_{i,1}^z))\cdot [c_z]
\end{displaymath}
\bn is primitive, since by definition $\rk^{\calV}(v_{i,1}^y)/\gcd(\rk^{\calV}(v_{i,1}^y), \rk^{\calV}(v_{i,1}^z))$ and $\rk^{\calV}(v_{i,1}^z)/\gcd(\rk^{\calV}(v_{i,1}^y), \rk^{\calV}(v_{i,1}^z))$ are relatively prime.  Therefore $\rk^{\calV}(v_{i,1}^{yz}) = \gcd(\rk^{\calV}(v_{i,1}^y), \rk^{\calV}(v_{i,1}^z))$.  For relation (6), relation (b) implies that
\begin{displaymath}
    \theta(\calV)_{i,j,1}^{yz} = \langle v_{i,1}^{y} + v_{i,1}^{z}, v_{j,1}^{y} + v_{j,1}^{z} \rangle.
\end{displaymath}
\bn As above, $v_{i,1}^y$ and $v_{j,1}^z$, can be represented by multiples of disjoint curves, and similarly for $v_{j,1}^z$ and $v_{i,1}^y$.  Therefore we have
\begin{align*}
    \theta(\calV)_{i,j,1}^{yz} &= \langle v_{i,1}^{y}, v_{j,1}^{y}  \rangle + \langle v_{i,1}^{y} , v_{j,1}^{z} \rangle + \langle v_{i,1}^{z}, v_{j,1}^{y} \rangle + \langle v_{i,1}^{z}, v_{j,1}^{z} \rangle\\
    &= \langle v_{i,1}^{y}, v_{j,1}^{y}\rangle + \langle v_{i,1}^{z}, v_{j,1}^{z} \rangle\\
    &= \theta(\calV)_{i,j,1}(y) + \theta(\calV)_{i,j,1}(z).
\end{align*}

\p{Relations (3) and (7)}  These two relations follow from relation (c) using an argument similar to that for relations (2) and (6).

\p{Relations (4) and (8)}  These two relations follow from relation (d) using an argument to that for relations (2) and (6).
\end{proof}

\subsection{The proof of Lemma~\ref{bmfincokprimred}}

\bn We begin by proving an auxiliary result, which we will need to prove Lemma~\ref{bmfincokprimred} and Lemma~\ref{bmfinquotalgint}.

\begin{lemma}\label{bmfinquotgenusincrease}
Let $g \geq \finalbound$, $a,b, M \subseteq S_g$, and $\calV \subseteq H_1(S_g;\Z)$ be as in Lemma~\ref{bmconjlemma}.  Let $\sigma \subseteq X_g$ be a 2--cell.  Suppose that there is a $1 \leq m \leq 9$ such that the following hold:
\begin{itemize}
\item $\rk^{\calV}(v_{i,k}^{\sigma}) \leq 1$ for all $1 \leq i < m$ and $0 \leq k \leq 2$,
\item $\rk^{\calV}(v_{m,2}^{\sigma}) \geq 2$,
\item $\rk^{\calV}(v_{m,2}^{\sigma}) = \max\{\rk^{\calV}(v_{m,k}^{\sigma}): 0 \leq k \leq 2\}$, and
\item $\rk^{\calV}(v_{m,1}^{\sigma}) \neq 0, \rk^{\calV}(v_{m,2}^{\sigma})$.
\end{itemize}
\bn Then there is a relation in $\BM_2(X_g;\Q)$ given by
\begin{displaymath}
[\BM_{\sigma}] = \sum_{\iota=1}^q \lambda_{\iota}[\BM_{\sigma_{\iota}}]
\end{displaymath}
\bn such that, for all $1 \leq \iota \leq q$:
\begin{enumerate}
\item we have $\rk^{\calV}(v_{i,k}^{\sigma_{\iota}}) \leq 1$ for all $1 \leq i < m$ and $0 \leq k \leq 2$,
\item the set $\{\rk^{\calV}(v_{m,k}^{\sigma_{\iota}}):0 \leq k \leq 2\}$ is upper bounded by $\rk^{\calV}(v_{m,2}^{\sigma})$,
\item the set $\{\rk^{\calV}(v_{m,k}^{\sigma_{\iota}}):0 \leq k \leq 2\}$ has no more elements equal to $\rk^{\calV}(v_{m,2}^{\sigma_{\iota}})$ than the does the set $\{\rk^{\calV}(v_{m,k}^{\sigma}):0 \leq k \leq 2\}$,
\item if $\max_{0\leq k \leq 2}\{\rk^{\calV}(v_{m,k}^{\sigma_{\iota}})\} = \rk^{\calV}(v_{m,2}^{\sigma})$, we have $0 < \rk^{\calV}(v_{m,1}^{\sigma_{\iota}}) < \max\{\rk^{\calV}(v_{m,k}^{\sigma_{\iota}}):0 \leq k \leq 2\}$, and
\item we have $g(\calH_1^{\sigma_{\iota}}) \geq 10$.
\end{enumerate}
\end{lemma}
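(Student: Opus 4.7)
Plan:

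The plan is to apply Lemma~\ref{lemmaaltcohomolauxadd} (the Bestvina--Margalit torus addition relation) in the form
\[
[\BM_\sigma] \;=\; [\BM_{\sigma_{yz}}] - [\BM_{\sigma_z}]
\]
for a judiciously chosen auxiliary 2-cell $\sigma_z$, and then verify conditions (1)--(5) for the summands by direct computation with the invariant transformation rules of Lemma~\ref{bmfinquotfundclassaddpt2}. Concretely, $\sigma_z$ is obtained by choosing a symplectic decomposition $\calH_2^\sigma = W_1 + W_2$ with $W_1 \cap W_2 = \Z[a]$, giving
\[
\calH(\BM_{\sigma_z}) = \{\calH_0^\sigma,\, W_1,\, \calH_1^\sigma + W_2\}, \qquad \calH(\BM_{\sigma_{yz}}) = \{\calH_0^\sigma,\, \calH_1^\sigma + W_1,\, W_2\},
\]
and the splitting is tuned so that $g(W_1) \geq 10$, which then forces $g(\calH_1^{\sigma_z}) = g(W_1) \geq 10$ and $g(\calH_1^{\sigma_{yz}}) \geq g(W_1) \geq 10$, guaranteeing condition (5) for both new tori. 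If $g(\calH_1^\sigma) \geq 10$ already, I take $q = 1$, $\lambda_1 = 1$, $\sigma_1 = \sigma$, and there is nothing to prove.

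In the main case, I arrange $W_2$ to contain the primitive element $u \in \calH_2^\sigma$ such that $v_{m,2}^\sigma = \rk^{\calV}(v_{m,2}^\sigma) \cdot u$, together with all of the primitive projections $\{v_{i,2}^\sigma : i < m\}$; these account for at most nine primitive directions. Since the identity $g(\calH_0^\sigma) + g(\calH_1^\sigma) + g(\calH_2^\sigma) = g - 1$, combined with $g(\calH_1^\sigma) < 10$ and $g \geq \finalbound$, forces $g(\calH_2^\sigma)$ to be large, there is ample room for $W_2$ to absorb these nine primitive directions while still leaving a symplectic complement $W_1 \subseteq \calH_2^\sigma$ of genus at least $10$. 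Applying Lemma~\ref{bmfinquotfundclassaddpt2} to this splitting then yields: $\sigma_{yz}$ inherits the rank triple $\bigl(\rk^{\calV}(v_{m,0}^\sigma),\,\rk^{\calV}(v_{m,1}^\sigma),\,\rk^{\calV}(v_{m,2}^\sigma)\bigr)$ of $\sigma$ for $v_m$, so conditions (1)--(4) carry over from $\sigma$ verbatim, while $\sigma_z$ has rank triple $\bigl(\rk^{\calV}(v_{m,0}^\sigma),\, 0,\, \gcd(\rk^{\calV}(v_{m,1}^\sigma), \rk^{\calV}(v_{m,2}^\sigma))\bigr)$, whose last entry is strictly less than $\rk^{\calV}(v_{m,2}^\sigma)$. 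For each $i < m$, the containment $v_{i,2}^\sigma \in W_2$ forces $\proj_{W_1 \cap [b]^\perp}(v_{i,2}^\sigma) = 0$, so the new ranks at positions $1$ and $2$ in both $\sigma_z$ and $\sigma_{yz}$ remain bounded by $1$, preserving condition (1).

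The main obstacle is the edge case $\rk^{\calV}(v_{m,0}^\sigma) = \rk^{\calV}(v_{m,2}^\sigma)$: in this subcase the splitting above leaves $\sigma_z$ with maximum $v_m$-rank still equal to $\rk^{\calV}(v_{m,2}^\sigma)$ (now achieved at position $0$) but with position-$1$ rank equal to $0$, violating condition (4). To address this I will first perform a preliminary splitting of $\calH_0^\sigma$ along a direction complementary to $v_{m,0}^\sigma$ to strictly reduce the multiplicity of the maximum rank, and only then execute the main genus-increasing splitting described above. Iterating the combined procedure yields an inductive scheme on the lexicographic complexity
\[
\bigl(\rk^{\calV}(v_{m,2}^\sigma),\ \#\{k : \rk^{\calV}(v_{m,k}^\sigma) = \rk^{\calV}(v_{m,2}^\sigma)\}\bigr),
\]
which strictly decreases at each preliminary step. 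The heart of the argument lies in carefully verifying that the gcd transformation rules of Lemma~\ref{bmfinquotfundclassaddpt2} interact compatibly with this induction, so that conditions (1)--(5) hold simultaneously for every term $[\BM_{\sigma_\iota}]$ produced along the way.
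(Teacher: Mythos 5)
Your high-level idea — use a 3-torus relation (Lemma~\ref{lemmaaltcohomolauxadd}) to split one of the large $\calH_k^\sigma$ and route one summand into $\calH_1$ so that the new position-$1$ piece has big genus — is the right skeleton, and it is what the paper does. But the specific splitting you choose creates a tension between conditions~(4) and~(5) that the paper's choice avoids, and your proposed patch does not close it.

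The concrete issue: you split $\calH_2^\sigma = W_1 + W_2$ with $W_1$ \emph{large} (genus $\geq 10$), place $W_1$ at position~$1$ of $\sigma_z$, and absorb $W_2$ into $\calH_1^\sigma$. Since you force all projections $v_{i,2}^\sigma$ into $W_2$, the projection of $v_m$ onto $W_1$ vanishes, so $\rk^{\calV}(v_{m,1}^{\sigma_z}) = 0$. In the edge case $\rk^{\calV}(v_{m,0}^\sigma) = \rk^{\calV}(v_{m,2}^\sigma)$ the maximum of the new rank triple is still $\rk^{\calV}(v_{m,2}^\sigma)$, so condition~(4) demands $0 < \rk^{\calV}(v_{m,1}^{\sigma_z})$, which fails. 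You cannot fix this by reindexing $\sigma_z$ so that $\calH_1^\sigma + W_2$ sits at position~$1$ instead, because that piece carries the gcd-rank but only has genus $g(\calH_1^\sigma) + g(W_2)$, which need not reach $10$; you would then need \emph{both} $g(W_1) \geq 9$ and $g(W_2) \geq 9$, i.e.\ $g(\calH_2^\sigma) \geq 18$, which is not guaranteed (only $\max\{g(\calH_0^\sigma), g(\calH_2^\sigma)\} \geq 21$ is forced when $g(\calH_1^\sigma) \leq 9$). The paper sidesteps both problems by splitting off a \emph{small} unimodular $\calH \subseteq \calH_\kappa^\sigma$ of genus exactly $9$ with $v_{i,\kappa}^\sigma \in \calH^\perp$ for all $i$, and absorbing the \emph{large} leftover $\calH^\perp \cap \calH_\kappa^\sigma$ into position~$1$ of $\sigma_z$. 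Then position~$1$ simultaneously has genus $\geq g(\calH_\kappa^\sigma) - 9 \geq 10$ and rank $\gcd(\rk^{\calV}(v_{m,\kappa}^\sigma), \rk^{\calV}(v_{m,1}^\sigma))$, which is automatically nonzero (since $\rk^{\calV}(v_{m,1}^\sigma)\neq 0$) and $< \rk^{\calV}(v_{m,2}^\sigma)$, so conditions~(4) and~(5) both hold with a single splitting and no case analysis on whether $\rk^{\calV}(v_{m,0}^\sigma)$ equals $\rk^{\calV}(v_{m,2}^\sigma)$.

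Your proposed preliminary splitting of $\calH_0^\sigma$ does not strictly decrease $\nummaxrk$: if you split $\calH_0^\sigma = W_1' + W_2'$ with $v_{m,0}^\sigma \in W_2'$ and fix $\calH_1^\sigma$, the resulting relation $[\BM_\sigma] = [\BM_{\sigma'}] + [\BM_{\sigma''}]$ has $\sigma'' = \{W_2', \calH_1^\sigma, W_1' + \calH_2^\sigma\}$ carrying the \emph{unchanged} rank triple $(\rk^{\calV}(v_{m,0}^\sigma), \rk^{\calV}(v_{m,1}^\sigma), \rk^{\calV}(v_{m,2}^\sigma))$, so the lexicographic complexity for $\sigma''$ is not reduced and your inductive scheme stalls. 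You would also need the flexibility the paper uses of splitting whichever of $\calH_0^\sigma, \calH_2^\sigma$ is large (there is a $\kappa \in \{0,2\}$ with $g(\calH_\kappa^\sigma) \geq 19$, but it need not be $\kappa = 2$), which your plan does not provide.
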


\begin{proof}
If $g(\calH_1^{\sigma}) \geq 10$, then we are done by taking $q = 1$, $\lambda_1 = 1$ and $[\BM_{\sigma_1}] = [\BM_\sigma]$.  Otherwise, since $g \geq \finalbound$, there is a $\kappa = 0,2$ such that $g(\calH_\kappa^{\sigma}) \geq 19$.  Then since $\left|\calV\right| = 9$, the fact that $g(\calH_{\kappa}^{\sigma}) \geq 19$ implies that there is a primitive $\calH \subseteq \calH_\kappa^{\sigma}$ such that the following hold:
\begin{multicols}{2}
\begin{itemize}
\item $a \in \calH$, 
\item $v_{i,\kappa}^{\sigma} \in \calH^{\perp}$ for all $1 \leq i \leq 9$, 
\item $g(\calH) = 9$,
\item $g(\calH) + g(\calH_1^{\sigma}) \geq 10$, and  
\item $2 *g(\calH) + 1 = \rk^{\calV}(\calH)$.
\end{itemize}
\end{multicols}
\bn Let $\widehat{\calH} = \{\calH_0^{\sigma}, \calH, \calH^{\perp} \cap \calH_1^{\sigma}, \calH_2^{\sigma}\}$.  We can describe $\widehat{\calH}$ using the graphical notation of Section~\ref{homolcurvequotsection} as in Figure~\ref{bmfinquotgenusincreasefig}.
\begin{figure}[h]
\begin{tikzpicture}
\node[anchor = south west, inner sep = 0] at (0,0){\includegraphics{threecircle.png}};
\node at (-0.3,1.3) {$\calH_0^{\sigma}$};
\node at (1.4,2.9) {$\calH$};
\node at (3.5,1.3) {$\calH^{\perp} \cap \calH_1^{\sigma}$};
\node at (1.4,-0.2) {$\calH_2^{\sigma}$};
\end{tikzpicture}
\caption{The graphical representation of $\widehat{\calH}$}\label{bmfinquotgenusincreasefig}
\end{figure}

Let $z \subseteq X_g$ be the edge with $\calH \in \calH(z)$.  Let $y \subseteq X_g$ the edge with $\calH^{\perp} \cap \calH_1^{\sigma} \in \calH(y)$.  Let $x \subseteq X_g$ be the edge with $\calH_{\kappa'}^{\sigma} \in \calH(y)$, where $0 \leq \kappa' \leq 2$ and $\kappa' \neq 1,\kappa$.  After orienting $y$ and $z$ correctly, we let $yz$ denote the third edge of a 2--cell in $X_g$ containing $y$ and $z$.  We have $\calH + \calH^{\perp} \cap \calH_1^{\sigma} = \calH_1^{\sigma} \in \calH(yz)$, so $x$ and $yz$ are two edges of the 2--cell $\sigma$.  Now, Lemma~\ref{lemmaaltcohomolauxadd} says that there is a relation
\begin{displaymath}
[\BM_{x,yz}] = [\BM_{x,y}] + [\BM_{x,z}].
\end{displaymath}
\bn Now, let $\sigma_y, \sigma_z \subseteq X_g$ be 2--cells containing $x$ and then $y$ and $z$ respectively.  We will now show that $\sigma_{z}$ and $\sigma_{y}$ desired properties for $\sigma_{\iota}$.

\p{The 2--cell $\sigma_z$ satisfies the desired properties of $\sigma_{\iota}$} Assume that $\calH(\sigma_z)$ is indexed so that $\calH = \calH_\kappa^{\sigma_z}$, $\calH_1^{\sigma_z} = \calH^{\perp} \cap \calH_{\kappa}^{\sigma} + \calH_1^{\sigma}$, and $\calH_{\kappa'}^{\sigma_z} = \calH_{\kappa'}^{\sigma}$.  By our assumption that $v_{i,\kappa}^{\sigma} \in \calH^{\perp}$ for all $1 \leq i \leq 9$, the following hold:
\begin{multicols}{2}
\begin{enumerate}
\item $v_{i,\kappa}^{\sigma_z} = 0$ for all $1 \leq i \leq 9$,
\item $v_{i,1}^{\sigma_z} = v_{i,\kappa}^{\sigma} + v_{i,1}^{\sigma}$ for all $1 \leq i \leq 9$, and
\item $v_{i,\kappa'}^{\sigma_z} = v_{i,\kappa'}^{\sigma}$ for all $1 \leq i \leq 9$.
\end{enumerate}
\end{multicols}
\bn Then we can compute $\rk^{\calV}(v_{i,\kappa}^{\sigma_z})$ as follows.
\begin{enumerate}
\item For $1 \leq i < m$ and $0 \leq k \leq 2$, we have $v_{i,k}^{\sigma_z}$ primitive.  The class $v_{i,k}^{\sigma}$ is primitive for all $1 \leq i <m$ and $0 \leq k \leq 2$ by hypothesis, so $v_{i,\kappa'}^{\sigma_z}$ is primitive.  Additionally, for each $1 \leq i < m$, the class $v_{i,\kappa}^{\sigma}$ can be represented by a simple closed curve disjoint from a representative for the class $v_{i,1}^{\sigma}$, so $v_{i, 1}^{\sigma_z}$ is primitive.  Hence $\rk^{\calV}(v_{i,k}^{\sigma_z}) \leq 1$ for $1 \leq i < m$ and $0 \leq k \leq 2$.
\item For $i = m$, $\rk^{\calV}(v_{i,\kappa}^{\sigma_z}) = 0$, $\rk^{\calV}(v_{i, \kappa'}^{\sigma_z}) = \rk^{\calV}(v_{i, \kappa'}^{\sigma_z})$, and $\rk^{\calV}(v_{i, 1}^{\sigma_z}) = \gcd(\rk^{\calV}(v_{i, \kappa}^{\sigma}), \rk^{\calV}(v_{i, 1}^{\sigma}))$.
\end{enumerate}
\bn The computation of $\rk^{\calV}(v_{i,k}^{\sigma_z})$ above implies that $\sigma_z$ satisfies property (1) of $\sigma_{\iota}$.  Then the computation of $\rk^{\calV}(v_{m,k}^{\sigma_z})$ implies that $\sigma_z$ satisfies properties (2)--(4) of $\sigma_{\iota}$.  Indeed, properties (2) and (3) follow from the fact that $\gcd(\rk^{\calV}(v_{i, \kappa}^{\sigma}), \rk^{\calV}(v_{i, 1}^{\sigma})) \leq \rk^{\calV}(v_{i, \kappa}^{\sigma}), \rk^{\calV}(v_{i, 1}^{\sigma})$.  Property (4) follows from the fact that $\rk^{\calV}(v_{i,1}^{\sigma_z}) = \gcd(\rk^{\calV}(v_{i,\kappa}^{\sigma}), \rk^{\calV}(v_{i,1}^{\sigma})) \leq \rk^{\calV}(v_{i,1}^{\sigma}) < \rk^{\calV}(v_{i,2}^{\sigma})$.  Finally, property (5) follows from the fact that $g(\calH_1^{\sigma_z}) = g(\calH_1^{\sigma}) + g(\calH_{\kappa}^{\sigma}) - g(\calH)$.  Indeed, we have assumed that $g(\calH_{\kappa}^{\sigma})  \geq 19$, so $g(\calH_1^{\sigma_z}) \geq 19 - g(\calH)$.  Then since $g(\calH) = 9$ by hypothesis, we have $g(\calH_1^{\sigma_z}) \geq 19 - 9 \geq 10$, so $\sigma_z$ satisfies property (5) of $\sigma_{\iota}$.  

\p{The 2--cell $\sigma_{y}$ satisfies the desired properties of $\sigma_{\iota}$}  Assume that $\calH(\sigma_{y})$ is indexed so that $\calH_{1}^{\sigma_{y}} = \calH_1^{\sigma} + \calH$, $\calH_{\kappa'}^{\sigma_{y}} = \calH_{\kappa}^{\sigma}$, and $\calH_{\kappa}^{\sigma_{y}} = \calH^{\perp} \cap \calH_{\kappa}^{\sigma_{y}}$.  The assumption that $v_{i,1}^{\sigma} \in \calH^{\perp}$ for all $1 \leq i \leq 9$ implies that $v_{i,k}^{\sigma} = v_{i,k}^{\sigma_{y}}$ for all $1 \leq i \leq 9$, $0 \leq k \leq 2$.  Therefore since $\sigma$ satisfies properties (1)--(4) of $\sigma_{\iota}$ by hypothesis, then $\sigma_{\iota}$ does as well.  Then $g(\calH_1^{\sigma_{y}}) = g(\calH_1^{\sigma}) + g(\calH) \geq 1 + 9 = 10$ by assumption, so $\sigma_{y}$ satisfies property (5) of $\sigma_{\iota}$.

Now, we have shown that $[\BM_{x,yz}] = [\BM_{x,y}] + [\BM_{x,z}]$.  By our choice of $x$ and $y$, we have $\BM_{x,yz} = \BM_{\sigma}$.  Therefore we have a relation
\begin{displaymath}
[\BM_{x,y}] = [\BM_{x,z}] - [\BM_{x,yz}].
\end{displaymath}
\bn By taking $s = 2$, $\tau_1 = \sigma_{z}$, $\tau_2 = \sigma_{yz}$, $\lambda_1= 1$, and $\lambda_2 = -1$, the proof is complete.
\end{proof}

\bn We now prove Lemma~\ref{bmfincokprimred}, which says that any fundamental class $[\BM_{\sigma}]$ is a linear combination of classes $[\BM_{\tau_\ell}]$ with $\rk^{\calV}(v_{i,k}^{\tau_{\ell}}) \leq 1.$

\begin{lemma}\label{bmfincokprimred}
Let $g \geq \finalbound$, $a,b, M \subseteq S_g$ and $\calV \subseteq H_1(S_g;\Z)$ be as in Lemma~\ref{bmconjlemma}.  Let $\BM_{\sigma}$ be a Bestvina--Margalit torus.  Then there is a collection of 2--cells $\tau_1,\ldots, \tau_p \subseteq X_g$ that satisfy
\begin{displaymath}
[\BM_{\sigma}] = \sum_{\ell=1}^p \lambda_\ell [\BM_{\tau_\ell}]
\end{displaymath}
\bn and such that $\rk^{\calV}(v_{i,k}^{\tau_\ell}) \leq 1$ for every $1 \leq \ell \leq p$, $1 \leq i \leq 9$ and $k = 0,1,2$.
\end{lemma}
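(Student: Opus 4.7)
The plan is to induct on a lex-ordered complexity measure. Let $m(\sigma) \in \{1,\ldots,9\} \cup \{\infty\}$ be the smallest index $i$ such that $\rk^{\calV}(v_{i,k}^\sigma) \geq 2$ for some $k$ (with $m(\sigma) = \infty$ if no such $i$ exists); when $m(\sigma) < \infty$ set $M(\sigma) = \max_k \rk^{\calV}(v_{m(\sigma), k}^\sigma)$ and $N(\sigma) = \#\{k : \rk^{\calV}(v_{m(\sigma), k}^\sigma) = M(\sigma)\}$. I will induct on the triple $(-m(\sigma), M(\sigma), N(\sigma))$ ordered lexicographically, with $m(\sigma) = \infty$ taken as the minimum. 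The base case $m(\sigma) = \infty$ satisfies the conclusion with $p = 1$, $\lambda_1 = 1$, $\tau_1 = \sigma$.

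For the inductive step, assume $m = m(\sigma) < \infty$ and $M = M(\sigma) \geq 2$. Since $M \subseteq S_g$ is disjoint from $a \cup b$ and $\langle[a],[b]\rangle = 1$, each $\calH_k \cap [b]^\perp$ avoids the degenerate direction $\Z[a]$, and the decomposition $[a]^\perp \cap [b]^\perp = \bigoplus_k (\calH_k \cap [b]^\perp)$ holds. Writing $v_m = u_0 + u_1 + u_2$ in this direct sum with $u_k$ of rank $r_k = \rk^{\calV}(v_{m,k}^\sigma)$, primitivity of $v_m$ forces $\gcd(r_0, r_1, r_2) = 1$. For $M \geq 2$ this excludes every profile whose nonzero entries all equal $M$, so some $r_k$ lies strictly in $(0,M)$. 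Using Lemma~\ref{bmfinquotsplit} to relabel the unordered decomposition, place $M$ at position $2$, a rank in $(0,M)$ at position $1$, and (using $g \geq \finalbound$) a summand of genus $\geq 19$ at position $\kappa \in \{0,2\}$. Applying Lemma~\ref{bmfinquotgenusincrease} then expresses $[\BM_\sigma]$ as $\pm [\BM_{\sigma_y}] \pm [\BM_{\sigma_z}]$, where $\sigma_z$ has strictly smaller complexity (with the choice $\kappa = 2$, the position-$2$ rank drops to $0$, so $M$ or $N$ strictly decreases) and $\sigma_y$ carries the same rank profile as $\sigma$ but with $g(\calH_1^{\sigma_y}) \geq 10$. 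The inductive hypothesis applies directly to $\sigma_z$.

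The main work is reducing $\sigma_y$. The idea is to split the middle summand $\calH_1^{\sigma_y} = A + B$ (with $A \cap B = \Z[a]$) and invoke Lemma~\ref{bmfinquotfundclassadd} to obtain a relation $[\BM_{\sigma_y}] = [\BM_{x,y}] + [\BM_{x,z}]$. Choose the splitting so that the primitive generator $w$ of $v_m$'s projection onto $\calH_1^{\sigma_y}$ decomposes as $w = w_A + w_B$ with $w_A$ primitive in $A$ and $w_B$ primitive in $B$; by Lemma~\ref{bmfinquotfundclassaddpt2}, the two resulting tori then carry rank profiles $(r_0^\sigma, r_1^\sigma, \gcd(r_1^\sigma, M))$ at index $m$, whose maximum is at most $\max(r_0^\sigma, r_1^\sigma) < M$, strictly decreasing the complexity. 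The main obstacle is choosing $A, B$ so that simultaneously (i) for each $i < m$, the projection of $v_i$ onto $\calH_1^{\sigma_y}$ sits entirely in $A$ or entirely in $B$ (keeping those ranks $\leq 1$), and (ii) $w$ splits non-trivially into two primitive pieces. The bound $g(\calH_1^{\sigma_y}) \geq 10$ established above provides the dimension needed for such a splitting, and Lemma~\ref{homolcurveconjstructurelemma} guarantees the resulting algebraic decomposition is realized by an actual edge of $X_g$.
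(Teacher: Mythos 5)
Your overall skeleton matches the paper's proof quite closely: the same outer induction on the first bad index $m$, the same inner double induction on $(\maxrk_m,\nummaxrk_m)$, the same use of Lemma~\ref{bmfinquotgenusincrease} to reduce to $g(\calH_1^{\sigma}) \geq 10$, and the same use of Lemmas~\ref{lemmaaltcohomolauxadd} and~\ref{bmfinquotfundclassaddpt2} to trade the torus for two tori with adjusted rank profiles. The place where you diverge from the paper is the construction of the splitting $\calH_1^{\sigma_y} = A + B$, and that is where the gap sits.

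You require simultaneously (i) each $v_{i,1}^{\sigma_y}$ for $i < m$ to lie entirely in $A$ or entirely in $B$, and (ii) $w$ to split as $w_A + w_B$ with $w_A \in A$, $w_B \in B$ both primitive and nonzero. Since $A$ and $B$ are required to be $\langle\cdot,\cdot\rangle$--orthogonal, (i) forces any pair $v_{i,1}^{\sigma_y}, v_{j,1}^{\sigma_y}$ with $\theta(\calV)_{i,j,1}(\sigma_y) \neq 0$ into the same piece. Worse, nothing prevents $w$ from lying in the rational span of $\{v_{i,1}^{\sigma_y} : i < m\} \cup \{[a]\}$, in which case any saturated $A$ containing all of those elements forces $w \in A$, so $w_B = 0$ and (ii) fails. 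The genus bound $g(\calH_1^{\sigma_y}) \geq 10$ gives room to extend a partial symplectic basis, but it does not remove these linear obstructions, and you do not address them. The paper sidesteps the problem by not asking the $v_{i,1}$ to land on one side at all: it uses the bound $10 > 9$ to find a single auxiliary primitive $h \in \calH_1^{\sigma} \cap [b]^{\perp}$ representable by a curve disjoint from curves representing $w_{1,1}^{\sigma},\dots,w_{9,1}^{\sigma}$, sets $h_i = v_{i,1}^{\sigma} - h$ (automatically primitive since the coefficient on $h$ is $-1$), and takes $A \ni [a], h_1,\dots,h_9$ with $B \ni h$. Then both new position--$1$ ranks are exactly $1$ for \emph{every} $i$, and the $\gcd$ formula of Lemma~\ref{bmfinquotfundclassaddpt2} collapses both new position--$2$ ranks to $1$ for every $i$ in a single step. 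To repair your proof you would either need to establish the existence of your splitting (and handle the degenerate cases), or adopt the paper's $h$--based construction.

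A smaller point: the claim that the new maximum at index $m$ is at most $\max(r_0^{\sigma}, r_1^{\sigma}) < M$ overlooks the possibility $r_0^{\sigma} = M$, which is permitted since primitivity of $v_m$ only forces $\gcd(r_0^{\sigma}, r_1^{\sigma}, r_2^{\sigma}) = 1$. In that case the maximum stays $M$ and only $\nummaxrk_m$ drops; the induction still advances, but not for the reason you state.
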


\begin{proof}
We induct on the number $m$ with $0 \leq m \leq 9$ such that we can write $[\BM_{\sigma}]$ as a linear combination of classes $[\BM_{\tau_\ell}]$ that satisfy, for $1 \leq \ell \leq p$ and $1 \leq i \leq m$, the inequality $\rk^{\calV}(v_{i,k}^{\tau_s}) \leq 1$.  

\p{Base case: $m = 0$}  In this case, the result holds trivially, since there are no $i$ with $1 \leq i$ and $i \leq m$.

\p{Inductive step: $m \geq 1$}  Inductively, assume that $[\BM_\sigma]$ can be written as a linear combination of classes $[\BM_{\tau_\ell}]$ that satisfy, for $1 \leq \ell \leq p$ and $1 \leq i \leq m$, the inequality $\rk^{\calV}(v_{i,k}^{\tau_s}) \leq 1$.  Hence without loss of generality, we may assume that $\BM_{\sigma}$ satisfies $\rk^{\calV}(v_{i,k}^{\sigma}) \leq 1$ for $1 \leq i < m$.  We will show that $[\BM_{\sigma}]$ is a linear combination of fundamental classes $[\BM_{\tau}]$, each of which has $\rk^{\calV}(v_{i,k}^{\tau}) \leq 1$ for all $1 \leq i \leq m$ and $0 \leq k \leq2$.  We will perform a double induction on two quantities associated to $\sigma$:
\begin{itemize}
\item $\maxrk_m(\sigma) = \max_{0 \leq k \leq 2}\{\rk^{\calV}(v_{m,k}^\sigma)\}$, and
\item $\nummaxrk_m(\sigma) = \left|\{k: 0 \leq k \leq 2, \rk^{\calV}(v_{m,k}^\sigma) = \maxrk_m(\sigma)\}\right|$.
\end{itemize}
\bn In particular, we suppose that $\sigma$ is a 2--cell with $\rk^{\calV}(v_{i,k}^{\sigma}) \leq 1$ for all $1 \leq i < m$ and $0 \leq k \leq 2$.   We suppose that for each 2--cell $\tau$ with: 
\begin{itemize}
\item $\rk^{\calV}(v_{i,k}^{\tau}) \leq 1$ for $1 \leq i < m$ and $0 \leq k \leq 2$, and either:
\begin{itemize}
\item $\maxrk_m(\tau) < \maxrk_m(\sigma)$, or
\item $\maxrk_m(\tau) \leq \maxrk_m(\sigma)$ and $\nummaxrk_m(\tau) < \nummaxrk_m(\sigma)$,
\end{itemize}
\end{itemize}
\bn we know that $[\BM_{\tau}]$ is a $\Q$--linear combination of classes $[\BM_{\tau_{\ell}}]$ such that $\rk^{\calV}(v_{i,k}^{\tau_{\ell}}) \leq 1$ for all $1 \leq i \leq m$ and $0 \leq k \leq 2$.  We will show that this implies that the same holds for $[\BM_{\sigma}]$.  

\p{Base case: $\maxrk_m(\sigma) = 1$}  In this case, $\BM_{\sigma}$ satisfies $\rk^{\calV}(v_{i,k}^{\sigma}) \leq 1$ for all $1 \leq i \leq m$ and $0 \leq k \leq 2$, so the inductive step for the induction on $m$ is complete.

\medskip

\bn\textit{Inductive step for $\maxrk_m(\sigma)$ and $\nummaxrk_m(\sigma)$: The inductive hypothesis for the induction on $m$ holds for all $\tau \subseteq X_g$ with:
\begin{itemize}
\item either $\maxrk_m(\tau) < \maxrk_m(\sigma)$ or
\item both $\maxrk_m(\tau) \leq \maxrk_m(\sigma)$ and $\nummaxrk_m(\tau) < \nummaxrk_m(\sigma)$.
\end{itemize}} 
\bn We will show that 
\begin{displaymath}
[\BM_{\sigma}] = \sum_{\ell=1}^{p} \lambda_{\ell}[\BM_{\tau_{\ell}}]
\end{displaymath}
\bn such that the following hold:
\begin{itemize}
\item at least one of the following holds:
\begin{itemize}
\item $\maxrk_m(\sigma_{\iota}) <\maxrk_m(\sigma)$ or 
\item $\maxrk_m(\sigma_{\iota}) \leq \maxrk_m(\sigma)$ and $\nummaxrk_m(\sigma_{\iota}) < \nummaxrk_m(\sigma)$, 
\end{itemize}
\item $\lambda_{\ell} \in \Q$, and
\item $\rk^{\calV}(v_{i,k}^{\tau_{\ell}}) \leq 1$ for all$1 \leq i < m$ and $k = 0,1,2$.
\end{itemize}
\bn  This completes the proof, since every $\tau_{\ell}$ as above satisfies the inductive hypothesis for the induction on $m$ by the inductive hypothesis for the induction on $\maxrk_m(\sigma)$ and $\nummaxrk_m(\sigma)$.  

 If $\rk^{\calV}(v_{m,k}^{\sigma}) = 0$ for two distinct choices of $k$, then $\rk^{\calV}(v_{m,k}^{\sigma}) = 1$ for the third choice of $k$, since $v_m = v_{m,0}^{\sigma} + v_{m,1}^{\sigma} + v_{m,2}^{\sigma}$ and $v_m$ is primitive by assumption.  Otherwise, reindex $\calH(\BM_{\sigma})$ such that $\rk^{\calV}(v_{m,2}^{\sigma})$ is maximal among all $\rk^{\calV}(v_{m,k}^\sigma)$.  Note that at least one remaining $\rk^{\calV}(v_{m,i}^{\sigma})$ must have $\rk^{\calV}(v_{m,i}^{\sigma}) \neq 0, \neq \rk^{\calV}(v_{m,2}^{\sigma})$ since $v_m$ is primitive by assumption. Hence, we may further reindex so that $\rk^{\calV}(v_{m,1}^\sigma) \neq 0, \rk^{\calV}(v_{m,2}^\sigma)$.  Then $\sigma$ satisfies the hypothesis of Lemma~\ref{bmfinquotgenusincrease}, so the conclusion of Lemma~\ref{bmfinquotgenusincrease} implies that we may rewrite $[\BM_{\sigma}]$ as a linear combination of fundamental classes of Bestvina--Margalit tori such that each tori $\BM_{\sigma_{\iota}}$ satisfies the following:
\begin{enumerate}
\item $\rk^{\calV}(v_{i,k}^{\sigma_{\iota}}) \leq 1$ for all $1 \leq i < m$ and $0 \leq k \leq 2$,
\item $\maxrk_m(\sigma_{\iota}) \leq \maxrk_m(\sigma)$,
\item if $\maxrk_m(\sigma_{\iota}) = \maxrk_m(\sigma)$, then $\nummaxrk_m(\sigma_{\iota}) \leq \nummaxrk_m(\sigma)$,
\item if $\maxrk_m(\sigma_{\iota}) = \rk^{\calV}(v_{m,2}^{\sigma})$, then $0 < \rk^{\calV}(v_{m,1}^{\sigma_{\iota}}) < \rk^{\calV}(v_{m,2}^{\sigma_{\iota}})$, and
\item $g(\calH_1^{\sigma_{\iota}}) \geq 10$.
\end{enumerate}
\bn Therefore we may assume without loss of generality that $g(\calH_1^{\sigma}) \geq 10$.

Now, since $g(\calH_1^{\sigma}) \geq 10$ and $\left|\calV \right| \leq 9$, there must be some nonzero primitive $h \in \calH_1^{\sigma} \cap [b]^{\perp}$ such that there is a with multicurve $M_1 \subseteq S_g$ where $\{[c]: c \in \pi_0(M_1)\} = \{h, w_{1,1}^\sigma, \ldots, w_{9,1}^{\sigma}\}$, where $w_{i,1}^{\sigma}$ is a primitive element with $v_{i,1}^{\sigma} = \lambda w_{i,1}^{\sigma}$ for some $\lambda \in \Z$.  For $1 \leq i \leq 9$, let $h_i = v_{i,1}^{\sigma} - h$.  Since $g(\calH_1^{\sigma}) \geq 10$, there is a primitive subgroup $\calH \subseteq \calH_1^{\sigma}$ such that the following hold:
\begin{multicols}{2}
\begin{itemize}
\item $a \in \calH$,
\item $2g(\calH) + 1 = \rk(\calH)$, 
\item $h_i \in \calH$ for all $1 \leq i \leq 9$, and
\item $h \in \calH^{\perp}$.
\end{itemize}
\end{multicols}
\bn Let $\widehat{\calH} = \{\calH_0^{\sigma}, \calH, \calH_1^{\sigma} \cap \calH^{\perp}, \calH_2^{\sigma}\}$, which can be graphically represented as in Figure~\ref{bmfincokprimredfig}. 

\begin{figure}[h]
\begin{tikzpicture}
\node[anchor = south west, inner sep = 0] at (0,0){\includegraphics{threecircle.png}};
\node at (-0.3,1.3) {$\calH_0^{\sigma}$};
\node at (1.4,2.9) {$\calH$};
\node at (3.5,1.3) {$\calH^{\perp} \cap \calH_1^{\sigma}$};
\node at (1.4,-0.2) {$\calH_2^{\sigma}$};
\end{tikzpicture}
\caption{The graphical representation of $\widehat{\calH}$}\label{bmfincokprimredfig}
\end{figure}

 Let $y \subseteq X_g$ be the unique edge such that $\calH \in \calH(y)$.  Let $x$ be the edge of $\sigma$ with $\calH_0^{\sigma} \in \calH(x)$.  There is a unique edge $z \subseteq X_g$ such that:
\begin{itemize}
\item $y$ and $z$ are two edges in a 2--cell $\tau$ and
\item the third edge of $\tau$, denoted $yz$, is the unique edge with $\calH_1^{\sigma} \in \calH(yz)$.
\end{itemize}
\bn  Let $\sigma_y$ and $\sigma_z$ denote 2--cells with $x,y \subseteq \sigma_y$, $x,z \subseteq \sigma_z$.  Assume that $\calH(\sigma_y)$ is indexed so that $\calH_0^{\sigma_y} = \calH_0^{\sigma}$, $\calH_1^{\sigma_y} = \calH$, and $\calH_2^{\sigma_y} = \calH^{\perp} \cap \calH_1^{\sigma} + \calH_2^{\sigma}$.  Our choice of $\calH$ implies that $\proj_{\calH_1^{\sigma_y} \cap [b]^{\perp}}(v_i) = h_i$ for all $1 \leq i \leq 9$.  Since $h$ is nonzero and primitive by assumption, the element $h_i$ is nonzero and primitive as well for an $1 \leq i \leq 9$, so $\rk^{\calV}(v_{i,1}^{\sigma_y}) = 1$ for all $1 \leq i \leq 9$.  Assume now that $\calH(\sigma_z)$ is indexed such that $\calH_0^{\sigma_z} = \calH_0^{\sigma}$, $\calH_1^{\sigma_z}= \calH^{\perp} \cap \calH_1^{\sigma}$, and $\calH_2^{\sigma_z} = \calH + \calH_2^{\sigma}$.  This means that $\proj_{\calH_1^{z} \cap [b]^{\perp}}(v_i) = h$ for all $1 \leq i \leq 9$, so $\rk^{\calV}(v_{i,1}^{\sigma_z}) = 1$ for all $1 \leq i \leq 9$, since $h$ is primitive by assumption.  Furthermore, for such $y$ and $z$, after possibly reorienting $\sigma_y$ and $\sigma_z$, we have 
\begin{displaymath}
[\BM_{\sigma}] = [\BM_{\sigma_y}] + [\BM_{\sigma_z}].
\end{displaymath}
\bn We have assumed that $\calH(\sigma_y)$ and $\calH(\sigma_z)$ are indexed so that $\calH_0^{\sigma_y} = \calH_0^{\sigma_z} = \calH_0^{\sigma}.$  Relations (1)--(4) of Lemma~\ref{bmfinquotfundclassaddpt2} and the above computations of $\rk^{\calV}(v_{i,1}^{\sigma_y})$ and $\rk^{\calV}(v_{i,1}^{\sigma_z})$ imply that the following hold for all $1 \leq i \leq 9$:
\begin{enumerate}
\item $\rk^{\calV}(v_{i,0}^{\sigma_y}) = \rk^{\calV}(v_{i,0}^{\sigma_z}) = \rk^{\calV}(v_{i,0}^{\sigma})$,
\item $\rk^{\calV}(v_{i,1}^{\sigma}) = \gcd(\rk^{\calV}(v_{i,1}^{\sigma_y}), \rk^{\calV}(v_{i,1}^{\sigma_z})) = 1$,
\item $\rk^{\calV}(v_{i,2}^{\sigma_y}) = \gcd(\rk^{\calV}(v_{i,1}^{\sigma_z}), \rk^{\calV}(v_{i,2}^{\sigma})) = \gcd(1, \rk^{\calV}(v_{i,2}^{\sigma})) = 1$, and
\item $\rk^{\calV}(v_{i,2}^{\sigma_z}) = \gcd(\rk^{\calV}(v_{i,1}^{\sigma_y}), \rk^{\calV}(v_{i,2}^{\sigma})) = \gcd(1, \rk^{\calV}(v_{i,2}^{\sigma})) = 1$.
\end{enumerate}

\bn  We have assumed that $1 \leq \rk^{\calV}(v_{m,1}^{\sigma}) < \rk^{\calV}(v_{m,2}^{\sigma})$, so we must have:
\begin{itemize}
\item $1 = \rk^{\calV}(v_{m,2}^{\sigma_y}) < \rk^{\calV}(v_{m,2}^\sigma)$ and
\item $1 = \rk^{\calV}(v_{m,2}^{\sigma_z})  <\rk^{\calV}(v_{m,2}^\sigma)$.
\end{itemize}
\bn Then relations (1)--(4) together imply the following:
\begin{itemize}
\item For any pair $i,k$ with $1 \leq i \leq 9$ and $0 \leq k \leq 2$ with $\rk^{\calV}(v_{i,k}^{\sigma}) \leq 1$, we have $\rk^{\calV}(v_{i,k}^{\sigma_y}), \rk^{\calV}(v_{i,k}^{\sigma_z}) \leq 1$, and
\item for any pair $i,k$ with $1 \leq i \leq 9$ and $0 \leq k \leq 2$ with $\rk^{\calV}(v_{i,k}^{\sigma} \geq 1$, we have $\rk^{\calV}(v_{i,k}^{\sigma_y}), \rk^{\calV}(v_{i,k}^{\sigma_z}) \leq \rk^{\calV}(v_{i,k}^{\sigma})$.
\end{itemize}
\bn Therefore, since we have assumed that $\rk^{\calV}(v_{m,2}^{\sigma})$ was maximal over all $\rk^{\calV}(v_{m,k}^{\sigma})$, we see that the two sets $\{\rk^{\calV}(v_{i,k}^{\sigma_y})\}_{1 \leq i \leq m, k = 0,1,2}$ and $\{\rk^{\calV}(v_{i,k}^{\sigma_z})\}_{1 \leq i \leq m, k = 0,1,2}$ each have strictly fewer elements equal to or exceeding $\rk^{\calV}(v_{m,2}^{\sigma})$ than does the set $\{\rk^{\calV}(v_{i,k}^{\sigma})\}_{1 \leq i \leq m, k = 0,1,2}$.  Hence for $y$, we have:
\begin{itemize}
\item $\rk^{\calV}(v_{i,k}^{\sigma_y}) \leq 1$ for all $1 \leq i < m$ and $0 \leq k \leq 2$, and
\item either:
\begin{itemize}
\item $\maxrk_m(\sigma_y) < \maxrk_m(\sigma)$, or
\item $\maxrk_m(\sigma_y) \leq \maxrk_m(\sigma)$ and $\nummaxrk_m(\sigma_y) < \nummaxrk_m(\sigma)$
\end{itemize}
\end{itemize}
\bn and similarly for $z$.  Therefore we have
\begin{displaymath}
[\BM_{\sigma}] = [\BM_{\sigma_y}] + [\BM_{\sigma_z}]
\end{displaymath}
\bn such that $\sigma_y$ and $\sigma_z$ satisfy the inductive hypothesis for the induction on $\maxrk_m$ and $\nummaxrk_m$.  The inductive hypothesis for $\maxrk_m$ and $\nummaxrk_m$ says that there are relations
\begin{displaymath}
[\BM_{\sigma_y}] = \sum_{\ell_y = 1}^{p_y} \lambda_{\ell_y} [\BM_{\tau_{\ell_y}}] \text{ and }[\BM_{\sigma_z}] = \sum_{\ell_z = 1}^{p_z} \lambda_{\ell_z} [\BM_{\tau_{\ell_z}}] 
\end{displaymath}
\bn such that for each $1 \leq \ell_y \leq p_y$ and for each $1 \leq i \leq m$, $0 \leq k \leq 2$, we have $\rk^{\calV}(v_{i,k}^{\tau_{\ell_y}} )\leq 1$, and similarly for $z$.  By combining these two relations, we have
\begin{displaymath}
[\BM_{\sigma}] = \sum_{\ell_y = 1}^{p_y} \lambda_{\ell_y} [\BM_{\tau_{\ell_y}}] + \sum_{\ell_z = 1}^{p_z} \lambda_{\ell_z} [\BM_{\tau_{\ell_z}}] = \sum_{\ell = 1}^p \lambda_\ell [\BM_{\tau_\ell}]
\end{displaymath}
\bn where for each $1 \leq \ell \leq p$ and $1 \leq i \leq m$ and $0 \leq k \leq 2$, we have $\rk^{\calV}(v_{i,k}^{\tau_{\ell}}) \leq 1$, so the inductive step is complete.
\end{proof}

\subsection{The proof of Lemma~\ref{bmfinquotalgint}}

\bn We are now almost ready to show that fundamental classes $[\BM_\sigma]$ with $\left|\rk^{\calV}(v_{i,k}^{\sigma})\right| \leq 1$ can be written as linear combinations of classes $[\BM_{\tau_{s}}]$ with $\left| \theta(\calV)_{i,j,k}(\tau_s)\right| \leq 1$.  We first prove the following lemma.

\begin{lemma}\label{bmfinquotalggenusincrease}
Let $g \geq \finalbound$, $a,b, M \subseteq S_g$ and $\calV \subseteq H_1(S_g;\Z)$ be as in Lemma~\ref{bmconjlemma}.  Let $\BM_{\sigma} \subseteq X_g$ be a Bestvina--Margalit torus such that $\rk^{\calV}(v_{i,k}^{\sigma}) \leq 1$ for every $1 \leq i \leq 9$ and $0 \leq k \leq 2$.  Assume that not all pairs $1 \leq i,j \leq 9$ satisfy $\left|\theta(\calV)_{i,j,k}(\sigma)\right| \leq 1$.  Choose a pair $(i',j')$ with $1 \leq i' < j' \leq 9$ such that, after possibly reindexing $\calH(\BM_\sigma)$, $\left|\theta(\calV)_{i',j',1}(\sigma)\right| \geq 2$ and $\left|\theta(\calV)_{i',j',1}(\sigma)\right|$ is maximal in the set $\left\{\left|\theta(\calV)_{i',j',k}(\sigma)\right|: 0 \leq k \leq 2\right\}$.  Then $\BM_{\sigma}$ is a $\Q$--linear combination of classes $[\BM_{\sigma_{\iota}}]$ for $1 \leq \iota \leq s$ such that the following hold for all $1 \leq \iota \leq s$:
\begin{enumerate}
\item $\rk^{\calV}(v_{i,k}^{\sigma_{\iota}}) \leq 1$ for all $1 \leq i \leq 9$ and $0 \leq k \leq 2$,
\item the set $\left\{\left|\theta(\calV)_{i',j',k}(\sigma_{\iota})\right|:0 \leq k \leq 2\right\}$ is bounded above by $\left|\theta(\calV)_{i',j',1}(\sigma)\right|$ and has no more  elements equal to $\left|\theta(\calV)_{i',j',1}(\sigma)\right|$ than does the set $\left\{\left|\theta(\calV)_{i',j',k}(\sigma)\right|: 0 \leq k \leq 2\right\}$,
\item if the set $\{\left|\theta(\calV)_{i',j', k}(\sigma_{\iota})\right|: 0 \leq k \leq 2\}$ has as many elements equal to $\left|\theta(\calV)_{i',j',1}(\sigma)\right|$ as does the set $\{\left|\theta(\calV)_{i',j',k}(\sigma)\right|: 0 \leq k \leq 2\}$, then $\left|\theta(\calV)_{i',j',1}(\sigma_{\iota})\right| = \left|\theta(\calV)_{i',j',1}(\sigma)\right|$,
\item for every pair $1 \leq i < j \leq 9$ with $\left|\theta(\calV)_{i,j,k}(\sigma)\right| \leq 1$ for every $0 \leq k \leq 2$, we have $\left|\theta(\calV)_{i,j,k}(\sigma_{\iota})\right| \leq 1$ for all $0 \leq k \leq 2$, and
\item $g(\calH_1^{\sigma_{\iota}}) \geq 11$.
\end{enumerate}
\end{lemma}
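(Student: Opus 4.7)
The plan is to mirror the proof of Lemma~\ref{bmfinquotgenusincrease}, replacing the rank invariants $\rk^{\calV}$ with the algebraic intersection invariants $\theta(\calV)$. I will find a primitive subgroup $\calH \subseteq \calH_1^{\sigma}$ containing $[a]$, apply Lemma~\ref{lemmaaltcohomolauxadd} to the graphical representation $\{\calH_0^{\sigma}, \calH, \calH^{\perp} \cap \calH_1^{\sigma}, \calH_2^{\sigma}\}$ to obtain a decomposition $[\BM_{\sigma}] = [\BM_{\sigma_y}] + [\BM_{\sigma_z}]$ (up to reorientation), and then verify properties (1)--(5) using Lemma~\ref{bmfinquotfundclassaddpt2}. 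The central algebraic observation is that, since $\calV$ arises from a disjoint multicurve, the classes $v_i$ are pairwise orthogonal and the summands $\calH_k^{\sigma}$ are pairwise symplectically orthogonal modulo $\Z[a]$; hence for every pair $(i,j)$,
\[
\theta(\calV)_{i,j,0}(\sigma) + \theta(\calV)_{i,j,1}(\sigma) + \theta(\calV)_{i,j,2}(\sigma) = 0.
\]
Since $|\theta(\calV)_{i',j',1}(\sigma)| = n$ is the maximum in this triple, the other two slot values have sign opposite to $\theta(\calV)_{i',j',1}(\sigma)$ (or are zero), and each has absolute value at most $n$.

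If $g(\calH_1^{\sigma})$ is not already large enough, I will first apply a warm-up analogous to the opening paragraph of the proof of Lemma~\ref{bmfinquotgenusincrease}: since $g \geq \finalbound$, one of $g(\calH_0^{\sigma}), g(\calH_2^{\sigma})$ is large, and choosing a primitive subgroup $\calH' \subseteq \calH_{\kappa}^{\sigma}$ containing $[a]$ and symplectically orthogonal to all $v_i$ lets Lemma~\ref{lemmaaltcohomolauxadd} rewrite $[\BM_{\sigma}]$ as a $\Q$--linear combination of two tori with the same $\rk^{\calV}$ and $\theta(\calV)$ invariants but with strictly larger $g(\calH_1)$. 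Iterating if necessary brings us to $g(\calH_1^{\sigma}) \geq 22$, which ensures both halves of the subsequent splitting satisfy $g(\calH_1^{\sigma_{\iota}}) \geq 11$. I then choose $\calH \subseteq \calH_1^{\sigma}$ primitive with $[a] \in \calH$, $g(\calH), g(\calH^{\perp} \cap \calH_1^{\sigma}) \geq 11$, and satisfying: $v_i \in \calH^{\perp}$ for all $i \neq i', j'$; the four projections of $v_{i'}, v_{j'}$ onto $\calH \cap [b]^{\perp}$ and $\calH^{\perp} \cap \calH_1^{\sigma} \cap [b]^{\perp}$ are nonzero and primitive; and the pairings $B_1, B_2$ arising from Lemma~\ref{bmfinquotfundclassaddpt2} satisfy $B_1 + B_2 = \theta(\calV)_{i',j',1}(\sigma)$ with both of the same sign as $\theta(\calV)_{i',j',1}(\sigma)$ and with $1 \leq |B_1|, |B_2| \leq n - 1$. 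The existence of such $\calH$ is a symplectic-linear-algebra problem inside the quasi-unimodular lattice $\calV_0^{\perp} \cap \calH_1^{\sigma}$ of genus at least $g(\calH_1^{\sigma}) - 7 \geq 15$ (with $\calV_0 = \{v_i : i \neq i', j'\}$): put $v_{i',1}^{\sigma}$ and $v_{j',1}^{\sigma}$ into a symplectic normal form spreading their intersection across $n$ symplectic pairs, then partition these pairs between $\calH$ and its complement.

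Properties (1)--(5) then follow from Lemma~\ref{bmfinquotfundclassaddpt2}: (1) is immediate from the primitivity of the projections; (2) and (3) follow because $|B_1|, |B_2| < n$ and, since $B_i$ and $\theta(\calV)_{i',j',0}(\sigma)$ have opposite signs, $|B_i + \theta(\calV)_{i',j',0}(\sigma)| < n$, so no new $\sigma_{\iota}$ attains the value $n$ in the $k=1$ or $k=2$ slot (forcing the count of max-achieving slots to strictly decrease); (4) holds because for pairs $(i,j)$ with all $|\theta(\calV)_{i,j,k}(\sigma)| \leq 1$, the condition $v_i \in \calH^{\perp}$ (or $v_j \in \calH^{\perp}$) gives $B_1 = 0$, and the sum-to-zero identity forces the new $k=2$ slot value to equal $-\theta(\calV)_{i,j,0}(\sigma)$, of absolute value $\leq 1$; (5) is built into the genus requirements on $\calH$. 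The hardest part will be the construction of $\calH$, which must simultaneously satisfy seven orthogonality constraints (to preserve primitivity of the other $v_i$-projections) while engineering an integer split of the pairing between $v_{i'}$ and $v_{j'}$ with both parts of the correct sign and magnitude, thereby necessitating the lower bound on $g(\calH_1^{\sigma})$ obtained in the warm-up.
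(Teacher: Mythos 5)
Your proposal conflates this lemma with the downstream Lemma~\ref{bmfinquotalgint}. The lemma you are proving is purely a genus--increasing step; properties (2)--(4) only ask that the $\theta(\calV)$ invariants \emph{do not get worse}. The paper achieves everything with nothing beyond your ``warm-up'': pick the large slot $\calH_\kappa^\sigma$ (say $\kappa = 0$) with $g(\calH_\kappa^\sigma) \geq 21$, choose a primitive $\calH \subseteq \calH_\kappa^\sigma$ with $[a] \in \calH$, $v_{i,\kappa}^\sigma \in \calH^\perp$ for all $i$, and $g(\calH)$ in the window $[11, g(\calH_\kappa^\sigma) - 10]$, and apply Lemma~\ref{lemmaaltcohomolauxadd} once. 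A single step suffices; no iteration is needed. The careful $\theta$--split of $\calH_1^\sigma$ is the main inductive step of Lemma~\ref{bmfinquotalgint}, not part of this lemma, and in the paper it is done by reducing $|\theta|$ by exactly $1$ at a time (via the choice $|\langle v'_{i,1}, v'_{j,1}\rangle| = |\theta| - 1$, $|\langle v''_{i,1}, v''_{j,1}\rangle| = 1$), not by an arbitrary integer partition $B_1 + B_2 = n$.

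Two concrete issues. First, your warm-up claim that the two resulting tori carry exactly the same $\rk^{\calV}$ and $\theta(\calV)$ invariants is false for one of them. For $\sigma_y$ (with $\calH^\perp \cap \calH_\kappa^\sigma$ in slot $\kappa$ and $\calH$ absorbed into slot $1$), the invariants are preserved, since $\calH$ is orthogonal to all $v_i$. But for $\sigma_z$ (with $\calH$ alone in slot $\kappa$), the slot $\kappa$ invariants all become zero and the slot $1$ invariant becomes $\theta_{\kappa}(\sigma) + \theta_1(\sigma)$; showing this is still bounded by $|\theta_1(\sigma)|$ (via the opposite-sign observation) \emph{is} the content of properties (2)--(3). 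So you cannot iterate on the premise that the invariants are static; you must carry through the ``no worse'' accounting each time.

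Second, the construction of $\calH \subseteq \calH_1^\sigma$ in your $\theta$--split step is not obviously achievable. You need, simultaneously, the four projections of $v_{i'}, v_{j'}$ onto $\calH$ and $\calH^\perp \cap \calH_1^\sigma$ to be \emph{primitive}, and the pairings $B_1, B_2$ to partition $n = \theta(\calV)_{i',j',1}(\sigma)$ into two nonzero same-sign summands. Take $v_{i',1}^\sigma = a_1$ and $v_{j',1}^\sigma = n b_1 + b_2$ (both primitive, with $\langle a_1, n b_1 + b_2 \rangle = n$); the natural split $\calH = \langle a_1, b_1 \rangle$ produces $\proj_{\calH}(v_{j'}) = n b_1$, which is not primitive, and any split assigning $a_1$ wholly to one side inherits this defect. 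Resolving the primitivity constraint simultaneously with the magnitude/sign constraints on $B_1, B_2$ is delicate and is precisely why Lemma~\ref{bmfinquotalgint} reduces $|\theta|$ by exactly $1$ per step rather than by an arbitrary split.
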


\begin{proof}
If $g(\calH_1^{\sigma}) \geq 11$, then the lemma is trivially true by taking $s = 1$ and $\BM_{\sigma_{1}} = \BM_{\sigma}$.  Otherwise, since $g \geq \finalbound$ there is a $k = 0,2$ with $g(\calH_k^{\sigma}) \geq 21$.  Without loss of generality, assume that $g(\calH_0^{\sigma}) \geq 21$.  Since $g(\calH_0^{\sigma}) \geq 21$ and $\left|\calV\right| = 9$, there is a primitive subgroup $\calH \subseteq \calH_0^{\sigma}$ such that the following hold:
\begin{multicols}{2}
\begin{itemize}
\item $a \in \calH$, 
\item $v_{i,k}^{\sigma} \in \calH^{\perp}$ for all $1 \leq i \leq 8$, 
\item $g(\calH) - g(\calH_1^{\sigma}) \geq 9$,
\item $g(\calH) \geq 11$, and  
\item $2 *g(\calH) + 1 = \rk^{\calV}(\calH)$.
\end{itemize}
\end{multicols}
\bn Let $\widehat{\calH} = \{\calH_0^{\sigma}, \calH, \calH^{\perp} \cap \calH_1^{\sigma}, \calH_2^{\sigma}\}$.  We can describe $\widehat{\calH}$ using the graphical notation of Section~\ref{homolcurvequotsection} as in Figure~\ref{bmfinquotalggenusincreasefig}.  
\begin{figure}[h]
\begin{tikzpicture}
\node[anchor = south west, inner sep = 0] at (0,0){\includegraphics{threecircle.png}};
\node at (-0.3,1.3) {$\calH_0^{\sigma}$};
\node at (1.4,2.95) {$\calH$};
\node at (3.5,1.3) {$\calH^{\perp} \cap \calH_1^{\sigma}$};
\node at (1.4,-0.2) {$\calH_2^{\sigma}$};
\end{tikzpicture}
\caption{The graphical representation of $\widehat{\calH}$}\label{bmfinquotalggenusincreasefig}
\end{figure}

Now, let $z \subseteq X_g$ be the unique edge with $\calH \in \calH(z)$.  Let $y \subseteq X_g$ be the unique edge with $\calH^{\perp} \cap \calH_0^{\sigma} \in \calH(y)$.  Reorient $y$ and $z$ so that $yz$ is the unique edge with $\calH_0^{\sigma} \in \calH(yz)$.  Let $x \subseteq X_g$ be the unique edge with $\calH_2^{\sigma} \in \calH(x)$.  For $* = y,z$, let $\sigma_* \subseteq X_g$ be a 2--cell containing $x$ and $*$.  By Lemma~\ref{lemmaaltcohomolauxadd}, there is a relation
\begin{displaymath}
[\BM_{\sigma }] = [\BM_{\sigma_y}] + [\BM_{\sigma_z}]. 
\end{displaymath}
\bn We now show that $\BM_{\sigma_y}$ and $\BM_{\sigma_z}$ have the desired properties of the tori $\BM_{\sigma_{\iota}}$.

\p{$\BM_{\sigma_z}$ satisfies properties (1)-(5) of $\sigma_{\iota}$} Assume that $\calH(\sigma_z)$ is indexed so that $ \calH =\calH_0^{\sigma_z}$, $\calH^{\perp} \cap \calH_0^{\sigma} + \calH_1^{\sigma} = \calH_1^{\sigma_z}$, and $\calH_2^{\sigma} = \calH_2^{\sigma_z}$.  Now, we have chosen $\calH$ such that $\proj_{\calH \cap [b]^{\perp}}(v_i) = 0$ for all $1 \leq i \leq 9$.  Furthermore, for $k = 1$, we have $\proj_{\calH_1^{\sigma_z} \cap [b]^{\perp}}(v_i) = v_{i,0}^{\sigma} + v_{i,1}^{\sigma}$.  For $k = 2$, we have $\proj_{\calH_2^{\sigma_z} \cap [b]^{\perp}}(v_i) = v_{i,2}^{\sigma}$.  Each $v_{i,k}^{\sigma}$ is primitive by our hypothesis that $\rk^{\calV}(v_{i,k}^{\sigma}) \leq 1$, so each $v_{i,k}^{\sigma_z}$ is primitive as well.  Therefore $\BM_{\sigma_z}$ satisfies property (1) of $\sigma_{\iota}$.  Furthermore, the following hold for $\sigma_z$:
\begin{enumerate}
\item $\theta(\calV)_{i,j,0}(\sigma_z) = 0$ for all $1 \leq i,j \leq 9$,
\item $\left|\theta(\calV)_{i,j,1}(\sigma_z)\right| = \left|\theta(\calV)_{i,j,1}(\sigma) + \theta(\calV)_{i,j,0}(\sigma) \right|$ for all $1 \leq i, j \leq 9$, 
\item $\theta(\calV)_{i,j,2}(\sigma_z) = \theta(\calV)_{i,j,2}(\sigma)$ for all $1 \leq i,j \leq 9$, and
\item $g(\calH_1^{\sigma_z}) = g(\calH_1^{\sigma}) + g(\calH_0^{\sigma}) - g(\calH)$.
\end{enumerate}
\bn Now, observe that since $\langle v_i, v_j \rangle = 0$, if $1 \leq i < j \leq 9$ is a pair such that $\theta(\calV)_{i,j,k}(\sigma) \in \{-1,0,1\}$ for every $0 \leq k \leq 2$, then either $\theta(\calV)_{i,j,k}(\sigma) = 0$ for $0 \leq k \leq 2$, or $\{\theta(\calV)_{i,j,k}(\sigma): 0 \leq k \leq 2\} = \{-1,0,1\}$.  Therefore, relations (1), (2) and (3) in the above list for relations among $\theta(\calV)_{i,j,k}(\sigma_z)$ imply that $\BM_{\sigma_z}$ satisfies property (4) of the lemma.  For property (2), observe that the fact that $\langle v_{i'},v_{j'} \rangle = 0$ implies that 
\begin{displaymath}
\theta(\calV)_{i',j',0}(\sigma) + \theta(\calV)_{i',j',1}(\sigma) + \theta(\calV)_{i',j',2}(\sigma) = 0
\end{displaymath}
\bn  Since $\left|\theta(\calV)_{i',j',1}(\sigma)\right|$ is maximal among $\left|\theta(\calV)_{i',j',k}(\sigma)\right|$, the integers $\theta(\calV)_{i',j',1}(\sigma)$ and $\theta(\calV)_{i',j',0}(\sigma)$ must have opposite signs.  Therefore $\left|\theta(\calV)_{i',j',1}(\sigma_z)\right| = \left|\theta(\calV)_{i',j',1}(\sigma) + \theta(\calV)_{i',j',1}(\sigma)\right| \leq \left|\theta(\calV)_{i',j',1}(\sigma)\right|$, so relations (1), (2), and (3) imply that $\BM_{\sigma_z}$ satisfies property (2) of $\sigma_{\iota}$.  For property (3), relations (1)-(3) imply that  the only way for the set $\left\{\left|\theta(\calV)_{i',j',k}(\sigma_{z})\right|: 0 \leq k \leq 2\right\}$ to have as many elements equal to $\left|\theta(\calV)_{i',j',1}(\sigma)\right|$ as does $\left\{\left|\theta(\calV)_{i',j',k}(\sigma)\right|: 0 \leq k \leq 2\right\}$ is for $\theta(\calV)_{i',j',1}(\sigma_z) = \theta(\calV)_{i',j',1}(\sigma)$, so property (3) must hold.  Then since $g(\calH_0^{\sigma}) - g(\calH) \geq 21 - 11 = 10$ by assumption, relation (4) in the list of relations above implies that $g(\calH_1^{\sigma_z}) = g(\calH_0^{\sigma}) - g(\calH) + g(\calH_1^{\sigma}) \geq 10 + 1 \geq 11$, so $\sigma_z$ satisfies property (5) of $\sigma_\iota$.  

\p{$\BM_{\sigma_y}$ satisfies hypothesis (1)-(5) of $\sigma_{\iota}$} Assume that $\calH(\sigma_y)$ is indexed so that $\calH^{\perp} \cap \calH_0^{\sigma} = \calH_0^{\sigma_y}$, $\calH + \calH_1^{\sigma} = \calH_1^{\sigma_y}$, and $\calH_2^{\sigma_y} = \calH_2^{\sigma}$.  We have $\proj_{\calH^{\perp} \cap \calH_0(\sigma) \cap [b]^{\perp}}(v_i) = v_{i,0}^{\sigma}$ for all $1 \leq i \leq 9$ by our choice of $\calH$.  Since we have $\calH_1^{\sigma_y} = \calH + \calH_1^{\sigma}$, our choice of $\calH$ implies that $\proj_{\calH_1^{\sigma_y} \cap [b]^{\perp}}(v_i) = v_{i,1}^{\sigma}$.  Then we also have $\calH_2^{\sigma_y} = \calH_2^{\sigma}$ by construction, so the following hold:
\begin{enumerate}
\item $\theta(\calV)_{i,j,k}(\sigma) = \theta(\calV)_{i,j,k}(\sigma_y)$ for all $1 \leq i \leq 9$ and $0 \leq k \leq 2$, and
\item $g(\calH_1^{\sigma_y}) = g(\calH_1^{\sigma})+ g(\calH)$.
\end{enumerate}
\bn Relation (1) implies that $\sigma_y$ satisfies hypotheses (1), (2), (3), and (4) of $\sigma_{\iota}$.  Then since we have assumed that $g(\calH) \geq 10$ and we must have $g(\calH_1^{\sigma}) \geq 1$, we have $g(\calH_1^{\sigma_y}) \geq 1 + 10 = 11$, so $\BM_{\sigma_y}$ satisfies property (4) of $\BM_{\sigma_{\iota}}$.

\bn Now, we have shown that $[\BM_{\sigma}] = [\BM_{x,y}] + [\BM_{x,z}]$.  The lemma now follows by taking $s = 2$ and $\tau_1 = \sigma_{y}$, $\tau_2 = \sigma_z$.  
\end{proof}

\bn We now show that we can rewrite $[\BM_{\sigma}]$ as a linear combination of classes $[\BM_{\tau}]$ with $\left|\theta(\calV)_{i,j,k}(\tau)\right|$ bounded.

\begin{lemma}\label{bmfinquotalgint}
Let $g \geq \finalbound$, $a,b, M \subseteq S_g$ and $\calV \subseteq H_1(S_g;\Z)$ be as in Lemma~\ref{bmconjlemma}.  Let $\BM_{\sigma} \subseteq X_g$ be a Bestvina--Margalit torus such that $\rk^{\calV}(v_{i,k}^\sigma) \leq 1$ for every $1 \leq i \leq 9$ and $0 \leq k \leq 2$.  Then $\BM_{\sigma}$ is a linear combination of classes $[\BM_{\tau_s}]$ for $1 \leq s \leq m$ such that 
\begin{displaymath}
\left|\theta(\calV)_{i,j,k}(\tau_s)\right| \leq 1 \text{ and } \rk^{\calV}(v_{i,k}^{\tau_s}) \leq 1
\end{displaymath}
\bn for every $1 \leq i\leq j \leq 9$, $0 \leq k \leq 2$ and $1 \leq s \leq m$.
\end{lemma}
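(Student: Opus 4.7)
The plan is to prove this lemma by induction that mirrors the triple induction in Lemma~\ref{bmfincokprimred}, but with the role of the rank invariants $\rk^{\calV}$ replaced by the algebraic intersection invariants $\theta(\calV)_{i,j,k}$, and with Lemma~\ref{bmfinquotalggenusincrease} playing the role of Lemma~\ref{bmfinquotgenusincrease}. Fix any linear ordering on the $\binom{9}{2}$ pairs $(i,j)$ with $1 \leq i < j \leq 9$, and let $N(\sigma)$ denote the number of pairs $(i,j)$ for which $|\theta(\calV)_{i,j,k}(\sigma)| \leq 1$ holds for every $0 \leq k \leq 2$. The outer induction will be on $\binom{9}{2} - N(\sigma)$, the number of \emph{bad} pairs; the base case $N(\sigma) = \binom{9}{2}$ is precisely the conclusion of the lemma.

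For the outer inductive step, I would select the first bad pair $(i', j')$ under the ordering and reindex $\calH(\BM_\sigma)$ so that $|\theta(\calV)_{i',j',1}(\sigma)|$ is maximal among $\{|\theta(\calV)_{i',j',k}(\sigma)|:0 \leq k \leq 2\}$. Set
\begin{displaymath}
\maxalg_{i',j'}(\sigma) = \max_{0 \leq k \leq 2}|\theta(\calV)_{i',j',k}(\sigma)|, \qquad \nummaxalg_{i',j'}(\sigma) = \left|\left\{k : |\theta(\calV)_{i',j',k}(\sigma)| = \maxalg_{i',j'}(\sigma)\right\}\right|,
\end{displaymath}
and perform a double lexicographic induction on $(\maxalg_{i',j'}(\sigma), \nummaxalg_{i',j'}(\sigma))$. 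The base case $\maxalg_{i',j'}(\sigma) = 1$ is the case where $(i',j')$ is not actually bad, and we would return to the outer induction. Otherwise, Lemma~\ref{bmfinquotalggenusincrease} applies to $\BM_\sigma$ and produces a relation $[\BM_\sigma] = \sum_{\iota=1}^s \lambda_\iota [\BM_{\sigma_\iota}]$ in which each $\BM_{\sigma_\iota}$ still satisfies the rank hypothesis (property~(1)), every pair that was already good for $\sigma$ remains good for $\sigma_\iota$ (property~(4)), and the pair $(\maxalg_{i',j'}(\sigma_\iota), \nummaxalg_{i',j'}(\sigma_\iota))$ is strictly lexicographically smaller than $(\maxalg_{i',j'}(\sigma), \nummaxalg_{i',j'}(\sigma))$ by a careful reading of properties~(2) and~(3). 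The inner inductive hypothesis then applies to each $\BM_{\sigma_\iota}$.

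The hard part will be extracting the strict decrease of $(\maxalg_{i',j'}, \nummaxalg_{i',j'})$ cleanly from properties~(2) and~(3) of Lemma~\ref{bmfinquotalggenusincrease}: property~(2) says that the maximum does not increase and the count at the old maximum does not increase, while property~(3) says that if the count at the old maximum is preserved then $|\theta(\calV)_{i',j',1}(\sigma_\iota)|$ equals the original maximum. Together these two properties force a strict decrease in at least one coordinate of the lexicographic pair, but some bookkeeping is required because after a single reduction a different index $k$ may carry the new maximum of $|\theta(\calV)_{i',j',k}(\sigma_\iota)|$, so one must re-reindex $\calH(\BM_{\sigma_\iota})$ to put the maximum back at $k = 1$ before applying Lemma~\ref{bmfinquotalggenusincrease} again. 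Since $\maxalg$ and $\nummaxalg$ depend only on the unordered multiset $\{|\theta(\calV)_{i',j',k}|:0 \leq k \leq 2\}$, this reindexing does not affect the induction, and the argument terminates.
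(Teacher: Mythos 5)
Your plan mistakes the role of Lemma~\ref{bmfinquotalggenusincrease}. That lemma is a preparatory step, not a reduction step: its conclusion (5) increases $g(\calH_1^{\sigma_{\iota}})$ while properties (2)--(4) only guarantee that the invariants do \emph{not} increase. In fact, the proof of Lemma~\ref{bmfinquotalggenusincrease} opens with the sentence ``If $g(\calH_1^{\sigma}) \geq 11$, then the lemma is trivially true by taking $s = 1$ and $\BM_{\sigma_{1}} = \BM_{\sigma}$.'' So once the genus of $\calH_1^\sigma$ is large, applying that lemma returns $\BM_\sigma$ unchanged, with the pair $(\maxalg_{i',j'}, \nummaxalg_{i',j'})$ untouched. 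Your claim that properties (2) and (3) ``together force a strict decrease in at least one coordinate of the lexicographic pair'' is therefore false; property (3) only ensures that, when the count at the old maximum is preserved, index $k=1$ still carries that maximum --- a normalization condition, not a decrease. Your inner induction would loop forever as soon as $g(\calH_1^\sigma) \geq 11$.

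What you are missing is the actual reduction step, which is the bulk of the paper's proof of Lemma~\ref{bmfinquotalgint}. After Lemma~\ref{bmfinquotalggenusincrease} has guaranteed $g(\calH_1^\sigma) \geq 10$, the paper makes a direct construction inside $\calH_1^\sigma$: it selects elements $v'_{i,1}, v''_{i,1}, v'_{j,1}, v''_{j,1}$ with $v_{i,1} = v'_{i,1} + v''_{i,1}$, $v_{j,1} = v'_{j,1} + v''_{j,1}$, arranged so that $|\langle v'_{i,1}, v'_{j,1} \rangle| = |\theta(\calV)_{i,j,1}(\sigma)| - 1$ and $|\langle v''_{i,1}, v''_{j,1}\rangle| = 1$, then splits $\calH_1^\sigma$ as $\calH_1' + \calH_1''$ accordingly, builds $\sigma'$ and $\sigma''$ from this four-piece decomposition, and verifies by direct computation of the $\theta(\calV)$-invariants (using $\langle v_i, v_j \rangle = 0$ and the sign analysis) that both $\sigma'$ and $\sigma''$ have strictly smaller $(\maxalg, \nummaxalg)$. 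The relation $[\BM_\sigma] = [\BM_{\sigma'}] + [\BM_{\sigma''}]$ then comes from Lemma~\ref{bmfinquotfundclassadd}. Without this construction --- or something equivalent that actually decreases the algebraic intersection invariants --- your induction has no engine. The outer induction on the number of bad pairs instead of a dictionary order on pairs is a harmless variation, but it does not compensate for the missing inner step.
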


\begin{proof}
The proof follows by double induction on $i$ and $j$.  In particular, for some $(i,j)$, we assume that $\BM_{\sigma}$ has the property that 
\begin{displaymath}
\left| \theta(\calV)_{i',j',k}(\tau_s) \right| \leq 1
\end{displaymath}
\bn for every $0 \leq k \leq 2$ and for every $(i',j') < (i,j)$, where $<$ is the dictionary ordering.  We will show that $[\BM_{\sigma}]$ is a linear combination of fundamental classes $[\BM_{\tau_1}],\ldots ,[\BM_{\tau_m}]$ such that
\begin{displaymath}
\left| \theta(\calV)_{i',j',k}(\tau_s) \right| \leq 1
\end{displaymath}
\bn for all $0 \leq k \leq 2$, for all $1 \leq s \leq m$ and for all $(i',j') \leq (i,j)$, and $\rk^{\calV}(v_{i',k}^{\tau_s}) \leq 1$ for all $1 \leq i' \leq 9$ and $0 \leq k \leq 2$.

\p{Base case: $(i,j) = (1,1)$} We have $\left| \theta(\calV)_{1,1,k}(\sigma)\right| = 0$.

\p{Inductive step: $(i,j) > (1,1)$} If $j =i $ then we have $\theta(\calV)_{i,j,k}(\sigma) = 0$ for $0 \leq k \leq 2$, so assume $i < j$.  We will perform a double induction on the following quantities:
\begin{enumerate}
\item $\maxalg_{i,j}(\sigma) = \max_{0 \leq k \leq 2}\left\{\left|\theta(\calV)_{i,j,k}(\sigma)\right|\right\},$
\item $\nummaxalg_{i,j}(\sigma) = \left|\left\{0 \leq k \leq 2: \left|\theta(\calV)_{i,j,k}(\sigma)\right| = \maxalg_{i,j}(\sigma)\right\}\right|$.
\end{enumerate}
\bn Reindex $\calH(\sigma)$ so that $\left|\theta(\calV)_{i,j,1}(\sigma)\right|$ is maximal in the set $\{\left| \theta(\calV)_{i,j,k}(\sigma)\right|: 0 \leq k \leq 2\}$.  If $\left| \theta(\calV)_{i,j,1}(\sigma)\right| \leq 1$ we are done, so assume $\left| \theta(\calV)_{i,j,1}(\sigma)\right| \geq 2$.  Lemma~\ref{bmfinquotalggenusincrease} says that we can rewrite $[\BM_{\sigma}]$ as a linear combination of fundamental classes $[\BM_{\sigma_{\iota}}]$, where:
\begin{itemize}
\item  hypothesis (1) implies that $\rk^{\calV}_{i',k}(\sigma_{\iota})\leq 1$ for $1 \leq i' \leq 9$,
\item hypothesis (2) implies that $\theta(\calV)_{i',j',k}(\sigma_{\iota}) \leq 1$ for all $(i',j') < (i,j)$ and $0 \leq k \leq 2$,
\item hypothesis (3) implies that $\maxalg_{i,j}(\sigma_{\iota}) \leq \maxalg_{i,j}(\sigma)$,
\item hypothesis (3) implies that $\nummaxalg_{i,j}(\sigma_{\iota}) \leq \nummaxalg_{i,j}(\sigma_{\iota})$, 
\item hypothesis (4) implies that if both:
\begin{itemize}
\item $\maxalg_{i,j}(\sigma) = \maxalg_{i,j}(\sigma_{\iota})$ and 
\item $\nummaxalg_{i,j}(\sigma) = \nummaxalg_{i,j}(\sigma_{\iota})$,
\end{itemize}
\bn then $\left|\theta(\calV)_{i,j,1}(\sigma_{\iota})\right|$ is maximal among the set $\{\left|\theta(\calV)_{i,j,k}(\sigma_{\iota}) \right| 0 \leq k \leq 2 \}$, and 
\item hypothesis (4) implies $g(\calH_{1}^{\sigma_{\iota}}) \geq 11$.  
\end{itemize}
\bn Hence we may assume without loss of generality that $g(\calH_1^{\sigma}) \geq 10$.  Choose elements $v_{i,1}', v''_{i,1}, v'_{j,1}, v''_{j,1} \in \calH_1$ such that the following hold:
\begin{enumerate}
 \item $v_{i,1} = v'_{i,1} + v''_{i,1}$,
 \item $v_{j,1} = v'_{j,1} + v''_{j,1}$,
\item for all $(i,\ell)$ and $(\ell,j)$ with $(i,\ell), (\ell,j) < (i,j)$, we have
\begin{displaymath}
\theta(\calV)_{i,\ell,1}(\sigma) = \langle v'_{i,1}, v_{\ell,1}\rangle \text{ and } \theta(\calV)_{\ell,j,1}(\sigma) = \langle v_{\ell,1}, v'_{j,1}\rangle.
\end{displaymath}
\item $\left| \langle v'_{i,1}, v'_{j,1} \rangle \right| = \left| \theta(\calV)_{i,j,1}(\sigma) \right| - 1$,
\item $\langle v_{i,1}'', v_{j,1}'\rangle = \langle v_{j,1}'', v_{i,1}'\rangle = \langle v_{i,1}'', v_{i,1}'\rangle = \langle v_{j,1}'', v_{j, 1}'\rangle = 0$,
\item $\langle v_{i,1}'', v_{\ell,1}\rangle = \langle v_{j,1}'', v_{\ell,1}\rangle = 0$ for all $\ell \neq i,j$, and 
\item $\left| \langle v''_{i,1}, v''_{j,1} \rangle \right| = 1$
\end{enumerate}
\bn Such elements exist since $g(\calH) \geq 11$ and $\left|\calV\right| = 9$, so we can find $v''_{i,1}, v_{j,1}''$ with their desired algebraic intersections, and then $v'_{i,1} = v_{i,1} - v''_{i,1}$, $v_{j,1}' = v_{j,1} - v''_{j,1}$.  Since $g(\calH_1) \geq 11$ and $v_{i,1}$, $v_{j,1}$ are primitive by hypothesis, we may choose primitive subgroups $\calH'_1, \calH''_1 \subseteq \calH_1$ such that the following hold: 
\begin{enumerate}
\item for any $x \in \calH'_1$ and $y \in \calH''_1$, we have $\langle x, y \rangle = 0$,
\item $\calH''_1 = (\calH_1')^{\perp} \cap \calH_1$,
\item $\calH'_1 + \calH''_1 = \calH_1$,
\item $\calH_1' \cap \calH_1'' = \Z[a]$,
\item $v'_{i,1}, v'_{j,1}, v_{\ell,1} \in \calH'_1$ for all $\ell \neq i,j$ and
\item $v''_{i,1}, v''_{j,1} \in \calH_1''$.  
\end{enumerate}
\bn Let $\widehat{\calH} = \{\calH_0^{\sigma}, \calH_1', \calH''_1, \calH_2\}$.  We can describe $\widehat{\calH}$ using the graphical notation of Section~\ref{homolcurvequotsection} as in Figure~\ref{bmfinquotalgintfig}.  
\begin{figure}[h]
\begin{tikzpicture}
\node[anchor = south west, inner sep = 0] at (0,0){\includegraphics{threecircle.png}};
\node at (-0.3,1.3) {$\calH_0^{\sigma}$};
\node at (1.4,2.95) {$\calH'_1$};
\node at (3,1.3) {$\calH''_1$};
\node at (1.4,-0.2) {$\calH_2^{\sigma}$};
\end{tikzpicture}
\caption{The graphical representation of $\widehat{\calH}$}\label{bmfinquotalgintfig}
\end{figure}

Let $\BM_{\sigma'}$ denote the Bestvina--Margalit torus corresponding to $\{\calH_0, \calH_1', \calH_1'' + \calH_2\}$ and let $\BM_{\sigma''}$ denote the Bestvina--Margalit torus corresponding to $\{\calH_0, \calH''_1, \calH_1' + \calH_2\}$.  We will compute $\theta(\calV)_{i',j',k}(\sigma')$ and $\theta(\calV)_{i',j',k}(\sigma'')$ as follows.

\p{Computing $\theta(\calV)_{i',j',k}(\sigma')$} By our choice of $\calH_1'$, we have
\begin{enumerate}
\item $v_{\ell,0}^{\sigma'} = v_{\ell,0}^{\sigma}$ for all $1 \leq \ell \leq 9$,
\item $v_{\ell,1}^{\sigma'} = v_{\ell,1}^{\sigma}$ for all $ \ell \neq i,j$,
\item $v_{\ell,1}^{\sigma'} = v_{\ell,1}'$ for $\ell = 1,j$,
\item $v_{\ell,2}^{\sigma'} = v_{\ell,2}^{\sigma}$ for all $ \ell \neq i,j$, and
\item $v_{\ell,2}^{\sigma'} = v_{\ell,2} + v''_{\ell,1}$ for $\ell = 1,j$.
\end{enumerate}
\bn  Now, computation (1) implies that $\theta(\calV)_{i,j,0}(\sigma') = \theta(\calV)_{i,j,0}(\sigma)$ for all $1 \leq i,j \leq 9$.  Computations (2) and (3) and the fact that $ \langle v_{i,1}', v_{\ell,1}^{\sigma} \rangle = \langle v_{i,1}, v_{\ell,1}^{\sigma}\rangle$ and $\langle v_{\ell,1}^{\sigma}, v_{j,1}'\rangle = \langle v_{\ell,1}^{\sigma}, v_{j,1}\rangle$ for $\ell \neq i,j$ tell us  that $\theta(\calV)_{i',j',1}(\sigma') = \theta(\calV)_{i',j',1}(\sigma)$ for all pairs $(i',j') \neq (i,j)$.  Computation (3), our assumption that $\theta(\calV)_{i,j,1}(\sigma) \geq 1$, and the fact that $\langle v'_{i,1}, v'_{j,1}\rangle = \theta(\calV)_{i,j,1}(\sigma) - 1$ implies that $\left|\theta(\calV)_{i,j,1}(\sigma')\right| < \left|\theta(\calV)_{i,j,1}(\sigma)\right|$.  For $k = 2$, computations (4) and (5) say that $\theta(\calV)_{i',j',2}(\sigma') = \theta(\calV)_{i',j',2}(\sigma)$ for all $1 \leq i',j' \leq 9$.  Finally, we see that $\left|\theta(\calV)_{i,j,2}(\sigma')\right| < \left|\theta(\calV)_{i,j,2}(\sigma)\right|$ if $\left|\theta(\calV)_{i,j,2}(\sigma)\right| \geq 1$.  Indeed, since $\theta(\calV)_{i,j,1}(\sigma)$ is positive and has 
$\left|\theta(\calV)_{i,j,1}(\sigma)\right| \geq \left|\theta(\calV)_{i,j,2}(\sigma)\right|$, the fact that $\langle v_i, v_j \rangle = 0$ allows us to conclude that $\theta(\calV)_{i,j,2}(\sigma) \leq 0$.  Then $\langle v_{i,1}'', v_{j,1}''\rangle = 1$, so $\left|\theta(\calV)_{i,j,2}(\sigma')\right| = \left|\theta(\calV)_{i,j,2}(\sigma) + 1\right| < \left|\theta(\calV)_{i,j,2}(\sigma)\right|$ if $\left|\theta(\calV)_{i,j,2}(\sigma)\right| \leq 1$ by computation (5).  Hence $\sigma'$ has either $\maxalg(\sigma') < \maxalg(\sigma)$ or $\maxalg(\sigma') = \maxalg(\sigma)$ and $\nummaxalg(\sigma') < \nummaxalg(\sigma)$, so $\sigma'$ satisfies the inductive hypothesis for the induction on $\maxalg$ and $\nummaxalg$.

\p{Computing $\theta(\calV)_{i',j',k}(\sigma'')$} By our choice of $\calH_1''$, we have
\begin{enumerate}
\item $v_{\ell,0}^{\sigma''} = v_{\ell,0}^{\sigma}$ for all $1 \leq \ell \leq 9$,
\item $v_{\ell,1}^{\sigma''} = 0$ for all $ \ell \neq i,j$,
\item $v_{\ell,1}^{\sigma''} = v_{\ell,1}''$ for $\ell = 1,j$,
\item $v_{\ell,2}^{\sigma''} = v_{\ell,2}^{\sigma} + v_{\ell,1}^{\sigma}$ for all $ \ell \neq i,j$, and
\item $v_{\ell,2}^{\sigma''} = v_{\ell,2} + v'_{\ell,1}$ for $\ell = 1,j$.
\end{enumerate}

\bn Then we have $\theta(\calV)_{i',j',0}(\sigma'') = \theta(\calV)_{i',j',0}(\sigma)$ for all $1 \leq i',j' \leq 9$ by computation (1), $\theta(\calV)_{i',j',0}(\sigma'') = 0$ for $(i',j') \neq (i,j)$ by computation (2), and $\theta(\calV)_{i,j,1}(\sigma'') = 1$ by computation (3).  Then as in the previous paragraph, we see that $\theta(\calV)_{i',j',1}(\sigma)$ and $\theta(\calV)_{i',j',2}(\sigma)$ have opposite signs, so for any $1 \leq i',j' \leq 9$ we have either $\left|\theta(\calV)_{i',j',1}(\sigma'')\right| < \left|\theta(\calV)_{i',j',1}(\sigma)\right|$ or $\left|\theta(\calV)_{i',j',1}(\sigma'')\right| \leq 1$.  Hence $\sigma''$ satisfies the inductive hypothesis as well.

Since $[\BM_{\sigma}] = [\BM_{\sigma'}] + [\BM_{\sigma''}]$ by Lemma \ref{bmfinquotfundclassadd}, the inductive hypothesis applied to $\BM_{\sigma'}$ and $\BM_{\sigma''}$ completes the proof.
\end{proof}

\bn We now complete the section.

\begin{proof}[Proof of Lemma~\ref{bmfinquotlemma}]
By Lemmas~\ref{bmfincokprimred} and~\ref{bmfinquotalgint}, any fundamental class $[\BM_\tau]$ is a linear combination of fundamental classes of Bestvina--Margalit tori $\BM_{\sigma}$ with $\left|\rk^{\calV}(v_{i,k}^\sigma)\right| \leq 1$ and $\left|\theta(\calV)_{i,j,k}(\sigma)\right| \leq 1$ for every $1 \leq i,j \leq 8$ and $0 \leq k \leq 2$.  Hence by Lemma~\ref{bmconjlemma}, the vector space $H_0(G;\BM_2(X_g;\Q))$ is finite dimensional.
\end{proof}

\section{Finiteness of $H_2(\cC_{[a]}(S_g)/\cI_g;\Q)$}\label{finquotsectionpt2}

In this section, we will finish the proof of Proposition~\ref{homolcurvequotprop}.  If $w \in [a]^{\perp}$, let $X_g^w$ denote the full subcomplex of $X_g$ generated by edges $e$ such that the decomposition $\calH(e)$ is \textit{compatible} with $w$, i.e., $w \in \calH_i^e$ for some $\calH_i^e \in \calH(e)$.  The bulk of Section~\ref{finquotsectionpt2} is devoted to proving the following lemma.

\begin{lemma}\label{vstablemma}
Let $g \geq \finalbound$ and $a \subseteq S_g$ a nonseparating simple closed curve.  Let $c \subseteq S_g \cut a$ be a nonseparating simple closed curve, and let $w = [c]$.  The cokernel of the pushforward map $H_2(X_g^w;\Q) \rightarrow H_2(X_g;\Q)$ is a subquotient of $\BM_2(X_g;\Q)$.
\end{lemma}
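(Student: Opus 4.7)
The plan is to prove the stronger chain-level statement
\begin{displaymath}
H_2(X_g;\Q) \;=\; \BM_2(X_g;\Q) \;+\; \im\bigl(H_2(X_g^w;\Q) \to H_2(X_g;\Q)\bigr).
\end{displaymath}
This immediately implies the lemma, since the composition
\begin{displaymath}
\BM_2(X_g;\Q) \hookrightarrow H_2(X_g;\Q) \twoheadrightarrow \coker\bigl(H_2(X_g^w;\Q) \to H_2(X_g;\Q)\bigr)
\end{displaymath}
then becomes surjective, exhibiting the cokernel as a quotient---hence a subquotient---of $\BM_2(X_g;\Q)$.

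The first step is a combinatorial criterion for membership in $X_g^w$. Using that $\calH_i^\sigma \cap (\calH_j^\sigma + \calH_k^\sigma) = \Z[a]$ for distinct $i,j,k$ in any cyclic $3$--partition $\calH(\sigma)$ (which follows from the description of $\calH(\sigma)$ in Section~\ref{homolcurvequotsection}), a short calculation shows that a $2$--cell $\sigma$ with $\calH(\sigma) = \{\calH_0^\sigma,\calH_1^\sigma,\calH_2^\sigma\}$ lies in $X_g^w$ if and only if $w \in \calH_i^\sigma$ for some $i$. Consequently, if $\sigma \notin X_g^w$ then by Lemma~\ref{bmfinquotsplit} the dual cell $\sigma'$ also satisfies $\sigma' \notin X_g^w$, and the fundamental class $[\BM_\sigma] = [\sigma]-[\sigma']$ has both of its $2$--cell constituents outside $X_g^w$. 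This is the mechanism by which $\BM_2(X_g;\Q)$ can ``absorb'' cells outside $X_g^w$.

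For a $2$--cycle $z \in Z_2(X_g;\Q)$, I would then iteratively eliminate the $2$--cells of the support of $z$ that lie outside $X_g^w$, using two operations. First, the BM identity $[\sigma]-[\sigma'] = \pm[\BM_\sigma]$ swaps $\sigma$ with its dual modulo $\BM_2(X_g;\Q)$. Second, a $3$--cell $\tau$ obtained by splitting one piece $\calH_i^\sigma = \calH_i' + \calH_i''$ of the partition of $\sigma$ has boundary of the form $\pm\sigma \pm \sigma_1 \pm \sigma_2 \pm \sigma_3$, where the $\sigma_j$ are $2$--cells determined by the refined $4$--partition. By choosing the splitting so that the projection of $w$ to $\calH_i^\sigma$ lies in $\calH_i'$, one can force several of the $\sigma_j$ into $X_g^w$, and the remaining ones can then be handled by the BM addition formula of Lemma~\ref{lemmaaltcohomolauxadd}.

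The main obstacle is the case in which $w$ has nonzero projection to \emph{all three} pieces $\calH_0^\sigma,\calH_1^\sigma,\calH_2^\sigma$, since then no refinement of a single piece produces any face in $X_g^w$. To handle this case I expect to need coordinated simultaneous splittings of two or three pieces, combined with iterated applications of Lemma~\ref{lemmaaltcohomolauxadd} and possibly $4$--cells whose boundaries are made up of the $3$--cells of the previous paragraph. This combinatorial bookkeeping is the hardest step of the proof and is what will occupy the bulk of the remainder of the section.
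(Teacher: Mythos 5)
Your reduction to showing $H_2(X_g;\Q) = \BM_2(X_g;\Q) + \im\bigl(H_2(X_g^w;\Q) \to H_2(X_g;\Q)\bigr)$ is sound, and your criterion for a $2$--cell to lie in $X_g^w$ (that $w \in \calH_i^\sigma$ for some $i$) is correct. But the proposal stops at the crux. Swapping $\sigma$ for its dual $\sigma'$ via $[\BM_\sigma]$ leaves the number of cells outside $X_g^w$ unchanged, and a single refinement $\calH_i^\sigma = \calH_i' + \calH_i''$ of one piece can introduce new faces of the resulting $3$--cell that lie outside $X_g^w$, especially when $w$ projects nontrivially to all three pieces. ``Iteratively eliminate'' is only an argument once you supply a well-founded measure that each step strictly decreases; you have correctly sensed that the all-three case requires coordinated multi-piece splittings, but that is the entire remaining content of the proof, not a loose end.

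The paper avoids the termination question by a different mechanism. It interposes the complex $X_g^{w,2}$, consisting of cells with at most one face outside $X_g^w$, which exactly separates your two cases: $2$--cells on which $w$ hits two pieces land in $X_g^{w,2} \setminus X_g^w$, and $2$--cells on which $w$ hits all three land outside $X_g^{w,2}$. Each inclusion in $X_g^w \subseteq X_g^{w,2} \subseteq X_g$ is then handled by a $\Cech$--to--singular spectral sequence for covers $\{U_e\}$, $\{\widehat{U}_e\}$ indexed by edges $e$ incompatible with $w$ (Lemmas~\ref{twotofullcoklemma} and~\ref{vertstabcoklemma}). In the second layer the crucial structural input is the product decomposition $\widehat{U}_e \cong e \times U_e$, so the cokernel is identified via the K\"unneth formula with Bestvina--Margalit classes, and the off-diagonal term is killed by a connectivity statement (Lemma~\ref{edgeequivconnlemma}) for an auxiliary complex $Y(e)$. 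This global homological argument replaces cell-by-cell surgery and makes the bookkeeping you are worried about unnecessary. If you want to pursue the chain-level route, the missing ingredient is the well-founded induction for the simultaneous splitting step; otherwise the paper's spectral-sequence framing is a cleaner substitute.
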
  

\bn For the remainder of this section, fix a $g$, $a$, and $c$ as in Lemma~\ref{vstablemma}.  Let $X_g^{w,2} \subseteq X_g$ be the subcomplex given by the union of all cells $\sigma$ such that $\dim(\sigma \cap X_g^w) \geq \dim(\sigma) - 1$.  Lemma~\ref{vstablemma} will proceed in two steps.

\p{Step (1)} We show that $\cok(H_2(X_g^{w,2};\Q) \rightarrow H_2(X_g;\Q))$ is generated by the images of fundamental classes of Bestvina--Margalit tori.

\p{Step (2)} We show that $\cok(H_2(X_g^w;\Q) \rightarrow H_2(X_g^{w,2};\Q))$ is generated by the images of fundamental classes of Bestvina--Margalit tori.  

\p{The outline of the proof of Lemma~\ref{bmfingenlemma}} Step (1) is recorded as Lemma~\ref{twotofullcoklemma}, and Step (2) is recorded as Lemma~\ref{vertstabcoklemma}.  We will prove Lemma~\ref{twotofullcoklemma} in Section~\ref{twotofullsubsection} and  Lemma~\ref{vertstabcoklemma} on Section~\ref{vstabsubsection}.  Additionally in Section~\ref{vstabsubsection}, we will assemble Lemmas~\ref{twotofullcoklemma} and~\ref{vertstabcoklemma} into the proof of Lemma~\ref{vstablemma}.  We will conclude with Section~\ref{homolcurvequotsubsection}, where we prove Lemma~\ref{bmfingenlemma} and Proposition~\ref{homolcurvequotprop}.

\subsection{Step (1) of the proof of Lemma~\ref{vstablemma}}\label{twotofullsubsection}
This step is recorded as the following lemma.

\begin{lemma}\label{twotofullcoklemma}
Let $g \geq \finalbound$ and $a \subseteq S_g$ a nonseparating simple closed curve.  Let $w \in [a]^{\perp}$ be a nonzero primitive homology class such that the image of $w$ under the adjoint map $[a]^{\perp} \rightarrow \ho_{\Z}([a]^{\perp},\Z)$ is nontrivial.  The cokernel of the pushforward map $H_2(X_g^{w,2};\Q) \rightarrow H_2(X_g;\Q)$ is generated by images of fundamental classes of Bestvina--Margalit tori.
\end{lemma}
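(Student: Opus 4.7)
My plan is to use the long exact sequence of the pair $(X_g, X_g^{w,2})$ to reduce the statement to a question about the relative chain complex, and then to exploit the duality $\sigma \mapsto \sigma'$ on $2$-cells (the one defining Bestvina--Margalit tori) to split any $2$-cycle of $X_g$ into a Bestvina--Margalit piece and a symmetric residual that vanishes modulo $X_g^{w,2}$. The unique vertex of $X_g$ lies in $X_g^w$ (any nonseparating curve disjoint from $a$ and representing a suitable class produces an edge compatible with $w$), and every edge of $X_g$ has this vertex as endpoint, so every vertex and edge of $X_g$ belongs to $X_g^{w,2}$; hence $C_0(X_g, X_g^{w,2};\Q) = 0 = C_1(X_g, X_g^{w,2};\Q)$ and
\[
H_2(X_g, X_g^{w,2};\Q) \cong C_2(X_g, X_g^{w,2};\Q)/\partial C_3(X_g, X_g^{w,2};\Q).
\]
The relative $2$-chain group is freely generated by the set $\calT_w$ of cells $\sigma$ with $\calH(\sigma) = \{A, B, C\}$ such that $\proj_A(w), \proj_B(w), \proj_C(w)$ are all nonzero (for $\sigma$ lies in $X_g^{w,2}$ iff some edge of $\sigma$ is compatible with $w$, iff one of these three projections vanishes). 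The long exact sequence yields an embedding $\cok(H_2(X_g^{w,2};\Q) \to H_2(X_g;\Q)) \hookrightarrow H_2(X_g, X_g^{w,2};\Q)$, so it suffices to show that the image of $H_2(X_g;\Q)$ in this relative homology coincides with the image of $\BM_2(X_g;\Q)$.

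Next, the duality $\sigma \mapsto \sigma'$ preserves $\calH(\sigma)$ as an unordered set, and hence preserves $\calT_w$; a direct computation using the differential formula from Section~\ref{homolcurvequotsection} shows $\partial \sigma = \partial \sigma'$, confirming that $[\BM_\sigma] = \sigma - \sigma'$ is a $2$-cycle in $X_g$. Given any $2$-cycle $z = \sum n_\tau \tau$ in $X_g$, write $z = z_{\mathrm{in}} + z_{\mathrm{out}}$ with $z_{\mathrm{in}}$ supported on $X_g^{w,2}$ and $z_{\mathrm{out}}$ supported on $\calT_w$, and decompose the latter into antisymmetric and symmetric parts under duality:
\[
z_{\mathrm{out}} = \sum_{\{\sigma,\sigma'\}} \tfrac12(n_\sigma - n_{\sigma'})(\sigma - \sigma') + \sum_{\{\sigma,\sigma'\}} \tfrac12(n_\sigma + n_{\sigma'})(\sigma + \sigma').
\]
The antisymmetric summand is visibly a linear combination of BM torus classes, and the identity $\partial \sigma = \partial \sigma'$ forces the symmetric part to combine with $z_{\mathrm{in}}$ into a $2$-cycle on its own. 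The lemma therefore reduces to showing that any symmetric residual of the form $\sum m_{\{\sigma,\sigma'\}}(\sigma + \sigma')$ is trivial in $H_2(X_g, X_g^{w,2};\Q)$.

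This final reduction is where I expect the main difficulty. The plan is, for each pair $\{\sigma,\sigma'\}$ with $\calH(\sigma) = \{A,B,C\}$ contributing to the symmetric residual, to choose a splitting $A = A_1 \oplus A_2$ inside $[a]^{\perp}$ with $\proj_A(w) \in A_1$ (so that $\proj_{A_2}(w) = 0$), and then to realize $\sigma + \sigma'$ as a sum of relative boundaries of $3$-cells built from the cyclic orders obtained by inserting $A_2$ into the decomposition $\{A_1, B, C\}$ in opposite positions. Each auxiliary $2$-face of such a $3$-cell will contain $A_2$ among its subgroups, and the edge in that $2$-face separating $A_2$ from its neighbors will have $A_2$ in one part, hence be compatible with $w$, placing the auxiliary $2$-face in $X_g^{w,2}$; the remaining contributions should cancel in pairs after a careful bookkeeping of signs and cyclic orders. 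The hypothesis $g \geq \finalbound$ guarantees that the required splittings $A = A_1 \oplus A_2$ always exist with enough room to respect the decomposition, and the main technical work will be choosing these splittings consistently across all dual pairs in the symmetric residual and verifying the signed cancellations of the auxiliary $2$-faces.
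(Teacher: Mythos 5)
The first two stages of your plan are sound and genuinely different from the paper's. The observation that $X_g$ has a unique vertex (lying in $X_g^w$ once $X_g^w$ is nonempty, which holds under the hypothesis on $w$) does give $C_0(X_g, X_g^{w,2};\Q) = C_1(X_g, X_g^{w,2};\Q) = 0$, and your computation from the differential formula in Section~\ref{homolcurvequotsection} that $\partial\sigma = \partial\sigma'$ is correct. Splitting $z_{\mathrm{out}}$ into antisymmetric and symmetric parts under duality cleanly isolates the $\BM$ contribution. This is a more direct route than the paper, which instead compares \v{C}ech-to-singular spectral sequences for two covers $\{U_e\}$ and $\{\widehat U_e\}$ indexed by edges $e$ incompatible with $w$ of genus $\{1,g-2\}$, using Lemmas~\ref{bm3torilemma}, \ref{threetoruslemma}, \ref{bmcokpseudoproductlemma}, and~\ref{onegenusreductionlemma}. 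So far so good.

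The gap is in the final step. You claim the auxiliary $2$-faces of the inserted $3$-cells land in $X_g^{w,2}$ (or cancel), but this is false as stated. Take $\calH(\sigma) = \{A,B,C\}$ with all three projections of $w$ nonzero and split $A = A_1 \oplus A_2$ with $\proj_{A_2}(w) = 0$. For $\calH(\tau_1) = \{A_1, A_2, B, C\}$ the boundary is $\sigma - \{A_1, A_2+B, C\} + \{A_1, A_2, B+C\} - \{A_2, B, C+A_1\}$. The last two of these auxiliary faces do lie in $X_g^{w,2}$, because each contains a piece whose complement absorbs $w$ (e.g.\ $w \in A_1 + B + C$ since $\proj_{A_2}(w) = 0$). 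But $\{A_1, A_2+B, C\}$ does \emph{not} lie in $X_g^{w,2}$: all three of $\proj_{A_1}(w)$, $\proj_{A_2+B}(w)$, $\proj_C(w)$ are nonzero, so every edge of this $2$-cell has $w$ with nonzero projection on both sides. Pairing $\tau_1$ with the $3$-cell $\tau_3$ of cyclic order $\{A_1, A_2, C, B\}$ does produce $\sigma'$ on its boundary, but the surviving residual is then $\{A_1, A_2+B, C\} + \{A_1, A_2+C, B\}$, which is neither zero in relative homology nor even a dual pair (the two cells have different unordered decompositions). So the ``cancellation in pairs'' you invoke does not occur; you have replaced one bad residual by another, and the argument as written does not close. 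Making your plan work would require an induction on some monovariant (plausibly the genus of the block carrying $\proj_A(w)$, which shrinks from $g(A)$ to $g(A_1)$, together with a further decomposition of the new residual into antisymmetric and symmetric parts), along with a verification that the needed splittings exist at each stage and that the induction terminates; none of this is present, and it is not obviously routine. You should either carry out that induction in detail or fall back on a different mechanism for killing the symmetric residual.
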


\bn Before proving Lemma~\ref{twotofullcoklemma}, we will prove a collection of auxiliary lemmas.  The main goal is to prove Lemma \ref{bmcokpseudoproductlemma}, which describes the cokernel of the $H_2$--pushforward of certain subcomplexes of $X_g$.  Let $g, a$ and $w$ be as in the statement of Lemma~\ref{twotofullcoklemma}.  If $e \subseteq X_g$ is an edge with $w$ incompatible with $\calH(e)$ let $\widehat{U}_e$ denote the union of all 3-cells $\tau \subseteq X_g$ such that:
\begin{itemize}
\item  $e \subseteq \tau$,
\item  there is $\calH_0 \in \calH(e)$ such that $\calH_0 \in \calH(\tau)$, 
\item there is a unique edge $e' \subseteq \tau$ with $e' \subseteq X_g^w$, and
\item there is some $\calH' \in \calH(\tau)$ with $\calH' \in \calH(e')$.
\end{itemize}
\bn  Let $U_e$ denote the union of all 2--cells $\sigma \subseteq \widehat{U}_e$ such that the following hold:
\begin{itemize}
\item $\dim(\sigma \cap e) = 0$ and
\item $\dim(\sigma \cap X_g^w) = 1$.
\end{itemize}

\bn We will prove the following lemma.

\begin{lemma}\label{bmcokpseudoproductlemma}
Let $g \geq \finalbound$, $a \subseteq S_g$ and $w \in [a]^{\perp}$ be as in Lemma~\ref{vstablemma}.  Let $e,f \subseteq X_g$ be two edges such that $\calH(e)$ and $\calH(f)$ are incompatible with $w$.  Furthermore, assume that $g(e) = g(f) = \{1, g-2\}$.  Then the following hold:
\begin{enumerate}
\item the cokernel of the pushforward map
\begin{displaymath}
H_2(U_e;\Q) \rightarrow H_2(\widehat{U}_e;\Q)
\end{displaymath}
\bn is generated by the images of fundamental classes of Bestvina--Margalit tori,
\item the pushforward
\begin{displaymath}
H_1(U_e;\Q) \rightarrow H_1(\widehat{U}_e;\Q)
\end{displaymath}
\bn is an isomorphism, and
\item the pushforward $H_1(U_e \cap U_f;\Q) \rightarrow H_1(\widehat{U}_e \cap \widehat{U}_f;\Q)$ is surjective.
\end{enumerate}
\end{lemma}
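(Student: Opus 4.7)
The plan is to analyze the combinatorial structure of 3-cells in $\widehat{U}_e$ using the cyclic-decomposition parameterization of Lemma~\ref{homolcurveconjstructurelemma}, classify the 2-faces, and read off the three statements from explicit boundary relations. Each 3-cell $\tau \subseteq \widehat{U}_e$ has a cyclic decomposition $\calH(\tau) = \{\calH_0, \calH_1, \calH_2, \calH_3\}$ in which, up to reindexing, $\calH_0 \in \calH(e)$, so that $e$ is the edge with $\calH(e) = \{\calH_0, \calH_1 + \calH_2 + \calH_3\}$. The uniqueness of an edge $e' \subseteq \tau$ with $e' \subseteq X_g^w$ and $\calH(e') \cap \calH(\tau) \neq \emptyset$, combined with the incompatibility of $\calH(e)$ with $w$, translates to the condition that exactly one index $i_0 \in \{1,2,3\}$ satisfies: the $\calH_{i_0}$-component of $w$ in the decomposition $[a]^{\perp} = \calH_0 + \calH_1 + \calH_2 + \calH_3$ vanishes modulo $\Z[a]$. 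The genus hypothesis $g(e) = \{1, g-2\}$ is used here to rule out the possibility that a diagonal edge $\{\calH_i + \calH_j, \calH_k + \calH_\ell\}$ of $\tau$ also lies in $X_g^w$.

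For such a $\tau$, a direct inspection of its four triangular faces shows that two of them contain the edge $e$ and hence lie outside $U_e$, one face $\sigma_D$ contains $e'_{i_0}$ as its unique $w$-compatible edge and lies in $U_e$, and the remaining face $\sigma_A$ has all three edges incompatible with $w$ and also lies outside $U_e$. The boundary relation $\partial \tau = \pm\sigma_A \mp \sigma_B \pm \sigma_C \mp \sigma_D$, taken modulo 3-chains and $C_*(U_e)$, expresses $\sigma_A$ as $\pm(\sigma_B - \sigma_C)$ in the cokernel. Since the unordered 4-element set $\{\calH_0, \calH_1, \calH_2, \calH_3\}$ admits other cyclic rearrangements, the 3-cell $\tau$ comes with companion 3-cells in $\widehat{U}_e$ sharing the same underlying decomposition, and the boundaries of these companions pair each of $\sigma_B, \sigma_C$ with its dual 2-cell of opposite cyclic order. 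By Lemma~\ref{bmfinquotsplit} these pairs form Bestvina--Margalit tori, so iterating the procedure over all 3-cells of $\widehat{U}_e$ exhibits the cokernel of $H_2(U_e;\Q) \to H_2(\widehat{U}_e;\Q)$ as generated by fundamental classes $[\BM_\sigma]$, establishing (1).

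Statements (2) and (3) follow from the same classification. Because every edge of every 3-cell $\tau \subseteq \widehat{U}_e$ already appears as an edge of its face $\sigma_D \in U_e$, the 1-skeleta of $U_e$ and $\widehat{U}_e$ coincide, yielding surjectivity of the $H_1$-maps. For injectivity in (2), any 2-chain $c$ in $\widehat{U}_e$ bounding a 1-cycle in $U_e$ can, via the boundary relations above, be rewritten as a 2-chain in $U_e$ plus a sum of 3-cell boundaries, giving the isomorphism. Statement (3) follows by applying the same argument to the subcollection of 3-cells lying in $\widehat{U}_e \cap \widehat{U}_f$, characterized by having a cyclic decomposition simultaneously compatible with the indexings demanded by $e$ and by $f$. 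The main obstacle is the careful bookkeeping required to verify the uniqueness of the compatible edge $e'$ and to check that the companion 3-cells produce genuine Bestvina--Margalit tori rather than cancelling contributions; the genus hypothesis $g(e) = g(f) = \{1, g-2\}$ is used precisely to eliminate the degenerate configurations that would otherwise destroy this classification.
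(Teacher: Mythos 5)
Your argument contains two concrete errors in the combinatorics, plus a structural gap in assembling the local computations into the global statements.

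\textbf{The face classification is incorrect.} Write $\calH(\tau) = \{\calH_0, \calH_1, \calH_2, \calH_3\}$ in cyclic order with $\calH_0 \in \calH(e)$, so $e$ is the edge with decomposition $\{\calH_0, \calH_1+\calH_2+\calH_3\}$. The two faces of $\tau$ not containing $e$ are $\sigma_{01} = \{\calH_0+\calH_1, \calH_2, \calH_3\}$ and $\sigma_{30} = \{\calH_0+\calH_3, \calH_1, \calH_2\}$, and they \emph{share} the edge $\{\calH_0+\calH_1+\calH_3, \calH_2\}$. If the unique $w$-compatible edge $e'$ is this shared edge --- that is, if $\calH_{i_0} = \calH_2$, the element of $\calH(\tau)$ not cyclically adjacent to $\calH_0$ --- then \emph{both} $\sigma_{01}$ and $\sigma_{30}$ contain $e'$, both satisfy the defining conditions for $U_e$, and there is no $\sigma_A$ with all three edges $w$-incompatible. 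Your boundary relation $\partial\tau = \pm\sigma_A \mp \sigma_B \pm \sigma_C \mp \sigma_D$ rests on a per-3-cell count of one $U_e$-face and one fully-incompatible face, which fails for exactly those $\tau$ where $\calH_{i_0}$ sits opposite $\calH_0$ in the cyclic order.

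\textbf{The 1-skeleton claim is false.} A 3-cell has six edges and a triangular face has three, so no face $\sigma_D$ can contain every edge of $\tau$. Concretely, the edge $e$ itself lies in $\widehat{U}_e$ (by definition it is an edge of every constituent 3-cell) but not in $U_e$ (every 2-cell of $U_e$ satisfies $\dim(\sigma \cap e) = 0$), so the 1-skeleta differ. Statement (2) is true for a subtler reason: within a 3-torus $T$ built from all 3-cells with a fixed unordered $\calH(\tau)$, the complex $T \cap U_e$ has exactly the edges of $T$ minus $e$, yet $H_1(T \cap U_e;\Q) \to H_1(T;\Q)$ is still an isomorphism between two copies of $\Q^3$, because $T \cap U_e$ is a pair of Bestvina--Margalit tori glued along $e'$ and $[e]$ is recoverable from the 2-cell relations. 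That computation, not a 1-skeleton identity, is what drives (2).

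\textbf{The global assembly is not addressed.} The paper organizes the torus-by-torus computations through a \v{C}ech-to-singular spectral sequence comparison for the covers $\{T \cap U_e\}_T$ of $U_e$ and $\{T\}_T$ of $\widehat{U}_e$, and must additionally analyze pairwise intersections $T \cap T'$ to control the $\bE^1_{*,1}$ column (this requires the connectivity argument in the paper's 3-torus lemma, cases (2) and (3)). Your sketch --- "iterating the procedure over all 3-cells" --- does not address this: a 2-chain representing a cokernel element of $H_2(U_e) \to H_2(\widehat{U}_e)$ is not a priori a sum of individual 3-cell boundaries, and the passage from local boundary identities to the global cokernel statement is missing. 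The treatment of (3) as "the same argument on a subcollection of 3-cells" has the same problem, and additionally $\widehat{U}_e \cap \widehat{U}_f$ need not even be a union of 3-cells; the paper handles (3) by a separate direct argument on edges.
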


\bn We begin by proving the following Lemma.

\begin{lemma}\label{bm3torilemma}
Let $\tau \subseteq X_g$ be a 3--cell.  Let $T$ denote the union of all $\tau' \subseteq X_g$ such that $\calH(\tau) = \calH(\tau')$ as unordered sets.  Then $T$ is a 3--torus.
\end{lemma}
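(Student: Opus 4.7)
By Lemma~\ref{homolcurveconjstructurelemma}, two 3-cells in $X_g$ are identified iff their cyclic decompositions agree, so the 3-cells of $T$ are in bijection with the $(4-1)! = 6$ cyclic orderings of the unordered set $\calH(\tau) = \{\calH_0, \calH_1, \calH_2, \calH_3\}$. The plan is to identify $T$ with the Freudenthal triangulation of the 3-torus $T^3 = (\R/\Z)^3$. In this triangulation, the unit cube $[0,1]^3$ is subdivided into $6$ tetrahedra $\Delta_\pi$ (one per permutation $\pi \in S_3$, where $\Delta_\pi$ has vertices $0$, $e_{\pi(1)}$, $e_{\pi(1)}+e_{\pi(2)}$, and $e_1+e_2+e_3$), and opposite faces of the cube are then identified; the resulting CW complex has $1$ vertex, $7$ edges, $12$ triangles, and $6$ tetrahedra.

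First I will count the cells of $T$ to verify compatibility with this triangulation: there is $1$ vertex (inherited from $X_g$); $7$ edges, of which $4$ are of type $\{\calH_i, \calH_j+\calH_k+\calH_l\}$ and $3$ are of type $\{\calH_i+\calH_j, \calH_k+\calH_l\}$; $12$ 2-cells, since the $\binom{4}{2} = 6$ partitions of $\calH(\tau)$ into sets of sizes $\{1,1,2\}$ each admit $(3-1)! = 2$ cyclic orderings, all of which are realized in $T$; and $6$ 3-cells. The resulting Euler characteristic $\chi(T) = 1 - 7 + 12 - 6 = 0$ matches that of $T^3$.

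Next I will construct an explicit cellular bijection $\Phi : T \to T^3$ by sending the 3-cell with cyclic ordering $(\calH_{i_0}, \calH_{i_1}, \calH_{i_2}, \calH_{i_3})$ to the tetrahedron $\Delta_\pi$, where $\pi$ is the permutation determined by this cyclic ordering after fixing $\calH_0$ as the base element. To verify that $\Phi$ is well-defined on faces, I compare the boundary formula from Section~\ref{homolcurvequotsection} (forgetting the $i$-th vertex of a $k$-cell merges $\calH_i$ and $\calH_{i+1}$ in the cyclic order) with the face-gluings in the Freudenthal triangulation, and check case by case that two cyclic orderings of $\calH(\tau)$ share a common $k$-face iff the corresponding tetrahedra do.

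The main obstacle is the case analysis needed to confirm the gluings on the $3$ size-$\{2,2\}$ edges. These are the face diagonals of the cube in the Freudenthal picture: each such edge lies in exactly $4$ of the $6$ 3-cells of $T$, matching the fact that a face diagonal $e_i + e_j$ is contained in $\Delta_\pi$ iff $(\pi(1),\pi(2)) \in \{(i,j),(j,i)\}$ or $(\pi(2),\pi(3)) \in \{(i,j),(j,i)\}$, which gives exactly $4$ permutations. Once this matching is verified, the cellular bijection $\Phi$ yields a homeomorphism $T \cong T^3$.
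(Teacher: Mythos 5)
Your approach --- identifying $T$ with the Freudenthal (Kuhn) triangulation of $(\R/\Z)^3$ --- is exactly the right way to make precise what the paper only asserts; the paper's proof is a single sentence (``the 3--dimensional analogue of the standard description of the 2--torus as a union of two 2--cells'') with no argument at all, so you are genuinely supplying content that the paper omits. Your cell counts are correct and worth spelling out: the 3--cells of $T$ are in bijection with the $3! = 6$ cyclic orders of $\{\calH_0,\ldots,\calH_3\}$; the 2--cells are the $\binom{4}{2}\cdot 2 = 12$ cyclically ordered coarsenings into three blocks; the edges are the $4 + 3 = 7$ unordered two-block coarsenings (four of shape $\{1,3\}$, each appearing in all six tetrahedra, and three of shape $\{2,2\}$, each appearing in exactly four, matching the face diagonals of the cube); and there is a single vertex. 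These match the Freudenthal cell counts $1,7,12,6$.

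Two cautions, both about the last step. First, the Euler characteristic computation is only a consistency check and does not distinguish $T^3$ from, say, $S^1 \times S^2$ or a lens space; the cellular identification is what carries the proof, so you should not lean on $\chi(T)=0$ as evidence. Second, and more substantively: $X_g$ is a $\Delta$--complex, not a simplicial complex, so matching face \emph{posets} (``$\tau_1$ and $\tau_2$ share a common $k$--face iff the corresponding tetrahedra do'') is \emph{not} quite sufficient --- you need to match the face \emph{maps} $d_j$. Concretely, deleting the $j$--th vertex of a lift of a $3$--cell with cyclic order $(\calH_{i_0},\calH_{i_1},\calH_{i_2},\calH_{i_3})$ merges $\calH_{i_{j-1}}$ with $\calH_{i_j}$ (indices mod $4$), and you must check that $\Phi$ intertwines this with the vertex-deletion face maps of $\Delta_\pi$; this is the real content of the ``case-by-case check,'' not merely the poset isomorphism. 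As written, that verification is still deferred, so your argument remains a proof sketch. It is, however, a sound and significantly more informative sketch than the one line in the paper, and all the facts you state along the way are correct.
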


\begin{proof}
This is the 3--dimensional analogue of the standard description of the 2--torus as a union of two 2--cells.
\end{proof}

\bn If $T$ is a torus as in the statement of Lemma \ref{bm3torilemma}, we will use $\calH(T)$ to denote the decomposition $\calH(\tau)$, except with the order forgotten.  In order to prove Lemma \ref{bmcokpseudoproductlemma}, we will prove the following result.

\begin{lemma}\label{threetoruslemma}
Let $g \geq \finalbound$, $a \subseteq S_g$ be a nonseparating simple closed curve, and $w \in [a]^{\perp}$ as in Lemma~\ref{vstablemma}.  Let $e \subseteq X_g$ be an edge with $\calH(e)$ not compatible with $w$.  Let $\tau, \tau' \subseteq \widehat{U}_e$ be two 3--cells.  Let $T$ denote the union of all 3--cells $\tau'' \subseteq \widehat{U}_e$ such that $H(\tau)= H(\tau'')$ as unordered sets, and similarly $T'$ and $\tau'$.  Then the following hold:
\begin{enumerate}
\item the pushforward $H_1(T \cap U_e;\Q) \rightarrow H_1(T;\Q)$ is an isomorphism and the cokernel $H_2(T \cap U_e;\Q) \rightarrow H_2(T;\Q)$ is generated by fundamental classes of Bestvina--Margalit tori,
\item if $T \cap T'$ contains an edge $f$ such that $f \neq e$ and $f \not \subseteq X_g^{w}$, then pushforward $H_1(T \cap T' \cap U_e;\Q) \rightarrow H_1(T \cap T';\Q)$ is a surjection, and
\item if $T \cap T'$ does not contain an $f$ as above, then the cokernel of the pushforward $H_1(T \cap T' \cap U_e;\Q) \rightarrow H_1(T \cap T';\Q)$ is generated by the image of the class $[e]$.
\end{enumerate}
\end{lemma}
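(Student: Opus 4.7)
The approach is a direct combinatorial analysis of the 3-torus $T$ described in Lemma~\ref{bm3torilemma}: its CW structure has one vertex, seven $1$-cells indexed by the seven nontrivial $2$-partitions of $\{0,1,2,3\}$ (writing $\calH(T) = \{\calH_0, \calH_1, \calH_2, \calH_3\}$), twelve $2$-cells indexed by cyclically ordered $3$-partitions, and six $3$-cells indexed by cyclic orders on $\calH(T)$. The plan is to identify $T \cap U_e$ and $T \cap T'$ as explicit subcomplexes and to compute the relevant homology at the chain level, then compare with $H_*(T)$ and $H_*(T \cap T')$.

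For part (1), conditions (ii) and (iv) in the definition of $\widehat{U}_e$ force both $\calH(e)$ and $\calH(e')$ to be single-element partitions; up to relabeling, $\calH(e) = \{\calH_0, \calH_1 + \calH_2 + \calH_3\}$ and $\calH(e') = \{\calH_i, \sum_{j \neq i} \calH_j\}$ for some $i \in \{1,2,3\}$. Writing $w = w_0 + w_1 + w_2 + w_3$ with $w_k \in \calH_k$, the incompatibility of $\calH(e)$ with $w$ gives $w_0 \neq 0$ and $(w_1, w_2, w_3) \neq 0$, and the uniqueness condition (iii) forces exactly one of $w_1, w_2, w_3$ to vanish; the other configurations leave $T$ empty and the lemma vacuous. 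Assuming $w_1 = 0$, every cyclic order on $\calH(T)$ gives a $3$-cell of $\widehat{U}_e$, so $T$ is the full $3$-torus, and the $2$-cells of $T$ that lie in $U_e$ are precisely the four $2$-cells whose underlying $3$-partitions have the form $\{0,k\} \mid \{1\} \mid \{m\}$ with $\{k, m\} = \{2,3\}$. These four $2$-cells group into two dual pairs forming two Bestvina--Margalit tori. A direct boundary computation shows that these two BM tori supply exactly the relations needed so that $H_1(T \cap U_e;\Q) \to H_1(T;\Q)$ is an isomorphism, and that the kernel of $\partial_2$ on $T \cap U_e$ is spanned by the two BM torus fundamental classes. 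Pushing forward, these two classes together with the four remaining BM torus classes in $T$ span $H_2(T;\Q)$, so the cokernel is generated by BM torus fundamental classes, as claimed.

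For parts (2) and (3), the intersection $T \cap T'$ is the subcomplex of $X_g$ whose edges correspond to those nontrivial $2$-partitions that are realized by both $\calH(T)$ and $\calH(T')$; both tori contain $e$, so $e$ is one such edge. If $T \cap T'$ contains an additional edge $f \neq e$ with $f \not\subseteq X_g^w$, then by condition (iii) the unique edge of each $3$-cell of $\widehat{U}_e$ in $X_g^w$ is fixed to be $e'$, and any $2$-cell of $T \cap T'$ adjacent to $f$ must have $e'$ as a second edge and hence lie in $U_e$. This forces every 1-cycle of $T \cap T'$ to be represented already in $T \cap T' \cap U_e$, yielding the surjectivity asserted in (2). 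In the complementary case of (3), every edge of $T \cap T'$ other than $e$ lies in $X_g^w$, so each such edge appears as an edge of a $2$-cell of $T \cap T' \cap U_e$, and the only 1-cycle of $T \cap T'$ not captured by $T \cap T' \cap U_e$ is a multiple of the loop $[e]$.

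The main obstacle is the careful bookkeeping in the case analysis of part (1), specifically verifying that the two BM torus boundaries in $T \cap U_e$ match exactly the two relations among the five edges needed to collapse $H_1(T \cap U_e;\Q)$ to the correct isomorphic image of $H_1(T;\Q)$ while leaving the correct BM-generated cokernel on $H_2$. The analogous combinatorial case distinction for $T \cap T'$ in parts (2) and (3) is governed by how the atomic decompositions $\calH(T)$ and $\calH(T')$ overlap, which in turn constrains the location of $e$ and any auxiliary edge $f$.
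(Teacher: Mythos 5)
Your proposal for part (1) is essentially the same as the paper's: you correctly identify $T \cap U_e$ as a wedge of two Bestvina--Margalit tori along the edge $e'$, from which both the $H_1$-isomorphism and the BM-generation of the $H_2$-cokernel follow by a short chain-level computation. The description of the CW structure of the $3$-torus $T$ and of the configuration of $w$ (exactly one of $w_1, w_2, w_3$ vanishes) is also correct.

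However, there is a genuine gap in your treatment of parts (2) and (3), stemming from a misreading of the definition of $\widehat{U}_e$. You write that ``by condition (iii) the unique edge of each $3$-cell of $\widehat{U}_e$ in $X_g^w$ is fixed to be $e'$,'' but condition (iii) is a local condition: each $3$-cell $\tau$ of $\widehat{U}_e$ has its \emph{own} unique edge $e'_{\tau}$ in $X_g^w$, and these can differ between $3$-cells (for instance, between $\tau$ and $\tau'$). Consequently, your key claim in part (2) --- ``any $2$-cell of $T \cap T'$ adjacent to $f$ must have $e'$ as a second edge and hence lie in $U_e$'' --- is false. In the relevant situation, where $\calH(T) = \{\calH_0, \calH_1, \calH_2, \calH_3\}$ and $\calH(T') = \{\calH_0, \calH_1, \calH_2', \calH_3'\}$ with $f = \{\calH_1, \calH_1^{\perp}\}$, the two $2$-cells of $T \cap T'$ are exactly those making up the BM torus $\{\calH_0, \calH_1, \calH_2 + \calH_3\}$. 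These $2$-cells each have $e$ itself as an edge, so they fail the condition $\dim(\sigma \cap e) = 0$ from the definition of $U_e$ and are therefore \emph{not} in $U_e$. So $T \cap T' \cap U_e$ contains no $2$-cells at all. The surjectivity in (2) has to be obtained differently: the edges $f$ and the composite $ef = \{\calH_0 + \calH_1, (\calH_0 + \calH_1)^{\perp}\}$ both lie in $T \cap T' \cap U_e$ because each occurs as a face of some $2$-cell of $U_e$ (not of $T \cap T'$), and $[f]$, $[ef] = [e] + [f]$ together span $H_1(T \cap T';\Q) \cong \Q^2$. Your part (3) has the same defect: you assert that the edges of $T \cap T'$ in $X_g^w$ ``appear as edges of a $2$-cell of $T \cap T' \cap U_e$,'' but $T \cap T' \cap U_e$ has no $2$-cells; the correct statement, as the paper shows, is that $T \cap T' \cap U_e$ is either the single edge corresponding to $\{\calH_2, \calH_2^{\perp}\}$ or the vertex, and in either case the $H_1$-cokernel is $\Q[e]$.
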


\begin{proof}
We prove each case in turn.

\p{Case (1) } Let $\calH(\tau) = \{\calH_0, \calH_1, \calH_2, \calH_3\}$ such that $\calH_0 \in \calH(e)$.  Since $T$ is independent of the cyclic ordering on $\calH(\tau)$ by definition, we may assume without loss of generality that the decomposition of $[a]^{\perp}$ given by $\{\calH_2, \calH_2^{\perp}\}$ is compatible with $w$.  Given this, the intersection $T \cap U_e$ is given by 2--cells in $X_g$ with the following decompositions: $\{\calH_0 + \calH_1, \calH_2, \calH_3\}$, $\{\calH_0 + \calH_3, \calH_1, \calH_2\}$, $\{\calH_2, \calH_0 + \calH_1, \calH_3\}$, and $\{\calH_0 + \calH_3, \calH_2, \calH_1\}$.  This is a pair of Bestvina--Margalit tori corresponding to the unordered decompositions $\{\calH_0+ \calH_1, \calH_2, \calH_3\}$ and $\{\calH_1, \calH_2, \calH_0 + \calH_3\}$ that intersect in the edge $\{\calH_2, \calH_2^{\perp}\}$, so the map $H_1(T \cap U_e;\Q) \rightarrow H_1(T;\Q)$ is an isomorphism and the cokernel of $H_2(T \cap U_e;\Q) \rightarrow H_2(T;\Q)$ is generated by the image of the fundamental class $[\BM_{\sigma}]$ with $\calH(\sigma) = \{\calH_0, \calH_1 + \calH_2, \calH_3\}$.

\p{Case (2)} Reuse the indexing of the elements of $\calH(\tau)$.  Let $\calH(\tau') = \{\calH_0, \calH_1, \calH_2', \calH_3'\}$ where $\calH(f) = \{\calH_1, \calH_1^{\perp}\}$.  Then by the construction of $T \cap U_e$ in the previous part, we see that $T \cap T' \cap U_e$ also contains the edge corresponding to the splitting $\{\calH_0 + \calH_1, \calH_1^{\perp} \cap \calH_0^{\perp}\}$.  Therefore the class $[e] \in H_1(T \cap T';\Q)$ is in the image of the pushforward $H_1(T \cap T' \cap U_e;\Q) \rightarrow H_1(T \cap T';\Q)$, so the pushforward map is surjective.

\p{Case (3)} Reuse the indexing of $\calH(\tau)$ from the previous case.  If $\calH_2 \in \calH(\tau')$, then $T \cap T \cap U_e$ is the single edge corresponding to $\{\calH_2, \calH_2^{\perp}\}$, while the intersection $T \cap T$ is the Bestvina--Margalit torus containing this edge and $e$.  Otherwise, $T \cap T' = e$ and so $T \cap T' \cap U_e$ is the unique vertex of $X_g$.  In either case the lemma holds.
\end{proof}

\begin{proof}[Proof of Lemma~\ref{bmcokpseudoproductlemma}]
If $\sigma \subseteq U_e$ is a 2--cell with $e, \sigma \subseteq \tau \subseteq \widehat{U}_e$ for some 3--cell $\tau$, then the union
\begin{displaymath}
\bigcup_{\tau' \subseteq \widehat{U}_e: \calH(\tau') = \tau} \tau'
\end{displaymath}
\bn is a 3--torus.  Let $\widehat{\calR}_e$ denote the set of such 3--tori.  This set is a simplicial cover of $\widehat{U}_e$ by construction.  Let
\begin{displaymath}
\calR_e = \{T \cap U_e: T \in \widehat{\calR}_e\}.
\end{displaymath}
\bn By the definition of $U_e$, $\calR_e$ is a cover of $U_e$.  Let $\bE_{p,q}^r(\calR_e;\Q)$ and $\bE_{p,q}^r(\widehat{\calR}_e;\Q)$ denote the $\Cech$--to--singular spectral sequences for $\calR_e$ covering $U_e$ and $\widehat{\calR}_e$ covering $\widehat{U}_e$ respectively.  Then, for $k = 1,2$ the cokernel of $H_k(U_e;\Q) \rightarrow H_k(\widehat{U}_e;\Q)$ is noncanonically isomorphic to
\begin{displaymath}
\bigoplus_{p+q = k} \cok(\bE_{p,q}^\infty(\calR_e;\Q) \rightarrow \bE_{p,q}^{\infty}(\widehat{\calR}_e;\Q)).
\end{displaymath}
\bn Hence to prove the first two statements in the lemma, it suffices to prove the following.
\begin{enumerate}
\item The vector space $\cok(\bE_{0,2}^\infty(\calR_e;\Q) \rightarrow \bE_{0,2}^\infty(\widehat{\calR}_e;\Q))$ is generated by fundamental classes of Bestvina--Margalit tori, and the maps $\bE_{p,q}^\infty(\calR_e;\Q) \rightarrow \bE_{p,q}(\widehat{\calR}_e;\Q)$ are surjective for $p > 0$ and $p +q = 2$.  
\item The maps $\bE_{p,q}^\infty(\calR_e;\Q) \rightarrow \bE_{p,q}^{\infty}(\widehat{\calR}_e;\Q)$ are surjective for $p + q = 1$.  
\end{enumerate}

\bn We will prove each of these in turn.

\p{The proof of statement (1)} We first show that $\cok(\bE_{0,2}^\infty(\calR_e;\Q) \rightarrow \bE_{0,2}^\infty(\widehat{\calR}_e;\Q)$ is generated by the images of fundamental classes of Bestvina--Margalit tori.  Since the differentials out of $\bE_{0,2}^r(\calR_e;\Q)$ and $\bE_{0,2}^r(\widehat{\calR}_e;\Q)$ vanish for all $r \geq 1$, it suffices to show that $\cok(\bE_{0,2}^1(\calR_e;\Q) \rightarrow \bE_{0,2}^1(\widehat{\calR}_e;\Q))$ is generated by the images of fundamental classes of Bestvina--Margalit tori.  Hence it is enough to show that for any $T \in \widehat{\calR}_e$, the cokernel $H_2(T \cap U_e;\Q) \rightarrow H_2(T;\Q)$ is generated by the images of fundamental classes of Bestvina--Margalit tori.  This follows from case (1) of Lemma \ref{bmcokpseudoproductlemma}.

We now show that $\bE_{p,q}^\infty(\calR_e;\Q) \rightarrow \bE_{p,q}^\infty(\widehat{\calR}_e;\Q)$ is surjective for $p > 0$ and $p + q = 2$.  Consider the map of chain complexes
\begin{displaymath}
\bE_{*,1}^1(\calR_e;\Q) \rightarrow \bE_{*,1}^1(\widehat{\calR}_e;\Q).
\end{displaymath}
\bn By Lemma \ref{bmcokpseudoproductlemma}, the map $\bE_{0,1}^1(\calR_e;\Q) \rightarrow \bE_{*,1}^1(\widehat{\calR}_e;\Q)$ is injective. Hence it suffices to show that the vector space $H_1( \bE_{*,1}^1(\widehat{\calR}_e;\Q), \bE_{*,1}^1(\widehat{\calR}_e;\Q))$ is the zero space.  Let $C_*$ denote the quotient complex $\bE_{*,1}^1(\widehat{\calR}_e;\Q)/\bE_{*,1}^1(\calR_e;\Q)$.  By Lemma \ref{bmcokpseudoproductlemma}, if $T \cap T' \cap U_e$ contains an edge $f \not \subseteq X_g^w$, then the image of a class in $\bE_{*,1}^1(\widehat{\calR}_e;\Q)$ supported on the index $T \cap T'$ is 0 in $C_*$.  Then if no such $f$ exists, Lemma \ref{bmcokpseudoproductlemma} says that the cokernel $H_1(T \cap T'\cap U_e;\Q) \rightarrow H_1(T \cap T';\Q)$ is generated by the image of the class $[e]$.  Therefore it suffices to show that if $T, T' \in \widehat{\calR}_e$, then there is a sequence $T_0, \ldots, T_n \in \widehat{\calR}_e$ such that $T = T_0$, $T' = T_n$, and $T_i \cap T_{i+1}$ contains an edge $f_i \neq e$ and $f \not \subseteq X_g^w$.  This will imply that the element in $C_*$ given by $[e]$ supported on $T \cap T'$ is homologous to the element $[e]$ supported on $T_0 \cap T_1 + \ldots + T_{n-1} \cap T_{n}$, since the edge connecting $T$ to $T'$ is homologous to the path $T_0 \rightarrow T_1 \rightarrow \ldots \rightarrow T_n$ in the clique complex on the set $\widehat{\calR}_e$.  Since the latter classes are all trivial in $C_*$, this implies that $T \cap T'$ is trivial in $H_1(C_*)$ as well.

Let $\calH(e) = \{\calH_0, \calH_0^{\perp}\}$ such that $g(\calH_0) = g-2$.  Let $\calH(T) = \{\calH_0, \calH_1, \calH_2, \calH_3\}$ such that $\{\calH_3, \calH_3^{\perp}\}$ is compatible with $w$.  Choose $\calH_1' \subseteq \calH_1$ such that $g(\calH_1')= 1$, $\{\calH_1', (\calH_1')^{\perp}\}$ is incompatible with $w$, and $w^{\perp} \supseteq (\calH_1')^{\perp} \cap \calH_1$.  Let $\calH(T_1) = \{\calH_0, \calH_1', \calH_2, \calH_3 + (\calH_1 \cap (\calH_1')^{\perp})\}$.  Note that $T_1 \cap T_0$ contains the edge $\{\calH_2, \calH_2^{\perp}\}$ which is not $e$ and is not in $X_g^w$.  Now, since $g(\calH_1') = 1$ and $g \geq \finalbound$, there is some $\calH_1'' \in \calH(T')$ such that there is an edge $h \subseteq U_e$ with $h \not \subseteq X_g^w$ such that for some $\calH_h \in \calH$, we have $\calH_h \supseteq \calH_1', \calH_1''$, $\calH_h \subseteq \calH_0^{\perp}$, and $w \in \calH_h^{\perp} \cap \calH_0^{\perp}$.  Let $T_2 = \{\calH_0, \calH_1', \calH_h, \calH_h^{\perp} \cap \calH_0^{\perp} \cap (\calH_1')^{\perp}\}$ and $T_3 = \{\calH_0, \calH_1'', \calH_h, \calH_h^{\perp} \cap \calH_0^{\perp} \cap (\calH_1'')^{\perp}\}$.  Then $T_1 \cap T_2$ contains the edge $\{\calH_1', (\calH_1')^{\perp}\}$, $T_2 \cap T_3$ contains the edge $\{\calH_h, \calH_h^{\perp}\}$, and $T_3 \cap T'$ contains the edge $\{\calH_1'', (\calH_1'')^{\perp}\}$.  Hence the desired path between $T$ and $T'$ exists, so $H_1(C_*) = 0$ and thus statement (1) holds.  

For $p = 2$, $q = 0$, note that any intersection $T_0 \cap \ldots \cap T_k$ of $T_0, \ldots, T_k \in \widehat{\calR}_e$ contains the unique vertex of $X_g$, and in particular any such intersection is nonempty and connected.  The same holds for $\calR_e$, so we have $\bE_{2,0}^2(\calR_e;\Q) = \bE_{2,0}^2(\widehat{\calR}_e;\Q) = 0$, so the pushforward on the $r = \infty$ page is surjective.

\p{The proof of statement (2)}  As a consequence of Lemma \ref{bmcokpseudoproductlemma}, the map $\bE_{1,0}^1(\calR_e;\Q) \rightarrow \bE_{1,0}^1(\widehat{\calR}_e;\Q)$ is an isomorphism, and in particular surjective.  Hence the map $\bE_{1,0}^\infty(\calR_e;\Q) \rightarrow \bE_{1,0}^{\infty}(\widehat{\calR}_e;\Q)$ is surjective.  The case $p = 1$ and $q = 0$ follows by the same argument as $p = 2$ and $q = 0$, except with $2$ replaced by 1.

\p{The proof of statement (3)} The vector space $H_1(\widehat{U}_e \cap \widehat{U}_f;\Q)$ is generated by classes represented by edges, so it suffices to show that any class in $H_1(\widehat{U}_e \cap \widehat{U}_f;\Q)$ represented by an edge is in the image of the pushforward $H_1(U_e \cap U_f;\Q) \rightarrow H_1(\widehat{U}_e \cap \widehat{U}_f;\Q)$.  Now, by construction we have $U_e^{(1)} \cup e = \widehat{U}_e^{(1)}$, and similarly for $f$.  Hence if $h \subseteq \widehat{U}_e \cap \widehat{U}_f$ is an edge with $h \neq e,f$, the class $[h]$ is in the image of $H_1(U_e \cap U_f;\Q) \rightarrow H_1(\widehat{U}_e \cap \widehat{U}_f;\Q)$.  Hence it suffices to show that $[e], [f] \in H_1(U_e \cap U_f;\Q) \rightarrow H_1(\widehat{U}_e \cap \widehat{U}_f;\Q)$ if $e,f \subseteq \widehat{U}_e \cap \widehat{U}_f$.  If $e \subseteq \widehat{U}_e \cap \widehat{U}_f$, this implies that there is a 3--cell $\tau$ containing both $e$ and $f$.  But now every edge in $\tau$ besides $e$ and $f$ lies in both $U_e$ and $U_f$ by the above argument, so $[e]$ is a linear combination of classes in $U_e \cap U_f$, and the lemma holds.
\end{proof}

\bn We need one more auxiliary result before proving Lemma \ref{twotofullcoklemma}.

\begin{lemma}\label{onegenusreductionlemma}
Let $g \geq \finalbound$, $a \subseteq S_g$ be a nonseparating simple closed curve, and $w \in [a]^{\perp}$ such that $w$ is not sent to zero under the adjoint map $[a]^{\perp} \rightarrow \ho_{\Z}([a]^{\perp}, \Z)$.  Let $\sigma \subseteq X_g$ be a 2--cell such that $\sigma \not \subseteq X_g^{w,2}$.  Then $\sigma$ is homologous to a linear combination of 2--cells $\sigma_1,\ldots, \sigma_n$ such that each $\sigma_i \not \subseteq X_g^{w,2}$, and each $\sigma_i$ contains an edge $e_i \subseteq \sigma_i$ such that $\calH(e_i)$ is not compatible with $w$ and $g(e_i) = \{1, g-2\}$.
\end{lemma}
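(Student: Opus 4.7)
The plan is to first observe that, writing $\calH(\sigma) = \{\calH_0, \calH_1, \calH_2\}$ with genera $g_i = g(\calH_i)$ summing to $g-1$, the pieces $\calH_i$ pair trivially with one another and pairwise intersect in $\Z[a]$, so there is a direct sum decomposition $[a]^{\perp}/\Z[a] \cong \bigoplus_i \calH_i/\Z[a]$; writing $\overline{w} = \overline{w}_0 + \overline{w}_1 + \overline{w}_2$ accordingly, the hypothesis $\sigma \not\subseteq X_g^{w,2}$ translates into the statement that all three $\overline{w}_i$ are nonzero. In the base case that some $g_i = 1$, the edge $\{\calH_i, \calH_i^{\perp}\}$ of $\sigma$ already has genus splitting $\{1, g-2\}$; it is $w$-incompatible since $\overline{w}_i \neq 0$ gives $w \notin \calH_i^{\perp}$ while some $\overline{w}_j$ with $j \neq i$ being nonzero gives $w \notin \calH_i$, so $\sigma_1 = \sigma$ satisfies the conclusion.

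In the remaining case all $g_i \geq 2$; reindex so that $g_0$ is maximal. A straightforward inductive linear-algebra argument in the symplectic lattice $\calH_0/\Z[a]$ produces an orthogonal symplectic direct sum decomposition $\calH_0/\Z[a] = V_1 \oplus \ldots \oplus V_{g_0}$ into genus-one summands such that the $V_i$-component of $\overline{w}_0$ is nonzero for every $i$. Let $\calH'_0$ and $\calH''_0$ denote the primitive preimages in $\calH_0$ of $V_1$ and of $V_2 \oplus \ldots \oplus V_{g_0}$ respectively, of genera $1$ and $g_0 - 1$. Applying the $3$-cell boundary formula of Section~\ref{homolcurvequotsection} to the $3$-cell $\tau_1$ with cyclic decomposition $\calH(\tau_1) = (\calH'_0, \calH''_0, \calH_1, \calH_2)$ yields
\[
\partial \tau_1 = \sigma - \{\calH'_0, \calH''_0 + \calH_1, \calH_2\} + \{\calH'_0, \calH''_0, \calH_1 + \calH_2\} - \{\calH'_0 + \calH_2, \calH''_0, \calH_1\}.
\]
The first two summands on the right each contain $\calH'_0$ as one of their three parts, hence carry the edge $\{\calH'_0, (\calH'_0)^{\perp}\}$ of genus splitting $\{1, g-2\}$; a direct check using the direct-sum structure together with the nonvanishing of the $V_1$-component of $\overline{w}_0$ and of $\overline{w}_1, \overline{w}_2$ shows both that these $2$-cells lie outside $X_g^{w,2}$ and that the distinguished edges are $w$-incompatible. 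The third summand $\{\calH'_0 + \calH_2, \calH''_0, \calH_1\}$ is a residual $2$-cell whose middle piece $\calH''_0$ has genus $g_0 - 1$.

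I then iterate the construction on this residual by peeling off the next summand $V_2$ from $\calH''_0/\Z[a]$ via an analogous $3$-cell, producing two more good $2$-cells together with a new residual whose middle piece has genus $g_0 - 2$. After $g_0 - 1$ iterations the middle piece has genus $1$ and the final residual itself contains such a piece, hence has its own good $w$-incompatible edge by the same argument as in the base case. Summing all of the good $2$-cells produced across the iterations gives the desired expression $\sigma = \sum_i \lambda_i \sigma_i$ modulo the boundaries of the constructed $3$-chains.

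The main obstacle I expect to face is ensuring, throughout the iteration, that every intermediate residual cell continues to have all three of its pieces project nontrivially from $w$ (so that the good edge it eventually produces is $w$-incompatible) and that the good $2$-cells produced along the way also lie outside $X_g^{w,2}$. This is handled uniformly by the single upfront choice of the decomposition $V_1 \oplus \ldots \oplus V_{g_0}$: at every stage of the iteration each intermediate cell has its three pieces built as direct sums of various $V_i$'s together with $\calH_1$ or $\calH_2$, so the required nonzero-projection conditions follow automatically from the nonvanishing of the $V_i$-components of $\overline{w}_0$ together with $\overline{w}_1, \overline{w}_2 \neq 0$.
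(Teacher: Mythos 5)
Your proof is correct and uses the same essential tool as the paper---the $3$--cell boundary relation---but organizes the reduction differently. You decompose the \emph{largest} piece $\calH_0$ of $\calH(\sigma)$ into genus--one orthogonal summands upfront (all chosen to carry a nonzero component of $\overline{w}$), and then peel them off one at a time, generating two ``good'' $2$--cells and one residual at each of the $g_0 - 1$ steps. The paper instead takes the \emph{smallest} piece $\calH$ (which is automatically $w$--incompatible since $\sigma \not\subseteq X_g^{w,2}$ forces all three pieces to be) and splits it arbitrarily as $\calH = \calH_1 + \calH_2$ with both parts having nonzero $\overline{w}$--projection; a single $3$--cell then replaces $\sigma$ by three cells each of which contains an incompatible edge of strictly smaller minimal genus, and the argument concludes by induction on that minimal genus. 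The paper's version is shorter and avoids the bookkeeping of tracking a chain of residuals, while your version buys a single clean upfront verification that every cell produced along the way stays outside $X_g^{w,2}$---a point the paper's proof asserts rather tersely. One place you should supply a word or two more justification: the claim that $\calH_0/\Z[a]$ admits a genus--one orthogonal splitting with all $V_i$--components of $\overline{w}_0$ nonzero should be spelled out (take a symplectic basis with $\alpha_1$ a scalar multiple of $\overline{w}_0$ and replace $\alpha_1$ by $\alpha_1 + \alpha_2$ to shift mass into the complement, then iterate), but this is the same implicit step the paper makes when it asserts that $\calH$ can be written as $\calH_1 + \calH_2$ with both $\{\calH_i, \calH_i^{\perp}\}$ incompatible with $w$.
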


\begin{proof}
Inductively, it suffices to show that $\sigma$ is homologous to $\sigma_1 + \sigma_2 + \sigma_3 $ such that each $\sigma_i$ contains some edge $e_i \subseteq \sigma_i$ with $\min g(e_i) < g(\calH)$, where $\calH \in \calH(\sigma)$ is minimal among $\calH' \in \calH(\sigma)$ such that $\{\calH', (\calH')^{\perp}\}$ is not compatible with $w$.  Choose such an $\calH \subseteq \calH(\sigma)$.  Let $\calH = \calH_1 + \calH_2$ such that there are edges  $e_i \not \subseteq X_g^w$ with $\calH(e_i) = \{\calH_i, \calH_i^{\perp}\}$.  Let $\tau$ be the 3--cell containing $\sigma$, $e_1$ and $e_2$.  Then every other 2--cell in $\partial \tau$ besides $\sigma$ contains either $e_1$ or $e_2$.  We have $\min g(e_i) < g(\calH)$ and $e_i \not \subseteq X_g^w$, so the lemma holds.
\end{proof}

\bn We are now ready to move forward with Step (1) of the proof of Lemma~\ref{vstablemma}. 

\begin{proof}[Proof of Lemma~\ref{twotofullcoklemma}]
Let $\calE^w = \{e \subseteq X_g: \calH(e) \text{ is incompatible with } w \text{ and } g(e) = \{1, g-2\}\}$.  For each $e \in \calE^w$, let $U_e$ and $\widehat{U}_e$ be as above.  Let
\begin{displaymath}
\calU = \{U_e\}_{e \in \calE} \text{ and } \widehat{\calU} = \{\widehat{U}_e\}_{e \in \calE}.
\end{displaymath}
\bn Let $\calU^+ = \calU \cup \{X_g^w\}$ and $\widehat{\calU}^+ = \widehat{\calU} \cup \{X_g^w\}$.  The set $\calU^+$ covers $(X_g^{w,2})^{(2)}$, since any 2--cell $\sigma \subseteq X_g^{w,2}$ is either contained in $X_g^w$, or is not.  In the latter case, the cell $\sigma$ must be disjoint from some edge $e \in \calE^w$.  Then the pushforward $H_2(\bigcup_{\widehat{U} \in \widehat{\calU}} \widehat{U} \cup X_g^{w}) \rightarrow H_2(X_g;\Q))$ is a surjection as a consequence of Lemma \ref{onegenusreductionlemma}.  For either of the covers $* = \calU^+, \widehat{\calU}^+$, let $\bE_{p,q}^r(*;\Q)$ denote the $\Cech$--to--singular spectral sequence corresponding to the cover $*$.  The cokernel of the pushforward $H_2(X_g^{w,2};\Q) \rightarrow H_2(X_g;\Q)$ is noncanonically identified with the direct sum
\begin{displaymath}
\bigoplus_{p + q = 2} \cok(\bE_{p,q}^\infty(\calU^+;\Q) \rightarrow \bE_{p,q}^\infty(\widehat{\calU}^+;\Q)).
\end{displaymath}
\bn Therefore, it suffices to prove the following three facts:
\begin{enumerate}
\item The cokernel of the map $\bE_{0,2}^1(\calU;\Q) \rightarrow \bE_{0,2}^1(\widehat{\calU}, \Q)$ is generated by the images of fundamental classes of Bestvina--Margalit tori,
\item the cokernel of the map $\bE_{1,1}^2(\calU;\Q) \rightarrow \bE_{1,1}^2(\overline{\calU};\Q)$ is trivial, and
\item the cokernel of the map $\bE_{2,0}^3(\calU;\Q) \rightarrow \bE_{2,0}^3(\widehat{\calU};\Q)$ is trivial.
\end{enumerate}
\bn We prove each of these in turn.

\p{Proof of Fact (1)} This is the content of statement (1) of Lemma \ref{bmcokpseudoproductlemma}. 

\p{Proof of Fact (2)} By statements (2) and (3) respectively of Lemma \ref{bmcokpseudoproductlemma}, the maps $\bE_{0,1}^1(\calU^+;\Q) \rightarrow \bE_{0,1}^1(\widehat{\calU}^+;\Q)$ and $\bE_{1,1}^1(\calU^+;\Q) \rightarrow \bE_{1,1}^1(\widehat{\calU}^+;\Q)$ are an isomorphism and a surjection respectively.  Hence the map $\bE_{1,1}^2(\calU^+;\Q) \rightarrow \bE_{1,1}^2(\widehat{\calU}^+;\Q)$ is surjective.

\p{Proof of Fact (3)} For any choice of $e_0,\ldots, e_k \in \calE^w$, both $U_{e_0} \cap \ldots \cap U_{e_k}$ and $\widehat{U}_{e_0} \cap \ldots \cap \widehat{U}_{e_k}$ contain the unique vertex of $X_g$, and hence are connected.  The same applies if we include $X_g^w$, so $\bE_{2,0}^2(\calU^+;\Q) = \bE_{2,0}^2(\widehat{\calU}^+;\Q) = 0$.

\p{Completing the proof} Given the above three statements,  $\cok(H_2(X_g^{w,2};\Q) \rightarrow H_2(X_g;\Q))$ is generated by a quotient of the image $\bE_{0,2}^1(\widehat{\calU};\Q) \rightarrow \bE_{0,2}^1(\overline{\calU};\Q)$, and the image of this map  is generated by the images of the fundamental classes of Bestvina--Margalit tori, so the lemma holds.
\end{proof}

\subsection{Step (2) of the proof of Lemma~\ref{vstablemma}}\label{vstabsubsection}

We now prove Lemma~\ref{vertstabcoklemma}.  This will complete the proof of Lemma~\ref{vstablemma}.  We will also prove Lemma~\ref{edgeequivconnlemma}, which is an auxiliary result about the acyclicity of a complex where the vertices are edges $e \subseteq X_g^{w,2}.$

\begin{lemma}\label{vertstabcoklemma}
Let $g \geq \finalbound$ and $a \subseteq S_g$ a nonseparating simple closed curve.  Let $w \in [a]^{\perp}$ be a nonzero primitive homology class such that the image of $w$ under the adjoint map $\ho_{\Z}([a]^{\perp},\Z)$ is nontrivial.  The vector space
\begin{displaymath}
\cok(H_2(X_g^w;\Q) \rightarrow H_2(X_g^{w,2};\Q))
\end{displaymath}
\bn is generated by the images of fundamental classes of Bestvina--Margalit tori.
\end{lemma}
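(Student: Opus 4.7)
The plan is to identify $\cok(H_2(X_g^w;\Q) \to H_2(X_g^{w,2};\Q))$ with a subspace of the relative homology $H_2(X_g^{w,2}, X_g^w;\Q)$ via the long exact sequence of the pair, and then to show that this subspace is generated by images of fundamental classes of Bestvina--Margalit tori $\BM_\sigma \subseteq X_g^{w,2}$.

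The key structural observations are as follows. A 2-cell $\sigma$ with $\calH(\sigma) = \{\calH_0, \calH_1, \calH_2\}$ lies in $X_g^{w,2} \setminus X_g^w$ precisely when $w$ has nonzero projection to exactly two of the $\calH_i$; in that case exactly one edge of $\sigma$, namely the one merging those two pieces, is good, while the other two are bad. The dual cell $\sigma'$ (reversed cyclic order) also lies in $X_g^{w,2} \setminus X_g^w$, and the torus $\BM_\sigma$ supplies a relative 2-cycle equal to $[\sigma] - [\sigma']$ (up to sign). Dually, a 3-cell $\tau \subseteq X_g^{w,2}$ fails to lie in $X_g^w$ precisely when $w$ has nonzero projection to exactly two parts of $\calH(\tau)$ that are adjacent in the cyclic order; in this case exactly one 2-face of $\tau$ lies in $X_g^w$, providing the useful boundary relation in the relative complex.

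To show generation, I would adapt the $\Cech$--to--singular spectral sequence strategy of Lemma~\ref{twotofullcoklemma}. For each bad edge $e \subseteq X_g^{w,2}$ satisfying a suitable genus condition (analogous to $g(e) = \{1, g-2\}$), define $\widehat{V}_e \subseteq X_g^{w,2}$ to be the union of 3-cells of $X_g^{w,2}$ containing $e$ together with their 2-faces containing $e$, and let $V_e = \widehat{V}_e \cap X_g^w$. A genus-reduction argument paralleling Lemma~\ref{onegenusreductionlemma} ensures these subcomplexes give compatible covers of $X_g^{w,2}$ and (a deformation retract of) $X_g^w$ sufficient for a spectral-sequence comparison. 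The main local inputs, analogous to Lemma~\ref{bmcokpseudoproductlemma}, are: (i) $\cok(H_2(V_e;\Q) \to H_2(\widehat{V}_e;\Q))$ is generated by images of fundamental classes of Bestvina--Margalit tori; (ii) $H_1(V_e;\Q) \to H_1(\widehat{V}_e;\Q)$ is an isomorphism; and (iii) $H_1(V_e \cap V_{e'};\Q) \to H_1(\widehat{V}_e \cap \widehat{V}_{e'};\Q)$ is surjective for $e \neq e'$.

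The hard part will be the local computation (i). It requires a detailed case analysis of how the 3-tori of $\widehat{V}_e$ (arranged by Lemma~\ref{bm3torilemma}) meet $X_g^w$, along the lines of Lemma~\ref{threetoruslemma}, in order to identify the relative 2-cycles in each 3-torus component as sums of $[\sigma] - [\sigma']$ modulo boundaries. Once these local facts are established, the two spectral sequences compare term-by-term exactly as in the proof of Lemma~\ref{twotofullcoklemma}: the $\bE_{1,1}^\infty$ and $\bE_{2,0}^\infty$ contributions vanish in the cokernel, and the $\bE_{0,2}^\infty$ contribution is supplied by Bestvina--Margalit torus classes, completing the proof.
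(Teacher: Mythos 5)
Your overall strategy — cover the ``bad'' part of $X_g^{w,2}$ by subcomplexes $\widehat{V}_e$ indexed by edges $e$ incompatible with $w$, cover its intersection with $X_g^w$ by $V_e = \widehat{V}_e \cap X_g^w$, and compare \v{C}ech-to-singular spectral sequences — is close to what the paper does. But your proposed local input (ii), that $H_1(V_e;\Q) \to H_1(\widehat{V}_e;\Q)$ is an isomorphism, is false, and this is not a small detail but the crux of the proof. The reason: with the definitions in play (the paper uses $U_e \subseteq X_g^w$ generated by $2$--cells $\sigma \subseteq X_g^w$ sharing a $3$--cell with $e$, and $\widehat{U}_e$ the union of those $3$--cells), one has a product decomposition $\widehat{U}_e \cong e \times U_e$. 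Since $X_g$ has one vertex, $e$ is a circle, so K\"unneth gives $H_1(\widehat{U}_e;\Q) \cong H_1(U_e;\Q) \oplus \Q[e]$; the pushforward from $H_1(U_e;\Q)$ is injective with a one-dimensional cokernel spanned by $[e]$. Transplanting the three assertions of Lemma~\ref{bmcokpseudoproductlemma} verbatim therefore does not work here.

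That same product structure also collapses what you flag as ``the hard part.'' Your input (i) becomes immediate: $\cok(H_2(U_e;\Q) \to H_2(\widehat{U}_e;\Q)) \cong H_1(U_e;\Q) \otimes H_1(e;\Q)$ by K\"unneth, and a class $[f] \otimes [e]$ is exactly the fundamental class of the Bestvina--Margalit torus on the $2$--cell with edges $e$ and $f$. There is no three-torus case analysis analogous to Lemma~\ref{threetoruslemma} needed. The paper sidesteps the failure of your (ii) by running the \v{C}ech spectral sequence directly for the quotient cover $\overline{\calU} = \{\widehat{U}_e / U_e\}$ of the quotient space $X_g^{w,2}/X_g^w$, rather than comparing two absolute spectral sequences. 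The genuinely hard step then is proving $\bE_{1,1}^2(\overline{\calU};\Q) = 0$: each $\widehat{U}_e/U_e$ has one-dimensional $H_1$ (spanned by $[e]$), so the row $\bE_{*,1}^1(\overline{\calU};\Q)$ is a cellular chain complex of a simplicial complex $Z$, and one must show $H_1(Z;\Q) = 0$. That is the content of Lemma~\ref{edgeequivconnlemma}, which establishes the $1$--acyclicity of an auxiliary complex $Y(e)$ whose vertices are edges of $X_g^{w,2}$ in a fixed relative-homology class and whose $2$--cells are filled whenever a common ``pivot'' edge $e'$ exists. Your outline does not anticipate needing a connectivity result of this kind, and without it the surjectivity of the $\bE_{1,1}^\infty$ comparison you rely on is unproved.
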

\bn We begin by defining an auxiliary complex.   Let $e \subseteq X_g$ be an edge with $g(e) = \{1, g-2\}$.  Let $A_e \subseteq H_1(X_g;\Q)$ be the affine space given by
\begin{displaymath}
A_e = \left\{[f] \in H_1(X_g;\Q): [f] - [e] \in H_1(X_g^w;\Q)\right\}.
\end{displaymath}
\bn Let $Y(e)$ denote the connected component containing the edge $e$ of the complex $C(e)$, where a $k$--cell of $C(e)$ is a set of \textit{ordered} $(k+1)$ edges $e_0,\ldots, e_k \subseteq X_g$ such that:
\begin{itemize}
\item $g(e_i) = \{1, g-2\}$ for every $0 \leq i \leq k$,
\item  $[e_i] \in A_e$ for every $0 \leq i \leq k$, and 
\item there is an edge $e'$ such that $[e'] \in A_e$ and such that $e'$ shares a 2--cell $\sigma_i \subseteq X_g^{w,2}$ with each $e_i$.
\end{itemize}

\p{Remark} Note that the cells of $Y(e)$ are ordered collections of vertices.  This is to avoid certain technical complications later in the section.

\medskip

\bn  We will prove the following auxiliary result.

\begin{lemma}\label{edgeequivconnlemma}
Let $g \geq \finalbound$ and $a \subseteq S_g$ a nonseparating simple closed curve.  Let $w \in [a]^{\perp}$ be a primitive nonzer class such that $w$ is not sent to zero under the adjoint map $[a]^{\perp} \rightarrow \ho_{\Z}([a]^{\perp}, \Z)$.  Let $e \subseteq X_g$ be an edge with $e \not \subseteq X_g^w$ and $g(e) = \{1,g-2\}$.  The complex $Y(e)$ is 1--acylic. 
\end{lemma}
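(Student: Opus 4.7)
The complex $Y(e)$ is path--connected by construction, as it is defined to be the connected component of $C(e)$ containing $e$; so $\widetilde{H}_0(Y(e);\Z) = 0$ automatically, and the task reduces to showing $H_1(Y(e);\Z) = 0$. My plan is to present $C(e)$ as the nerve of an explicit cover by simplices. For each edge $e' \subseteq X_g$ with $[e'] \in A_e$, set
\begin{displaymath}
U_{e'} = \{e_0 \subseteq X_g : g(e_0) = \{1,g-2\},\ [e_0] \in A_e,\ e_0 \text{ shares a 2--cell } \sigma \subseteq X_g^{w,2} \text{ with } e'\}.
\end{displaymath}
Directly from the definition of $C(e)$, any finite tuple drawn from a single $U_{e'}$ spans a cell of $C(e)$, with $e'$ serving as common witness; so the full subcomplex on $U_{e'}$ is a simplex, and the same holds for every intersection $U_{e'_0}\cap\cdots\cap U_{e'_k}$. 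The nerve lemma applies to this cover by contractibles and identifies the relevant component of $C(e)$ with the corresponding component $N$ of the nerve whose simplices are finite families $\{e'_0,\ldots,e'_k\}$ of witnesses with $\bigcap_j U_{e'_j} \neq \emptyset$.

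Having reduced to $N$, my plan is to prove its simple connectivity by surgery on curves. For connectivity: given two witnesses $e'_0,e'_1$ whose sets $U_{e'_j}$ lie in the same component of $C(e)$, I will exhibit a chain $e'_0 = h_0, h_1, \ldots, h_m = e'_1$ of witnesses with $U_{h_i}\cap U_{h_{i+1}} \neq \emptyset$ for every $i$. This is achieved by repeatedly invoking the change of coordinates principle and Lemma~\ref{homolcurveconjstructurelemma}: the cyclic decomposition $\calH(h_i)$ always contains a genus--one summand that can be adjusted by a small surgery to produce a bridging edge lying in both $U_{h_i}$ and $U_{h_{i+1}}$; the hypothesis $g \geq \finalbound$ guarantees enough ambient genus to do so while keeping the supporting 2--cells in $X_g^{w,2}$ and the homology classes in $A_e$. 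For the $H_1$ vanishing, I plan to show that any loop $e'_0,\ldots,e'_n,e'_0$ in $N$ bounds by producing a single ``universal'' witness $e''$ of genus type $\{1,g-2\}$ whose $U_{e''}$ meets every $U_{e'_i}$, which cones off the loop in $N$. If such a universal witness cannot be produced in one step, I will subdivide the loop into triangles and fill each via an analogous auxiliary witness.

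The main obstacle I anticipate is the simultaneous satisfaction of constraints in the coning step: the candidate $e''$ must represent a class in $A_e$, must be joined to each $e'_i$ by a 2--cell lying in $X_g^{w,2}$ (which demands $w$--compatibility of two of the three edges of that 2--cell), and must have decomposition type $\{1,g-2\}$. The bound $g \geq \finalbound$ is chosen to leave many handles outside the finite collection of pre--existing curves, and the assumption that $w$ has nontrivial image under the adjoint map places $w$ inside a genuine symplectic plane of $[a]^{\perp}$, so that $w$ together with the various $\calH(e'_i)$ extend to a global symplectic configuration in which a common $e''$ is visible. The bulk of the work will be a careful combinatorial surgery argument in this large ambient genus, arranging all the curves so that the 2--cells linking $e''$ to the $e'_i$ simultaneously land in $X_g^{w,2}$.
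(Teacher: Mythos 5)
Your nerve reduction is sound as far as it goes: the full subcomplex of $Y(e)$ on each $U_{e'}$ is a simplex with $e'$ as common witness, and nonempty finite intersections of these are again simplices, so the nerve lemma applies. But everything after that is a plan rather than a proof, and the plan has a concrete hole. To cone off a loop $e'_0,\ldots,e'_n,e'_0$ in the nerve $N$ by a single vertex $e''$, it is not enough to arrange $U_{e''}\cap U_{e'_i}\neq\emptyset$ for each $i$; the triangles $\{e'',e'_i,e'_{i+1}\}$ are simplices of $N$ only if the \emph{triple} intersections $U_{e''}\cap U_{e'_i}\cap U_{e'_{i+1}}$ are nonempty, and your outline only addresses the pairwise condition. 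More fundamentally, the ``careful combinatorial surgery argument'' you defer to the end is exactly the content of the lemma, and you have not identified the structural fact that makes it tractable.

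That missing fact is this: write $\calH(e)=\{V_0,V_1\}$ and choose $w_0\in V_0$, $w_1\in V_1$ with $w_0+w_1=w$; then \emph{every} vertex $f$ of $Y(e)$ has $\calH(f)$ compatible with both $w_0$ and $w_1$. (One sees this by noting that a $2$--cell containing an edge whose decomposition is $w_0,w_1$--compatible lies in $X_g^{w,2}$ exactly when its decomposition is again $w_0,w_1$--compatible, and propagating this along the edges of $Y(e)$.) This is what lets one choose, for \emph{any} finite set of vertices $e_0,\ldots,e_k$ of $Y(e)$, a single primitive subgroup $\calH\subseteq [a]^\perp$ with $w_0\in\calH$, $w_1\in\calH^\perp$, compatible with each $\calH(e_j)$; the corresponding edge $e'$ is then a common witness, and $w_0,w_1$--compatibility is precisely what forces the linking $2$--cells into $X_g^{w,2}$ and $[e']$ into $A_e$. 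Without this you have no control over the two constraints you correctly flag as the main obstacle. Once you do have it, the nerve detour is unnecessary: every triple of vertices of $Y(e)$ spans a $2$--cell, so the $2$--skeleton of $Y(e)$ is that of the full simplex on its vertex set, which already gives $H_1(Y(e);\Q)=0$ directly.
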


\begin{proof}
Let $\calH(e) = \{V_0, V_1\}$.  Let $w_0, w_1$ be two nonzero elements in $[a]^{\perp}$ such that $w_i \in V_i$ and $w_0 + w_1 = w$.  We will prove the following.

\p{Claim} Let $f \subseteq X_g$ be an edge with $f$ a vertex of $Y(e)$ and let $\calH(f) = \left\{V_0^f, V_1^f\right\}$.  Then after possibly reindexing, we have $w_i \in V_i^f$ for $i = 0,1$.

\p{Proof of claim} Note that for any 2--cell $\sigma \subseteq X_g$ such that $w_0,w_1$ compatible with $\calH(\sigma)$, there is some $\calH \in \calH(\sigma)$ with $w_0,w_1 \in \calH$ or $\calH^{\perp}$, so such a $\sigma$ is contained in $X_{g}^{w,2}$.  Likewise, if $\sigma \subseteq X_g^{w,2}$ is a 2--cell containing $e$, then there is $\calH \in \calH(\sigma)$ compatible with $w$, so in particular we have $w_0,w_1$ both compatible with $\calH(\sigma)$.  Hence if $e_0,\ldots, e_k$ is a cell in $Y(e)$ with $e'$ as in the definition of $Y(e)$, then $e_0,\ldots, e_k$ and $e'$ are all compatible with $v_0$ and $v_1$, so the claim holds.

Now, given the claim, we see that since $g \geq \finalbound$, any triple of edges $e_0,e_1,e_2 \in Y(e)$ are the vertices of a 2--cell in $Y(e)$.  Indeed, for any three $e_0, e_1, e_2 \subseteq X_g$ with $e_0, e_1, e_2 \in Y(e)$, we have $w_0$ and $w_1$ compatible with $\calH(e_0)$, $\calH(e_1)$ and $\calH(e_2)$ by the claim.  Since $g \geq \finalbound$ and $g(e_k) = (1, g-2)$ for $k = 0,1,2$, we see that there is some primitive $\calH \subseteq [a]^{\perp}$ with $\calH^{\perp} \cap \calH = \Z[a]$, $\calH^{\perp} + \calH = [a]^{\perp}$, where $w_0 \in \calH$, $w_1 \in \calH^{\perp}$, and $\calH$ compatible with $\calH(e_k)$ for $k =0,1,2$.  Let $e' \subseteq X_g$ be the unique edge with $\calH \in \calH(e')$.  For each $e_k$, let $\sigma_k$ be a 2--cell containing $e_k$ and $e'$.  We see that since $w_0,w_1$ both compatible with $e_k$ and $e'$, then the third edge $z_k$ of $\sigma_k$ must have $w_0, w_1 \in \calH_0^{z_k}$ or $\calH_1^{z_k}$.  Hence $\sigma_k \in X_g^{w,2}$, so $[e'] \in A_e$ since $[e'] + [z_k] = [e_k]$ and $[z_k] \in H_1(X_g^w;\Q)$.  Therefore by the definition of $Y(e)$, we have that $e_0,e_1,e_2$ is a 2--cell.  This implies that $Y(e)$ is the 2--skeleton of a flag complex on the complete graph of the vertices of $Y(e)$, so in particular we have $H_1(Y(e);\Q) = 0$.
\end{proof}

\bn We are now ready to complete Step 2 of the proof of Lemma~\ref{vstablemma}.
\begin{proof}[Proof of Lemma~\ref{vertstabcoklemma}]
Let $\calE^w_1 = \{e \subseteq X_g: e \text{ is incompatible with } w, g(e) =\{1,g-2\}\}$.  Let $U_e \subseteq X_g^w$ be the subcomplex generated by all $2$--cells $\sigma \subseteq X_g^w$ such that $e$ and $\sigma$ are both faces of a $3$--cell $\tau$.  Let $\widehat{U}_e$ consist of the union of all $3$ cells $\tau$ as in the previous sentence.  For any such $\tau \subseteq \widehat{U}_e$, the union of $\tau$ with all $\tau_i$ that $H(\tau_i) = H(\tau)$ as unordered sets forms a $k+1$--torus, and this torus is naturally isomorphic to the product of $e$ and the minimal $k$--torus in $X_g$ containing $\sigma$. Hence, there is a natural isomorphism $\widehat{U}_e \cong e \times U_e$.  By construction, the collection
\begin{displaymath}
\overline{\calU} = \{\widehat{U}_e/U_e\}_{e \in \calE_1^w}
\end{displaymath}
\bn covers the 2--skeleton of $X_g^{w,2}/X_{w}$.  For any cover $* = \calU, \widehat{\calU}$, or $\overline{\calU}$, let
$\bE_{p,q}^r(*;\Q)$ denote the $\Cech$--to--singular spectral sequence corresponding to the cover $*$.  Since $\overline{\calU}$ covers the 2--skeleton of $X_g^{w,2}/X_{g}^w$, we have
\begin{displaymath}
\bigoplus_{p +q = 2}\bE_{p,q}^\infty(\overline{\calU};\Q) \rrightarrow H_{2}(X_g^{w,2}/X_{g}^w;\Q).
\end{displaymath}
\bn From the long exact sequence in homology for the pair $(X_g^{w,2}, X_g^w)$, we have an inclusion
\begin{displaymath}
\cok(H_2(X_g^w;\Q) \rightarrow H_2(X_g^{w,2};\Q)) \hookrightarrow H_2(X_g^{w,2}, X_g^{w};\Q).
\end{displaymath}
\bn  Then we have $H_2(X_g^{w,2}, X_g^w;\Q) = H_2(X_g^{w,2}/X_g^w;\Q)$.  Since $\bE_{p,q}^r(\overline{\calU};\Q) = 0$ for $p < 0$ or $q < 0$, the vector space $\bE_{0,2}^\infty(\overline{\calU};\Q)$ is a quotient of $\bE_{0,2}^1(\overline{\calU};\Q)$.  Therefore it is enough to prove the following three facts:
\begin{enumerate}
\item The image of $\bE_{0,2}^1(\widehat{\calU};\Q) \rightarrow \bE_{0,2}^1(\overline{\calU}, \Q)$ is generated by Bestvina--Margalit tori,
\item the vector space $\bE_{1,1}^2(\overline{\calU};\Q)$ is the $0$--space, and 
\item the vector space $\bE_{2,0}^2(\overline{\calU};\Q)$ is the $0$--space.
\end{enumerate}
\bn We prove each of these in turn.

\p{The proof of Fact (1)} Since $\widehat{U}_e \cong e \times U_e$, the cokernel $\cok(H_2(U_e;\Q) \rightarrow H_2(\widehat{U}_e;\Q))$ is isomorphic to $H_1(U_e;\Q) \otimes H_1(e;\Q)$ by the K\"unneth formula.  The tensor product of a class represented by an edge $f \subseteq U_e$ with $e$ is the Bestvina--Margalit torus containing $e$ and $f$, so Fact (1) holds.

\p{The proof of Fact (2)} Let $e_0,\ldots, e_k \in \calE^w$.  By construction, we have 
\begin{displaymath}
\dim\left(H_1\left(\widehat{U}_{e_0} \cap \ldots \cap \widehat{U}_{e_k}/U_{e_0}\cap \ldots \cap U_{e_k};\Q\right)\right) \leq 1
\end{displaymath}
\bn so $\bE_{*,1}^1(\overline{\calU};\Q)$ is the cellular chain complex of a simplicial complex $Z$, where the $k$--cells of $Z$ are sets of $k+1$ edges $e_0,\ldots, e_k \in \calE_1^w$ with 
\begin{displaymath}
\dim\left(H_1\left(\widehat{U}_{e_0} \cap \ldots \cap \widehat{U}_{e_k}/U_{e_0}\cap \ldots \cap U_{e_k};\Q\right)\right) = 1.
\end{displaymath}
\bn Now, note that if $U_{e_0},\ldots, U_{e_k}$ form a $k$--cell in $Z$, then there is an edge $f \subseteq X_g$ with $[f] \not \in H_1(X_g^w;\Q)$ such that $f$ and $e_i$ are two edges of a 2--cell $\sigma_i \subseteq X_g$ with the third edge in $X_g^w$.  Hence if $e \in \calE_1^w$, the path component $P_e$ of $Z$ containing $\widehat{U}_e/U_e$ has 2--skeleton canonically identified with the 2--skeleton of $Y(e)$.  Therefore $H_1(P_e;\Q) = 0$ by Lemma~\ref{edgeequivconnlemma}.   Therefore $H_1(Z;\Q) = 0$, so $\bE_{1,1}^2(\overline{\calU};\Q) = 0$ as desired. 

\p{The proof of Fact (3)} For any choice of $e_0,\ldots, e_k \in \calE^w$, both $U_{e_0} \cap \ldots \cap U_{e_k}$ and $\widehat{U}_{e_0} \cap \ldots \cap \widehat{U}_{e_k}$ contain the unique vertex of $X_g$, and hence are connected.  Therefore $\bE_{2,0}^2(\overline{\calU};\Q) = 0$.
\end{proof}

\bn We now prove Lemma~\ref{vstablemma}.

\begin{proof}[Proof of Lemma~\ref{vstablemma}]
There is a noncanonical surjection
\begin{align*}
\cok (H_2(X_g^w;\Q) & \rightarrow H_2(X_g^{w,2};\Q)) \bigoplus \cok (H_2(X_g^{w,2};\Q) \rightarrow H_2(X_g;\Q)) \\
&\twoheadrightarrow \cok(H_2(X_g^v;\Q) \rightarrow H_2(X_g;\Q)).
\end{align*}
\bn  Hence the lemma follows by Lemmas~\ref{twotofullcoklemma} and~\ref{vertstabcoklemma}.
\end{proof}

\subsection{The proof of Proposition~\ref{homolcurvequotprop}}\label{homolcurvequotsubsection}

We now conclude Section~\ref{finquotsectionpt2}.  We first connect the results of Section~\ref{abelcyclesection} with Lemma~\ref{vstablemma}.

\begin{lemma}\label{bpbmconnectionlemma}
Let $g \geq \finalbound$ and $a \subseteq S_g$ a nonseparating simple closed curve.  Let $w \in [a]^{\perp}$ be a nonzero primitive element such that $w$ is not sent to zero under the adjoint map $[a]^{\perp} \rightarrow \ho_{\Z}([a]^{\perp},\Z)$.  Let $\varphi$ be the composition
\begin{displaymath}
H_2^{\ab, \bp}(\cI_g;\Q) \rightarrow H_2(X_g;\Q) \rightarrow H_2(X_g;\Q))/H_2(X_g^w;\Q)
\end{displaymath}
\bn where the first map is the first map is the map in the five term exact sequence for the equivariant homology spectral sequence given by the action of $\cI_g$ on $\cC_{[a]}(S_g)$.  Then the map $\varphi$ is surjective.
\end{lemma}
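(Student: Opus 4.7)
The plan is to combine Lemma~\ref{vstablemma} with an explicit construction of abelian cycles realizing Bestvina--Margalit torus classes. By Lemma~\ref{vstablemma}, the cokernel $H_2(X_g;\Q)/H_2(X_g^w;\Q)$ is a subquotient of $\BM_2(X_g;\Q)$, so to prove $\varphi$ is surjective it suffices to show that for every 2-cell $\sigma \subseteq X_g$, the fundamental class $[\BM_\sigma]$ lifts, modulo $H_2(X_g^w;\Q)$, under the edge map $H_2^{\ab,\bp}(\cI_g;\Q) \to H_2(X_g;\Q)$.

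For the lifting, given a 2-cell $\sigma \subseteq X_g$ with lift $\widehat{\sigma}$ and dual cell $\widehat{\sigma}'$ in $\cC_{[a]}(S_g)$, the union $\widehat{\sigma} \cup \widehat{\sigma}'$ determines a four-curve configuration on $S_g$ (the vertices $x,y,z,w$ appearing in Figure~\ref{dualcellpic}). From this configuration I would extract two disjoint bounding pairs and form the corresponding commuting bounding pair maps $T_{c,c'}, T_{d,d'}$, yielding an abelian cycle $[T_{c,c'},T_{d,d'}] \in H_2^{\ab,\bp}(\cI_g;\Q)$. The generated $\Z^2 \subseteq \cI_g$ acts on $\cC_{[a]}(S_g)$ in a manner compatible with the identification of the edges of $\widehat{\sigma}$ with those of $\widehat{\sigma}'$ after passing to $X_g$; this is precisely the identification producing the torus structure of $\BM_\sigma$. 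Consequently, the edge map should send $[T_{c,c'}, T_{d,d'}]$ to $[\BM_\sigma]$ up to a correction living in $H_2(X_g^w;\Q)$.

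The main obstacle is the chain-level verification that the edge map of the equivariant spectral sequence sends this abelian cycle to $[\BM_\sigma]$ modulo $H_2(X_g^w;\Q)$. This requires tracing the class through the Borel construction $E\cI_g \times_{\cI_g} \cC_{[a]}(S_g)$ (whose $H_2$ is identified with $H_2(\cI_g;\Q)$ via the $2$-acyclicity of $\cC_{[a]}(S_g)$) and its projection to $X_g$: the 2-torus class arising from $\Z^2$ projects, via the glued stabilized subcomplex of $\cC_{[a]}(S_g)$, to $[\BM_\sigma]$ in $X_g$. Correction terms involve 2-cells adjacent to edges whose decomposition $\calH(e)$ is compatible with $w$, so that they are contained in $X_g^w$; this compatibility can be arranged using the hypothesis that $w$ is a primitive element of $[a]^{\perp}$ with nondegenerate image under the adjoint map $[a]^{\perp} \to \ho_{\Z}([a]^{\perp},\Z)$, which provides enough flexibility in the choice of representative curves $c,c',d,d'$.
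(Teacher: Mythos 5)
Your reduction via Lemma~\ref{vstablemma} to lifting each class $[\BM_\sigma]$ matches the paper's strategy, but the actual lifting argument has a genuine gap: you describe extracting two disjoint bounding pairs from the four-curve configuration of $\widehat{\sigma} \cup \widehat{\sigma}'$ and then defer the crucial verification---that the edge map actually sends the resulting abelian cycle to $[\BM_\sigma]$---to an unproven ``main obstacle,'' at which point the proposal stops.

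The paper's proof resolves this cleanly with a specific construction that you do not reproduce. Rather than working with the dual-cell picture, pick a lift $\widehat{\sigma}$ of $\sigma$ with vertices $a, a_1, a_2$, choose a single curve $b$ meeting each of $a, a_1, a_2$ exactly once, and complete it to bounding pairs $b \cup b_1$, $b \cup b_2$ so that the commuting bounding pair maps $T_{b,b_1}$ and $T_{b,b_2}$ send $a$ to $a_1$ and $a_2$ respectively. Since $X_g$ has a single vertex, the image of $T_{b,b_i}$ under $\cI_g \to \pi_1(X_g)$ is precisely the edge loop $\gamma_i$ connecting $a$ to $a_i$, and so the abelian cycle $[T_{b,b_1}, T_{b,b_2}]$ maps under the edge map to the abelian cycle $[\gamma_1, \gamma_2]$, which is exactly $[\BM_\sigma]$. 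This is an equality on the nose, not an equality modulo $H_2(X_g^w;\Q)$; the ``correction terms'' you introduce do not arise, and invoking the hypothesis on $w$ at this point is unnecessary. The interpretation of the edge map as the induced map on group/space homology via $\cI_g \to \pi_1(X_g)$ is the mechanism that replaces your proposed chain-level tracing through the Borel construction, and omitting it leaves the heart of the argument unestablished.
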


\begin{proof}
By Lemma~\ref{vstablemma}, the quotient space $H_2(X_g;\Q)/H_2(X_g^w;\Q)$ is generated by the images of fundamental classes of Bestvina--Margalit tori.  Hence it suffices to show that any $[\BM_{\sigma}] \in H_2(X_g;\Q)$ is the image of some $[T_{c,c'}, T_{d,d'}] \in H_2^{\ab,\bp}(\cI_g;\Q)$ under the map $H_2^{\ab, \bp}(\cI_g;\Q) \rightarrow H_2(X_g;\Q)$.  Let $\BM_\sigma$ be a Bestvina--Margalit torus, and let $\widehat{\sigma}$ be a lift of $\sigma$ to $\cC_{[a]}(S_g)$ such that $a \in \widehat{\sigma}$.  Let $a_1, a_2$ be the other two vertices of $\widehat{\sigma}$.  Choose a curve $b \subseteq S_g$ such that the geometric intersections $\left|a \cap b\right| = \left|a_1 \cap b\right| = \left|a_2 \cap b\right|$ are all equal to 1.  Now, there are curves $b_1, b_2$ such that $b \cup b_1$ and $b \cup b_2$ are bounding pairs, and the corresponding bounding pair maps $T_{b, b_1}$ and $T_{b,b_2}$ both commute and take $a$ to $a_1$ and $a$ to $a_2$ respectively.  The construction of such bounding pairs can be seen in Figure~\ref{bpabelbmfigure}.
\begin{figure}[h]
\begin{tikzpicture}
\node[anchor = south west, inner sep = 0] at (0,0){\includegraphics{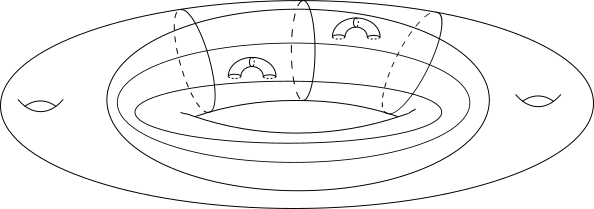}}[scale=0.9];
\node at (8.5,4){$b$};
\node at (5.4,3.8){$b_1$};
\node at (11.4,3.3){$b_2$};
\node at (6.4,5.0){$a_2$};
\node at (6.4,4.2){$a$};
\node at (6.4,3.1){$a_1$};
\end{tikzpicture}
\caption{The curves $a,a_1,a_2$ and bounding pairs $b \cup b_1$ and $b \cup b_2$}\label{bpabelbmfigure}
\end{figure}
\bn Let $\gamma_i$ denote the loop in $\pi_1(X_g)$ given by the image of the edge in $\cC_{[a]}(S_g)$ connecting $a$ to $a_i$.  The bounding pair map $T_{b, b_i}$ is sent to the loop $\gamma_i$ under the natural map $\cI_g \rightarrow \pi_1(X_g)$ for $i = 1,2$.  Hence the image of the abelian cycle $[T_{b,b_1}, T_{b,b_2}]$ under the map $H_2(\cI_g;\Q) \rightarrow H_2(X_g;\Q)$ is the abelian cycle $[\gamma_1, \gamma_2] \in H_2(X_g;\Q)$, which is $[\BM_{\sigma}]$.
\end{proof}

\bn We now prove Lemma~\ref{bmfingenlemma}, which verifies that the first hypothesis of Proposition \ref{gengrpprop} holds for $\Sp([a]^{\perp},\Z)$, $H_2(X_g;\Q)$, and $d = 1$.  

\begin{lemma}\label{bmfingenlemma}
Let $g \geq \finalbound$ and let $w \in [a]^{\perp}$ be a primitive element such that the image of $w$ under the adjoint map $[a]^{\perp} \rightarrow \ho_{\Z}([a]^{\perp},\Z)$ is nonzero.  Then the vector space $\cok(H_2(X_g^w;\Q) \rightarrow H_2(X_g;\Q))$ is finite dimensional.
\end{lemma}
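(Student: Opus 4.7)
The plan is to verify that the tuple $(G, \{T_v\}_{v \in \calV}, V)$ is a representation of transvective type (for a suitable $G$ and $\calV$ containing $w$) and to apply Proposition~\ref{gengrpprop}, where $V = \cok(H_2(X_g^w;\Q) \to H_2(X_g;\Q))$ is the vector space whose finite dimensionality we are trying to establish.

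First, I would choose a curve $b \subseteq S_g$ with $\langle [a], [b]\rangle = 1$ and $\left|a \cap b \right|=1$, and then choose a nonseparating multicurve $M \subseteq S_g \cut(a \cup b)$ with $\left|\pi_0(M) \right|=9$ in which one component represents $w$. This is possible because $w$ is primitive and nondegenerate in $[a]^\perp$ and $g \geq \finalbound$. Set $\calV = \{[c'] : c' \in \pi_0(M)\}$ and let $G$ be the image of the natural map $\Mod(S_g \cut (a \cup b \cup M)) \to \Sp(2g,\Z)$. By Lemma~\ref{spgenlemma}, $T_v \in G$ for every $v \in \calV$. For each $\calF' \subseteq \{T_v\}_{v \in \calV}$, corresponding to some $\calV' \subseteq \calV$, take $\Cent_G'(\calF')$ to be the image in $\Sp(2g,\Z)$ of the mapping class group of $S_g$ cut along $a \cup b$ together with the sub-multicurve of $M$ representing $\calV \setminus \calV'$.

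Hypotheses (1) and (4) of the definition of representation of transvective type will follow directly from Lemma~\ref{spgenlemma}, applied to each of the relevant cut-open surfaces; the genus bound $g \geq \finalbound$ guarantees that each such surface has genus at least one. For hypothesis (3), I would combine Lemma~\ref{bpbmconnectionlemma}, which provides a surjection $\varphi: H_2^{\ab,\bp}(\cI_g;\Q) \twoheadrightarrow V$, with Proposition~\ref{abcycleprop}, which decomposes $H_2^{\ab,\bp}(\cI_g;\Q) = \sum_{v \in \calV} H_2^{\ab,\bp}(\cI_g;\Q)^{T_v}$. Since $\varphi$ is $G$-equivariant and each $T_v$ lies in $G$, this yields $V = \sum_{v \in \calV} V^{T_v}$, and the cokernel in hypothesis (3) vanishes.

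The main obstacle is hypothesis (2), namely that $H_0(\Cent_G'(\calF'); V)$ is finite dimensional for every $\calF' \subsetneq \{T_v\}_{v \in \calV}$. Here I would first trace through the proof of Lemma~\ref{vstablemma} (via Lemmas~\ref{twotofullcoklemma} and~\ref{vertstabcoklemma}) to observe that the cokernel $V$ is not merely a subquotient of $\BM_2(X_g;\Q)$ but is actually a $G$-equivariant quotient of it: in both lemmas the cokernels are generated by images of fundamental classes of Bestvina--Margalit tori, which are elements of the subspace $\BM_2(X_g;\Q) \subseteq H_2(X_g;\Q)$. Consequently $H_0(\Cent_G'(\calF'); V)$ is a quotient of $H_0(\Cent_G'(\calF'); \BM_2(X_g;\Q))$. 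The latter is finite dimensional by Lemma~\ref{bmfinquotlemma} applied to a 9-component nonseparating sub-multicurve of $S_g \cut(a \cup b)$ containing the components of $M$ that represent $\calV \setminus \calV'$ together with enough additional components to reach the required count of nine (again using $g \geq \finalbound$). Proposition~\ref{gengrpprop} then produces the desired finite dimensionality of $V$.
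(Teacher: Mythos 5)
Your overall plan is the same as the paper's: verify that a suitable tuple $(G, \calF, V)$ is a representation of transvective type and apply Proposition~\ref{gengrpprop}, verifying the four hypotheses via Lemma~\ref{spgenlemma}, Lemma~\ref{vstablemma}, Lemma~\ref{bmfinquotlemma}, Lemma~\ref{bpbmconnectionlemma}, and Proposition~\ref{abcycleprop}. You have correctly identified the key ingredients, including the point (used implicitly in the paper) that since the cokernel $V$ is generated by images of Bestvina--Margalit classes, the coinvariants $H_0(H;V)$ are a quotient of $H_0(H;\BM_2(X_g;\Q))$.

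However, your definition of $\Cent_G'(\calF')$ is backwards, and this is not merely notational. You take $\Cent_G'(\calF')$ to be the image of cutting along $a\cup b$ and the sub-multicurve representing $\calV\setminus\calV'$. As $\calF'$ grows, this cuts along fewer curves and so produces a \emph{larger} group, whereas the definition requires $\Cent_G'(\calF'')\supseteq\Cent_G'(\calF')$ for $\calF''\subseteq\calF'$. Worse, for nonempty $\calF'$ your $\Cent_G'(\calF')$ strictly contains your $G$ (which cuts along all of $M$), so it cannot be a subgroup of $\Cent_G(\calF')\subseteq G$. That inclusion is used essentially in the proof of Proposition~\ref{gengrpprop}: in the inductive step the group $G_f=\Cent_G'(\calF\setminus\{f\})$ must preserve each subspace $V^g$ for $g\in\calF\setminus\{f\}$ in order to act on the quotient $V'$, which holds precisely because $G_f$ centralizes $\calF\setminus\{f\}$. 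A third problem arises from including $w$ in $\calV$: whenever $T_w\in\calF'$, your $\Cent_G'(\calF')$ does not cut along the curve representing $w$, hence need not fix $w$, hence need not preserve $X_g^w$, and then $V$ is not even a $\Cent_G'(\calF')$-module, so hypothesis (2) is not well-posed. The paper avoids all of this by keeping $w=[d]$ disjoint from $M$, taking $G=\im(\Mod(S_g\cut(a\cup d))\to\Sp(2g,\Z))$, and taking $\Cent_G'(\calF')$ to cut along $a\cup d\cup M'$ where $M'$ is the sub-multicurve representing $\calF'$ (not its complement); with that convention $G$ is the largest group, every $\Cent_G'$ fixes $w$, the monotonicity and inclusion hold, and your verifications of hypotheses (2) and (3) then do go through essentially as you describe.
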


\begin{proof}[Proof of~\ref{bmfingenlemma}]
Let $d$ be a representative of $w$ disjoint from $a$, and let $G = \im(\Mod(S_g \cut (a \cup d)) \rightarrow \Sp(2g,\Z))$.  We will show that the $G$--representation $V = \cok(H_2(X_g^w;\Q) \rightarrow H_2(X_g;\Q))$ satisfies the hypotheses of Proposition \ref{gengrpprop} for $\delta = 9$, namely:
\begin{enumerate}
\item for $M \subseteq S_g \cut (a \cup d)$ a nonseparating multicurve with $|M| \geq 9$, the map
\begin{displaymath}
\bigoplus_{c \in M} V^{T_c} \rightarrow V
\end{displaymath}
\bn is surjective, and 
\item for $M \subseteq S_g \cut (a \cup d)$ a nonseparating multicurve with $|M| \leq 8$, and $G_M = \Stab_{G}(M)$, the coinvariants module
\begin{displaymath}
H_0(G_M;V)
\end{displaymath}
\bn is finite dimensional.
\end{enumerate}
\bn Then $V$ is finite dimensional by Proposition \ref{gengrpprop}.

\p{Hypothesis (1)} Lemma \ref{bpbmconnectionlemma} tells us that the map
\begin{displaymath}
H_2^{\ab,\bp}(\cI_g;\Q) \rightarrow V
\end{displaymath}
\bn is surjective.  Now Proposition \ref{abcycleprop} tells us that, if $M \subseteq S_g \cut (a \cup d)$ is a nonseparating multicurve with $|M| \geq 9$, the map
\begin{displaymath}
\bigoplus_{c \in M} H_2^{\ab,\bp}(\cI_g;\Q)^T_{c}  \rightarrow H_2^{\ab,\bp}(\cI_g;\Q)
\end{displaymath}
\bn is surjective.  Then the image $H_2^{\ab,\bp}(\cI_g;\Q)^{T_c} \rightarrow V$ is contained in $V^{T_c}$, so the map
\begin{displaymath}
\bigoplus_{c \in M} V^{T_c} \rightarrow V
\end{displaymath}
\bn is surjective, as desired.

\p{Hypothesis (2)} Let $M \subseteq S_g \cut (a \cut d)$ be a multicurve with $|M| \leq 8$.  The coinvariants module $\BM_2(X_g;\Q)_{G_M}$ is finite dimensional by Lemma \ref{bmfinquotlemma} applied to $M' = M \cup d$.  Now by Lemma \ref{vstablemma}, there is a surjection $\BM_2(X_g;\Q) \rightarrow V$, so there is a surjection $\BM_2(X_g;\Q)_{G_M} \rightarrow V_{G_M}$, so in particular $V_{G_M}$ is surjective.
\end{proof}

\begin{proof}[Proof of Proposition~\ref{homolcurvequotprop}]
Let $G = \im(\Mod(S_g \cut a) \rightarrow \Sp(2g,\Z))$.  We will use Proposition \ref{gengrpprop} applied to the $G$--representation $H_2(X_g;\Q)$ with $d = 1$to show that $H_2(X_g;\Q)$ is finite dimensional.  In particular, we will show that:
\begin{enumerate}
\item for any nonseparating $c \subseteq S_g \cut a$, we have $\cok(H_2(X_g;\Q)^{T_c} \rightarrow H_2(X_g;\Q))$ finite dimensional, and
\item the coinvariants module $H_2(X_g;\Q)_G$ is finite dimensional.
\end{enumerate}

\p{Hypothesis (1)} This is exactly the content of Lemma \ref{bmfingenlemma}.

\p{Hypothesis (2)} Let $\calI(\vec{x}) \subseteq \Mod(S_g)$ denote the partial Torelli group defined by Putman \cite{Putmanpartial}, which is the subgroup of $\Mod(S_g)$ acting trivially on the homology class $\vec{x}$.  Now, $\cI(\vec{x})$ fits into a short exact sequence
\begin{displaymath}
1 \rightarrow \cI_g \rightarrow \calI(\vec{x}) \rightarrow G \rightarrow 1.
\end{displaymath}

\bn For $g \geq 3$, we know that $H_1(\cI_g;\Q)$ is finite dimensional by the work of Johnson \cite{JohnsonIII}.  Therefore by the Lyndon--Hochschild--Serre \cite{Brownbook} spectral sequence and the fact that $G$ is finitely presented \cite{Ragarithmetic}, we see that $H_2(\cI_g;\Q)_{G}$ is finite dimensional if and only if $H_2(\cI(\vec{x});\Q)$ is finite dimensional.  We now consider the equivariant homology spectral sequence for the action of $\cI(\vec{x})$ on $\cC_{\vec{x}}(S_g)$.  This action is cocompact on $\cC_{\vec{x}}(S_g)$.  Furthermore, the stabilizers of vertices and edges are finitely presented and finitely generated respectively, since these are just mapping class groups of surfaces which are finitely presented \cite{FarbMarg}.  Then $\cC_{\vec{x}}(S_g)$ is $(g-3)$--acyclic by a theorem of the author \cite{Minahanhomolconn}, so $H_2(\cI(\vec{x});\Q)$ is finite dimensional, and thus the second hypothesis is satisfied.
\end{proof}

\section{Finiteness of coinvariants in $\bE_{1,1}^2(\cI_g, C_{[a]}(S_g))$}\label{11finquotsection}  

  The main work of this section is to prove the following result.

\begin{lemma}\label{11finquotlemma}
Let $g \geq \finalbound$ and $a \subseteq S_g$ be a nonseparating simple closed curve.  Let $M \subseteq S_g \cut a$ be a nonseparating multicurve with $\left|\pi_0(M)\right| = 8$.  Let $G = \im(\Mod(S_g \cut M) \rightarrow \Sp(2g,\Z))$.  Let $\bE_{p,q}^r$ denote the equivariant homology spectral sequence for the action of $\cI_g$ on $\cC_{[a]}(S_g)$.  Then the vector space
\begin{displaymath}
H_0(G; \bE_{1,1}^2)
\end{displaymath}
\bn is finite dimensional.
\end{lemma}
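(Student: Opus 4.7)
My plan is to adapt the strategy of Section~\ref{bmfinquotsection} to control the subquotient $\bE_{1,1}^2 = \ker(d^1_{1,1})/\im(d^1_{2,1})$ of $\bE_{1,1}^1 = \bigoplus_{e \in X_g^{(1)}} H_1((\cI_g)_e;\Q)$ after passing to $G$-coinvariants. The proof will proceed in three phases: algebraic description of each summand $H_1((\cI_g)_e;\Q)$, classification of $G$-orbits of edges using refined invariants, and reduction of the relevant quotient to finitely many orbit types via shuffling moves on the 2-cells and 3-cells of $X_g$.

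First, I would describe each $H_1((\cI_g)_e;\Q)$. The edge stabilizer $(\cI_g)_e$ contains $\cI(S_g \cut \widehat{e}, S_g)$ as a finite index subgroup for a lift $\widehat{e}$, so by Lemma~\ref{firsthomolratimlemma} applied to each of the two components of $S_g \cut \widehat{e}$, the vector space $H_1((\cI_g)_e;\Q)$ is identified (up to the quotient by the Johnson image of $H_1(S_g;\Q)$) with $\bigwedge^3 \calH_0^e \otimes \Q \oplus \bigwedge^3 \calH_1^e \otimes \Q$, where $\calH(e) = \{\calH_0^e, \calH_1^e\}$ is the decomposition of $[a]^{\perp}$ attached to $e$ by Lemma~\ref{homolcurveconjstructurelemma}.

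Second, I would classify $G$-orbits of edges in $X_g$. By Lemma~\ref{homolcurveconjstructurelemma}, $\cI_g$-orbits are determined by $\calH(e)$; the $G$-refinement is controlled by how the classes $\calV = \{[c]: c \in \pi_0(M)\}$ project onto $\calH_0^e$ and $\calH_1^e$. I would introduce the invariants $\rk^{\calV}(v_{i,k}^e)$ and $\theta(\calV)_{i,j,k}(e)$ for $k = 0,1$, in direct analogy with Section~\ref{bmfinquotsection}, and prove an edge-analogue of Lemma~\ref{bmconjlemma} identifying $G$-orbits. A reduction step mirroring Lemmas~\ref{bmfincokprimred}--\ref{bmfinquotalgint}, using 2-cells and 3-cells of $X_g$ that contribute to $\im(d^1_{2,1})$, would then show that every class in $H_0(G; \bE_{1,1}^1/\im(d^1_{2,1}))$ is represented by classes supported on edges with all invariants bounded by $1$. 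The hypothesis $g \geq \finalbound$ provides the genus buffer required for the characteristic moves of Lemma~\ref{bmfinquotgenusincrease} to be carried out. Since there are only finitely many such bounded orbit types and each contributes a finite-dimensional piece to the coinvariants, $H_0(G; \bE_{1,1}^1/\im(d^1_{2,1}))$ is finite dimensional.

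The main obstacle is the kernel condition defining $\bE_{1,1}^2$, which forces one to work with specific combinations of edge-supported classes rather than one edge at a time. I would address this via the short exact sequence
\begin{displaymath}
0 \to \bE_{1,1}^2 \to \bE_{1,1}^1/\im(d^1_{2,1}) \to \im(d^1_{1,1}) \to 0
\end{displaymath}
of $G$-modules, which yields the long exact sequence fragment
\begin{displaymath}
H_1(G; \im(d^1_{1,1})) \to H_0(G; \bE_{1,1}^2) \to H_0(G; \bE_{1,1}^1/\im(d^1_{2,1})).
\end{displaymath}
The right term is finite dimensional by the preceding paragraph, so it suffices to bound the left term. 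Since $\im(d^1_{1,1}) \subseteq \bE_{0,1}^1 = H_1((\cI_g)_a;\Q)$, and $H_1((\cI_g)_a;\Q)$ is by Lemma~\ref{firsthomolratimlemma} a quotient of $\bigwedge^3 [a]^{\perp} \otimes \Q$ and hence finite dimensional as a $\Q$-vector space, $\im(d^1_{1,1})$ itself is finite dimensional. Then $H_1(G; \im(d^1_{1,1}))$ is finite dimensional because $G$ is finitely presented as a quotient of the finitely presented mapping class group $\Mod(S_g \cut M)$, which bounds $H_1(G;V)$ as a subquotient of $V$ raised to the number of generators of $G$. This completes the argument.
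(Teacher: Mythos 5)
Your proposal follows the same general strategy as the paper's proof — the same edge invariants $\rk^{\calV}_{i,k}$ and $\theta(\calV)_{i,j,k}$, the same $G$-orbit classification, the same shrinking moves — so the core is sound. Two points of genuine divergence:

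\medskip

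First, a genuine gap. You claim $H_1((\cI_g)_e;\Q)$ is identified (modulo the Johnson kernel) with $\bigwedge^3 \calH_0^e \oplus \bigwedge^3 \calH_1^e$ via Lemma~\ref{firsthomolratimlemma}. But that lemma gives an isomorphism $H_1(\cI(S,S_g);\Q) \cong \bigwedge^3 H_1(S;\Q)$ only when $g(S) \geq 3$; when a component of $S_g \cut \widehat{e}$ has genus $\leq 2$, the map to $\bigwedge^3 H_1(S;\Q)$ has a kernel, so the identification fails for such edges. Your later reduction (shrinking the invariants via 2-cells and 3-cells) would operate on the $A_x$ quotient, so you need to know that passing from $\bigoplus H_1((\cI_g)_e;\Q)$ to $\bigoplus A_x$ loses nothing at the level of $\bE_{1,1}^2$. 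This is exactly what the paper's Lemma~\ref{intermediatequotientlemma} establishes: for a small-genus component $S'$, any class $f \in \ker(H_1(\Stab_{\cI_g}(\widehat{x});\Q) \to H_1(\cI_g;\Q))$ is exhibited as the image under $d^1_{2,1}$ of a 2-cell $\sigma \supseteq \widehat{x}$ whose other two edges have both components of genus $\geq 3$. Without this step the orbit-counting argument does not apply to $\bE_{1,1}^1/\im(d^1_{2,1})$, only to its $\bigoplus A_x$ quotient.

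\medskip

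Second, an acceptable alternative. You dispose of the $\ker(d^1_{1,1})$ condition via the short exact sequence
\begin{displaymath}
0 \to \bE_{1,1}^2 \to \bE_{1,1}^1/\im(d^1_{2,1}) \to \im(d^1_{1,1}) \to 0
\end{displaymath}
and finite-dimensionality of $\bE_{0,1}^1 = H_1(\Stab_{\cI_g}(a);\Q)$, whereas the paper proves (Lemma~\ref{specseqquotlemma}, via Putman's injectivity theorem~\cite[Theorem B]{Putmanjohnson}) that $d^1_{1,1} = 0$ outright, making $\bE_{1,1}^2$ an honest quotient of $\bE_{1,1}^1$. Your route is correct — finite generation of $G$ (from Lemma~\ref{spgenlemma}) suffices for $H_1(G;\im(d^1_{1,1}))$ to be a subquotient of $V^n$ and hence finite dimensional; finite presentability, which you invoke, is more than you need. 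That said, since Lemma~\ref{intermediatequotientlemma} already uses Putman's injectivity theorem, your detour does not save the citation; it only postpones it.
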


\p{Notation} For the remainder of this section, we will fix $g \geq \finalbound$ and $a \subseteq S_g$ a nonseparating simple closed curve.  We will also fix $M \subseteq S_g \cut a$ as in the statement of Lemma~\ref{11finquotlemma}.  We will use $\bE_{p,q}^r$ to denote the equivariant homology spectral sequence given by the action of $\cI_g$ on $\cC_{[a]}(S_g)$.  Additionally, for the remainder of the section, we will let $\calV = \{[c]: c\in \pi_0(M)\}$.  

\p{The outline of the proof of Lemma~\ref{11finquotlemma}} The strategy used is similar to that used in the proof of Lemma~\ref{bmfinquotlemma}.  In particular, we will associate to each edge $e \subseteq X_g$ certain invariants that record how the elements of $\calV$ project onto the elements of $H(e)$.  We will show in Lemma~\ref{11finquotalginvlemma} that these numbers are preserved under the action of $G$.  We then prove Lemma~\ref{intermediatequotientlemma}, which describes an intermediate quotient between $\bE_{1,1}^1$ and $\bE_{1,1}^2$ We then describe in Lemmas~\ref{11rkshrinklemma} and~\ref{11finquotshrinkgenlemma} how these invariants change under addition of edges.  With these results in hand, we will prove Lemma~\ref{11finquotlemma}.  

\p{Algebraic invariants of edges}  Let $[b] \in H_1(S_g;\Z)$ be a nonzero primitive class such that $\langle [a],[b] \rangle = 1$ and $[b]$ intersects any element of $\calV$ trivially.  Let $x \subseteq X_g$ be an edge and let $\calH(x) = \{\calH_0^x, \calH_1^x\}$.  For each $1 \leq i \leq 8$, $0 \leq k \leq 1$, let $v_{i,k}^x = \proj_{[a]^{\perp} \cap [b]^{\perp}} v_{i}$.  We define the following two invariants of the edge $x$:
\begin{enumerate}
\item $\rk^{\calV}_{i,k}(x)$ is the maximal $n$ such that $v^x_{i,k} = nv$ for some $v \in \calH_k^x$ and
\item $\theta(\calV)_{i,j,k}(x) = \langle v^x_{i,k}, v^x_{j,k} \rangle$.
\end{enumerate}

\bn We have the following result about these algebraic invariants, which parallels Lemma~\ref{bmconjlemma}.

\begin{lemma}\label{11finquotalginvlemma}
Let $g \geq \finalbound$ and $a \subseteq S_g$ be a nonseparating simple closed curve.  Let $M \subseteq S_g \cut a$ be a nonseparating multicurve with $\left|\pi_0(M)\right| = 8$.  Let $b \subseteq S_g$ be a nonseparating simple closed curve such that $b$ has geometric intersection number with $a$, and such that $b$ intersects $M$ trivially.  Let $G = \im(\Mod(S_g \cut M) \rightarrow \Sp(2g,\Z))$.  Let $\calV = \{[c]: c \in \pi_0(M)\}$.  After possibly relabeling $\calH_0^y$ and $\calH_1^y$, suppose the following hold:
\begin{enumerate}
\item $g(\calH_0^x) = g(\calH_0^y)$,
\item $\rk^{\calV}_{i,k}(x) = \rk^{\calV}_{i,k}(y)$ for all $1 \leq i \leq 8$ and $0 \leq k \leq 1$ and
\item $\theta(\calV)_{i,j,k}(x) = \theta(\calV)_{i,j,k}(y)$ for all $1 \leq i,j \leq 8$ and $0 \leq k \leq 1$.
\end{enumerate}
\bn Then there is a $g \in G$ such that $gx = y$.
\end{lemma}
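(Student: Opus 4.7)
The plan is to closely mirror the proof of Lemma~\ref{bmconjlemma}, exploiting the fact that edges are 1--cells (so $\calH(x)$ has only two pieces instead of three). The strategy has three steps: first find an element of $\Sp(2g,\Z)$ that matches the splittings $\calH(x)$ and $\calH(y)$; then adjust it by block-diagonal symplectic elements so it also matches the images of $\calV$; and finally lift the resulting element to $\Mod(S_g \cut (a \cup M))$.

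First I would pick a class $[b] \in H_1(S_g;\Z)$ such that $\langle [a],[b]\rangle = 1$ and $[b] \in \calV^{\perp}$ (this is the $[b]$ used to define the invariants). Since $g(\calH_0^x) = g(\calH_0^y)$ (and hence $g(\calH_1^x) = g(\calH_1^y)$ by taking $\perp$ inside $[a]^{\perp}\cap[b]^{\perp}$), an application of $\Sp(2g,\Z)$--transitivity on splittings of the fixed lattice $[a]^{\perp} \cap [b]^{\perp}$ into two complementary pieces of prescribed genera produces $f_1 \in \Stab_{\Sp(2g,\Z)}([a],[b])$ with $f_1(\calH_k^x) = \calH_k^y$ for $k = 0,1$.

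Next, for each $k \in \{0,1\}$, consider the tuples $(f_1(v_{i,k}^x))_{i=1}^8$ and $(v_{i,k}^y)_{i=1}^8$ inside $\calH_k^y \cap [b]^{\perp}$. By hypothesis (2), corresponding entries have the same divisibility, and by hypothesis (3) the pairwise intersection numbers agree. Because $g \geq \finalbound$ is much larger than $\left|\calV\right|=8$, the lattice $\calH_k^y \cap [b]^{\perp}$ has ample free rank to support a symplectic automorphism $f_{2,k}$ carrying $f_1(v_{i,k}^x) \mapsto v_{i,k}^y$ for every $i$; this is the standard transitivity of the symplectic group on tuples with prescribed algebraic intersection data (proved by a straightforward inductive extension of a partial symplectic basis, using that $v_{i,k}^x$ can be expressed as $\rk^{\calV}_{i,k}(x)$ times a primitive element and that there is enough room to realize the pairwise pairings). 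Extending each $f_{2,k}$ by the identity on the complementary summand, on $[a]$, and on $[b]$, we obtain $f_{2,0}, f_{2,1} \in \Stab_{\Sp(2g,\Z)}([a],[b])$. Setting $f = f_{2,1} \circ f_{2,0} \circ f_1$, we still have $f\calH_k^x = \calH_k^y$, and now $f(v_{i,k}^x) = v_{i,k}^y$ for all $i,k$. Since $v_i = v_{i,0}^x + v_{i,1}^x$ (both projections lie in $[b]^{\perp}$ because $\calV \subseteq [b]^{\perp}$), we deduce $f(v_i) = v_i$ for all $1 \leq i \leq 8$.

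Finally, the lifting step is handled exactly as at the end of the proof of Lemma~\ref{bmconjlemma}: the symplectic representation $\Mod(S_g) \to \Sp(2g,\Z)$ is surjective, so pick $F' \in \Mod(S_g)$ with $F' \mapsto f$; then extend $\calV \cup \{[a],[b]\}$ to a symplectic basis and represent it by a disjointness-maximizing system of curves $\widehat{\calB}$ containing $\pi_0(M) \cup \{a,b\}$. Applying the change of coordinates principle to $\widehat{\calB}$ and $F'(\widehat{\calB})$ produces $F'' \in \cI_g$ with $F'' F'(M) = M$ and $F''F'(a) = a$; then $F''F' \in \Mod(S_g \cut (a \cup M))$ maps to $f$, so $f \in G$ and $f \cdot x = y$.

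The main obstacle I expect is the transitivity argument in Step 2: one must verify carefully that knowing $\rk^{\calV}_{i,k}$ and $\theta(\calV)_{i,j,k}$ really does determine the $\Sp(\calH_k^y \cap [b]^{\perp})$--orbit of the 8--tuple $(v_{i,k}^y)_i$. The cleanest way is to choose, in each lattice $\calH_k^y \cap [b]^{\perp}$, a partial symplectic basis adapted to the tuple (using the divisibility data to extract primitive generators, then the intersection data to determine the symplectic pairings); this is where the large genus bound $g \geq \finalbound$ is needed, since we must have enough free symplectic directions to realize any prescribed Gram matrix. The remaining steps are essentially mechanical reprises of arguments already in Section~\ref{homolcurvequotsection}.
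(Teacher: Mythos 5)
Your proposal is correct and takes essentially the same approach as the paper, which proves this lemma by simply citing Lemma~\ref{bmconjlemma} and noting that 2--cells should be replaced by edges; you have filled in explicitly the three steps (matching the splittings, adjusting block-diagonally to match the projections of $\calV$, and lifting to $\Mod(S_g \cut (a \cup M)) \subseteq \Mod(S_g \cut M)$) that the paper delegates to "a similar argument."
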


\begin{proof}
This follows by a similar argument to Lemma~\ref{bmconjlemma}, except with 2--cells replaced by edges. The $\rk^{\calV}$ here is the same as the $\rk^{\calV}$ in Lemma~\ref{bmconjlemma}, and similarly for $\theta(\calV)$ and $\theta(\calV)$.
\end{proof}

Before proving Lemmas~\ref{11rkshrinklemma} and~\ref{11finquotshrinkgenlemma}, we will prove Lemma~\ref{intermediatequotientlemma}, which describes an intermediate quotient between $\bE_{1,1}^1 \rightarrow \bE_{1,1}^2$.  We begin by showing that $\bE_{1,1}^2$ is a quotient of $\bE_{1,1}^1$.

\begin{lemma}\label{specseqquotlemma}
Let $g \geq \finalbound$ and $a \subseteq S_g$ be a nonseparating simple closed curve.  Let $\bE_{p,q}^r$ denote the equivariant homology spectral sequence for the action of $\cI_g$ on $\cC_{[a]}(S_g)$.  The inclusion map $\bE_{1,1}^2 \hookrightarrow \bE_{1,1}^1/d_{2,1}^1(\bE_{2,1}^1)$ is an isomorphism.
\end{lemma}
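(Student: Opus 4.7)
The statement is equivalent to the vanishing of the differential $d^1_{1,1}\colon \bE^1_{1,1}\to\bE^1_{0,1}$. Indeed, $\bE^2_{1,1}=\ker(d^1_{1,1})/\im(d^1_{2,1})$, and since $\im(d^1_{2,1})\subseteq\ker(d^1_{1,1})$ the natural map $\bE^2_{1,1}\to\bE^1_{1,1}/\im(d^1_{2,1})$ is injective; it is surjective precisely when $\ker(d^1_{1,1})=\bE^1_{1,1}$, i.e., when $d^1_{1,1}=0$.

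My plan is to unpack $d^1_{1,1}$ explicitly and show it vanishes one edge at a time. For each $e\in X_g^{(1)}$, fix a lift $\hat e\subseteq\cC_{[a]}(S_g)$ with endpoints $v_1,v_2$ and elements $h_1,h_2\in\cI_g$ with $h_i\cdot a=v_i$; such $h_i$ exist because every nonseparating simple closed curve homologous to $a$ lies in the $\cI_g$-orbit of $a$ (Lemma~\ref{homolcurveconjstructurelemma}), and we may take $h_1=1$. Then the restriction of $d^1_{1,1}$ to the summand $H_1((\cI_g)_e;\Q)$ is the map
\[
[\phi]\longmapsto [\phi]-[h_2^{-1}\phi h_2]\in H_1((\cI_g)_a;\Q),
\]
so it is enough to prove $[\phi]=[h_2^{-1}\phi h_2]$ in $H_1((\cI_g)_a;\Q)$ for every $\phi\in(\cI_g)_e$.

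The heart of the argument, and the main obstacle, is this last equality. I plan to exploit a Johnson-type description of $H_1((\cI_g)_a;\Q)$ in the spirit of Lemma~\ref{firsthomolratimlemma}: after rationalization, a suitable relative Johnson homomorphism on $(\cI_g)_a$ should be injective. Combined with the $\Sp(2g,\Z)$-equivariance of the Johnson homomorphism and the fact that $h_2\in\cI_g$ acts trivially on $H_1(S_g;\Z)$, one gets $\tau(h_2^{-1}\phi h_2)=\tau(\phi)$, which by injectivity forces the equality of classes in $H_1((\cI_g)_a;\Q)$.

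The hardest sub-step will be establishing the injectivity for $(\cI_g)_a$ itself, which strictly contains the subsurface Torelli group $\cI(S_g\cut a,S_g)$ to which Lemma~\ref{firsthomolratimlemma} applies directly: $(\cI_g)_a$ additionally contains the Dehn twist $T_a$ and possibly mapping classes swapping the two sides of $a$. I would handle these extra generators via a short exact sequence of the form
\[
1\longrightarrow \cI(S_g\cut a,S_g)\longrightarrow (\cI_g)_a\longrightarrow Q\longrightarrow 1
\]
together with the associated five-term exact sequence, computing $Q$ explicitly and checking by direct inspection that conjugation by $h_2\in\cI_g$ acts as the identity on the additional contributions to $H_1((\cI_g)_a;\Q)$.
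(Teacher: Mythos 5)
Your reduction to the vanishing of $d^1_{1,1}$ is exactly how the paper begins, but from there the paper takes a cleaner, non-computational route. Since $\cC_{[a]}(S_g)$ is $1$--acyclic for $g$ in this range, the $p+q=1$ part of the spectral sequence converges to $H_1(\cI_g;\Q)$, which yields an exact sequence
\[
\bE^1_{1,1}\xrightarrow{\,d^1_{1,1}\,}H_1(\Stab_{\cI_g}(a);\Q)\longrightarrow H_1(\cI_g;\Q),
\]
and Putman's theorem~\cite[Theorem B]{Putmanjohnson}, cited directly for $\Stab_{\cI_g}(a)$, says the second arrow is injective; hence $d^1_{1,1}=0$ with no need to unwind the differential edge by edge.

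Your per--edge unwinding would also work, but two of your steps are more complicated than necessary. First, you do not need the Johnson homomorphism or its $\Sp(2g,\Z)$--equivariance to see that $[\phi]$ and $[h_2^{-1}\phi h_2]$ agree in $H_1(\cI_g;\Q)$: since $h_2\in\cI_g$, these two elements are conjugate in $\cI_g$, and conjugation is an inner automorphism, which acts trivially on group homology. Combined with the injectivity of $H_1(\Stab_{\cI_g}(a);\Q)\to H_1(\cI_g;\Q)$ this immediately forces $[\phi]=[h_2^{-1}\phi h_2]$ in $H_1(\Stab_{\cI_g}(a);\Q)$. Second, the ``hardest sub--step'' you flag, namely extending an injectivity statement from the subsurface Torelli group $\cI(S_g\cut a,S_g)$ to $(\cI_g)_a$ via a five--term analysis of the extension, is unnecessary: the paper applies Putman's theorem directly to $\Stab_{\cI_g}(a)$. (Also note that conjugation by $h_2$ does not in general act on $H_1((\cI_g)_a;\Q)$ at all, since $h_2$ need not normalize $(\cI_g)_a$; the correct statement is the one above, that the two classes become equal after pushing forward to $\cI_g$.) Once you insert Putman's theorem in place of your proposed step four and drop the Johnson--equivariance detour, your argument closes and is a valid, if more laborious, alternative to the paper's.
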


\begin{proof}
The lemma is equivalent to the statement that the differential $d_{1,1}^1$ is the zero map.  Since $g \geq \finalbound \geq 4$, a theorem of the author~\cite[Theorem A]{Minahanhomolconn} says that we have $H_1(C_{[a]}(S_g);\Z) = 0$.  Hence 
\begin{displaymath}
\bE_{p,q}^r \Rightarrow H_1(\cI_g;\Q) \text{ for } p + q = 1
\end{displaymath}
\bn by the properties of the equivariant homology spectral sequence~\cite[Section VII]{Brownbook}.  Therefore there is an exact sequence
\begin{displaymath}
\bE_{1,1}^1 \xrightarrow{d_{1,1}^1}  \bE_{0,1}^1 \rightarrow H_1(\cI_g;\Q).
\end{displaymath}
\bn Since $\bE_{0,1}^1 \cong H_1(\Stab_{\cI_g}(a);\Q)$, the exact sequence is
\begin{displaymath}
\bE_{1,1}^1\xrightarrow{d_{1,1}^1} H_1(\Stab_{\cI_g}(a);\Q) \rightarrow H_1(\cI_g;\Q).
\end{displaymath}
\bn A theorem of Putman~\cite[Theorem B]{Putmanjohnson} says that the map $H_1(\Stab_{\cI_g}(a);\Q)  \rightarrow H_1(\cI_g;\Q)$ is an injection, so $d_{1,1}^1 = 0$. 
\end{proof}

\bn As in Section~\ref{abelcyclesection}, let $\tau_g:\cI_g \rightarrow \midwedge^3H_1(S_g;\Z)/H_1(S_g;\Z)$ denote the Johnson homomorphism~\cite{Johnsonhomomorphism}.  If $\sigma \subseteq X_g$ is a cell, let $A_\sigma$ denote the vector space
\begin{displaymath}
\im\left(\bigoplus_{\calH \in \calH(\sigma)}\midwedge^3 \calH \rightarrow \midwedge^3H_1(S_g;\Z)/H_1(S_g;\Z)\right) \otimes \Q.
\end{displaymath}
\bn We have the following result about $\bE_{1,1}^2$.

\begin{lemma}\label{intermediatequotientlemma}
Let $g \geq \finalbound$ and $a \subseteq S_g$ be a nonseparating simple closed curve.  Let $\bE_{p,q}^r$ denote the equivariant homology spectral sequence for the action of $\cI_g$ on $\cC_{[a]}(S_g)$.  The surjection $\varphi:\bE_{1,1}^1 \rightarrow \bE_{1,1}^2$ induced by quotienting out the subspace $d_{2,1}^1(\bE_{2,1}^1)$ factors through the natural projection
\begin{displaymath}
\rho:\bE_{1,1}^1 \rightarrow \bigoplus_{x \in X_g^{(1)}} A_x.
\end{displaymath}
\end{lemma}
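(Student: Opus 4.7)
The factoring statement is equivalent to $\ker(\rho) \subseteq \ker(\varphi) = d_{2,1}^1(\bE_{2,1}^1)$, so the plan is to establish this containment.

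First I unpack $\rho$ componentwise. For each edge $x$ underlying an oriented pair $\{a, c\}$, a variant of Lemma~\ref{putmancorrlemma} (applied to each component of $S_g \cut (a \cup c)$) shows that $(\cI_g)_x$ is generated by bounding pair maps supported in one of the two components of $S_g \cut (a \cup c)$ together with the bounding pair $T_{a,c}$. Lemma~\ref{bpjohnsoncomp} places the Johnson image of each such generator in $A_x$, so the composition $H_1((\cI_g)_x;\Q) \xrightarrow{(\iota_x)_*} H_1(\cI_g;\Q) \xrightarrow{\tau_g \otimes \Q} \midwedge^3 H_1(S_g;\Q)/H_1(S_g;\Q)$ factors through $A_x$, yielding $\rho_x$. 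Since $\tau_g \otimes \Q$ is an isomorphism (Johnson's theorem), $\ker(\rho_x) = \ker((\iota_x)_*)$.

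To analyze this kernel I apply Lemma~\ref{firsthomolratimlemma} to each clean subsurface $S_i^x \hookrightarrow S_g$ of $S_g \cut (a \cup c)$, which is valid since $g \geq \finalbound$ forces $g(S_i^x) \geq 3$. This gives injections $H_1(\cI(S_i^x, S_g);\Q) \hookrightarrow H_1(\cI_g;\Q)$ with image the projection of $\midwedge^3 \calH_i^x$ to $\midwedge^3 H_1(S_g;\Q)/H_1(S_g;\Q)$. By Lemma~\ref{bpjohnsoncomp}, $\tau_g(T_{a,c}) = [a] \wedge \omega_0$ where $\omega_0$ is the characteristic element of a symplectic basis of $\calH_0^x$; this element lies in $\midwedge^3 \calH_0^x$, and by Lemmas~\ref{wedgegenlemma} and~\ref{bpjohnsoncomp} there is a bounding pair sum $F_0 \in \cI(S_0^x, S_g)$ with $\tau_g(F_0) = \tau_g(T_{a,c})$. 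Thus $\ker((\iota_x)_*)$ is generated by differences of the form $[T_{a,c}] - [F_0]$ (and the analogous $[T_{a,c}] - [F_1]$ coming from the other side).

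The central step is realizing each such generator as an element of $d_{2,1}^1(\bE_{2,1}^1)$. Given an auxiliary curve $c' \subseteq S_0^x$ homologous to $a$ and disjoint from $a \cup c$, the triple $\{a, c, c'\}$ forms a 2-cell $\sigma$ with edges $x_0 = \{c, c'\}$, $x_1 = \{a, c'\}$, $x_2 = x$. In $(\cI_g)_\sigma$ we have the bounding pair identity $[T_{a,c}] = [T_{a,c'}] + [T_{c',c}]$, and applying $d_{2,1}^1$ to a bounding pair class supported on $\sigma$ distributes it across the three edges with alternating signs. The plan is to iterate this construction over a family of 2-cells, one for each summand of $F_0$ written via Lemma~\ref{wedgegenlemma} as a sum over a partial symplectic basis of $\calH_0^x$, so that the auxiliary contributions on $x_0$ and $x_1$ telescope pairwise to zero across consecutive terms, leaving the net class $[T_{a,c}] - [F_0]$ on $x$. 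The main obstacle is organizing these cancellations: I plan to fix a compatible symplectic basis for $\calH_0^x$ so that the 2-cells used in adjacent terms share an auxiliary curve, ensuring that the corresponding contributions to $H_1((\cI_g)_{x_0})$ and $H_1((\cI_g)_{x_1})$ match with opposite signs and cancel, yielding the desired presentation of $\ker(\rho_x)$ inside $d_{2,1}^1(\bE_{2,1}^1)$.
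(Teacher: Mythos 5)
There is a genuine gap in your proposal, and it comes at a structural point that the paper handles with care.

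You claim that $g \geq \finalbound$ forces $g(S_i^x) \geq 3$ for each component $S_i^x$ of $S_g \cut \widehat{x}$, and you use this to apply the second (injectivity) part of Lemma~\ref{firsthomolratimlemma} to both sides. This is false: an edge $\widehat{x}$ cuts $S_g$ into two components whose genera sum to $g-1$, and one of them can certainly have genus $0$, $1$, or $2$. In that case the pushforward $H_1(\cI(S',S_g);\Q) \rightarrow H_1(\cI_g;\Q)$ need not be injective, your description of $\ker(\rho_x)$ as being generated by differences $[T_{a,c}] - [F_0]$ is no longer justified, and the extra kernel coming from the low-genus side is not accounted for by your relations. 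The paper explicitly splits into two cases: after reducing via K\"unneth to $f$ supported on one component $S'$, if $g(S') \geq 3$ then injectivity gives $f = 0$ outright; and if $g(S') \leq 2$ it instead chooses a single $2$--cell $\sigma$ containing $\widehat{x}$ whose other two edges $y,z$ have the $S'$-side subsurfaces of genus $\geq 3$, applies $d^1_{2,1}(\overline{\sigma},[F])$ to get $(x,f) = (\overline{y},[F]) + (\overline{z},[F])$, and then uses injectivity on the $y$- and $z$-sides (where the genus \emph{is} large) to conclude both remaining terms vanish.

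Separately, your concluding telescoping construction is not carried out: you acknowledge the cancellations are "the main obstacle," and the plan to sum over a symplectic basis of $\calH_0^x$ with pairwise-matching auxiliary curves is both incomplete and more elaborate than is needed. Once you take the K\"unneth reduction, there is no need to realize a specific generator $[T_{a,c}] - [F_0]$ as a boundary; the argument works for an arbitrary kernel class $f$, and a single auxiliary $2$--cell suffices because the nonvanishing side automatically has high genus. You should redo the kernel analysis to distinguish the two cases by the genus of the side supporting $f$, and replace the telescoping family by one $2$--cell with the two vanishing contributions.
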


\begin{proof}
Let $\varphi:\bE_{1,1}^1 \rightarrow \bE_{1,1}^1/d_{2,1}^1(\bE_{2,1}^1) = \bE_{1,1}^2$ denote the quotient map, which exists by Lemma~\ref{specseqquotlemma}.  For each $x \subseteq X_g$, let $\widehat{x} \subseteq C_{[a]}(S_g)$ be a lift of $x$ such that $a$ is a vertex of $\widehat{x}$.  Let 
\begin{displaymath}
A_{\widehat{x}} = \im\left(H_1(\Stab_{\cI_g}(\widehat{x});\Q) \rightarrow H_1(\cI_g;\Q)\right).
\end{displaymath}
\bn  Let $S$ and $S'$ be the connected components of $S_g \cut \widehat{x}$.  Lemma~\ref{firsthomolratimlemma} implies that $\im(H_1(\cI(S,S_g);\Q) \rightarrow H_1(\cI_g;\Q))$ is sent to $\im(\midwedge^3 H_1(S;\Q) \rightarrow \im(\tau_g) \otimes \Q)$ under the Johnson homomorphism, and similarly for $S'$.  Therefore we have $A_{\widehat{x}} = A_x$ by the definition of $A_x$.  Hence it suffices to show that $\varphi$ factors through the quotient map
\begin{displaymath}
\rho: \bE_{1,1}^1 =  \bigoplus_{x \in X_g^{(1)}} H_1(\Stab_{\cI_g}(\widehat{x});\Q) \rightarrow \bigoplus_{x \in X_g^{(1)}} A_{x}.
\end{displaymath}
\bn Showing that $\varphi$ factors through $\rho$ is equivalent to showing that $\ker(\rho) \subseteq \ker(\varphi)$, so it suffices to show that
\begin{displaymath}
\bigoplus_{x \in X_g^{(1)}} \ker\left(H_1(\Stab_{\cI_g}(\widehat{x});\Q) \rightarrow H_1(\cI_g;\Q)\right) \subseteq \ker(\varphi).
\end{displaymath}

 The idea of the proof is to rewrite any class in $\ker(\rho)$ as a linear combination of classes, where the edges in these classes have sufficiently large genus.  If $x \subseteq X_g$ is an edge and $f \in H_1(\Stab_{\cI_g}(\widehat{x});\Q)$ is a class, we use the notation $\left(x,f\right) \in \bigoplus_{x \in X_g^{(1)}} H_1(\Stab_{\cI_g}(\widehat{x});\Q)$ to denote the class in $\bE_{1,1}^2$ equal to $f$ in the index $x$ and equal to zero in every other index.  Now, we have
\begin{displaymath}
\ker(\rho) = \bigoplus_{x \in X_g^{(1)}} \ker\left(H_1(\Stab_{\cI_g}(\widehat{x});\Q) \rightarrow H_1(\cI_g;\Q)\right),
\end{displaymath}
\bn and the latter space is spanned by elements of the form $(x,f)$.  Hence it suffices to show that $(x,f) \in \ker(\varphi)$ for any $x \in X_g^{(1)}$ and $f \in \ker(H_1(\Stab_{\cI_g}(\widehat{x});\Q) \rightarrow H_1(\cI_g;\Q))$.  Let $S', S''$ be the connected components of $S_g \cut \widehat{x}$. We have a surjection $H_1(\cI(S', S_g);\Q) \oplus H_1(\cI(S'', S_g);\Q) \rightarrow H_1(\Stab_{\cI_g}(\widehat{x});\Q $ by applying the K{\"u}nneth formula to the product $\cI(S',S_g) \times \cI(S'',S_g)$, so we may assume that $f$ is supported on one connected component $S_g \cut \widehat{x}$.  Without loss of generality, we will assume that $f \in H_1(\cI(S', S_g);\Q)$.  If $g(S') \geq 3$, then Lemma~\ref{firsthomolratimlemma} says that the map $H_1(\cI(S', S_g);\Q) \rightarrow H_1(\cI_g;\Q)$ is injective.  Since $f$ is in the kernel of the map $H_1(\cI(S', S_g);\Q) \rightarrow H_1(\cI_g;\Q)$, this implies that $f = 0$, so $(x,f) = 0 \in \bE_{1,1}^1$ and thus $(x,f) \in \ker(\varphi)$.  Otherwise, $g(S') \leq 2$, which implies in particular that $g(S'') \geq 4$.  Let $\sigma \subseteq C_{[a]}(S_g)$ be a 2--cell such that: 
\begin{itemize}
\item $\widehat{x} \subseteq \sigma$
\item for the other two edges $y,z$ of $\sigma$, the connected components $S_y', S_z'$ of $S_g \cut y$ and $S_g \cut z$ respectively that contain $S'$ have $g(S_y'), g(S_z') \geq 3$.  
\end{itemize}
\bn Now, let $\overline{\sigma}$ denote the image of $\sigma$ in $X_g$ and similarly for $\overline{y}$ and $\overline{z}$.  We have chosen $\sigma$ so that $f$ has a representative $F \in \Stab_{\cI_g}(\sigma)$. Therefore, after possibly reorienting $\sigma$, $x$ and $y$, we have a relation in $\bE_{1,1}^2$ given by $d_{2,1}^1(\overline{\sigma}, [F])$, which tells us in particular that
\begin{displaymath}
0 = (\overline{y},[F]) + (\overline{z},[F]) - (x,f)
\end{displaymath}
\bn in $\bE_{1,1}^2$.  We have $F$ supported on $S_y'$ and $S_z'$, both of which have genus at least 3.  Then by Lemma~\ref{firsthomolratimlemma}, the map $H_1(\cI(S_y', S_g);\Q) \rightarrow H_1(\cI_g;\Q)$ is injective, and similarly for $S_z'$.  But now, we have assumed that $f \in \ker(H_1(\Stab_{\cI_g}(\widehat{x});\Q) \rightarrow H_1(\cI_g;\Q))$.  Therefore $[F] \in H_1(\cI(S_y', S_g);\Q)$ and $[F]\in H_1(\cI(S_z', S_g);\Q)$ are both zero, so $(\overline{y},[F]) = (\overline{z},[F]) = 0$.  Since $d_{2,1}^1(\overline{\sigma}, [F]) \in \ker(\varphi)$ by the definition of $\bE_{1,1}^2$, we have $(x,f) \in \ker(\varphi)$, as desired.
\end{proof}

\p{Notation} For the remainder of the paper, if $x \subseteq X_g$ is an edge and $f \in A_x$ is a class, we will denote the corresponding element in $\bE_{1,1}^2$ under the image of the quotient map $\bigoplus_{x \in X_g^{(1)}}A_x \rightarrow \bE_{1,1}^2$ by $(x,f)$. 

\medskip 

\subsection{The proof of Lemma~\ref{11rkonelemma}}

We now continue with the definitions.  Let $x \subseteq X_g$ be an edge.  Let $y \subseteq X_g$ be another edge such that $x,y$ are two edges of a 2--cell $\sigma$, and let $z$ denote the third edge of $\sigma$.  We say that such a $y$ is \textit{rank $\calV$--shrinking relative to $x$} if, after possibly relabeling $\calH(x)$, $\calH(y)$ and $\calH(z)$, the following hold:
\begin{enumerate}
    \item $\rk^{\calV}_{i,k}(x) \geq \rk^{\calV}_{i,k}(y)$ and $\rk^{\calV}_{i,k}(z)$ for all $1 \leq i \leq 8$, $0 \leq k \leq 1$, and
    \item for at least one choice of $1 \leq i \leq 8$ and $ 0 \leq k \leq 1$ with 
    \begin{displaymath}
\max\{\rk^{\calV}_{i,0}(x), \rk^{\calV}_{i,1}(x) \geq\max\{\rk^{\calV}_{j,k}(x),: 1 \leq j \leq 8, 0 \leq k \leq 1\}
\end{displaymath}
\bn we have $\max\{\rk^{\calV}_{i,0}(x), \rk^{\calV}_{i,1}(x)\} > \rk^{\calV}_{i,k}(y), \rk^{\calV}_{i,k}(z).$
\end{enumerate}
\bn If $\calH_0^x \in \calH(x)$, we say that $y$ is \textit{rank $\calV$--shrinking relative to $\calH_0^x$} if, in addition, there is $\calH_i^y \in \calH(y)$ with $\calH_i^y \subseteq \calH_0^x$.  Denote the set of rank $\calV$--shrinking edges relative to $\calH_0^x$ by $\rkshrink_{\calV}(\calH_0^x)$.  We now prove Lemma~\ref{11rkshrinklemma}, which will allow us to rewrite classes in $\bE_{1,1}^2$ as linear combinations of classes with lower $\rk^{\calV}_{i,k}$.  
\begin{lemma}\label{11rkshrinklemma}
Let $g \geq \finalbound$ and $a \subseteq S_g$ be a nonseparating simple closed curve.  Let $M \subseteq S_g \cut a$ be a nonseparating multicurve with $\left|\pi_0(M)\right| = 8$.  Let $b \subseteq S_g$ be a nonseparating simple closed curve such that $b$ has geometric intersection number with $a$, and such that $b$ intersects $M$ trivially.  Let $\calV = \{[c]: c \in \pi_0(M)\}$.  Let $x \subseteq X_g$ be an edge.  Suppose that $g(\calH_0^x) \geq 12$, and that $\rk^{\calV}_{i,1}(x) > 1$ for at least one $1 \leq i \leq 8$. 
 Then the natural map
\begin{displaymath}
    \varphi: \bigoplus_{y \in \rkshrink_{\calV}(\calH_0^x)} A_y \cap A_x \rightarrow A_x
\end{displaymath}
\bn is surjective.
\end{lemma}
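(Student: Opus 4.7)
The plan is to reduce surjectivity of $\varphi$ to producing, for each generating wedge of the $\midwedge^3 \calH_0^x$ part of $A_x$, a splitting $\calH_0^x = B + C$ (with $B$ a primitive unimodular sublattice, $B \cap C = \Z[a]$, $B + C = \calH_0^x$) such that the corresponding edge $y$ with $\calH(y) = \{B, \calH_1^x + C\}$ lies in $\rkshrink_{\calV}(\calH_0^x)$. Write $v_i = w_i + u_i$ with $w_i = v_{i,0}^x$ and $u_i = v_{i,1}^x$. For any such $y$ we have $\calH_1^x \subseteq \calH_1^y$, whence $\midwedge^3 \calH_1^x \subseteq A_y \cap A_x$ for free; so only the image of $\midwedge^3 \calH_0^x$ in $A_x$ needs to be covered. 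By Lemma~\ref{wedgegenlemma} applied to $\calH_0^x$, the latter is spanned by pure wedges $\alpha_1 \wedge \alpha_2 \wedge \alpha_3$ of primitive elements of $\calH_0^x$.

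Fix such a wedge. Choose $i^{*}$ with $M := \max_{j,k} \rk^{\calV}_{j,k}(x) = \max_k \rk^{\calV}_{i^{*}, k}(x)$; by the hypothesis $\rk^{\calV}_{i, 1}(x) > 1$ for some $i$ we have $M \geq 2$. The hypothesis $g(\calH_0^x) \geq 12$ together with $|\calV| = 8$ gives ample room to choose a primitive unimodular sublattice $B \subseteq \calH_0^x$ with $[a], \alpha_1, \alpha_2, \alpha_3 \in B$, and to arrange the splitting $w_{i^*} = w_{i^*}' + w_{i^*}''$ (with $w_{i^*}' \in B$, $w_{i^*}'' \in C := \calH_0^x \cap B^\perp$) so that both $w_{i^*}' + u_{i^*}$ and $w_{i^*}'' + u_{i^*}$ are primitive in $\calH_1^x + B$ and $\calH_1^x + C$ respectively, and so that for each $j \neq i^*$ the primitive direction $\tilde{w}_j$ of $w_j$ lies in $B$ (whence $w_j \in B$). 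The third edge $z$ of the 2-cell containing $x$ and $y$ then has $\calH(z) = \{C, \calH_1^x + B\}$.

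One now verifies the rank conditions. For $j \neq i^*$, the values $v_{j,0}^y = w_j$, $v_{j,1}^y = u_j$, $v_{j,0}^z = 0$, $v_{j,1}^z = v_j$ (primitive) yield ranks bounded by $\rk^{\calV}_{j,k}(x)$; no projection at index $j$ exceeds $M$. For $i^*$, the arranged primitivity gives $\rk^{\calV}_{i^*,1}(y) = \rk^{\calV}_{i^*,1}(z) = 1 < M$, while $\rk^{\calV}_{i^*,0}(y), \rk^{\calV}_{i^*,0}(z) \leq \rk^{\calV}_{i^*,0}(x) \leq M$. Taking the distinguished pair $(i^*, 1)$ in the strict-inequality clause then confirms $y \in \rkshrink_{\calV}(\calH_0^x)$, and since $\alpha_1 \wedge \alpha_2 \wedge \alpha_3 \in \midwedge^3 B \subseteq A_y \cap A_x$, the surjectivity of $\varphi$ follows.

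The main obstacle is arranging the simultaneous primitivity of $w_{i^*}' + u_{i^*}$ in $\calH_1^x + B$ and $w_{i^*}'' + u_{i^*}$ in $\calH_1^x + C$. One writes $w_{i^*} = \rk^{\calV}_{i^*,0}(x)\cdot \tilde{w}_{i^*}$ with $\tilde{w}_{i^*}$ primitive in $\calH_0^x$ and $u_{i^*} = \rk^{\calV}_{i^*,1}(x)\cdot \tilde{u}_{i^*}$ with $\tilde{u}_{i^*}$ primitive in $\calH_1^x$; the splitting of $\tilde{w}_{i^*}$ across $B$ and $C$ must be chosen so that both halves contribute a component whose coefficient is coprime to the multiplicity of $\tilde{u}_{i^*}$. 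The hypothesis $\rk^{\calV}_{i,1}(x) > 1$ is precisely what produces nontrivial multiplicity to combine with, and the bound $g(\calH_0^x) \geq 12$ gives enough independent directions in $\calH_0^x$ to select $B$ freely enough to achieve this coprimality while still containing $[a], \alpha_1, \alpha_2, \alpha_3$ and the $\tilde{w}_j$ for $j \neq i^*$.
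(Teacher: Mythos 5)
Your proof follows essentially the same strategy as the paper's. Both reduce to showing a single pure wedge $r_1\wedge r_2\wedge r_3$ in $\midwedge^3\calH_0^x$ lies in $A_y\cap A_x$ for a suitable $y\in\rkshrink_{\calV}(\calH_0^x)$, both use the $g(\calH_0^x)\geq 12$ room to split $\calH_0^x$ across a 2--cell, and both exploit the coprimality $\gcd(\rk^{\calV}_{i^*,0}(x),\rk^{\calV}_{i^*,1}(x))=1$ (from primitivity of $v_{i^*}$) to force the top rank down to $1$.

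The concrete constructions are the same up to swapping which half of the split is labeled $\calH_0^y$. The paper selects a single fresh primitive $u\in\calH_0^x$ orthogonal to the $w_{i,0}$ and the $r_s$, sets $h=w_{1,0}-u$, puts $u$ alone into $\calH_0^y$ and the rest (including $h$, the other $w_{i,0}$, and the $r_s$) into $\calH_1^y\cap\calH_0^x$; your $C$ plays the role of the paper's $\calH_0^y$, your $B$ plays the role of its complement, and your $w_{i^*}''$ is the paper's $\rk^{\calV}_{i^*,0}(x)\cdot u$. Because the definition of a rank $\calV$--shrinking edge treats $y$ and $z$ symmetrically (both must satisfy the same rank bounds), your $y$ is the paper's $z$ and vice versa, and the point that the pure wedge lands in $A_y$ holds in both placements.

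Two places where your write-up is lighter than the paper's. First, the paper explicitly constructs $u$ by a direct perpendicularity-and-independence argument, then observes $h=w_{1,0}-u$ is automatically primitive; your ``arrange the splitting so both halves have coefficient coprime to $\rk^{\calV}_{i^*,1}(x)$'' is correct but should be unpacked: if $w_{i^*}=\rk^{\calV}_{i^*,0}(x)\tilde{w}$ with $\tilde{w}$ primitive, split $\tilde{w}=\tilde{w}'+\tilde{w}''$ with $\tilde{w}''$ a fresh primitive orthogonal to the relevant classes, so both halves have rank exactly $\rk^{\calV}_{i^*,0}(x)$, which is coprime to $\rk^{\calV}_{i^*,1}(x)$ by primitivity of $v_{i^*}$. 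Second, your sentence ``ranks bounded by $\rk^{\calV}_{j,k}(x)$'' for $j\neq i^*$ deserves a caveat at $v_{j,1}^z=v_j$: when $\rk^{\calV}_{j,1}(x)=0$ the inequality is only restored after the allowed relabeling; the paper's version of $z$ (your $y$) keeps $\rk^{\calV}_{j,k}(z)=\rk^{\calV}_{j,k}(x)$ on the nose, which is tidier, and the same edge case also lurks in the paper's relations for its $y$. These are presentation issues, not a different approach.
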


\begin{proof}
Let $r_1 \wedge r_2 \wedge r_3 \in A_x$.  We will show that $r_1 \wedge r_2 \wedge r_3 \in A_y$ for some $y \in \rkshrink_{\calV}(\calH_0^x).$  If $r_1, r_2, r_3 \in \calH_1^x$ then $r_1 \wedge r_2 \wedge r_3 \in \midwedge^3 \calH_1^x \subseteq \midwedge^3 \calH_1^y \subseteq A_y$ for any $y \in \rkshrink_{\calV}(\calH_0^x)$,  so it remains to prove the result in the case that $r_1 \wedge r_2 \wedge r_3 \in \calH_0^x.$  Suppose without loss of generality that $\rk^{\calV}_{1,1}(x) > 1$ and
\begin{displaymath}
\rk^{\calV}_{1,1}(x) =\max\{\max\{\rk^{\calV}_{i,0}(x), \rk^{\calV}_{i,1}(x)\} \geq\max\{\max\{\rk^{\calV}_{j,0}(x), \rk^{\calV}_{j,1}(x)\}: 1 \leq j \leq 8\}.
\end{displaymath}
\bn  For each $1 \leq i \leq 8$, let $w_{i,k}$ denote the nonzero primitive homology class in $H_1(S_g;\Z)$ such that
\begin{displaymath}
    \proj_{\calH_k^x \cap [b]^{\perp}}(v_i) = \rk^{\calV}_{i,k}(x) w_{i,k}.
\end{displaymath}
\bn We claim that there is a nonzero primitive class $u \in \calH_0^x$ such that:
\begin{itemize}
    \item $\langle u, w_{i,0} \rangle = 0 $ for all $1 \leq i \leq 8$,
    \item $\langle u, r_s \rangle = 0$ for all $1 \leq s \leq 3$, and
    \item $u$ is not in the span of $\{w_{1,0}, \ldots, w_{8,1}, r_1,\ldots, r_3\}$.
\end{itemize}
\bn Such a $u$ exists since we have assumed that $g(\calH_0^x) \geq 12$ and there are only eleven elements in the set $\{w_{1,0},\ldots, w_{8,0}, r_1,r_2,r_3\}$, so the subspace $w_{1,0}^{\perp} \cap \ldots w_{8,0}^{\perp} \cap r_1^{\perp} \cap r_2^{\perp} \cap r_3^{\perp}$ has genus at least one.  Let $h = w_{1,0} - u.$  Let $y \subseteq X_g$ be an edge such that such that:
\begin{multicols}{2}
\begin{itemize}
    \item $\calH_0^y \subseteq \calH_0^x$
    \item $u \in \calH_0^y$,
    \item $h \in \calH_1^y \cap \calH_0^x$,
    \item $w_{i,0} \in \calH_0^x \cap \calH_1^y$ for all $2 \leq i \leq 8$, and
    \item $r_s \in \calH_1^y \cap \calH_0^x$ for all $1 \leq s \leq 3.$
\end{itemize}
\end{multicols}
\bn Let $\sigma$ be a 2--cell containing $x$ and $y$, and let $z$ be the third edge of $\sigma$.  Index $\calH(z)$ so that $\calH_0^z \subseteq \calH_0^x$.   By construction, the vectors $\proj_{[b]^{\perp} \cap \calH_k^y}(v_i)$ and $\proj_{[b]^{\perp} \cap \calH_k^z}(v_i)$ are given as follows:
\begin{multicols}{2}
\begin{enumerate}
    \item  $\proj_{[b]^{\perp} \cap \calH_0^y}(v_1) = \rk^{\calV}_{1,0}(x)u$,
    \item $\proj_{[b]^{\perp} \cap \calH_0^y}(v_i) = 0$ for $2 \leq i \leq 8$,
    \item $\proj_{[b]^{\perp} \cap \calH_1^y}(v_1) = \rk^{\calV}_{1,1}(x)w_{1,1} + \rk^{\calV}_{1,0}h$,
    \item $\proj_{[b]^{\perp} \cap \calH_1^y}(v_i) = \rk^{\calV}_{i,1}(x)w_{i,1} + \rk^{\calV}_{i,0}(x)w_{i,0}$ for $2 \leq i \leq 8$,
    \item  $\proj_{[b]^{\perp} \cap \calH_0^z}(v_1) = \rk^{\calV}_{1,0}(x)h,$
    \item $\proj_{[b]^{\perp} \cap \calH_0^z}(v_i) = \rk^{\calV}_{i,0}(x)w_{i,0}$ for $2 \leq i \leq 8$,
    \item $\proj_{[b]^{\perp} \cap \calH_1^z}(v_1) = \rk^{\calV}_{1,1}(x)w_{1,1} + \rk^{\calV}_{1,0}u$, and
    \item $\proj_{[b]^{\perp} \cap \calH_1^z}(v_i) = \rk^{\calV}_{i,1}w_{i,1}$ for $2 \leq i \leq 8$.
\end{enumerate}
\end{multicols}
\bn By assumption, the homology classes $w_{i,k}$, $h$ and $u$ are all primitive.  Hence the numbers $\rk^{\calV}_{i,k}(y)$ and $\rk^{\calV}_{i,k}(z)$ are given as follows, where each relation in the following list follows from the corresponding relation in the previous list:
\begin{multicols}{2}
\begin{enumerate}[label=(\alph{enumi})]
    \item $\rk^{\calV}_{1,0}(y) = \rk^{\calV}_{1,0}(x)$,
    \item $\rk^{\calV}_{i,0}(y) = 0$ for $2 \leq i \leq 8$,
    \item $\rk^{\calV}_{1,1}(y) = \gcd(\rk^{\calV}_{1,1}(x), \rk^{\calV}_{1,0}(x))$,
    \item $\rk^{\calV}_{i,1}(y) = \gcd(\rk^{\calV}_{i,1}(x), \rk^{\calV}_{i,0}(x))$ for $2 \leq i \leq 8$,
    \item $\rk^{\calV}_{1,0}(z) = \rk^{\calV}_{1,0}(x)$,
    \item $\rk^{\calV}_{i,0}(z) = \rk^{\calV}_{i,0}(x)$ for $2 \leq i \leq 8$,
    \item $\rk^{\calV}_{1,1}(z) = \gcd(\rk^{\calV}_{1,1}(x), \rk^{\calV}_{1,0}(x))$,
    \item $\rk^{\calV}_{i,1}(z) = \rk^{\calV}_{i,1}(x)$ for $2 \leq i \leq 8.$
\end{enumerate}
\end{multicols}
\bn We have assumed that $\rk^{\calV}_{1,1}(x) > 1$.  Since $v_1$ is primitive, we have $\gcd(\rk^{\calV}_{1,0}(x), \rk^{\calV}_{1,1}(x)) = 1$, so relation (c) implies that $\rk^{\calV}_{1,1}(y) < \rk^{\calV}_{1,1}(x)$.  Similarly, relation (g) implies that $\rk^{\calV}_{1,1}(z) < \rk^{\calV}_{1,1}(x).$  Then relations (a), (b) and (d) imply that $\rk^{\calV}_{i,k}(y) \leq \rk^{\calV}_{i,k}(x)$ for all $1 \leq i \leq 8$ and $0 \leq k \leq 1$, and relations (e), (f) and (h) imply that $\rk^{\calV}_{i,k}(z) \leq \rk^{\calV}_{i,k}(y)$ for all $1 \leq i \leq 8$ and $0 \leq k \leq 1$, and thus $y \in \rkshrink_{\calV}(\calH_0^x)$.  Then we have chosen $y$ so that $r_1, r_2, r_3 \in \calH_0^y$, so $r_1 \wedge r_2 \wedge r_3 \in A_y$, and thus the proof is complete.
\end{proof}

We use Lemma~\ref{11rkshrinklemma} to prove the following, which is the first step of the proof of Lemma~\ref{11finquotlemma}.

\begin{lemma}\label{11rkonelemma}
Let $g \geq \finalbound$ and $a \subseteq S_g$ be a nonseparating simple closed curve.  Let $M \subseteq S_g \cut a$ be a nonseparating multicurve with $\left|\pi_0(M)\right| = 8$.  Let $b \subseteq S_g$ be a nonseparating simple closed curve such that $b$ has geometric intersection number with $a$, and such that $b$ intersects $M$ trivially.  Let $\calV = \{[c]: c \in \pi_0(M)\}$.  Let $\bE_{p,q}^r$ denote the equivariant homology spectral sequence for the action of $\cI_g$ on $\cC_{[a]}(S_g)$.  Let $(x,f) \in \bE_{1,1}^2$ be a class.  There is a relation in $\bE_{1,1}^2$ given by
\begin{displaymath}
(x,f) = \sum_{\ell = 1}^m \lambda_{\ell}(y_{\ell}, f_{\ell})
\end{displaymath}
\bn such that $\lambda_{\ell} \in \Q$, $\rk^{\calV}_{i,k}(y_{\ell}) \leq 1$ for all $1 \leq i \leq 8, 0 \leq k \leq 1$, and $1 \leq \ell \leq m$. 
\end{lemma}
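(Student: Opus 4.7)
The plan is to prove the lemma by double induction on the pair $(\maxrk(x), \nummaxrk(x))$ in lexicographic order, where $\maxrk(x) = \max\{\rk^{\calV}_{i,k}(x) : 1 \leq i \leq 8,\ 0 \leq k \leq 1\}$ and $\nummaxrk(x)$ counts the number of index pairs $(i,k)$ attaining this maximum. The base case $\maxrk(x) = 1$ is immediate: take $m = 1$, $y_1 = x$, $f_1 = f$, $\lambda_1 = 1$.

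For the inductive step, I would first arrange the labeling so that the hypotheses of Lemma~\ref{11rkshrinklemma} hold. Since $x$ corresponds to a bounding pair on $S_g$, cutting along $x$ yields two subsurfaces whose genera sum to $g - 1 \geq \finalbound - 1 \geq 50$, so at least one of $\calH_0^x, \calH_1^x$ has genus at least $25$. A short case analysis, using the assumption $\maxrk(x) \geq 2$, shows that we may reindex $\calH(x)$ so that $g(\calH_0^x) \geq 12$ while simultaneously some $\rk^{\calV}_{i,1}(x) \geq 2$. Concretely: if the maximum is attained on the component of larger genus, label that component as $\calH_1^x$; otherwise the maximum is attained on a small-genus component, which we also label as $\calH_1^x$, and the large component becomes $\calH_0^x$.

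Now apply Lemma~\ref{11rkshrinklemma} to write $f = \sum_{\alpha=1}^n g_\alpha$ with $g_\alpha \in A_{y_\alpha} \cap A_x$ for some $y_\alpha \in \rkshrink_{\calV}(\calH_0^x)$. Each $y_\alpha$ shares a $2$-cell $\sigma_\alpha$ with $x$, whose third edge is some $z_\alpha$. Tracing through the construction in the proof of Lemma~\ref{11rkshrinklemma}, each summand $g_\alpha$ may in fact be chosen to lie in $A_{\sigma_\alpha}$, and therefore lifts to a class on the stabilizer of $\sigma_\alpha$. Applying the differential $d_{2,1}^1 : \bE_{2,1}^1 \to \bE_{1,1}^1$ to such a lift and then projecting via Lemma~\ref{intermediatequotientlemma} to $\bigoplus_{x} A_x$, we obtain a relation in $\bE_{1,1}^2$ of the form
\[
(x, g_\alpha) = \pm(y_\alpha, g_\alpha) \pm (z_\alpha, g_\alpha).
\]
By the definition of $\rkshrink_{\calV}$, the edges $y_\alpha$ and $z_\alpha$ satisfy $(\maxrk, \nummaxrk)(y_\alpha), (\maxrk, \nummaxrk)(z_\alpha) < (\maxrk, \nummaxrk)(x)$ in lexicographic order, so the inductive hypothesis applies to each of $(y_\alpha, g_\alpha)$ and $(z_\alpha, g_\alpha)$, completing the inductive step.

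The main obstacle is the lifting step: verifying that each $g_\alpha \in A_{y_\alpha} \cap A_x$ produced by Lemma~\ref{11rkshrinklemma} is actually realized by a class in $A_{\sigma_\alpha}$, which is what the differential $d_{2,1}^1$ requires in order to produce the three-term relation above. Inspection of the proof of Lemma~\ref{11rkshrinklemma} shows that the relevant decompositions are always built so that $r_1, r_2, r_3 \in \calH_0^{y_\alpha}$, and $\calH_0^{y_\alpha}$ is one of the three pieces of the decomposition $\calH(\sigma_\alpha)$, so the element $r_1 \wedge r_2 \wedge r_3$ lies in $\midwedge^3 \calH_0^{y_\alpha} \subseteq A_{\sigma_\alpha}$. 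A secondary technical point is the reindexing argument, which relies crucially on the bound $g \geq \finalbound$ to guarantee that after choosing the labeling forced by the location of the rank-at-least-$2$ indices, the complementary component still has genus $\geq 12$.
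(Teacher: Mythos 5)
Your overall strategy — double induction on $(\maxrk(x), \nummaxrk(x))$, with Lemma~\ref{11rkshrinklemma} supplying the inductive step via a $d^1_{2,1}$ relation — is the same as the paper's. The base case, the lifting of $g_\alpha$ into $A_{\sigma_\alpha}$, and the lexicographic descent for $y_\alpha, z_\alpha$ are all handled correctly. Your secondary concern about $A_{y_\alpha} \cap A_x \subseteq A_{\sigma_\alpha}$ is in fact automatic from the definitions ($A_x \cap A_y = A_\sigma$), so that worry was unfounded.

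The genuine gap is in your ``short case analysis'' preceding the application of Lemma~\ref{11rkshrinklemma}. You propose to arrange, by relabeling alone, that $g(\calH_0^x) \geq 12$ while some $\rk^{\calV}_{i,1}(x) \geq 2$, and you break into cases according to whether the maximum is attained on the larger-genus component. But in your first case (maximum attained on the component of genus $\geq 25$), after labeling that component $\calH_1^x$, the remaining component $\calH_0^x$ may have genus as small as $1$: the two genera only need to sum to $g-1$, and nothing forces the smaller one to be $\geq 12$. You also cannot fix this by swapping the labels, because then the rank-$\geq 2$ index would be at $k = 0$, and for that same $i$ one has $\gcd(\rk^{\calV}_{i,0}(x), \rk^{\calV}_{i,1}(x)) = 1$ (primitivity of $v_i$), so $\rk^{\calV}_{i,1}(x) = 1$ — and nothing guarantees a large rank at $k=1$ for some other index $j$. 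So relabeling does not put $x$ into a position where Lemma~\ref{11rkshrinklemma} applies.

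The paper handles exactly this obstruction with a separate Claim inside the proof: when $g(\calH_0^x) \leq 12$, it uses $g \geq \finalbound$ to produce a primitive subgroup $\calH \subseteq \calH_1^x$ (of carefully bounded genus, orthogonal to all the projected $v_i$'s and to the pure tensor being analyzed), builds a $2$-cell $\sigma$ with third edges $z_1, z_2$, and shows that $(x,f) = (z_1,f) + (z_2,f)$ where $z_1$ already has $\maxrk = 1$ and $z_2$ inherits the ranks of $x$ but now has $g(\calH_0^{z_2}) \geq 13$. Your proof is missing this genus-increasing reduction; the ``reindex $\calH(x)$'' sentence needs to be replaced by this explicit rewriting step before Lemma~\ref{11rkshrinklemma} can be invoked.
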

\begin{proof}
Let $\maxrk(x) = \max_{1 \leq i \leq 8, 0 \leq k \leq 1} \rk^{\calV}_{i,k}(x)$.  Let $\nummaxrk(x)$ be the number of pairs $(i,k)$ with $1 \leq i \leq 8$ and $0 \leq k \leq 1$ such that $\rk^{\calV}_{i,k}(x) = \maxrk(x)$.  The proof proceeds by double induction on $\maxrk(x)$ and $\nummaxrk(x)$.

\p{Base case: $\maxrk(x) = 1$} In this, the resulting linear relation is $(x,f) = (x,f)$.  

\p{Inductive step: the lemma holds for all $y \subseteq X_g$ that satisfy either $\maxrk(y) < \maxrk(x)$ or both $\maxrk(y) \leq \maxrk(x)$ and $\nummaxrk(y) < \nummaxrk(x)$}  We want to apply Lemma~\ref{11rkshrinklemma}.  We begin with the the following claim.  

\p{Claim} There is a linear relation
\begin{displaymath}
(x,f) = \sum_{p = 1}^q (z_p, f_p)
\end{displaymath}
\bn such that for every $1 \leq p \leq q$:
\begin{itemize}
\item either $\maxrk(z_p) = \maxrk(x)$ and $\nummaxrk(z_p) \leq \nummaxrk(x)$, or $\maxrk(z_p) < \maxrk(x)$, and
\item if $\maxrk(z_p) \neq 1$, there is a pair $(i,k)$ such that $\rk^{\calV}_{i,k}(z_p) = \maxrk(z_p)$ and $g(\calH_k^{z_{p}}) \geq 13$.
\end{itemize}

\p{Proof of claim} By reindexing, we may assume without loss of generality that $\rk^{\calV}_{1,0}(x) = \maxrk(x)$.  If $g(\calH_0^x) \geq 13$ then we are done, so suppose that $g(\calH_0^x) \leq 12$.  Since $f \in A_x$, we may rewrite $f$ as a $\Q$--linear combination of pure tensors $r_1 \wedge r_2 \wedge r_3 \in A_x$ such that each $r_i$ is primitive.  Therefore we may assume without loss of generality that $f = r_1 \wedge r_2 \wedge r_3$ with $r_1,r_2, r_3 \in \calH_k^x$ for some $0 \leq k \leq 1$.  Now, since $g(\calH_0^x) \leq 12$, we have $g(\calH_1^x) \geq \finalbound - 12 \geq 13$.  Hence there is a primitive subgroup $\calH \subseteq \calH_1^x$ such that the following hold:
\begin{itemize}
\item $2 *g(\calH) +1 = \rk^{\calV}(\calH)$,
\item $g(\calH_1^x) - g(\calH) \leq 11$,
\item $[a] \in \calH$,
\item $r_1,r_2,r_3, v_{1,1}, v_{2,1},\ldots, v_{8,1} \in \calH^{\perp}$.
\end{itemize}
\bn Let $z_1 \subseteq X_g$ be the unique edge with $\calH \in \calH(z_1)$.  By construction, $z_1$ and $x$ share a 2--cell $\sigma \subseteq X_g$ that satisfies $\calH(\sigma) = \{\calH_0^x, \calH, \calH_1^x \cap \calH^{\perp}\}$.  Let $z_2$ be the third edge of $\sigma$.  We have $\calH(z_1) = \{\calH, \calH^{\perp}\}$ and $\calH(z_2) = \{\calH^{\perp} \cap \calH_1^x, \calH_0^x + \calH\}$.  Since $r_1,r_2,r_3$ are all in either $\calH_0^x$ or $\calH_1^x$  and are in $\calH^{\perp}$ by hypothesis, the fact that $\calH(\sigma) = \{\calH_0^x, \calH, \calH^{\perp} \cap \calH_1^x\}$ implies that $f \in A_{\sigma}$.  Therefore in $\bE_{1,1}^2$, the image of $d_{2,1}^1(\sigma, f)$ (after possibly reorienting $x,z_1$ and $z_2$) yields a relation
\begin{displaymath}
(x,f) = (z_1,f) + (z_2,f).
\end{displaymath}
\bn Assume that $\calH(z_1)$ is indexed so that $\calH = \calH_0^{z_1}$.  By our choice of $\calH$, the projections of each $v_i$ to the elements of $\calH(z_1)$ are given as follows:
\begin{itemize}
\item $v_{i,1}^{z_1} = v_i$, and
\item $v_{i,0}^{z_1} = 0$.
\end{itemize}
\bn Since each $v_i$ is primitive by assumption, we have $\rk^{\calV}_{i,k}(z_1) = 1$ or $0$ for every $1 \leq i \leq 8$ and $0 \leq k \leq 1$. Hence $(z_1,f)$ satisfies the desired properties of $z_i$ in the claim, since:
\begin{itemize}
\item $\maxrk(z_1) = 1 < \maxrk(x)$, and
\item $z_1$ does not satisfy the hypothesis in the second condition.  
\end{itemize}
\bn For the class $(z_2,f)$, after indexing $\calH(z_2)$ so that $\calH_0^x \subseteq \calH_0^{z_2}$, the projections $v_{i,k}^{z_2}$ satisfy $v_{i,k}^{z_2} = v_{i,k}^x$.  Hence we have $\rk^{\calV}_{i,k}(z_2) = \rk^{\calV}_{i,k}(x)$ for $1 \leq i \leq 8$ and $0 \leq k \leq 1$.  Now, by construction we have $g(\calH_0^{z_2}) = g(\calH_0^x) + g(\calH)$.  Since we have chosen $\calH$ so that $g(\calH_1^x) - g(\calH) \leq 11$, we have $g(\calH) \geq g(\calH_1^x) - 11$.  Then we have
\begin{displaymath}
g(\calH_0^{z_2}) = g(\calH_0^x) + g(\calH_1^x) \geq g(\calH_0^x) + g(\calH_1^x) - 11 \geq \finalbound - 1 - 11 \geq 13,
\end{displaymath}
\bn since we have assumed that $g = g(\calH_0^x) + g(\calH_1^x) + 1 \geq \finalbound$.  Therefore $z_2$ satisfies the properties in the claim, since:
\begin{itemize}
\item $\maxrk(z_2) = \maxrk(x)$ and $\nummaxrk(z_2) = \nummaxrk(x)$, and
\item any pair $(i,0)$ that satisfies $\rk^{\calV}_{i,0}(x) = \maxrk(x)$ also satisfies $\rk^{\calV}_{i,0}(z_2) = \maxrk(z_2)$, and $g(\calH_0^{z_2}) \geq 13$,
\end{itemize}
\bn so the claim holds.

We now continue with the inductive step of the proof.  By the claim, we may rewrite the class $(x,f)$ as a sum so that each summand $(z,f')$ satisfies $\maxrk(z) \leq \maxrk(x)$, and $\nummaxrk(z) \leq \nummaxrk(x)$ if $\maxrk(z) = \maxrk(x)$, and, if $(z,f)$ does not already satisfy $\maxrk(z) \leq 1$, we have $g(\calH_0^z) \geq 13$ with $\rk^{\calV}_{i,0}(z) = \maxrk(z)$ for some $i$.  Hence we may assume without loss of generality that $g(\calH_0^x) \geq 13$ and $\rk^{\calV}_{i,0}(x) = \maxrk(x)$ for some $1 \leq i \leq 8$.  Then by Lemma~\ref{11rkshrinklemma}, there is a collection of edges $y_{\ell} \in \rkshrink_{\calV}(\calH_0^x)$ and a choice of $f_{\ell} \in A_{y_{\ell}} \cap A_x$ such that
\begin{displaymath}
f = \sum_{\ell = 1}^m f_{\ell}.
\end{displaymath}
\bn For each $y_{\ell}$, there is a 2--cell $\sigma_{\ell} \subseteq X_g$ with $x_{\ell}, y_{\ell} \in X_g$ by the definition of $\rkshrink_{\calV}(\calH_0^x)$.  For each of these $\sigma_{\ell}$, let $z_{\ell} \subseteq \sigma_{\ell}$ denote the third edge besides $x$ and $y_{\ell}$.  Then $f_{\ell} \in A_{y_{\ell}} \cap A_x$ and $A_x \cap A_{y_{\ell}} = A_{\sigma_{\ell}}$ by the definition of $A_{\tau}$ for any $\tau \subseteq X_g$, so we have $f_{\ell} \in A_{\tau}$.  Therefore we have the following relation in $\bE_{1,1}^2$:
\begin{displaymath}
d_{2,1}^1\left (\sum_{\ell = 1}^m (\sigma_\ell, f_\ell)\right) = \sum_{\ell = 1}^m (x,f_{\ell}) + (y_{\ell}, f_{\ell}) - (z_{\ell}, f_{\ell}).
\end{displaymath}
\bn But now, by rearranging terms, we have a relation
\begin{displaymath}
\sum_{\ell = 1}^m (x, f_{\ell}) = \sum_{\ell = 1}^m (y_{\ell}, f_{\ell}) - (z_{\ell}, f_{\ell}).
\end{displaymath}
\bn Then since $\sum_{\ell = 1}^m f_{\ell} = f$, we have a relation
\begin{displaymath}
(x,f) = \sum_{\ell = 1}^m (z_{\ell}, f_{\ell}) - (y_{\ell}, f_{\ell}).
\end{displaymath}
\bn Since $y_{\ell} \in \rkshrink_{\calV}(\calH_0^x)$, for any $1 \leq \ell \leq m$, we have either $\maxrk(y_{\ell}) < \maxrk(x)$, or $\maxrk(y_{\ell}) = \maxrk(x)$ and $\nummaxrk(y_{\ell}) < \nummaxrk(x)$, and similarly for $z_{\ell}$.  Therefore by the inductive hypothesis, $y_{\ell}$ and $z_{\ell}$ are linear combinations of elements as in the statement of the lemma, so the lemma holds for $(x,f)$ as well.
\end{proof}

\subsection{The proof of Lemma~\ref{11algonelemma}} We now proceed to the second step of the proof of Lemma~\ref{11finquotlemma}.  Let $x \subseteq X_g$ be an edge such that $\rk^{\calV}_{i,k}(x) \leq 1$ for all $1 \leq i \leq 8$ and $0 \leq k \leq 1.$  Let $y \subseteq X_g$ be another edge such that $y$ and $x$ are two edges of a 2--cell $\sigma$, and let $z$ be the third edge of $\sigma.$  We say that $y$ is \textit{algebraically $\calV$--shrinking relative to $x$} if, after possibly reindexing $\calH(x)$, $\calH(y)$, and $\calH(z)$, the following hold:
\begin{enumerate}
\item $\rk^{\calV}_{i,k}(y), \rk^{\calV}_{i,k}(z) \leq 1$ for $1 \leq i \leq 8$ and $0 \leq k \leq 1$,
\item $\max\{\left|\theta(\calV)_{i,j,1}(x) \right|,1\} \geq \left| \theta(\calV)_{i,j,1}(y)\right|, \left| \theta(\calV)_{i,j,1}(z)\right| $ for all $1 \leq i,j \leq 8$, and
\item at least one pair $1 \leq i< j \leq 8$ and $0 \leq k \leq 1$ with
\begin{displaymath}
\left|\theta(\calV)_{i,j,k}(x)\right| = \max\{\left|\theta(\calV)_{i',j',k}(x) \right|: 1 \leq i',j' \leq 8, 0 \leq k \leq 1\}
\end{displaymath}
\bn has $\left|\theta(\calV)_{i,j,k}(x) \right|>  \left| \theta(\calV)_{i,j,k}(y)\right|, \left| \theta(\calV)_{i,j,k}(z)\right|$.
\end{enumerate}
\bn If $x$ is an edge, let $\algshrink_{\calV}(x) \subseteq X_g^{(1)}$ denote the set of algebraically $\calV$--shrinking relative to $x$ edges in $X_g$.  We will prove the following result about $\algshrink_{\calV}(x)$, which completes the second step of the proof of Lemma~\ref{11finquotlemma}.

\begin{lemma}\label{11finquotshrinkgenlemma}
Let $g \geq \finalbound$ and $a \subseteq S_g$ be a nonseparating simple closed curve.  Let $M \subseteq S_g \cut a$ be a nonseparating multicurve with $\left|\pi_0(M)\right| = 8$.  Let $b \subseteq S_g$ be a nonseparating simple closed curve such that $b$ has geometric intersection number with $a$, and such that $b$ intersects $M$ trivially.  Let $\calV = \{[c]: c \in \pi_0(M)\}$.  Let $\bE_{p,q}^r$ denote the equivariant homology spectral sequence for the action of $\cI_g$ on $\cC_{[a]}(S_g)$.  Let $x \subseteq X_g$ be an edge such that $\rk^{\calV}_{i,k}(x) \leq 1$ for all $1 \leq i \leq 8$ and $0 \leq k \leq 1$.  Suppose that there is a pair $1 \leq i < j \leq 8$ and $0 \leq k \leq 1$ such that $\left|\theta(\calV)_{i,j,k}(x)\right| > 1$.  Then the natural map
\begin{displaymath}
\varphi: \bigoplus_{y \in \algshrink_{\calV}(x)} A_y \cap A_x \rightarrow A_x
\end{displaymath}
\bn is surjective.
\end{lemma}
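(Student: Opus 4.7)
The plan mirrors Lemma~\ref{bmfinquotalgint}, translating the argument from Bestvina--Margalit tori to edges of $X_g$. Fix a generator $r_1 \wedge r_2 \wedge r_3 \in A_x$ in which each $r_s$ is primitive and all three lie in a single slot $\calH_\ell^x$. Choose indices $1 \leq i < j \leq 8$ and $k^* \in \{0,1\}$ for which $|\theta(\calV)_{i,j,k^*}(x)|$ attains its maximum over all admissible triples and is at least $2$. The goal is to produce an edge $y \in \algshrink_\calV(x)$ with $r_1 \wedge r_2 \wedge r_3 \in A_y \cap A_x$, which certifies that this generator lies in the image of $\varphi$.

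After a preliminary reduction using differentials $d_{2,1}^1$ (paralleling the preliminary genus-increasing step in the proof of Lemma~\ref{bmfinquotalggenusincrease}) one may assume that $g(\calH_{k^*}^x)$ is sufficiently large. In that case, split $v_{i,k^*}^x = v' + v''$ and $v_{j,k^*}^x = w' + w''$ with $v'', w''$ primitive and $\langle v'', w''\rangle = -\operatorname{sign}\bigl(\theta(\calV)_{i,j,k^*}(x)\bigr)$, arranged so that $v''$ and $w''$ are each orthogonal to every $r_s$, to every $v_{\ell,k^*}^x$ for $\ell \neq i,j$, and to $v'$ and $w'$. This forces $|\langle v', w'\rangle| = |\theta(\calV)_{i,j,k^*}(x)| - 1$. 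Using the large genus of $\calH_{k^*}^x$, choose a primitive orthogonal splitting $\calH_{k^*}^x = \calH' + \calH''$ with $\calH' \cap \calH'' = \Z[a]$, so that $\calH'$ contains $\{r_s\} \cup \{v_{\ell,k^*}^x\}_{\ell \neq i, j} \cup \{v', w'\}$ and $\calH''$ contains $\{v'', w''\}$. Define $y$ to be the edge with $\calH_{k^*}^y = \calH'$ and $\calH_{1-k^*}^y = \calH_{1-k^*}^x + \calH''$, and let $z$ be the third edge of the $2$-cell spanned by $x$ and $y$. Whether $\ell = k^*$ or $\ell = 1-k^*$, one has $r_1 \wedge r_2 \wedge r_3 \in A_y$, since in the former case all $r_s$ lie in $\calH_{k^*}^y$ while in the latter they lie in $\calH_{1-k^*}^x \subseteq \calH_{1-k^*}^y$.

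Verifying $y \in \algshrink_\calV(x)$ then reduces to a direct projection computation on both $y$ and $z$. Each $\rk^\calV_{\ell, k}(y)$ and $\rk^\calV_{\ell, k}(z)$ stays at most $1$, since every new projection is either zero, unchanged, or a sum of primitive classes lying in direct-summand subgroups whose pairwise intersection is $\Z[a]$. The orthogonality of $v'', w''$ to every coordinate other than $v_{i,k^*}^x$ and $v_{j,k^*}^x$ guarantees that $|\theta(\calV)_{\ell, m, k}|$ on $y$ and $z$ is bounded by $\max\{|\theta(\calV)_{\ell, m, k}(x)|, 1\}$ for every $(\ell, m, k)$, while $|\theta(\calV)_{i,j,k^*}(y)| = |\langle v', w'\rangle| = |\theta(\calV)_{i,j,k^*}(x)| - 1$, and a parallel calculation gives the same strict decrease for $z$. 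The principal obstacle will be the sign bookkeeping that guarantees the strict decrease holds simultaneously on $y$ and $z$ at the same pair $(i, j, k^*)$, together with the careful integration of the preliminary genus-increasing reduction so that it does not inflate any of the $\theta$ data being shrunk --- exactly the delicate points already handled at the level of tori in the proofs of Lemmas~\ref{bmfinquotalggenusincrease} and~\ref{bmfinquotalgint}.
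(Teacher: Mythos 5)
Your proposal captures the broad shape of the argument---isolate the maximal $|\theta(\calV)_{i,j,k}|$, split off a small correction to decrease it, and park $r_1 \wedge r_2 \wedge r_3$ on the safe side of the split---but it is missing the one observation that makes this lemma work without any preliminary maneuvering, and it substitutes a reduction that cannot be carried out at this level.

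The key fact the paper uses is that $\theta(\calV)_{i,j,0}(x) = -\theta(\calV)_{i,j,1}(x)$, which follows from $\langle v_i, v_j \rangle = 0$ and the orthogonality of $\calH_0^x$ and $\calH_1^x$: writing $v_i = v_{i,0}^x + v_{i,1}^x$ and pairing gives $0 = \langle v_i, v_j \rangle = \theta(\calV)_{i,j,0}(x) + \theta(\calV)_{i,j,1}(x)$. In particular the $\theta$--data is identical up to sign between the two slots. Since $g(\calH_0^x) + g(\calH_1^x) = g-1 \geq \finalbound - 1$, at least one slot always has $g(\calH_k^x) \geq (\finalbound-1)/2 \geq 13$, and one may simply declare $\calH_0^x$ to be that slot. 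This is why the paper's proof needs no preliminary genus-increasing step whatsoever. Your proposal does not make this observation, and the "preliminary reduction using differentials $d_{2,1}^1$'' that you propose in its place is not available here: Lemma~\ref{11finquotshrinkgenlemma} is a statement about a \emph{fixed} edge $x$ and asserts surjectivity of a purely linear map $\varphi: \bigoplus_{y} A_y \cap A_x \to A_x$. The $d_{2,1}^1$ differentials express relations among classes in $\bE_{1,1}^2$, not identifications among the spaces $A_x$; they can be invoked only at the level where this lemma is \emph{applied} (inside Lemma~\ref{11algonelemma}), not inside its proof. Lemma~\ref{bmfinquotalggenusincrease} does not run into this obstacle because its conclusion is itself a relation in $\BM_2(X_g;\Q)$; the situations are not parallel. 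You flag this tension yourself at the end, but the worry is real and the mechanism you name would not resolve it.

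There is also a sign error. With $v_{i,k^*}^x = v' + v''$ and $v_{j,k^*}^x = w' + w''$ and your orthogonality conditions, one has $\theta(\calV)_{i,j,k^*}(x) = \langle v', w' \rangle + \langle v'', w'' \rangle$, so $\langle v', w' \rangle = \theta - \langle v'', w''\rangle$. Taking $\langle v'', w''\rangle = -\operatorname{sign}(\theta)$ as you propose gives $|\langle v', w'\rangle| = |\theta| + 1$, an \emph{increase}. The decrease you want requires $\langle v'', w''\rangle = +\operatorname{sign}(\theta)$. (The paper's stated normalization $\theta(\calV)_{1,2,0}(x) \geq 0$ together with $\langle u_1, u_2\rangle = 1$ has the same difficulty; but when writing a proof from scratch the sign must come out right, and here it does not.) Since the conclusion of the lemma is exactly this strict decrease on $y$ and $z$ at the maximal pair, deferring the "sign bookkeeping'' leaves precisely the claim that was to be proved unverified.
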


\begin{proof}
Choose three primitive classes $r_1,r_2,r_3 \in \calH_0^x \cup \calH_1^x$ such that $r_1 \wedge r_2 \wedge r_3 \in A_x$.  We will show that $r_1 \wedge r_2 \wedge r_3 \in \im(\varphi)$.  Since $\left|\theta(\calV)_{i,j,0}(x)\right| = \left|\theta(\calV)_{i,j,1}(x)\right|$ for all $1 \leq i,j \leq 8$ because we have assumed that $\langle v_i, v_j \rangle  =0 $, we may assume without loss of generality that $g(\calH_0^x) \geq \frac{\finalbound}2$, so in particular $g(\calH_0^x) \geq 13$.  If $r_1, r_2, r_3 \in \calH_1^x$ then we are done since $\algshrink_{\calV}(x)$ is nonempty, so assume that $r_1,r_2,r_3 \in \calH_0^x.$  We will show that there is an edge $y \in \algshrink_{\calV}(x)$ such that $r_1 \wedge r_2 \wedge r_3 \in A_y$.  For each $v_i \in \calV$, let $w_{i,k}$ be the projection $w_{i,k} = \proj_{[b]^{\perp} \cap \calH_k^x} v_i$.  By hypothesis each $w_{i,k}$ is either primitive or zero.  Assume without loss of generality that $\left|\theta(\calV)_{1,2,0}\right|$ is maximal over all $\left|\theta(\calV)_{i,j,0}\right|$.  By possibly replacing $v_1$ with $-v_1$, we may also assume without loss of generality that $\theta(\calV)_{1,2,0}(x) \geq 0$.  Since $g(\calH_0^x) \geq 13$ and the set $\{w_{1,0}, \ldots, w_{8,0}, r_1, r_2, r_3\}$ has eleven elements, we may choose two primitive classes $u_1,u_2 \in \calH_0^x$ such that the following hold:
\begin{multicols}{2}
\begin{enumerate}
\item $\langle u_t, w_{i,0} \rangle = 0$ for all $1 \leq t \leq 2, 1 \leq i \leq 8$,
\item $\langle u_t, r_s \rangle = 0$ for all $1 \leq t \leq 2, 1 \leq s \leq 3$,
\item $\langle u_1,u_2\rangle = 1$,
\end{enumerate}
\end{multicols}
\bn Consider the following classes $h_i$ for $1 \leq i \leq 8$:
\begin{multicols}{2}
\begin{enumerate}
\item $h_1 = w_{1,0} - u_1$,
\item $h_2 = w_{2,0} - u_2$ and
\item $h_i = w_{i,0}$ for $3 \leq i \leq 8$.
\end{enumerate}
\end{multicols}
\bn Now, let $y$ be an edge that shares a 2--cell with $x$ such that the following properties hold:
\begin{multicols}{2}
\begin{enumerate}
\item $\calH_0^y \subseteq \calH_0^x$,
\item $h_i \in \calH_0^x \cap \calH_0^y$ for $1 \leq i \leq 8$,
\item $r_s \in \calH_0^x \cap \calH_1^y$ for $1 \leq s \leq 3$, and
\item $u_t \in \calH_0^x \cap \calH_1^y$ for $1 \leq t \leq 2$.
\end{enumerate}
\end{multicols}
\bn We will show that:
\begin{enumerate}
\item $y \in \algshrink_{\calV}(x)$ and
\item $r_1 \wedge r_2 \wedge r_3 \in A_y$.
\end{enumerate}
\bn This completes the proof, since then $r_1 \wedge r_2 \wedge r_3 \in \im(\varphi)$.  The latter property follows by construction, so it suffices to prove the former.  Let $z$ denote the third edge of a 2--cell $\sigma$ with $x \subseteq \sigma$, $y \subseteq \sigma$.   Label $\calH(z) = \{\calH_0^z, \calH_1^z\}$ such that $\calH_0^z \subseteq \calH_0^x$.  Therefore we have $\calH_0^z = \calH_0^x \cap \calH_1^y$ and $\calH_1^z = \calH_0^y + \calH_1^x$.  For notational convenience, let $u_i = 0$ for $3 \leq i \leq 8$.  By construction, the vectors $\proj_{[b]^{\perp} \cap \calH_k^w}(v_i)$ for each choice of $1 \leq i \leq 8$, $0 \leq k \leq 1$ and $w = x,y,z$ are given as follows:
\begin{multicols}{2}
\begin{enumerate}
    \item $\proj_{[b]^{\perp} \cap \calH_0^y}(v_i) = h_i$ for $1 \leq i \leq 8$,
    \item $\proj_{[b]^{\perp} \cap \calH_1^y}(v_i) = u_i+ w_{i,1}$ for $1 \leq i \leq 8$,
    \item $\proj_{[b]^{\perp} \cap \calH_0^z}(v_i) = u_i$ for $1\leq i \leq 8$, and
    \item $\proj_{[b]^{\perp} \cap \calH_1^z}(v_i) = h_i + w_{i,1}$ for $1 \leq i \leq 8.$
\end{enumerate}
\end{multicols}
\bn Now, given these projections, we can compute ranks and intersection numbers as follows.
\begin{enumerate}
    \item We have $\rk^{\calV}_{i,k}(y), \rk^{\calV}_{i,k}(z) \leq 1$ for $1 \leq i \leq 8$ and $0 \leq k \leq 1$, since by assumption and construction each projection is primitive.
    \item We have $\left|\theta(\calV)_{i,j,0}(y) \right|= \left|\langle h_i, h_j \rangle \right| = \left|\theta(\calV)_{i,j,0}(x) \right| - \mathbbm{1}_{(i,j) = (1,2)}.$
    \item We have $\left|\theta(\calV)_{i,j,0}(z) \right| = \left|\langle u_i, u_j \rangle \right|=  \mathbbm{1}_{(i,j) = (1,2)}.$
\end{enumerate}
\bn  Furthermore, since $\langle v_i, v_j \rangle = 0$ by assumption, we have $\left|\theta(\calV)_{i,j,0}(y)\right| = \left|\theta(\calV)_{i,j,1}(y)\right|$, and similarly for $z$.  Therefore we have $\left|\theta(\calV)_{i,j,k}(x)\right| \geq \left|\theta(\calV)_{i,j,k}(y)\right|, \left|\theta(\calV)_{i,j,k}(z)\right|$ for all $1 \leq i,j \leq 8$ and $0 \leq k \leq 1$.  Additionally, we have assumed that $\left|\theta(\calV)_{i,j,0}(x)\right| > 1$, so we have $\left|\theta(\calV)_{1,2,0}(x)\right| > \left|\theta(\calV)_{1,2,0}(y)\right|, \left|\theta(\calV)_{1,2,0}(z)\right|$. 
Hence we have $y \in \algshrink_{\calV}(x)$, so the proof is complete.
\end{proof}

\bn We now complete the second step of the proof of Lemma~\ref{11finquotlemma}.

\begin{lemma}\label{11algonelemma}
Let $g \geq \finalbound$ and $a \subseteq S_g$ be a nonseparating simple closed curve.  Let $M \subseteq S_g \cut a$ be a nonseparating multicurve with $\left|\pi_0(M)\right| = 8$.  Let $b \subseteq S_g$ be a nonseparating simple closed curve such that $b$ has geometric intersection number with $a$, and such that $b$ intersects $M$ trivially.  Let $\calV = \{[c]: c \in \pi_0(M)\}$.  Let $\bE_{p,q}^r$ denote the equivariant homology spectral sequence for the action of $\cI_g$ on $\cC_{[a]}(S_g)$.  Let $(x,f) \in \bE_{1,1}^2$ be a class such that $\rk^{\calV}_{i,k}(x) \leq 1$ for all $1 \leq i \leq 8$ and $0 \leq k \leq 1$.  Then there is a relation in $\bE_{1,1}^2$ given by
\begin{displaymath}
(x,f) = \sum_{\ell = 1}^m \lambda_{\ell}(y_{\ell}, f_{\ell})
\end{displaymath}
\bn such that $\lambda_{\ell} \in \Q$, $\rk^{\calV}_{i,k}(y_{\ell}) \leq 1$, and $\theta(\calV)_{i,j,k}(y_{\ell}) \leq 1$ for all $1 \leq \ell \leq m$, $1 \leq i,j \leq 8$, and $0 \leq k \leq 1$.
\end{lemma}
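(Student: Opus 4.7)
The plan is to prove the lemma by a double induction on the pair $(\maxalg(x), \nummaxalg(x))$ under the lexicographic order, exactly paralleling the strategy of Lemma~\ref{11rkonelemma}. Here $\maxalg(x) = \max\{|\theta(\calV)_{i,j,k}(x)| : 1 \leq i,j \leq 8,\ 0 \leq k \leq 1\}$ and $\nummaxalg(x)$ counts the triples $(i,j,k)$ realizing this maximum. The base case $\maxalg(x) \leq 1$ is immediate: the trivial relation $(x,f) = (x,f)$ already exhibits $(x,f)$ in the desired form, since by hypothesis $\rk^{\calV}_{i,k}(x) \leq 1$.

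For the inductive step, assume $\maxalg(x) \geq 2$, so the hypothesis of Lemma~\ref{11finquotshrinkgenlemma} is met. That lemma yields a decomposition $f = \sum_{\ell = 1}^m f_\ell$ with each $f_\ell \in A_x \cap A_{y_\ell}$ for some $y_\ell \in \algshrink_{\calV}(x)$. For each such $y_\ell$, pick a 2--cell $\sigma_\ell \subseteq X_g$ containing both $x$ and $y_\ell$, and let $z_\ell$ denote its third edge. Because the cyclic decomposition $\calH(\sigma_\ell)$ is the common refinement of $\calH(x)$ and $\calH(y_\ell)$, we have $A_x \cap A_{y_\ell} = A_{\sigma_\ell}$, so $f_\ell \in A_{\sigma_\ell}$.

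By Lemma~\ref{firsthomolratimlemma} applied to the connected components of $S_g \cut \widehat{\sigma}_\ell$, the class $f_\ell$ lifts to some $\phi_\ell \in H_1((\cI_g)_{\sigma_\ell};\Q)$. Applying the $d_{2,1}^1$ differential to $(\sigma_\ell, \phi_\ell)$ and passing to $\bE_{1,1}^2$ (using Lemma~\ref{intermediatequotientlemma}), we obtain, up to signs determined by the chosen orientation of $\sigma_\ell$, a relation of the form
\[
(x, f_\ell) = \pm (y_\ell, f_\ell) \pm (z_\ell, f_\ell) \quad \text{in } \bE_{1,1}^2.
\]
Summing over $\ell$ expresses $(x,f)$ as a $\Q$--linear combination of classes $(y_\ell, f_\ell)$ and $(z_\ell, f_\ell)$. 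The definition of $\algshrink_{\calV}(x)$ guarantees that both $y_\ell$ and $z_\ell$ satisfy $\rk^{\calV}_{i,k} \leq 1$, and that either $\maxalg(y_\ell) < \maxalg(x)$ or $\maxalg(y_\ell) = \maxalg(x)$ with $\nummaxalg(y_\ell) < \nummaxalg(x)$, with the same holding for $z_\ell$. The inductive hypothesis therefore applies to each $(y_\ell, f_\ell)$ and $(z_\ell, f_\ell)$, finishing the proof.

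There is no serious obstacle; the argument is a direct analogue of Lemma~\ref{11rkonelemma} with Lemma~\ref{11finquotshrinkgenlemma} playing the role of Lemma~\ref{11rkshrinklemma}. The only points requiring attention are the identification $A_x \cap A_{y_\ell} = A_{\sigma_\ell}$ (which is immediate from the definitions) and the lift $f_\ell \mapsto \phi_\ell$ used to activate the spectral sequence differential; both are packaged cleanly by Lemma~\ref{firsthomolratimlemma} and the observation, already used in Lemma~\ref{intermediatequotientlemma}, that $A_\sigma$ is the image of $H_1((\cI_g)_\sigma;\Q)$ under the Johnson homomorphism.
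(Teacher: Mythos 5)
Your proof is correct and follows essentially the same route as the paper's: a double induction on $(\maxalg, \nummaxalg)$, with Lemma~\ref{11finquotshrinkgenlemma} supplying the decomposition of $f$ and the $d_{2,1}^1$ differential producing the replacement relation. The only cosmetic difference is that you count triples $(i,j,k)$ rather than pairs $(i,j)$ with $k=0$ in the definition of $\nummaxalg$, but since $\theta(\calV)_{i,j,0} = -\theta(\calV)_{i,j,1}$ this has no effect on the induction.
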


\begin{proof}
Let $\maxalg(x) = \max_{1 \leq i,j \leq 8}\left|\theta(\calV)_{i,j,0}(x)\right|$.  Since $\theta(\calV)_{i,j,0}(x) = - \theta(\calV)_{i,j,1}(x)$, we only need take the maximum for $k = 0$.  Let $\nummaxalg(x)$ denote the number of pairs $1 \leq i < j \leq 8$ such that $\left|\theta(\calV)_{i,j,0}\right| = \maxalg(x)$.  The proof proceeds by double induction on $\maxalg(x)$ and $\nummaxalg(x)$.  

\p{Base case: $\maxalg(x) \leq 1$}  In this case, the relation in the lemma is the trivial relation $(x,f) = (x,f)$, so the lemma holds.

\p{Inductive step: the lemma holds for all $y \subseteq X_g$ with either $\maxalg(y) < \maxalg(x)$ or with both $\maxalg(y) \leq \maxalg(x)$ and $\nummaxalg(y) < \nummaxalg(x)$} By Lemma~\ref{11finquotshrinkgenlemma}, there is a linear combination
\begin{displaymath}
f = \sum_{\ell = 1}^m f_{\ell}
\end{displaymath}
\bn such that each $f_{\ell} \in A_{y_{\ell}}$, where $y_{\ell} \in \algshrink_{\calV}(x)$.  By definition, each $y_{\ell}$ shares a 2--cell $\sigma_{\ell}$ with $x$, and $f_{\ell} \in A_{\sigma_{\ell}}$.  Let $z_{\ell}$ denote the third edge of $\sigma_{\ell}$.  Then there is a relation in $\bE_{1,1}^2$ given by
\begin{displaymath}
\sum_{\ell = 1}^m d_{2,1}^{1}(\sigma_{\ell}, f_\ell) = \sum_{\ell=1}^m (x, f_{\ell}) + (y_{\ell}, f_{\ell}) - (z_{\ell}, f_{\ell}) = 0.
\end{displaymath}
\bn By rearranging terms and applying the fact that $f = \sum_{\ell = 1}^m f_{\ell}$, we have
\begin{displaymath}
(x,f) = \sum_{\ell=1}^m (z_{\ell}, f_{\ell})-(y_{\ell}, f_{\ell}).
\end{displaymath}
\bn But then $y_{\ell} \in \algshrink_{\calV}(x)$ for all $1 \leq \ell \leq m$, so for all $1 \leq \ell \leq m$, either $\maxalg(y_{\ell}) < \maxalg(x)$ or $\maxalg(y_{\ell}) = \maxalg(x)$ and $\nummaxalg(y_{\ell}) < \nummaxalg(x_{\ell})$, and similarly for $z_{\ell}$.  Therefore the classes $(y_{\ell}, f_{\ell})$ and $(z_{\ell}, f_{\ell})$ are linear combinations of classes as in the statement of the lemma by the inductive hypothesis, so the lemma holds for $(x,f)$ as well. 
\end{proof}

\bn We are now ready to conclude Section~\ref{11finquotsection}. 

\begin{proof}[Proof of Lemma~\ref{11finquotlemma}]
Let $W \subseteq \bE_{1,1}^2$ denote the subspace of $\bE_{1,1}^2$ spanned by elements $(x,f)$ where $x$ satisfies $\rk^{\calV}_{i,k}(x), \theta(\calV)_{i,j,k}(x) \leq 1$ for all $1 \leq i < j \leq 8$, $0 \leq k \leq 1$.  We will show that $W = \bE_{1,1}^2$.  By Lemma~\ref{intermediatequotientlemma}, it suffices to show that $(x,f) \in W$ for any $x \subseteq X_g$ and $f \in A_x$.
This follows from Lemmas~\ref{11rkonelemma} and~\ref{11algonelemma}.  Now, as a consequence of  Lemma~\ref{11finquotalginvlemma}, there is a finite set of edges $y_1,\ldots, y_n \subseteq X_g$ given by all possible combinations of genera $g(\calH(y_\ell))$ and choices of $\rk^{\calV}_{i,k}$ and $\left|\theta(\calV)_{i,j,k}\right|$ less than or equal to one, such that any $x \subseteq X_g$ with $\rk^{\calV}_{i,k}(x), \theta(\calV)_{i,j,k}(x) \leq 1$ for all $1 \leq i < j \leq 8$, $0 \leq k \leq 1$ is in the same $G$--orbit as some $y_\ell$, so the natural map
\begin{displaymath}
\bigoplus_{1 \leq \ell \leq n} \Ind_{\Stab_G(y_\ell)}^GA_{y_\ell} \rightarrow W
\end{displaymath}
is surjective.  Therefore the map
\begin{displaymath}
H_0\left(G; \bigoplus_{1 \leq \ell \leq n} \Ind_{\Stab_G(y_\ell)}^GA_{y_\ell}\right) \rightarrow H_0(G;W)
\end{displaymath}
\bn is surjective since $H_0(G, -)$ is left exact.  Then Shapiro's lemma says that
\begin{displaymath}
H_0\left(G; \bigoplus_{1 \leq \ell \leq n} \Ind_{\Stab_G(y_\ell)}^GA_{y_i}\right) \cong \bigoplus_{1 \leq \ell \leq n} H_0(\Stab_G(y_\ell); A_{y_\ell}).
\end{displaymath}
\bn But then $A_{y_{\ell}}$ is contained in $H_1(\Stab_{\cI_g}(a);\Q)$ for all $y_{\ell}$, so $A_{y_{\ell}}$ is finite dimensional for all $y_{\ell}$.  Therefore $H_0(\Stab_G(y_\ell);A_{y_\ell})$ is finite dimensional for any $1 \leq \ell \leq n$, so the proof is complete.
\end{proof}

\section{The Proof of Theorem~\ref{fincokthm}}\label{fincoksection}

In this section, we will complete the proof of Proposition~\ref{11finprop}, which, along with Proposition~\ref{homolcurvequotprop}, completes the proof of Theorem~\ref{fincokthm}.  For the remainder of this section, unless otherwise specified, fix a $g \geq \finalbound$ and $a \subseteq S_g$ a nonseparating curve.  We will also let $\bE_{p,q}^r$ denote the equivariant homology spectral sequence for the action of $\cI_g$ on $C_{[a]}(S_g)$. 

\p{The outline of Section~\ref{fincoksection}} We will devote the bulk of the section to proving Lemma~\ref{11stabgenlemma}, which is done in Section~\ref{11stabgensection}.  The statement of Lemma~\ref{11stabgenlemma} requires some notation, so we defer the statement for a moment.  We then use Lemma~\ref{11finquotlemma} and~\ref{11stabgenlemma} to prove Proposition~\ref{11finprop} in Section~\ref{fincoksubsection}, and then use Proposition~\ref{11finprop} and Proposition~\ref{homolcurvequotprop} to prove Theorem~\ref{fincokthm}.

\subsection{The proof of Lemma~\ref{11stabgenlemma}}\label{11stabgensection}

Let $\calV= \{v_1,\ldots, v_k\} \subseteq [a]^{\perp}$ be a set of primitive elements.  Let $\tau_g$ denote the Johnson homomorphism.  Let $x \subseteq X_g$ be an edge, and let $A_x^{\calV}$ denote the subspace of $\im(\tau_g) = \wedge^3H_1(S_g;\Q)/H_1(S_g;\Q)$ given by
\begin{displaymath}
\left(\im\left(\midwedge^3 \calH_0^x \oplus \midwedge^3 \calH_1^x \rightarrow \im(\tau_g)\right) \cap \im \left( \midwedge^3\calV^{\perp} \rightarrow \im(\tau_g)\right)\right) \otimes \Q.
\end{displaymath} 
\bn Let $X_g^{\calV} \subseteq X_g$ denote the subcomplex consisting of cells $\sigma$ such that $\calH(\sigma)$ is \textit{compatible with $\calV$}, i.e., every $v \in \calV$ satisfies $v \in \calH$ for some $\calH \in \calH(\sigma)$.  Let $\bE_{1,1}^{2, \calV}$ denote the image of the composition
\begin{displaymath}
\bigoplus_{x \in \left(X_g^{\calV}\right)^{(1)}} A_x^{\calV} \rightarrow \bigoplus_{x \in X_g^{(1)}} A_x \rightarrow \bE_{1,1}^{2}
\end{displaymath} 
\bn where $A_x$ is an in Section~\ref{11finquotsection}.  If $\calV = \{v\}$ is a singleton, we will denote $\bE_{1,1}^{2,\calV}$ and $A_x^{\calV}$ by $\bE_{1,1}^{2,v}$ and $A_x^v$ respectively.  We are now ready to state Lemma~\ref{11stabgenlemma}.

\begin{lemma}\label{11stabgenlemma}
Let $\calV = \{v_1,\ldots, v_9\} \subseteq [a]^{\perp}$ be a set of primitive elements such that there is a nonseparating multicurve $M \subseteq S_g \cut a$ with $\calV = \{[c]: c \in \pi_0(M)\}$.  Then the natural map
\begin{displaymath}
\xi: \bigoplus_{v_i \in \calV} \bE_{1,1}^{2,v_i} \rightarrow \bE_{1,1}^2
\end{displaymath}
\bn is surjective.  
\end{lemma}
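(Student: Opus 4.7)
The plan is to take an arbitrary class $(x,f) \in \bE_{1,1}^2$---where $x \subseteq X_g$ is an edge and $f \in A_x$, as guaranteed by Lemma~\ref{intermediatequotientlemma}---and rewrite it as a $\Q$--linear combination of classes each belonging to some $\bE_{1,1}^{2, v_i}$ for $v_i \in \calV$. The template I have in mind mirrors the two-step structure of Proposition~\ref{abcycleprop}: first a linear-algebra decomposition of $f$ over the subspaces $\midwedge^3 v_i^{\perp}$, then a $2$--cell move in $X_g$ exchanging $x$ for an edge compatible with the relevant $v_i$.

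First I would normalize $(x,f)$. Choose an eight-element subset $\calV' \subseteq \calV$ and apply Lemmas~\ref{11rkonelemma} and~\ref{11algonelemma} with respect to $\calV'$ to write $(x,f)$ as a sum of classes $(x_\ell, f_\ell)$ with $\rk^{\calV'}_{i,k}(x_\ell) \leq 1$ and $\left|\theta(\calV')_{i,j,k}(x_\ell)\right| \leq 1$ for every $1 \leq i,j \leq 8$ and $0 \leq k \leq 1$; replacing $(x,f)$ with one such summand, the projections $v_{i,k}^x = \proj_{\calH_k^x \cap [b]^{\perp}}(v_i)$ are either zero or primitive with pairwise algebraic intersections in $\{-1,0,1\}$ for the eight chosen elements. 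Then, by Lemma~\ref{wedgegenlemma} applied inside $\calH_0^x$ and $\calH_1^x$, the class $f$ is a linear combination of pure wedges $r_1 \wedge r_2 \wedge r_3$ with all $r_s$ in a common $\calH_k^x$; the bounded-invariant setup ensures the nonzero projections of $\calV$ into $\calH_k^x$ retain a primitive, linearly independent subfamily of size at least seven, and Lemma~\ref{unimodwedgethreept2lemma} then decomposes each such pure wedge into a sum of terms lying in $\midwedge^3(\calW_\ell)^{\perp} \cap \midwedge^3 \calH_k^x$ for various $\calW_\ell \subseteq \calV$ of size four. In particular each resulting summand lies in $\midwedge^3 v_{i_\ell}^{\perp} \cap A_x$ for some $v_{i_\ell} \in \calV$.

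Next, for each summand $(x, f_\ell)$ with $f_\ell \in A_x \cap \midwedge^3 v_{i_\ell}^{\perp}$, I would adjust $x$ to an edge compatible with $v_{i_\ell}$. If $v_{i_\ell}$ is already in $\calH_0^x \cup \calH_1^x$ there is nothing to do. Otherwise, using $g \geq \finalbound$, I would construct a primitive subgroup $\calH \subseteq [a]^{\perp}$ with $[a], v_{i_\ell} \in \calH$, containing all the elements needed to express the pure-wedge factors of $f_\ell$, and with $\calH \cap \calH_k^x$ of sufficiently large genus for both $k = 0, 1$. The corresponding edge $y \subseteq X_g$ then shares a $2$--cell $\sigma$ with $x$ in which $f_\ell$ lifts to $A_\sigma$; writing $z$ for the third edge of $\sigma$, the relation $d_{2,1}^1(\sigma, f_\ell) = 0$ in $\bE_{1,1}^2$ yields $(x,f_\ell) = \pm (y, f_\ell) \pm (z, f_\ell)$, and by the construction of $\calH$ both $y$ and $z$ are compatible with $v_{i_\ell}$, placing both resulting classes in $\bE_{1,1}^{2, v_{i_\ell}}$.

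The main obstacle will be the construction in the last step: producing the primitive subgroup $\calH$ that simultaneously contains $v_{i_\ell}$, supports the pure-wedge expression of $f_\ell$, and meets both $\calH_0^x$ and $\calH_1^x$ in subgroups of sufficiently large genus. This is in the spirit of the constructions inside Lemmas~\ref{11rkshrinklemma} and~\ref{11finquotshrinkgenlemma}, but the additional constraint $v_{i_\ell} \in \calH$ eats into the available dimension, and it is here that the genus bound $g \geq \finalbound$ will be used crucially to provide the slack needed to arrange all of these conditions at once.
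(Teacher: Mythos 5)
Your proposal reverses the order of the paper's argument, and in doing so runs into an obstruction that the paper's order was designed to avoid. The paper first reduces $f$ (via Lemma~\ref{11stabedgefixlemma}) to a bounding pair map supported on a genus-$2$ subsurface with homology image $V$, \emph{then} decomposes the edge $[x]$ inside $H_1(X_g^V;\Q)$ using Lemma~\ref{genus2homolcycleh1genlemma} — a genuine $H_1$ statement, not a single cell move — and only afterwards applies Lemma~\ref{unimodwedgethreelemma} to $f$. You instead first decompose $f$ into pieces $f_\ell \in \midwedge^3 v_{i_\ell}^{\perp} \cap A_x$ and then try to fix the edge one step at a time by a single $2$--cell relation $(x,f_\ell) = \pm(y,f_\ell) \pm(z,f_\ell)$.

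The gap is in that last step, and I do not think it can be patched as stated. Suppose $x \notin X_g^{v_{i_\ell}}$, so $v_{i_\ell} = v_0 + v_1$ with $v_k \in \calH_k^x$ and neither $v_k$ in $\Z[a]$. Any $2$--cell $\sigma$ containing $x$ refines exactly one of $\calH_0^x, \calH_1^x$, say $\calH_1^x = \calH_1 + \calH_2$, so that $\calH(y) = \{\calH_1, \calH_0^x + \calH_2\}$ and $\calH(z) = \{\calH_2, \calH_0^x + \calH_1\}$. Since $v_{i_\ell} \notin \calH_1^x$, the only way for $y \in X_g^{v_{i_\ell}}$ is $v_1 \in \calH_2$, and the only way for $z \in X_g^{v_{i_\ell}}$ is $v_1 \in \calH_1$; these force $v_1 \in \calH_1 \cap \calH_2 \subseteq \Z[a]$, a contradiction. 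So at most one of $y, z$ is compatible with $v_{i_\ell}$, and the class $(z, f_\ell)$ (say) is not obviously in $\im(\xi)$. Your suggested construction of $\calH$ has a symptom of the same problem: if $\calH \cap \calH_k^x$ has large genus for \emph{both} $k$, then neither $\calH$ nor $\calH^{\perp}$ is contained in a single piece of $\calH(x)$, so the edge $\{\calH, \calH^{\perp}\}$ does not share a $2$--cell with $x$ at all; while if $\calH$ lies inside one piece of $\calH(x)$, it cannot contain $v_{i_\ell}$. This is precisely why the paper moves the edge first, while $f$ is still a \emph{single} class supported on a small subsurface $V$: then $f \in A_\sigma$ for every $2$--cell $\sigma \subseteq X_g^V$, so a whole $2$--chain in $X_g^V$ realizing a relation in $H_1(X_g^V;\Q)$ can be hit with $d_{2,1}^1$. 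After $f$ has been split up across several $\midwedge^3 v_i^{\perp}$ pieces, this global move is no longer available, and single $2$--cell moves are not enough.

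Separately, the normalization via Lemmas~\ref{11rkonelemma} and~\ref{11algonelemma} does not actually help here: those lemmas were designed so that the \emph{$G$--orbit} of the resulting edges is finite, which is what Lemma~\ref{11finquotlemma} needs, but bounded invariants do not make $x$ compatible with any particular $v_i$, which is the obstruction you face. You would do better to drop that step and instead prove the analogues of the paper's Lemma~\ref{11stabedgefixlemma} (reducing $f$ to a class supported on a genus-$2$ subsurface disjoint from a lift of $x$) and Lemma~\ref{genus2homolcycleh1genlemma} (the $H_1$ surjectivity statement for $X_g^V$), which is where the real work lives.
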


\p{The outline of the proof of Lemma~\ref{11stabgenlemma}} We will prove Lemmas~\ref{unimodwedgelemma},~\ref{unimodwedgelemmapt2} and~\ref{genus2homolcycleh1genlemma}.  The first two lemmas are statements about generating sets for vector spaces equipped with alternating forms, while the last is a statement about a generating set for the first rational homology of a certain subcomplex of $X_g$.  In order to prove Lemma~\ref{genus2homolcycleh1genlemma}, we will prove Lemma~\ref{h1xgqdescription} and Lemma~\ref{h1torellipullbacklemma}, which are auxiliary results about rational abelianizations of subgroups and quotients of the Torelli group.  We use Lemma~\ref{unimodwedgelemma}, Lemma~\ref{unimodwedgelemmapt2},  and Lemma~\ref{genus2homolcycleh1genlemma} to prove Lemma~\ref{11stabgenlemma}. 

\begin{lemma}\label{unimodwedgelemma}
Let $V$ be a finite dimensional $\Q$--vector space equipped with an alternating form $\langle \cdot, \cdot \rangle$.  Let $v_1,v_2,v_3$ be elements in $V$ such that the image of the set $\{v_1,v_2,v_3\}$ under the adjoint map $V \rightarrow \ho_{\Q}(V,\Q)$ is linearly independent.  Then the natural map
\begin{displaymath}
\psi: \bigoplus_{i \in \{1,2,3\}} \midwedge^2 v_i^{\perp} \rightarrow \midwedge^2V
\end{displaymath}
\bn is surjective.
\end{lemma}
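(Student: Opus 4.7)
The plan is to imitate the proof of Lemma~\ref{unimodwedgethreelemma}, using the inclusion--exclusion principle for vector spaces recorded just before it. Let $n = \dim V$, set $V_i = \midwedge^2 v_i^{\perp} \subseteq \midwedge^2 V$ for $i = 1,2,3$, and write $W = V_1 + V_2 + V_3 = \im(\psi)$. The goal is to compute $\dim W$ and compare it with $\dim \midwedge^2 V = \binom{n}{2}$.

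First, I would verify the uniform-dimension hypothesis needed to apply inclusion--exclusion. For any nonempty $\calS \subseteq \{1,2,3\}$, we have $\bigcap_{i \in \calS} V_i = \midwedge^2 \bigl(\bigcap_{i \in \calS} v_i^{\perp}\bigr)$. Because the images of $v_1,v_2,v_3$ under the adjoint $V \to \ho_{\Q}(V,\Q)$ are linearly independent, $\dim \bigcap_{i \in \calS} v_i^{\perp} = n - |\calS|$, so
\begin{displaymath}
\dim \bigcap_{i \in \calS} V_i = \binom{n - |\calS|}{2},
\end{displaymath}
depending only on $|\calS|$. Since the natural map $V_1 \oplus V_2 \oplus V_3 \twoheadrightarrow W$ is surjective by definition, the inclusion--exclusion principle for vector spaces yields
\begin{displaymath}
\dim W \;=\; \sum_{k=1}^{3} (-1)^{k+1}\binom{3}{k}\binom{n-k}{2} \;=\; 3\binom{n-1}{2} - 3\binom{n-2}{2} + \binom{n-3}{2}.
\end{displaymath}

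The remaining step is a straightforward polynomial computation: expanding $\binom{m}{2} = m(m-1)/2$ and simplifying, one finds that this alternating sum equals $\binom{n}{2}$. Hence $\dim W = \dim \midwedge^2 V$, and since $W \subseteq \midwedge^2 V$ we conclude $W = \midwedge^2 V$, i.e.\ $\psi$ is surjective.

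There is essentially no obstacle here; the only thing to be slightly careful about is justifying that the elements $v_1,v_2,v_3$ being linearly independent in $\ho_{\Q}(V,\Q)$ really does force $\dim\bigcap_{i \in \calS} v_i^{\perp} = n - |\calS|$ (rather than something larger caused by degeneracy of the form). This follows because $\bigcap_{i \in \calS} v_i^{\perp}$ is by definition the kernel of the linear map $V \to \Q^{|\calS|}$ sending $w$ to $(\langle v_i, w \rangle)_{i \in \calS}$, and the image of this map is $|\calS|$-dimensional precisely because the functionals $\langle v_i, \cdot \rangle$ are linearly independent by hypothesis.
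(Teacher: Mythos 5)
Your proposal is correct and follows essentially the same approach as the paper's proof: apply the inclusion--exclusion principle for vector spaces set up before Lemma~\ref{unimodwedgethreelemma}, and verify via a dimension count that $3\binom{n-1}{2}-3\binom{n-2}{2}+\binom{n-3}{2}=\binom{n}{2}$. You additionally spell out two points the paper leaves implicit, namely that $\bigcap_{i\in\calS}\midwedge^2 v_i^{\perp}=\midwedge^2\bigl(\bigcap_{i\in\calS}v_i^{\perp}\bigr)$ and that $\dim\bigcap_{i\in\calS}v_i^{\perp}=n-|\calS|$, which is a reasonable amount of extra care but does not change the argument.
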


\begin{proof}
This follows by a dimension count.  Let $\calB = \{a_1,\ldots, a_n\}$ be a basis for $V$ such that $\langle v_i,a_j\rangle = \delta_{ij}$.  Let $\calB_i = \{a_1,\ldots, \widehat{a_i}, \ldots, a_n\}$, which is a basis for $v_i^{\perp}$ for any $i \in \{1,2,3\}$.  The vector space $\midwedge^2 V$ has a basis consisting of pairs of elements in $\calB$.  Each pair of these elements is contained in at least one $\calB_i$.  Therefore $\im(\psi)$ contains a basis for $\midwedge^2 V$, so $\psi$ is surjective.
\end{proof}
\bn We now extend Lemma~\ref{unimodwedgelemma} as follows.
\begin{lemma}\label{unimodwedgelemmapt2}
Let $V$ be a finite dimensional $\Q$--vector space equipped with an alternating form $\langle \cdot, \cdot \rangle$.  Let $\calV = \{v_1,\ldots, v_n\}$ be a set of elements in $V$ with $\left|\calV\right| \geq 3$ such that the image of $\calV$ under the adjoint map $V \rightarrow \ho_{\Z}(V,\Z)$ is linearly independent.  Let $m \leq \left|\calV\right| -2$ be a natural number.  Then the natural map
\begin{displaymath}
\psi_m: \bigoplus_{\calV' \subseteq \calV: \left| \calV'\right| = m} \midwedge^2\left(\calV'\right)^{\perp}\rightarrow \midwedge^2 V
\end{displaymath}
\bn is surjective.
\end{lemma}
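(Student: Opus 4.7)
The plan is to prove the lemma by induction on $m$, mimicking the strategy used in the proof of Lemma~\ref{unimodwedgethreept2lemma} earlier in the paper. For the base case $m = 1$, the hypothesis $m \leq n - 2$ forces $n \geq 3$, so we may select three elements $v_{i_1}, v_{i_2}, v_{i_3} \in \calV$. Their adjoint images in $\ho_{\Q}(V,\Q)$ remain linearly independent by the hypothesis on $\calV$, and Lemma~\ref{unimodwedgelemma} applied to this triple yields a surjection $\bigoplus_{j=1}^3 \midwedge^2 v_{i_j}^{\perp} \to \midwedge^2 V$. This map factors through $\psi_1$, so $\psi_1$ is surjective.

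For the inductive step, assume $\psi_{m-1}$ is surjective with $m \leq n-2$. Since the image of $\psi_{m-1}$ is all of $\midwedge^2 V$, it suffices to show that for every $\calV'' \subseteq \calV$ with $|\calV''| = m-1$, the image of the natural map $\midwedge^2(\calV'')^{\perp} \to \midwedge^2 V$ is contained in $\im(\psi_m)$. Fix such a $\calV''$. Because $m \leq n-2$, we have $|\calV \setminus \calV''| \geq n - (m-1) \geq 3$, so we may select three elements $v_{i_1}, v_{i_2}, v_{i_3} \in \calV \setminus \calV''$. Each of these determines a linear functional $f_j = \langle v_{i_j}, \cdot \rangle$ on the restricted vector space $(\calV'')^{\perp}$. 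I claim that $f_1, f_2, f_3$ are linearly independent: a relation $\sum_j c_j f_j = 0$ on $(\calV'')^{\perp}$ says that $\sum_j c_j \langle v_{i_j}, \cdot \rangle$ annihilates $(\calV'')^{\perp}$, which forces it to lie in the span of $\{\langle v'', \cdot \rangle : v'' \in \calV''\}$ inside $\ho_{\Q}(V,\Q)$; the linear independence of the adjoint images of $\calV$ then forces each $c_j = 0$.

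With this linear independence in hand, we repeat the inclusion--exclusion dimension count from the proof of Lemma~\ref{unimodwedgelemma}, now applied inside $(\calV'')^{\perp}$ using the three codimension-one subspaces $\ker(f_j) = (\calV'')^{\perp} \cap v_{i_j}^{\perp} = (\calV'' \cup \{v_{i_j}\})^{\perp}$. This produces a surjection
\begin{displaymath}
\bigoplus_{j=1}^3 \midwedge^2(\calV'' \cup \{v_{i_j}\})^{\perp} \;\twoheadrightarrow\; \midwedge^2 (\calV'')^{\perp}.
\end{displaymath}
Each summand on the left is a direct summand of $\psi_m$ (taking $\calV' = \calV'' \cup \{v_{i_j}\}$), so composing shows $\im(\midwedge^2(\calV'')^{\perp} \to \midwedge^2 V) \subseteq \im(\psi_m)$, completing the inductive step. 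The only subtle point is that the $v_{i_j}$ do not themselves lie in $(\calV'')^{\perp}$; however, the proof of Lemma~\ref{unimodwedgelemma} depends only on the three codimension-one subspaces determined by the $f_j$ and their intersection dimensions, so this causes no difficulty. As in the earlier Lemma~\ref{unimodwedgethreept2lemma}, the main work is in verifying the linear independence of the restricted functionals, which here follows directly from the annihilator computation.
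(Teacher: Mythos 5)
Your proof takes essentially the same approach as the paper's: the same induction on $m$, the same base case via Lemma~\ref{unimodwedgelemma}, and the same reduction in the inductive step showing $\midwedge^2(\calV'')^{\perp} \subseteq \im(\psi_m)$ for each $(m-1)$-element subset $\calV''$. Your explicit annihilator argument verifying linear independence of the restricted functionals $f_j$ on $(\calV'')^{\perp}$ is correct and spells out a technical point (that the $v_{i_j}$ need not lie in $(\calV'')^{\perp}$) which the paper leaves to the reader.
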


\begin{proof}
This follows by induction on $m$.

\p{Base case: $m = 1$} This follows from Lemma~\ref{unimodwedgelemma}.

\p{Inductive step: the lemma holds for $m' < m$} Let $\psi_m$ be as in the lemma.  By the inductive hypothesis, the map $\psi_{m-1}$ is surjective.  Hence it suffices to show that $\midwedge^2\left(\calW^{\perp}\right)\subseteq \im(\psi_m)$ for any $\calW \subseteq \calV$ with $\left| \calW \right| = m - 1$.  Let $\calW \subseteq \calV$ be a subset with $\left|\calW \right| = m-1$.  Since we have chosen $m$ with $m \leq \left|\calV\right| - 2$, we have $\left|\calV \setminus \calW\right| \geq 3$.  Hence Lemma~\ref{unimodwedgelemma} applied to $L = \calW^{\perp}$ and the set $\calV \setminus \calW$ says that the map
\begin{displaymath}
\bigoplus_{v \in \calV \setminus \calW} \midwedge^2(v^{\perp} \cap \calW^{\perp}) \rightarrow \midwedge^2 \calW^{\perp}
\end{displaymath}
\bn is surjective.  Since $\midwedge^2(v^{\perp} \cap \calW^{\perp}) \subseteq \im(\varphi_m)$ for any $v \in \calV \setminus \calW$ because $\left|\{v\} \cup \calW\right| = m$, we have $\midwedge^2 \calW^{\perp} \subseteq \im(\psi_m)$.  Hence $\im(\psi_m) = \im(\psi_{m-1})$ and the inductive hypothesis says that $\im(\psi_{m-1})$ is surjective, so $\im(\psi_m)$ is surjective as well.  
\end{proof}

\bn We will now give an explicit description of the vector space $H_1(X_g;\Q)$.  We first give an alternate description of the Johnson homomorphism.

\p{The Johnson homomorphism, alternate version} Let $\pi_1^{(k)}(S_g)$ denote the $k$th term of the lower central series of the fundamental group of $\pi_1(S_g)$ (we suppress the basepoint in the notation, since the choice of basepoint does not affect the construction).  The \textit{Johnson homomorphism} is a map
\begin{displaymath}
\tau_g: \cI_g \rightarrow \ho_{\Z}\left(\pi_1(S_g)/\pi_1^{(1)}(S_g), \pi_1^{(1)}(S_g)/\pi_1^{(2)}(S_g)\right).
\end{displaymath}
\bn If $\gamma \in \pi_1(S_g)$ is a loop and $f \in \cI_g$ is a mapping class, then $\tau_g(f)(\gamma) = \gamma^{-1}f(\gamma)$, where the element $\gamma^{-1}f(\gamma)$ is only defined up to conjugation by $\pi_1(S_g)$.  Johnson showed that this is well defined as a map $\pi_1(S_g)/\pi_1^{(1)}(S_g) \rightarrow \pi_1^{(1)}(S_g)/\pi_1^{(2)}(S_g)$.  Now, if $\omega' \in \midwedge^2 H_1(S_g;\Z)$ is given by $a_1 \wedge b_1 + \ldots + a_g \wedge b_g$ for $\{a_i,b_i\}_{1 \leq i \leq g}$ a symplectic basis for $H_1(S_g;\Z)$, then the Johnson homomorphism can be rewritten as a map
\begin{displaymath}
\tau_g: \cI_g \rightarrow \ho_{\Z}\left(H_1(S_g;\Z), \midwedge^2 H_1(S_g;\Z)/\Z\omega'\right).
\end{displaymath}
\bn This description of the Johnson homomorphism allows us to prove Lemma~\ref{h1xgqdescription}.  If $\calW \subseteq H_1(S_g;\Z)$ is a set of elements, recall that $X_g^{\calW} \subseteq X_g$ denotes the subcomplex of $X_g$ generated by elements $\sigma$ such that $\calH(\sigma)$ is compatible with $\calW$, i.e., for each $w\in \calW$ there there is an $\calH \in \calH(\sigma)$ such that $w \in \calH$.  If $\alpha_1,\beta_1,\ldots, \alpha_g, \beta_g$ are a symplectic basis for $H_1(S_g;\Z)$ with $[a] = \alpha_1$, let $\omega'_a = \alpha_2 \wedge \beta_2 + \ldots + \alpha_g \wedge \beta_g$.  If $V \subseteq H_1(S_g;\Q)$ is a subspace, let $X_g^V$ denote the subcomplex of $X_g$ consisting of cells $x$ such that $V \subseteq \calH \otimes \Q$ for some $\calH \in \calH(\sigma)$.

\begin{lemma}\label{h1xgqdescription}
Let $g \geq 4$ and let $a \subseteq S_g$ be a nonseparating simple closed curve.  Then there is an isomorphism $\overline{\tau_g}: H_1(X_g;\Q) \cong \midwedge^2[a]^{\perp} \otimes \Q/\Q\omega'_a$.  Furthermore, this isomorphism is functorial in the following sense.  Let $\calW \subseteq [a]^{\perp}$ be a set of elements such that $g(\calW^{\perp} \cap [a]^{\perp}) \geq 1$.  Then $\im(H_1(X_g^{\calW};\Q) \rightarrow H_1(X_g;\Q) \rightarrow \midwedge^2 [a]^{\perp} \otimes \Q/\Q\omega_a') \supseteq \im(\midwedge^2(\calW^{\perp} \cap [a]^{\perp}) \otimes \Q\rightarrow \midwedge^2 [a]^{\perp} \otimes \Q/\Q\omega_a') $.
\end{lemma}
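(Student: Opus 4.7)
Plan: I would prove this by applying the equivariant homology spectral sequence $\bE^r_{p,q}$ for the action of $\cI_g$ on $\cC_{[a]}(S_g)$. Since the complex is $1$-acyclic for $g \geq 4$ by~\cite[Theorem A]{Minahanhomolconn}, the spectral sequence converges to $H_*(\cI_g;\Q)$ in total degree $\leq 1$. Lemma~\ref{specseqquotlemma} (invoking Putman's injectivity from \cite[Theorem~B]{Putmanjohnson}) gives both $d^1_{1,1}=0$ and the injectivity of $H_1(\Stab_{\cI_g}(a);\Q) \hookrightarrow H_1(\cI_g;\Q)$; together with standard degree counting for the remaining differentials this yields the short exact sequence
\[
0 \to H_1(\Stab_{\cI_g}(a);\Q) \to H_1(\cI_g;\Q) \to H_1(X_g;\Q) \to 0.
\]

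Next, I would identify $H_1(\cI_g;\Q)$ with $\midwedge^3 H_1(S_g;\Q)/H_1(S_g;\Q)$ via Johnson's theorem~\cite{JohnsonIII} and compute the image of $H_1(\Stab_{\cI_g}(a);\Q)$. By Lemma~\ref{putmancorrlemma} applied to $S = S_g \cut a$ (which has genus $g-1 \geq 3$), $\Stab_{\cI_g}(a) = \cI(S_g \cut a, S_g)$ is generated by bounding pair maps; Lemma~\ref{bpjohnsoncomp} evaluates the Johnson map on each generator as $[c] \midwedge \omega_c$ with $[c] \in [a]^{\perp}$ and $\omega_c$ the symplectic form of a subsurface of $S_g$ disjoint from $a$. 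A careful accounting of this image, using Lemma~\ref{wedgegenlemma} to generate $\midwedge^2$ of quasi-unimodular lattices by pairs of primitives of intersection one, identifies the cokernel with $\midwedge^2 [a]^{\perp} \otimes \Q/\Q\omega'_a$. I would then realize $\overline{\tau_g}$ concretely at the edge level by sending an edge $e$ with decomposition $\calH(e) = \{\calH_0^e, \calH_1^e\}$ to $\omega_{\calH_0^e} \in \midwedge^2 [a]^{\perp}/\Q\omega'_a$, where $\omega_{\calH_0^e}$ denotes the symplectic form of a maximal unimodular complement to $\Z[a]$ in $\calH_0^e$. Edge reversal swaps $\calH_0^e$ and $\calH_1^e$ and sends $\omega_{\calH_0^e}$ to $\omega_{\calH_1^e} = \omega'_a - \omega_{\calH_0^e} \equiv -\omega_{\calH_0^e} \pmod{\Q\omega'_a}$, and a $2$-cell decomposition $\{\calK_0, \calK_1, \calK_2\}$ contributes an alternating sum that is a rational multiple of $\omega'_a$, vanishing in the target.

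For the functoriality clause, given $\calW \subseteq [a]^{\perp}$ with $g(\calW^{\perp} \cap [a]^{\perp}) \geq 1$, apply Lemma~\ref{wedgegenlemma} to $L = \calW^{\perp} \cap [a]^{\perp}$ to see that $\midwedge^2(\calW^{\perp} \cap [a]^{\perp})$ is spanned by $\gamma \midwedge \delta$ with $\gamma, \delta$ primitive in $L$ and $\langle\gamma,\delta\rangle=1$. For each such pair, construct the edge $e$ with $\calH_0^e = \Span([a], \gamma, \delta)$ (rank $3$, symplectic genus $1$) and $\calH_1^e$ its symplectic complement in $[a]^{\perp}$; each $w \in \calW$ lies in $[a]^{\perp}$ with $\langle w, \gamma\rangle = \langle w, \delta\rangle = 0$, hence $w \in \calH_1^e$ and $e \in X_g^{\calW}$. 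Since $\overline{\tau_g}([e]) = \gamma \midwedge \delta$, the image of $H_1(X_g^{\calW};\Q) \to \midwedge^2 [a]^{\perp}/\Q\omega'_a$ contains $\midwedge^2(\calW^{\perp} \cap [a]^{\perp})$, as required. The main technical difficulty is the image computation in paragraph two: because $S_g \setminus (S_g \cut a)$ has genus zero, Lemma~\ref{firsthomolratimlemma} does not directly apply to the pair $(S_g \cut a, S_g)$, so the Johnson image of $H_1(\Stab_{\cI_g}(a);\Q)$ must be identified by hand, tracking both the image of $\midwedge^3 [a]^{\perp}$ in $\midwedge^3 H_1(S_g;\Q)/H_1(S_g;\Q)$ and the relations in $H_1(S_g;\Q) \midwedge \omega$ that together reduce the quotient to $\midwedge^2 [a]^{\perp}/\Q\omega'_a$.
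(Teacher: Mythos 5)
Your spectral-sequence setup, the derivation of the short exact sequence via Putman's injectivity, the concrete realization $e \mapsto \omega_{\calH_0^e}$, and your argument for the functoriality clause all essentially match the paper. However, the heart of the lemma --- computing the cokernel of $H_1(\Stab_{\cI_g}(a);\Q) \to H_1(\cI_g;\Q)$ and identifying it with $\midwedge^2[a]^{\perp}\otimes\Q/\Q\omega'_a$ --- is left as a gesture, and the route you propose for it runs into a genuine obstruction that you flag but do not resolve. You correctly note that Lemma~\ref{firsthomolratimlemma} does not apply to $S = S_g\cut a$ because the complement has genus zero, but the issue is worse than a missing hypothesis: the conclusion of that lemma is actually \emph{false} for this $S$. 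A dimension count shows this. The image of $H_1(\Stab_{\cI_g}(a);\Q)$ in $H_1(\cI_g;\Q) \cong \midwedge^3 H_1(S_g;\Q)/H_1(S_g;\Q)$ must have dimension $\binom{2g}{3} - 2g - \binom{2g-1}{2} + 1$ if the lemma is to hold, whereas $\im\bigl(\midwedge^3[a]^{\perp} \to \midwedge^3 H_1(S_g;\Q)/H_1(S_g;\Q)\bigr)$ has dimension $\binom{2g-1}{3}-1$; these differ by $2g-2$. So the stabilizer's image is a proper subspace of $\im(\midwedge^3[a]^{\perp})$, and the quotient of $\midwedge^3 H_1(S_g;\Q)/H_1(S_g;\Q)$ by $\im(\midwedge^3[a]^{\perp})$ is strictly smaller than $\midwedge^2[a]^\perp\otimes\Q/\Q\omega'_a$. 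Consequently, the ``careful accounting'' you defer would have to do two things that your sketch does not address: show that the bounding-pair images of Lemma~\ref{bpjohnsoncomp} generate a codimension-$(2g-2)$ subspace of $\im(\midwedge^3[a]^\perp)$, and then prove that \emph{nothing outside it} appears in $\tau_g(\Stab_{\cI_g}(a))$.

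The paper's actual proof sidesteps this computation entirely by invoking a tool absent from your proposal. It passes to the crossed-homomorphism description $\tau_g: \cI_g \to \ho_{\Z}\bigl(H_1(S_g;\Z), \midwedge^2 H_1(S_g;\Z)/\Z\omega'\bigr)$ and proves the identity
\begin{displaymath}
\im\bigl(H_1(\Stab_{\cI_g}(a);\Q) \to H_1(\cI_g;\Q)\bigr) = \ker\bigl(\eval_{[a]} \circ \tau_g\bigr),
\end{displaymath}
where $\eval_{[a]}$ is evaluation at $[a]$. The inclusion $\subseteq$ is immediate from the crossed-homomorphism description. The nontrivial inclusion $\supseteq$ uses Church's orbit theorem~\cite{Churchorbit}: if $\eval_{[a]}(\tau_g(f)) = 0$, then some $h$ in the Johnson kernel satisfies $ha = f(a)$, so $h^{-1}f \in \Stab_{\cI_g}(a)$ and $[h^{-1}f] = [f]$ in $H_1(\cI_g;\Q)$. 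With the image identified as $\ker(\eval_{[a]})$, the quotient is read off from Johnson's description of $\im(\tau_g)$. This is the missing ingredient; without Church's theorem or an equivalent device, your ``by hand'' computation has no evident path to completion, since it is not enough to produce generators of the image --- one must also bound it from above.
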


\begin{proof}
Since $g \geq 4$, the complex $\cC_{[a]}(S_g)$ is $1$--acyclic by a theorem of the author~\cite[Theorem A]{Minahanhomolconn}.  Therefore the last three terms of the five term exact sequence associated to the equivariant homology spectral sequence for the action of $\cI_g$ on $\cC_{[a]}(S_g)$ form a right exact sequence
\begin{displaymath}
H_1(\Stab_{\cI_g}(a);\Q) \rightarrow H_1(\cI_g;\Q) \rightarrow H_1(X_g;\Q) \rightarrow 0.
\end{displaymath}
\bn We have the following claim.

\p{Claim} We have $\im(H_1(\Stab_{\cI_g}(a);\Q) \rightarrow H_1(\cI_g;\Q)) = \ker(\eval_{[a]} \circ \tau_g)$, where 
\begin{displaymath}
\eval_{[a]}:\ho_{\Z}(H_1(S_g;\Z), \midwedge^2 H_1(S_g;\Z)/\Z\omega') \rightarrow \midwedge^2 H_1(S_g;\Z)/\Z\omega'
\end{displaymath}
\bn is the linear map given by evaluation on $[a]$.

\p{Proof of claim} We prove each containment in turn.  If $f \in \Stab_{\cI_g}(a)$, we have $\tau_g(f)([a]) = 0$, so $f \in \ker(\eval_{[a]})$ and the $\subseteq$ containment holds.  For the $\supseteq$ containment, let $f \in \cI_g$ be a mapping class such that $\eval_{[a]}(\tau_g(f)) = 0$.  Let $\gamma$ be a loop in $\pi_1(S_g)$ such that $\gamma$ is homotopic to $a$ as an unbased loop.  Since the choice of representative of $[a]$ is arbitrary, we must have $\gamma^{-1}f(\gamma) \in \pi_1^{(2)}(S_g)$ since $\tau_g(f)(a) = 0$ by hypothesis.  A theorem of Church~\cite[Theorem 1.1]{Churchorbit} tells us that there is some $h \in \calK_g$, where $\calK_g$ is the Johnson kernel~\cite[Section 6.6]{FarbMarg}, such that $h a = f(a)$, so $h^{-1}f(a) = a$, and therefore $h^{-1}f \in \Stab_{\cI_g}(a)$.  Johnson~\cite[Lemma 4A]{Johnsonhomomorphism} showed that $h \in \ker(\tau_g)$, so $[h^{-1}f] = [f] \in H_1(\cI_g;\Q)$, since $\im(\tau_g) \otimes \Q \cong H_1(\cI_g;\Q)$~\cite{JohnsonIII}.  Therefore $[f] \in \im(H_1(\Stab_{\cI_g}(a);\Q) \rightarrow H_1(\cI_g;\Q))$, so the claim holds.

\medskip

Given the claim, we have an exact sequence
\begin{displaymath}
H_1(\Stab_{\cI_g}(a);\Q) \rightarrow H_1(\cI_g;\Q) \xrightarrow{\eval_{[a]}} \midwedge^2H_1(S_g;\Q)/\Q\omega'.
\end{displaymath}
\bn Therefore, it suffices to show that $\im\left(\eval_{[a]}\right) = \midwedge^2[a]^{\perp}/\Q \omega'_a$.  This is a consequence of Johnson's computation of the image of the Johnson homomorphism~\cite[Theorem 1]{Johnsonhomomorphism}.

We now use Lemma~\ref{wedgegenlemma} to prove the second part of the lemma.  In particular, it suffices to show that any element $\gamma \wedge \delta \in \midwedge^2 \calW^{\perp} \cap \midwedge^2 [a]^{\perp}$ with $\gamma, \delta$ primitive and $\langle \gamma, \delta \rangle = 1$ lies in the image of the composition
\begin{displaymath}
H_1(X_g^{\calW};\Q) \rightarrow H_1(X_g;\Q) \rightarrow \midwedge^2[a]^{\perp} \otimes \Q/Q\omega'_a.
\end{displaymath}
\bn Let $x \subseteq X_g$ be an edge, $\widehat{x}$ a lift of $x$ to $\cC_{[a]}(S_g)$ with one vertex equal to $a$, and $f \in \cI_g$ a mapping class taking $a$ to the other endpoint of $\widehat{x}$.  By the construction of the equivariant homology spectral sequence, the class $[x] \in H_1(X_g;\Q)$ is the image under the map $H_1(\cI_g;\Q) \rightarrow H_1(X_g;\Q)$ of the class $[f] \in H_1(\cI_g;\Q)$.  Then by alternate definition of $\tau_g$ given before the lemma, we have $\eval_{[a]}(f) = \omega_{\calH}$, where $\omega_\calH \in \midwedge^2 H_1(S_g;\Q)$ is the characteristic element for some $\calH \in \calH(x)$.  Then for any $\gamma \wedge \delta$ with $\gamma, \delta$ primitive and $\langle \gamma, \delta \rangle = 1$, there is an $x \subseteq X_g$ and $\calH \in \calH(x)$ with $\omega_\calH = \gamma \wedge \delta$.  The set of such $\gamma \wedge \delta$ spans $\midwedge^2 [a]^{\perp} \otimes \Q$ by Lemma \ref{wedgegenlemma}, so the image of the composition 
\begin{displaymath}
H_1(X_g^{\calW};\Q) \rightarrow H_1(X_g;\Q) \rightarrow \midwedge^2[a]^{\perp} \otimes \Q/Q\omega'_a.
\end{displaymath}
\bn contains $\im(\midwedge^2(\calW^{\perp} \cap [a]^{\perp}) \otimes \Q\rightarrow \midwedge^2 [a]^{\perp} \otimes \Q/\Q\omega_a')$, so the lemma is complete.
\end{proof}

\bn We will need another auxiliary lemma about the rational abelianizations of the Torelli groups of surfaces.  

\begin{lemma}\label{h1torellipullbacklemma}
Let $g \geq 4$, and let $\Tau_0, \Tau_1$ be two surfaces each equipped with an embedding $\iota^i:\Tau_i \hookrightarrow S_g$.  Assume that the following hold:
\begin{multicols}{2}
\begin{itemize}
\item each embedding $\iota^i$ is clean,
\item $S_g \cut \iota^i(\Tau_i)$ is connected,
\item $g(\Tau_i) \geq 3$ for $i = 0,1$,
\item the pullback $S$ of the maps $\iota^0$ and $\iota^1$ is a connected, smooth manifold, 
\item $S_g \cut \left(\iota^0(\Tau_0) \cap \iota^1(\Tau_1)\right)$ is connected, and
\item $g(S) \geq 3$.
\end{itemize}
\end{multicols}
\bn The following commuting square

\centerline{\xymatrix{
H_1(\cI(S, S_g);\Q) \ar[r]\ar[d]& H_1(\cI(\Tau_1,S_g);\Q) \ar[d]^{\iota_*^1}\\
H_1(\cI(\Tau_2,S_g);\Q) \ar[r]^{\iota_*^2}& H_1(\cI_g;\Q),
}}

\bn is a pullback square.
\end{lemma}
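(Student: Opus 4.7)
The plan is to translate the statement into a claim about subspaces of an exterior algebra via the Johnson homomorphism. Since $S, \Tau_0, \Tau_1$ all satisfy the hypotheses of Lemma~\ref{firsthomolratimlemma}---clean embedding with connected complement in $S_g$ and genus at least three---applying that lemma to each identifies the given square with
\[
\begin{array}{ccc}
\midwedge^3 H_1(S;\Q) & \longrightarrow & \midwedge^3 H_1(\Tau_1;\Q) \\
\big\downarrow & & \big\downarrow \\
\midwedge^3 H_1(\Tau_0;\Q) & \longrightarrow & \midwedge^3 H_1(S_g;\Q)/H_1(S_g;\Q),
\end{array}
\]
in which every arrow is the natural injection induced by an inclusion of first homology; the injectivity of the outermost arrows is precisely the second half of Lemma~\ref{firsthomolratimlemma}. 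Because the square consists entirely of injections, it is a pullback if and only if
\[
\midwedge^3 H_1(S;\Q) \;=\; \midwedge^3 H_1(\Tau_0;\Q) \cap \midwedge^3 H_1(\Tau_1;\Q)
\]
as subspaces of $\midwedge^3 H_1(S_g;\Q)/H_1(S_g;\Q)$.

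The next step is to establish the analogue at the level of first homology, namely $H_1(S;\Q) = H_1(\Tau_0;\Q) \cap H_1(\Tau_1;\Q)$ as subspaces of $H_1(S_g;\Q)$. The inclusion $\subseteq$ is immediate from $S \subseteq \Tau_i$. For the reverse, I would apply the Mayer--Vietoris sequence to $\Tau_0 \cup \Tau_1$ (replacing each $\Tau_i$ by an open regular neighborhood). The connectedness of $S = \Tau_0 \cap \Tau_1$ yields a short exact sequence $0 \to H_1(S;\Q) \to H_1(\Tau_0;\Q) \oplus H_1(\Tau_1;\Q) \to H_1(\Tau_0 \cup \Tau_1;\Q) \to 0$, and the hypothesis that $S_g \setminus S$ is connected---together with a short analysis of which boundary circles of $\Tau_0 \cup \Tau_1$ become null in $S_g$---forces $H_1(\Tau_0 \cup \Tau_1;\Q) \to H_1(S_g;\Q)$ to be injective, whence a diagram chase delivers the intersection identity. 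A standard adapted-basis argument then promotes this to the exterior-algebra identity $\midwedge^3 H_1(S;\Q) = \midwedge^3 H_1(\Tau_0;\Q) \cap \midwedge^3 H_1(\Tau_1;\Q)$ inside $\midwedge^3 H_1(S_g;\Q)$.

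The main obstacle is descending this identity from $\midwedge^3 H_1(S_g;\Q)$ to the quotient $\midwedge^3 H_1(S_g;\Q)/H_1(S_g;\Q)$, since intersection does not commute with quotients in general. Given a class in the intersection of the two quotient images, I would lift it to elements $\tilde{\alpha}_0 \in \midwedge^3 H_1(\Tau_0;\Q)$ and $\tilde{\alpha}_1 \in \midwedge^3 H_1(\Tau_1;\Q)$ whose difference lies in $H_1(S_g;\Q) \wedge \omega$, and then use the injectivity already extracted in the proof of Lemma~\ref{firsthomolratimlemma}---namely that $\midwedge^3 H_1(\ast;\Q) \cap \bigl(H_1(S_g;\Q) \wedge \omega\bigr) = 0$ inside $\midwedge^3 H_1(S_g;\Q)$ for each $\ast \in \{S, \Tau_0, \Tau_1\}$---to adjust the lifts to a common element of $\midwedge^3 H_1(S;\Q)$. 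This descent step is the technical heart of the argument.
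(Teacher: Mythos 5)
Your first three steps follow the paper's own route (translating via Lemma~\ref{firsthomolratimlemma} and reducing to the $H_1$-level intersection identity, then promoting to $\midwedge^3$), and you are right to flag the descent to the quotient $\midwedge^3 H_1(S_g;\Q)/H_1(S_g;\Q)$ as the real issue --- the paper's own proof passes over it silently, hiding it inside the phrase ``$\midwedge^3$ sends pullbacks of monomorphisms to pullbacks of monomorphisms,'' which only produces a pullback square with codomain $\midwedge^3 H_1(S_g;\Q)$, not the quotient. So you are diagnosing the right obstacle.

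The problem is that the strategy you propose for the descent does not work. Writing $A_i = \midwedge^3 H_1(\Tau_i;\Q)$, $A_S = \midwedge^3 H_1(S;\Q)$, and $K = H_1(S_g;\Q)\wedge\omega$ inside $W = \midwedge^3 H_1(S_g;\Q)$, you take $\tilde\alpha_0 \in A_0$, $\tilde\alpha_1 \in A_1$ with $\tilde\alpha_0 - \tilde\alpha_1 \in K$ and want to ``adjust'' them to a common representative in $A_S$. But there is no room to adjust: if $\tilde\alpha_0 - k \in A_S \subseteq A_0$ for some $k \in K$, then $k \in A_0 \cap K = 0$, so the only candidate adjustment is $k = 0$, i.e.\ you need $\tilde\alpha_0$ itself to lie in $A_1$, which forces $\tilde\alpha_0 = \tilde\alpha_1$. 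What you actually need, then, is $(A_0 + A_1) \cap K = 0$, and this is \emph{strictly stronger} than the pair of conditions $A_0 \cap K = 0$ and $A_1 \cap K = 0$ supplied by Lemma~\ref{firsthomolratimlemma}. A toy example makes the gap concrete: in $\Q^4 = \langle e_1,e_2,e_3,e_4\rangle$ take $A_0 = \langle e_1,e_2\rangle$, $A_1 = \langle e_1,e_3\rangle$, $K = \langle e_2 + e_3\rangle$; then $A_0 \cap K = A_1 \cap K = 0$ and $A_0 \cap A_1 = \langle e_1\rangle$, yet $\overline{e_2} = \overline{-e_3}$ lies in $\overline{A_0} \cap \overline{A_1}$ but not in $\overline{A_0 \cap A_1}$. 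The injectivity you cite simply does not control the cross-term $(A_0 + A_1) \cap K$, and establishing that it vanishes requires a separate argument in which the hypothesis that $S_g \cut S$ is connected (and in particular that $\Tau_0 \cup \Tau_1$ is a proper subsurface of $S_g$) has to enter in an essential way --- roughly, one needs the piece of $\omega$ supported outside $\Tau_0 \cup \Tau_1$ to be nondegenerate so that no nonzero $v \wedge \omega$ can avoid having a component outside $A_0 + A_1$. Until that step is carried out, the descent remains a genuine gap.
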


\begin{proof}
For each $\Tau_i$, let $\kappa^i$ denote the map $S \rightarrow \Tau_i$, and let $\iota$ denote the composition $\iota^i \circ \kappa^i$.  Note that since $S$ is a pullback, we have $\iota^1 \circ \kappa^1 = \iota^0 \circ \kappa^0$, so the definition of $\iota$ does not depend on $i$.  Now, since we have assumed that each embedding $\iota^i$ is clean, we have $\iota^i_*$ injective by a theorem of Putman~\cite[Theorem B]{Putmanjohnson}.  The map $\iota$ is also clean by hypothesis, and therefore the pushforward $\iota_*$ is injective by the same theorem of Putman, so it suffices to show that $\im(\iota_*) = \im(\iota_*^0) \cap \im(\iota_*^1)$.  Lemma~\ref{firsthomolratimlemma} says that that $\im(\cI(\iota^i)_*) = \im(\midwedge^3 H_1(\Tau_i;\Q) \rightarrow \midwedge^3 H_1(S_g;\Q))$ and $\im(\cI(\iota)_*) = \im(\midwedge^3 H_1(S;\Q) \rightarrow \midwedge^3 H_1(S_g;\Q))$.  Since the functor $\midwedge^3$ from $\Q$--vector spaces to $\Q$--vector spaces sends pullbacks of monomorphisms to pullbacks of monomorphisms, it is enough to show that 
\begin{displaymath}
\im(H_1(\Tau_0;\Q) \rightarrow H_1(S_g;\Q)) \cap \im(H_1(\Tau_1;\Q) \rightarrow H_1(S_g;\Q)) = \im(H_1(S;\Q) \rightarrow H_1(S_g;\Q)). 
\end{displaymath} 
\bn  This follows from our hypotheses that $S_g \cut \iota(S)$ and $S_g \cut \iota^i(\Tau_i)$ are connected, so embedding $S$ and $\Tau_i$ into $S_g$ does not introduce any new relations in $H_1(S;\Q)$ or $H_1(\Tau_i;\Q)$.
\end{proof}

\bn We now prove the following.

\begin{lemma}\label{genus2homolcycleh1genlemma}
Let $L \subseteq [a]^{\perp}$ be a free abelian subgroup with $g(L) = 2$ and $\dim(L \otimes \Q) = 4$.  Let $V = L \otimes \Q$.  Let $\calV$ be as in Lemma~\ref{11stabgenlemma}.  Then the natural map
\begin{displaymath}
\psi: \bigoplus_{\calV' \subseteq \calV: |\calV'| = 4} H_1(X_g^V \cap X_g^{\calV'};\Q) \rightarrow H_1(X_g^V;\Q)
\end{displaymath}
\bn is surjective.
\end{lemma}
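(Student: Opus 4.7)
The plan is to use Lemma~\ref{h1xgqdescription} to identify the images of $H_1(X_g^V;\Q)$ and $H_1(X_g^V \cap X_g^{\calV'};\Q)$ inside $H_1(X_g;\Q) \cong \midwedge^2 [a]^{\perp}/\Z[a] \otimes \Q / \Q\omega'_a$, and then reduce surjectivity of $\psi$ to an application of Lemma~\ref{unimodwedgelemmapt2}. Writing $W = V^{\perp} \cap [a]^{\perp}$, the assumption that $V$ is nondegenerate of genus $2$ forces $V \cap \Z[a] = 0$, so there is a symplectic decomposition $[a]^{\perp}/\Z[a] = V \oplus W/\Z[a]$ with $\omega'_a = \omega_V + \omega_{W/\Z[a]}$. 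For an edge $x \in X_g^V$ with decomposition $\calH(x) = \{\calH_0, \calH_1\}$ and $V \subseteq \calH_0$, the class $[x]$ corresponds to $\omega_{\calH_0/\Z[a]} = \omega_V + \omega_{(\calH_0 \cap W)/\Z[a]}$, which modulo $\Q\omega'_a$ reduces to $-\omega_{\calH_1/\Z[a]}$, a characteristic element of a maximal unimodular subspace of $W/\Z[a]$.

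For $H_1(X_g^V \cap X_g^{\calV'};\Q)$, the compatibility of $\calH(x)$ with $\calV'$ partitions $\calV'$ into pieces landing in $\calH_0$ and $\calH_1$. In the key case where all of $\calV'$ lies on the $V$-containing side, the projection $\pi:[a]^{\perp} \to W$ along $V$ sends $\calV'$ into $\calH_0 \cap W$, forcing $\calH_1 \subseteq W \cap \pi(\calV')^{\perp}$. Adapting the proof of Lemma~\ref{h1xgqdescription} by using Lemma~\ref{wedgegenlemma} to realize each $\gamma \wedge \delta$ with $\gamma, \delta \in W \cap \pi(\calV')^{\perp}$ primitive and $\langle \gamma, \delta \rangle = 1$ as an edge with $\calH_1$ of genus one spanned by $\{\gamma,\delta\}$---possible because $g \geq \finalbound$ provides enough room---I would show that the image of $H_1(X_g^V \cap X_g^{\calV'};\Q)$ covers $\midwedge^2(W \cap \pi(\calV')^{\perp})/\Z[a] \otimes \Q$ modulo $\Q\omega_W$.

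With these identifications in hand, surjectivity of $\psi$ reduces to the wedge identity
\[
\sum_{\calV' \subseteq \calV: |\calV'| = 4} \midwedge^2(\pi(\calV')^{\perp} \cap W/\Z[a]) \otimes \Q = \midwedge^2(W/\Z[a]) \otimes \Q.
\]
If $\pi(\calV)$ has nine linearly independent images, this follows from Lemma~\ref{unimodwedgelemmapt2} with $m = 4$. In the degenerate case where $V \subseteq \mathrm{span}_{\Q}(\calV)$ so that $\pi$ kills four elements of $\calV$ leaving only five linearly independent nonzero images, I would instead apply Lemma~\ref{unimodwedgelemmapt2} with $m = 3$ to those five images, and augment each three-element subset to a four-element subset of $\calV$ by appending one of the four elements projecting to zero, yielding the same wedge term in our sum.

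The main obstacle is transferring surjectivity at the level of images in $H_1(X_g;\Q)$ back to surjectivity of $\psi$ itself. Since the maps $H_1(X_g^V;\Q) \to H_1(X_g;\Q)$ and $H_1(X_g^V \cap X_g^{\calV'};\Q) \to H_1(X_g;\Q)$ may have nontrivial kernels, covering the images alone does not suffice. This is where Lemma~\ref{h1torellipullbacklemma} enters: by interpreting these $H_1$ groups via Torelli group abelianizations of the relevant subsurfaces through the equivariant homology spectral sequence for the $\cI_g$-action on $\cC_{[a]}(S_g)$, the pullback structure on Torelli abelianizations allows us to check that kernel classes are also covered by the sum. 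Carrying out this reduction carefully, together with the case analysis across all partitions of $\calV'$ between the two sides of $\calH(x)$, is the critical technical step.
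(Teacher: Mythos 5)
Your overall strategy matches the paper's: identify the images of $H_1(X_g^V;\Q)$ and $H_1(X_g^V \cap X_g^{\calV'};\Q)$ inside $H_1(X_g;\Q)$ via Lemma~\ref{h1xgqdescription}, reduce the wedge-product surjectivity to Lemma~\ref{unimodwedgelemmapt2}, and recognize that the substantive work is in promoting image-level surjectivity to surjectivity of $\psi$ itself. You correctly flag this last step as the critical technical point, but you leave it as an acknowledged gap rather than closing it, and the framing obscures what is actually needed.

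The paper does not ``check that kernel classes are also covered by the sum'' --- it proves there are no kernel classes. The key claim, which your proposal neither states nor establishes, is that the pushforward $H_1(X_g^V;\Q) \to H_1(X_g;\Q)$ is \emph{injective}. Once this is known, $H_1(X_g^V;\Q)$ is identified with its image $\midwedge^2(V^{\perp} \cap [a]^{\perp}) \otimes \Q$ and the rest is bookkeeping. Proving the injectivity is nontrivial: the paper chooses a subsurface $S_2^1 \subseteq S_g$ carrying $V$, invokes the result of Kent, Leininger, and Schleimer (\cite[Theorem 7.2]{KLS}) to identify $\cC_{[a]}(S_g, S_2^1)$ (curves disjoint from $S_2^1$) with $\cC_{[a]}(S_{g-2})$ up to homotopy, runs the equivariant homology spectral sequence for the action of the relative Torelli group $\cI_{g-2}^1$ on this complex to produce a short exact sequence parallel to the one giving $H_1(X_g;\Q)$, and then uses Lemma~\ref{h1torellipullbacklemma} together with the snake lemma to kill the kernel of $\overline{\rho}: H_1(X_g^V;\Q) \to H_1(X_g;\Q)$. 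None of this machinery --- in particular the KLS identification, which is what makes $H_1(X_g^V;\Q)$ amenable to the same spectral-sequence description as $H_1(X_g;\Q)$ --- appears in your proposal. Lemma~\ref{h1torellipullbacklemma} alone does not suffice; you need the right-exact sequence for $X_g^V$ to even set up the morphism of short exact sequences that the pullback lemma and snake lemma act on.

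A smaller point: your ``degenerate case'' where only five linearly independent projections survive does not arise. The elements of $\calV$ are pairwise orthogonal, so $\Span_{\Q}(\calV)$ is isotropic; its intersection with the nondegenerate $4$-dimensional $V$ therefore has dimension at most $g(V) = 2$, and the paper observes directly that $\proj_{V^{\perp}}(\calV)$ contains at least six linearly independent elements, so $m = 4$ in Lemma~\ref{unimodwedgelemmapt2} is always available. The workaround with $m = 3$ and augmenting subsets is unnecessary.
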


\begin{proof}
We begin with the following claim.

\p{Claim} The pushforward $H_1(X_g^V;\Q) \rightarrow H_1(X_g;\Q)$ is an injection, and the image of this map is sent to $\midwedge^2 V^{\perp} \cap \midwedge^2 [a]^{\perp}$ under the isomorphism $H_1(X_g;\Q) \cong \midwedge^2 [a]^{\perp}/\Q \omega_{\alpha}$ from Lemma~\ref{h1xgqdescription}.

\p{Proof of claim} Let $S_2^1 \subseteq S_g$ be a compact subsurface such that $\im\left(H_1(S_2^1;\Z) \hookrightarrow H_1(S_g;\Z)\right) = V$.  Let $\cI_{g-2}^1$ denote the subgroup of $\cI_g$ generated by elements that fix $\partial S_2^1$ and restrict to the identity on $S_2^1$.  Let $\cC_{[a]}(S_g, S_2^1)$ denote the subcomplex of $\cC_{[a]}(S_g)$ generated by curves $c$ such that $c$ is disjoint from $S_2^1$.  By a result of Kent, Leininger and Schleimer~\cite[Theorem 7.2]{KLS}, the fibers of the natural map $\cC_{[a]}(S_g, S_2^1) \rightarrow \cC_{[a]}(S_{g-2})$ are all trees, so $\cC_{[a]}(S_g, S_2^1)$ is homotopy equivalent to $\cC_{[a]}(S_{g-2})$.  Then a theorem of the author~\cite[Theorem A]{Minahanhomolconn} says that $\cC_{[a]}(S_{g-2})$ is at least 1--acyclic, and thus $\cC_{[a]}(S_g, S_2^1)$ is at least 1--acyclic as well.  Therefore the equivariant homology spectral sequence $\bE_{p,q}^r(\cI^{1}_{g-2},\cC_{[a]}(S_g, S_2^1);\Q)$ converges to $H_1(\cI^{1}_{g-2};\Q)$.  Hence there is a right exact sequence
\begin{displaymath}
H_1(\Stab_{\cI_{g-2}^1}(a);\Q) \rightarrow H_1(\cI_{g-2}^1;\Q) \rightarrow H_1(X_g^V;\Q) \rightarrow 0.
\end{displaymath}
\bn Since $g \geq \finalbound$, we have $g\left(S_{g-2}^1 \cut a\right) \geq 3$.  Then the inclusion $S_{g-2}^1 \cut a \hookrightarrow S_{g-2}^1$ is a clean embedding (as in Section~\ref{homolcurvequotsection}), so a theorem of Putman~\cite[Theorem B]{Putmanjohnson} says that the pushforward map $H_1(\Stab_{\cI_{g-2}^1}(a);\Q) \rightarrow H_1(\cI_{g-2}^1;\Q)$ is an injection.  Hence the above right exact sequence is in fact exact:
\begin{displaymath}
0 \rightarrow H_1(\Stab_{\cI_{g-2}^1}(a);\Q) \rightarrow H_1(\cI_{g-2}^1;\Q) \rightarrow H_1(X_g^V;\Q) \rightarrow 0.
\end{displaymath}
\bn Furthemore, there is a morphism of short exact sequences

\centerline{\xymatrix{
0\ar[r]& H_1(\Stab_{\cI_{g-2}^1}(a);\Q)\ar[d]^{\rho_a}\ar[r]& H_1(\cI_{g-2}^1;\Q) \ar[d]^{\rho}\ar[r]& H_1(X_g^V;\Q) \ar[d]^{\overline{\rho}}\ar[r]& 0\\
0\ar[r]& H_1(\Stab_{\cI_g}(a);\Q)\ar[r]& H_1(\cI_g;\Q) \ar[r]& H_1(X_g;\Q) \ar[r]& 0,
}}

\bn where $\rho_a$ and $\rho$ are the natural pushforward maps.  Lemma~\ref{h1torellipullbacklemma} says that left square is a pullback.  Furthermore, each map in this square is an injection by the aforementioned theorem of Putman, so the map $\cok(\rho_a) \rightarrow \cok(\rho)$ is injective.  Furthermore, since $\rho$ is injective, we have $\ker(\rho) = 0$.  Then the snake lemma says that there is an exact sequence
\begin{displaymath}
\ker(\rho) \rightarrow \ker(\overline{\rho}) \rightarrow \cok(\rho_a) \rightarrow \cok(\rho).
\end{displaymath}
\bn The map $\cok(\rho_a) \rightarrow \cok(\rho)$ is injective and $\ker(\rho) = 0$, so $\ker(\overline{\rho}) = 0$, and in particular we that $\overline{\rho}:H_1(X_g^V;\Q) \rightarrow H_1(X_g;\Q)$ is an injection.  The second part of the claim follows from the fact that for any edge $x \subseteq X_g^V$, the characteristic element $\omega_\calH$ for any $\calH \in \calH(x)$ lies in $\midwedge^2 V^{\perp} \cap \midwedge^2 [a]^{\perp} \otimes \Q$, because $V$ is a vector space.  This implies that $\im(H_1(X_g^V;\Q) \rightarrow H_1(X_g;\Q)$ is contained in $\im(\midwedge^2 V^{\perp} \cap \midwedge^2[a]^{\perp} \otimes \Q$.  Then the second part of Lemma~\ref{h1xgqdescription} gives the containment in the other direction, so we have equality.  

Given the claim, we now continue with the proof of the lemma.  The claim says that $H_1(X_g^V;\Q) \cong \midwedge^2 [V]^{\perp} \cap [a]^{\perp} \otimes \Q$.  Since $\dim(V) = 4$ and the elements of $\calV$ have pairwise trivial algebraic intersection, the set $\proj_{V^{\perp} }(\calV)$ contains at least six linearly independent elements.  Let $\cB \subseteq \calV$ be a subset of maximal size with $\cB' = \proj_{V^{\perp} }(\cB)$ linearly independent.  Since we have assumed that each element $v \in \calV$ has trivial algebraic intersection with $[a]$, we have $\cB \subseteq [a]^{\perp} \cap V^{\perp}$.  Then since $\left|\cB'\right| = 6$, Lemma~\ref{unimodwedgelemmapt2} applied to the set $\cB'$ and the vector space $V^{\perp} \cap [a]^{\perp}$ with $m = 4$ says that the natural map
\begin{displaymath}
\psi: \bigoplus_{\calV' \subseteq \cB': \left|\calV'\right| = 4} \midwedge^2(V^{\perp} \cap (\calV')^{\perp} \cap [a]^{\perp}) \otimes \Q \rightarrow \midwedge^2(V^{\perp} \cap [a]^{\perp}) \otimes \Q
\end{displaymath}
\bn is a surjection.  Then, for any $v \in \calB$, we have $v^{\perp} \cap V^{\perp} = w^{\perp} \cap V^{\perp}$, where $w = \proj_{V}(v)$, so the natural map
\begin{displaymath}
\psi: \bigoplus_{\calV' \subseteq \cB: \left|\calV'\right| = 4} \midwedge^2(V^{\perp} \cap (\calV')^{\perp}) \otimes \Q \rightarrow \midwedge^2(V^{\perp} \cap [a]^{\perp}) \otimes \Q
\end{displaymath}
\bn is a surjection.  We have $H_1(X_g^V) \cong \midwedge^2(V^{\perp} \cap [a]^{\perp}) \otimes \Q$ by the claim.  Furthermore, by Lemma~\ref{h1xgqdescription}, we have $\im(H_1(X_g^V \cap X_g^v);\Q) \rightarrow H_1(X_g;\Q)) \supseteq \midwedge^2(V^{\perp} \cap v^{\perp})$ for any $v \in \calB$.  Then by substituting in each of these homology groups in the previous equality, we see that the natural map
\begin{displaymath}
\psi: \bigoplus_{\calV' \subseteq \cB: \left|\calV'\right| = 4} H_1(X_g^{V} \cap X_g^{\calV'};\Q) \rightarrow H_1(X_g^V;\Q)
\end{displaymath}
\bn is surjective, so the lemma is complete.
\end{proof}

\bn We are now almost ready to conclude Section~\ref{11stabgensection}.  We will first prove the following auxiliary result, and then we will prove Lemma~\ref{11stabgenlemma}.

\begin{lemma}\label{11stabedgefixlemma}
Let $(x,f) \in \bigoplus_{x \in X_g^{(1)}}A_x$ be a class.  There is a linear combination in $\bE_{1,1}^2$ given by
\begin{displaymath}
(x,f)= \sum_{i = 1}^m (x_i, f_i)
\end{displaymath}
\bn such that, for each $1 \leq i \leq m$, each $x_i$ has a representative $\widehat{x}_i \subseteq \cC_{[a]}(S_g)$ and $f_i$ has a representative $F_i \in \Stab_{\cI_g}(\widehat{x}_i) $.  These representatives have the property that there is a subsurface $\Tau_i \subseteq S_g$ with $\Tau_i \cong S_2^1$, $F_i$ supported on $\Tau_i$, and $\widehat{x}_i$ disjoint from $\Tau_i$.
\end{lemma}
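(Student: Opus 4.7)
The plan is to reduce an arbitrary class $(x,f) \in \bigoplus_{x \in X_g^{(1)}} A_x$ to a sum whose summands each have a representative given by a bounding pair map supported on a copy of $S_2^1$ disjoint from the lift $\widehat{x}_i$. Throughout I will keep $x_i = x$ and only decompose $f \in A_x$; it then suffices to exhibit the required equality already at the level of $\bigoplus_{x} A_x$. By definition $A_x$ is the image of $\midwedge^3 \calH_0^x \oplus \midwedge^3 \calH_1^x$ in $\midwedge^3 H_1(S_g;\Q)/H_1(S_g;\Q)$, so by $\Q$--linearity I first reduce to the case where $f$ is a single pure wedge $r_1 \wedge r_2 \wedge r_3$ lying in $\midwedge^3 \calH_k^x$ for some $k \in \{0,1\}$.

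Next I ensure the pure wedge lies in a side with $g(\calH_k^x) \geq 2$. Since $g(\calH_0^x) + g(\calH_1^x) = g - 1 \geq 50$, at least one side has large genus. If $g(\calH_k^x) = 0$ the pure wedge vanishes; if $g(\calH_k^x) = 1$, then $\midwedge^3 \calH_k^x$ is one--dimensional, spanned by $[a] \wedge \alpha \wedge \beta$ for a symplectic pair $(\alpha,\beta)$ in $\calH_k^x$. Using the identity $c \wedge \omega = 0$ in $\midwedge^3 H_1(S_g;\Q)/H_1(S_g;\Q)$ with $c = [a]$, this element equals $-[a] \wedge \omega_{\calH_{1-k}^x}$, where $\omega_{\calH_{1-k}^x}$ is the characteristic element associated to a symplectic basis of the nondegenerate part of $\calH_{1-k}^x$. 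Expanding $\omega_{\calH_{1-k}^x}$ yields a sum of pure wedges in $\midwedge^3 \calH_{1-k}^x$, and $g(\calH_{1-k}^x) \geq g - 2 \geq 49$. Hence I may assume the pure wedge lies in $\midwedge^3 \calH_k^x$ with $g(\calH_k^x) \geq 2$.

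Let $S^* \subseteq S_g$ be the component of $S_g \cut \widehat{x}$ corresponding to $\calH_k^x$, so that $g(S^*) \geq 2$ and $\partial S^* \subseteq \widehat{x}$. Applying Lemma~\ref{wedgegenlemma} to the quasi--unimodular lattice $\calH_k^x$ rewrites $r_1 \wedge r_2 \wedge r_3$ as a $\Q$--linear combination of terms $r \wedge \gamma \wedge \delta$ with $r, \gamma, \delta$ primitive in $\calH_k^x$ and $\langle \gamma, \delta \rangle = 1$. For each such term, Lemma~\ref{bpjohnsoncomp} furnishes a bounding pair $c, c' \subseteq S^*$ with $[c] = r$ cobounding a genus--$1$ subsurface $R \subseteq S^*$ whose first homology has symplectic basis $(\gamma, \delta)$, and satisfying $\tau_g(T_{c,c'}) = r \wedge \gamma \wedge \delta$. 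Because $g(S^*) \geq 2$, I can find a separating curve $d \subseteq S^*$ disjoint from $\partial S^* \cup c \cup c'$ that cuts off a subsurface $\Tau \cong S_2^1$ containing $R$, with complement $S_{g(S^*)-2}^3$, which is a valid surface since $g(S^*) \geq 2$. Then $F_i := T_{c,c'}$ is supported on $\Tau$, lies in $\Stab_{\cI_g}(\widehat{x})$, and its image under the Johnson homomorphism represents $r \wedge \gamma \wedge \delta \in A_x$, as required.

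The main obstacle is the side--switching step in the second paragraph: one must handle the small--genus cases where the pure wedge cannot be directly realized by a bounding pair map supported in that component of $S_g \cut \widehat{x}$. The only genuinely nontrivial subcase is $g(\calH_k^x) = 1$, and the explicit identification modulo $H_1(S_g;\Q)$ described above resolves it.
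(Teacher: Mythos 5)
Your proposal makes a genuinely different structural choice from the paper: you keep the edge $x_i = x$ fixed and move classes across the two sides of $\widehat x$ algebraically, using the relation $[a]\wedge\omega = 0$ in $\midwedge^3 H_1(S_g;\Q)/H_1(S_g;\Q)$. The paper instead changes edges by applying $d^1_{2,1}$ to a $2$--cell to raise the genus of the relevant side, and only then reduces to bounding pair maps. Your side-switching is a cleaner route to the genus condition. However, there is a genuine gap at the realization step.

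The problem is the terms $r\wedge\gamma\wedge\delta$ with $r = [a]$, which your argument both needs (since $[a]$ lies in the degenerate part of $\calH_k^x$ and wedges containing $[a]$ are required to span $\midwedge^3\calH_k^x$) and produces wholesale in your side-switching step, whose output is $-[a]\wedge\omega_{\calH_{1-k}^x} = -\sum_j [a]\wedge\alpha_j\wedge\beta_j$. For such a term you take a bounding pair $c,c'\subseteq \text{int}(S^*)$ with $[c] = [a]$ cobounding a genus-one surface $R$. But a curve $c\subseteq\text{int}(S^*)$ with $[c] = [a]$ has $[c]_{S^*}$ equal to the class of a boundary component of $S^*$, so $c$ and $c'$ cut $S^*$ into \emph{three} pieces, with $R$ sandwiched between an annular piece abutting one component of $\partial S^*$ and a piece abutting the other. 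The complement $S^*\setminus R$ is then disconnected, with $c$ and $c'$ lying in different components. Your claimed subsurface $\Tau\cong S_2^1$ with $R\subseteq\Tau\subseteq\text{int}(S^*)$ would require a pair of pants in $S^*\setminus R$ meeting both $c$ and $c'$; no such pair of pants exists, and in fact no such $\Tau$ exists for this configuration. This is precisely the obstruction that the paper's Case~2 is designed to remove: it rewrites $[a]\wedge\gamma\wedge\delta$ as $[d]\wedge\gamma\wedge\delta + [e]\wedge\gamma\wedge\delta$ with $[d]+[e]=[a]$ and $[d],[e]\neq[a]$ before invoking the $\Tau$ construction. Your proof omits this step, so the $r=[a]$ case — which the side-switching makes unavoidable — is not handled. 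The fix is the same elementary rewrite; once added, the proof should go through. (A smaller point, shared with the paper: invoking Lemma~\ref{wedgegenlemma} to produce triples $r\wedge\gamma\wedge\delta$ with $\langle\gamma,\delta\rangle=1$ silently also requires $\langle r,\gamma\rangle=\langle r,\delta\rangle=0$ for the appeal to Lemma~\ref{bpjohnsoncomp}; worth making explicit.)
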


\begin{proof}
Let $\widehat{x} \subseteq \cC_{[a]}(S_g)$ be a representative for $x$.  Let $S', S''$ be the connected components of $S_g \cut \widehat{x}$.  We have a surjection $\cI(S', S_g) \times \cI(S'', S_g) \rightarrow \Stab_{\cI_g}(\widehat{x})$, and hence by the K\"unneth formula we have a surjection
\begin{displaymath}
H_1(\cI(S', S_g);\Q) \oplus H_1(\cI(S'', S_g);\Q) \rightarrow H_1(\Stab_{\cI_g}(\widehat{x});\Q).
\end{displaymath}
\bn Hence we may assume that, without loss of generality, $f$ is represented by a mapping class $F \in \cI(S', S_g)$.  Furthermore, we may assume that $g(S') \geq 4$.  Indeed, if $g(S') \leq 3$, then $g \geq \finalbound \geq 7$ implies that there is a 2--cell $\widehat{\sigma} \subseteq \cC_{[a]}(S_g)$ such that $\widehat{x} \subseteq \widehat{\sigma}$, and such that for $\widehat{y}, \widehat{z} \subseteq \widehat{\sigma}$ the other two edges, the connected component of $S_g \cut \widehat{y}$ that contains $S'$ has genus at least 3, and similarly for $\widehat{z}$.  Then since $g(S') \geq 4 \geq 3$, Lemma~\ref{putmancorrlemma} says that $\cI(S', S_g)$ is generated by bounding pair maps.  Then bounding pair maps supported on separating curves vanish in $H_1(\cI_g;\Q)$ since Dehn twists along separating curves vanish under $\tau_g$.  Then Lemma~\ref{firsthomolratimlemma} says that if $\iota:S' \rightarrow S_g$ is the inclusion map, the pushforward $\cI(\iota)_*$ in $H_1$ is injective, so bounding pair maps supported on separating curves are trivial in $H_1(\cI(S',S_g);\Q)$.  Hence the class $f$ is a linear combination of classes represented by bounding pair maps supported on nonseparating curves contained in $S'$, so we may assume that $f$ has a representative $T_{c,c'} \in \cI(S', S_g)$ for $c \cup c'$ a bounding pair with $[c] \neq 0$.  We now have two cases.

\p{Case 1: $[c] \neq [a]$} We first show that we can assume that no connected component of $S' \cut (c \cup c')$ has genus zero.  Suppose otherwise, so one connected component of $S' \cut (c \cup c')$ is a subsurface $P$ with $P \cong S_0^4$.  Since we have assumed that $g(S') \geq 4 \geq 3$, there is another curve $c''$ disjoint from $c \cup c'$ and not equal to $c$ or $c''$ with $[c''] = [c]$ and such that both connected components of $S' \cut (c \cup c'')$ have positive genus.  Now, we have $T_{c,c'} = T_{c,c''} T_{c'', c'}$, and both $c \cup c''$ and $c'' \cup c'$ satisfied the desired condition on the genera of connected components.  Now, assuming that both connected components of $S' \cut (c \cup c')$ have positive genus, we can rewrite $T_{c,c'}$ as a product of bounding pair maps $T_{c_0, c_1} T_{c_1,c_2} \ldots T_{c_{n-1}, c_n}$ such that:
\begin{itemize}
\item $c_0 = c$, $c_n = c'$, and
\item at least one connected component of $S' \cut (c_i \cup c_{i+1})$ has genus one.
\end{itemize}
Hence we may assume without loss of generality that $c \cup c'$ is supported on a surface of genus one with two boundary components.  Then there is a surface $\Tau$ that contains $c \cup c'$ and is disjoint from $\widehat{x}$ such that $\Tau \cong S_2^1$, as desired.

\p{Case 2: $[c] = [a]$} We will reduce to Case 1, by showing that $[T_{c,c'}]\in H_1(\cI(S', S_g);\Q)$ is a sum $[T_{d,d'}] + [T_{e,e'}]$ with $d \cup d'$ and $e \cup e'$ bounding pairs such that $[d], [e] \neq [a]$.  As in Case 1, we may assume that the connected component of $S' \cut (c \cup c')$ that does not contain $\widehat{x}$ has genus one.  Then by Lemma~\ref{firsthomolratimlemma}, we have $H_1(\cI(S', S_g);\Q) \cong \midwedge^3 H_1(S';\Q)$.  By Lemma~\ref{bpjohnsoncomp}, if the connected component $S''$ of $S' \cut (c \cup c')$ that does not contain $\widehat{x}$ has $a,b \in H_1(S'';\Z)$ a pair of primitive elements with $\langle a, b \rangle = 1$, then the image of $T_{c,c'}$ in $\midwedge^3 H_1(S';\Q)$ is $[a] \wedge [b] \wedge [c]$.  Now, choose nonzero primitive $[d], [e] \in H_1(S';\Z)$ such that $[d] + [e] = [c]$ and $[d], [e] \in [a]^{\perp} \cap [b]^{\perp}$.  Then we take $T_{d,d'}$ and $T_{e,e'}$ bounding pair maps in $\cI(S', S_g)$ such that $[T_{d,d'}] = [a] \wedge [b] \wedge [d]$ and $[T_{e,e'}] = [a] \wedge [b] \wedge [e]$.
\end{proof}

\bn We are now ready to conclude Section~\ref{11stabgensection}.

\begin{proof}[Proof of Lemma~\ref{11stabgenlemma}]
Let $(x,f) \in \bigoplus_{x \in X_g^{(1)}} A_x$ be a class, where $x \subseteq X_g$ is an edge and $f \in A_x$.  Let $\widehat{x}$ be a lift of $x$ to $\cC_{[a]}(S_g)$ such that $a$ is a vertex of $\widehat{x}$.  We will show that the image of $(x,f)$ in $\bE_{1,1}^2$ is contained in $\im(\xi)$.  By Lemma~\ref{11stabedgefixlemma}, it suffices to prove the result in the case that $f$ is represented by a bounding pair map $F \in \Stab_{\cI_g}(\widehat{x})$ and that there is an inclusion $\iota: S_2^1 \hookrightarrow S_g$ such that $\widehat{x}$ is disjoint from $\im(\iota)$ and $F$ is supported on $\im(\iota)$.  Let $V = \im(\iota_*)$.  By Lemma~\ref{genus2homolcycleh1genlemma}, there is a linear combination in $H_1(X_g^V;\Q)$ given by 
\begin{displaymath}
[x] = \sum_{i = 0}^n \lambda_i [x_i]
\end{displaymath}
\bn such that each $x_i$ is contained in $X_g^{\calV'} \cap X_g^{V}$ for some $\calV' \subseteq \calV$ with $\left| \calV' \right| = 4$.  Since each $x_i \subseteq X_g^V$, we have $f \in A_{x_i}$ for each $x_i$, and thus there is a linear combination in $\bE_{1,1}^2$ given by
\begin{displaymath}
(x,f) = \sum_{i=0}^n (x_i,f).
\end{displaymath}
\bn Hence it is enough to prove the result in the case that $x \in X_g^{\calV'}$ for some $\calV' \subseteq \calV$ with $\left|\calV' \right| = 4$.  Let $\calH(x) = \{\calH_0^x, \calH_1^x \}$ and, without loss of generality, assume $V \subseteq \calH_0^x$.  Now, if $\calV' \not \subseteq \calH_0^x$ for $i = \{0,1\}$, we have $(x,f) \in \bE_{1,1}^{2, v}$ for any $v \in \calV'$.  Otherwise, the vector space $\calH_0^x \otimes \Q$ and the set $\calV'$ satisfy the hypotheses of Lemma~\ref{unimodwedgethreelemma}.  Hence there is a linear combination
\begin{displaymath}
f = \sum_{j=1}^m \lambda_j f_j
\end{displaymath}
\bn where each $f_j \in \midwedge^3 (\calH_0^x \cap v_i^{\perp}) \otimes \Q$ for some $v_i \in \calV'$.  Hence in $\bE_{1,1}^2$, we have
\begin{displaymath}
(x,f) = \sum_{j=1}^m \lambda_j (x, f_j)
\end{displaymath}
\bn with each $(x,f_j) \in \bE_{1,1}^{2,v}$ for some $v \in \calV' \subseteq \calV$.  Therefore we have $(x,f) \in \im(\xi)$ as desired.
\end{proof}

\subsection{The proof of Theorem~\ref{fincokthm}}\label{fincoksubsection}

\bn We begin by showing that $\bE_{1,1}^2$ is finite dimensional.

\begin{proof}[Proof of Proposition~\ref{11finprop}]
Let $G = \im(\Mod(S_g \cut a) \rightarrow \Sp(2g,\Z))$.  We will show that the hypotheses of Proposition \ref{gengrpprop} are satisfied for the $G$--representation $\bE_{1,1}^2$ with $d = 9$.  The first hypothesis is exactly Lemma \ref{11stabgenlemma} and the second is exactly Lemma \ref{11finquotlemma}, so $\bE_{1,1}^2$ is finite dimensional by Proposition \ref{gengrpprop}.
\end{proof}
\bn We are now ready to complete Section~\ref{fincoksection}.

\begin{proof}[Proof of Theorem~\ref{fincokthm}]
Let $g \geq \finalbound$ and $\bE_{*,*}^*$ denote the equivariant homology spectral sequence for the action of $\cI_g$ on $C_{[a]}(S_g)$.  By a result of the author~\cite[Theorem A]{Minahanhomolconn}, $\bE_{p,q}^r$ converges to $H_2(\cI_g;\Q)$ for $p + q = 2$.  Hence it suffices to show that $\bE_{2,0}^2$ and $\bE_{1,1}^2$ are finite dimensional.  The vector space $\bE_{2,0}^2$ is finite dimensional by Proposition~\ref{homolcurvequotprop} and $\bE_{1,1}^2$ is finite dimensional by Proposition~\ref{11finprop}.
\end{proof}

\section{The proof of Theorem~\ref{mainthm}} \label{mainpfsection}

We now prove Theorem~\ref{mainthm}.  We will begin by proving Lemma~\ref{KassPutaltlemma}, which is an alternate proof of a corollary of a result of Kassabov and Putman result~\cite[Theorem A]{KassabovPutman}.  We will then use Theorem~\ref{fincokthm} along with Proposition~\ref{gengrpprop} to prove Theorem~\ref{mainthm}.  

\begin{lemma}\label{KassPutaltlemma}
Let $g \geq 3$.  The vector space $H_0(\Sp(2g,\Z); H_2(\cI_g;\Q))$ is finite dimensional.
\end{lemma}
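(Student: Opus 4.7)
The plan is to apply the Lyndon--Hochschild--Serre spectral sequence to the short exact sequence
\begin{displaymath}
1 \rightarrow \cI_g \rightarrow \Mod(S_g) \rightarrow \Sp(2g,\Z) \rightarrow 1.
\end{displaymath}
\bn In homological form this gives $E_{p,q}^2 = H_p(\Sp(2g,\Z); H_q(\cI_g;\Q))$ converging to $H_{p+q}(\Mod(S_g);\Q)$.  The term of interest is $E_{0,2}^2 = H_0(\Sp(2g,\Z); H_2(\cI_g;\Q))$, and the strategy is to compare it with $E_{0,2}^\infty$, which sits as the bottom step of the filtration on $H_2(\Mod(S_g);\Q)$ and so embeds into that group.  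Harer's theorem that $H_2(\Mod(S_g);\Q)$ is finite dimensional for $g \geq 3$ then gives $\dim E_{0,2}^\infty < \infty$.

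First I would identify the kernel of the surjection $E_{0,2}^2 \twoheadrightarrow E_{0,2}^\infty$.  Every outgoing differential from $E_{0,2}^r$ lands in a column with negative first index and hence vanishes, so only the incoming differentials $d_2 \colon E_{2,1}^2 \rightarrow E_{0,2}^2$ and $d_3 \colon E_{3,0}^3 \rightarrow E_{0,2}^3$ can contribute (the incoming $d_r$ for $r \geq 4$ originate from rows of negative $q$--index and therefore vanish).  Consequently the dimension of the kernel of $E_{0,2}^2 \twoheadrightarrow E_{0,2}^\infty$ is bounded above by $\dim E_{2,1}^2 + \dim E_{3,0}^2$.

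It remains to verify that these two source dimensions are finite.  By Johnson's theorem, $H_1(\cI_g;\Q)$ is the finite-dimensional rational $\Sp(2g,\Z)$--representation $\midwedge^3 H_1(S_g;\Q)/H_1(S_g;\Q)$.  Since $\Sp(2g,\Z)$ is a virtually torsion-free arithmetic group of type $\FP_\infty$ (via the Borel--Serre bordification), the homology group $H_p(\Sp(2g,\Z); W)$ is finite dimensional in every degree $p$ for any finite-dimensional rational representation $W$.  Applying this to $W = H_1(\cI_g;\Q)$ bounds $\dim E_{2,1}^2$, and applying it to $W = \Q$ bounds $\dim E_{3,0}^2$.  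Combining these with the finite dimensionality of $E_{0,2}^\infty$ proves $\dim E_{0,2}^2 < \infty$.  The only nontrivial external inputs are Harer's finiteness of $H_2(\Mod(S_g);\Q)$ and the Borel-style finiteness of the homology of $\Sp(2g,\Z)$ with finite-dimensional rational coefficients; the spectral sequence bookkeeping that glues them together is routine.
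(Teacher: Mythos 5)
Your argument is correct and follows essentially the same route as the paper: the Lyndon--Hochschild--Serre spectral sequence for $1 \to \cI_g \to \Mod(S_g) \to \Sp(2g,\Z) \to 1$, finiteness of $H_2(\Mod(S_g);\Q)$, Johnson's finite dimensionality of $H_1(\cI_g;\Q)$, and the finiteness of $H_*(\Sp(2g,\Z);W)$ for finite-dimensional rational $W$. You unwind the bound on the kernel of $E_{0,2}^2 \twoheadrightarrow E_{0,2}^\infty$ a bit more explicitly than the paper does, but the argument is the same.
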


\begin{proof}
Let $\bE_{p,q}^r$ be the Leray--Serre spectral sequence associated to the short exact sequence
\begin{displaymath}
1 \rightarrow \cI_g \rightarrow \Mod(S_g) \rightarrow \Sp(2g,\Z) \rightarrow 1.
\end{displaymath}
\bn  The vector space $H_2(\Mod(S_g);\Q)$ is finite dimensional~\cite[Section 5.4]{FarbMarg}.  Then since $H_1(\cI_g;\Q)$ is finite dimensional~\cite{JohnsonI}, the modules $\bE_{p,q}^2 = H_p(\Sp(2g,\Z); H_q(\cI_g;\Q))$ are all finite dimensional for $q \leq 1$~\cite[Corollary 3]{Ragarithmetic}.  Hence both the image and the kernel of the pushforward map
\begin{displaymath}
H_0(\Sp(2g,\Z);H_2(\cI_g;\Q)) \rightarrow H_2(\Mod(S_g);\Q)
\end{displaymath}
\bn are finite dimensional, so $H_0(\Sp(2g,\Z); H_2(\cI_g;\Q))$ is finite dimensional.
\end{proof}

\bn We now prove the main result of the paper.

\begin{proof}[Proof of Theorem~\ref{mainthm}] Let $G = \Sp(2g,\Z)$ and $V = H_2(\cI_g;\Q)$.  We will show that the $G$--representation $V$ satisfies the hypotheses of Proposition \ref{gengrpprop} for $d = 1$, and hence is finite dimensional.  The first hypothesis is the content of Theorem \ref{fincokthm}, while the second is the content of Lemma \ref{KassPutaltlemma}.
\end{proof} 

\bibliographystyle{plain}
\bibliography{mainbib}

\end{document}